\newcommand{\ccc}{{\mathbf C}}
\newcommand{\nnn}{{\mathbf N}}
\newcommand{\zzz}{{\mathbf Z}}
\renewcommand{\ggg}{{\frak{g}}}
\newcommand{\hhh}{{\frak{h}}}
\newtheorem{thm}{Theorem}[section]
\newtheorem{prop}{Proposition}[section]
\newtheorem{lemma}{Lemma}[section]
\newtheorem{cor}{Corollary}[section]
\newtheorem{conj}{Conjecture}[section]
\newtheorem{rem}{Remark}[section]
\numberwithin{equation}{section}
\begin{document}

\title{
Mock theta functions and characters of N=3 superconformal modules}

\author{\footnote{12-4 Karato-Rokkoudai, Kita-ku, Kobe 651-1334, Japan, \,\ 
wakimoto@r6.dion.ne.jp, \,\ wakimoto.minoru.314@m.kyushu-u.ac.jp
}{ Minoru Wakimoto}}

\date{\empty}

\maketitle

\begin{center}
Abstract
\end{center}

In this paper we study the characters of N=3 superconformal modules 
by using the Zwegers' theory on modification of mock theta functions.

\tableofcontents

\section{Introduction}

As it is known well, the N=3 superconformal algebra is constructed as the W-algebra 
associated to the affine superalgebra $\widehat{B}(1,1)= \widehat{osp}(3|2)$. 
In this paper, we consider the N=3 modules obtained by the quantum Hamiltonian 
reduction of $\widehat{B}(1,1)$-modules which are integrable with respect to 
the roots $\alpha_2$ and $\delta-\alpha_2$ and atypical with respect to 
$\alpha_1$, Among these modules, we consider  the highest weight $\widehat{B}(1,1)$-module 
$L(\Lambda^{[K(m), m_2]})$ where 
$m$ is a positive integer and $m_2$ is a non-negative integer such that 
$m_2 \leq m$ and $K(m) := -\frac{m+2}{4}$ and 
$\Lambda^{[K(m), m_2]}:= K(m)\Lambda_0-\frac{m_2}{2}\alpha_1$. 

The characters of these N=3 modules are Appell's functions 
({\it cf}. \cite{KW2001}) 
and their modular properties are usually unclear and difficult.

The case $m=1$ (i.e., $K=-\frac34$) was discussed in \S 5 of \cite{KW2017b}.

The purpose of the present paper is to study the characters in the case $m=2$ 
(i.e., $K=-1$) by using the Zwegers' theory on mock modular forms. 
In this case, we will show that the space of characters 
is not $SL_2(\zzz)$-invariant but the $SL_2(\zzz)$-invariance is achieved 
by collaboration of honest characters and modified characters. 
Also we show that the modified characters are explicitly written 
by the Dedekind's $\eta$-function and the Mumford's theta 
functions $\vartheta_{ab}(\tau, z)$ $(a, b \in \{0,1\})$.
The method to compute the characters of N=3 modules in this paper is as follows:
\begin{itemize}
\item First, compute all modified characters by using 
the action of $SL_2(\zzz)$.
\item Next, compute the Zwegers' correction terms.
\end{itemize}
These are possible in the case $m=2$ namely $K=-1$, and then we 
obtain the explicit formulas for both honest and modified characters.
The results obtained in the case $m=2$ enable us to study also 
the case $m=4$.

\medskip

This paper is organized as follows.

In section \ref{sec:Phi} we review the Zwegers' modification theory 
and collect formulas which are necessary to work out our calculation.

In section \ref{sec:B11}, as preparation for calculation in this paper, 
we give formulas for (super)characters and twisted characters of 
integrable $\widehat{B}(1,1)$-modules.

In section \ref{sec:quantum} we compute the characters of the 
N=3 modules obtained from the quantum Hamiltonian reduction of 
integrable $\widehat{B}(1,1)$-modules.
For the $SL_2(\zzz)$-invariance of the space of modified characters, 
we consider 2 kinds of twisted characters.

In section \ref{sec:m2:modified} we compute modified characters in the case 
$m=2$ by using their modular properties. 

In section \ref{sec:m2:honest}, as continuation from section 
\ref{sec:m2:modified}, we compute the Zwegers' correction terms and obtain 
the explicit formulas for the honest characters in the case $m=2$. 
By this calculation it turns out that the modified character, in this case, 
is nothing else but the sum of two honest characters.

In section \ref{sec:asymptotics} we compute the asymptotic behavior 
of (super)characters as $\tau \downarrow 0$ in the case $m=2$ 
(i.e., $K=-1$). We see that 
the asymptotics of supercharacters remain unchanged under the modification, 
whereas the asymptotics of characters behave in somehow strange way.

In sections \ref{sec:m4:modified} and \ref{sec:m4:honest} 
we discuss about relations between characters of \\
$H(\Lambda^{[K(m), m_2]})$ and $H(\Lambda^{[K(2m), m_2]})$ and, 
as its application, we deduce explicit formulas for some of modified
and honest characters in the case $m=4$.

Finally in section \ref{sec:theta-relation} we show some theta-relations 
of the Mumford's theta functions $\vartheta_{ab}(\tau, z)$ obtained 
from the modification of characters.

Through the study developped in this paper, we are led to a conjecture 
about the relation between honest characters and modified characters.
If this conjecture is true, we can expect that the Zwegers' modification 
theory of mock theta functions will provide very powerful tools 
for the study of characters of W-algebras.

The author is grateful to Professor Victor Kac for useful discussions and 
interest in this work.

\section{Functions $\Phi^{[m,s]}$ and $\widetilde{\Phi}^{[m,s]}$}
\label{sec:Phi}

We begin this section with brief and quick review on the the Zwegers' 
modification theory of mock theta functions which are used in this paper.
For $m \in \frac12 \nnn$ and $s \in \frac12\zzz$, we define the 
functions $\Phi^{[m,s]}_i$ \,\ $(i=1,2)$ by
\begin{subequations}
\begin{eqnarray}
\Phi^{[m,s]}_1(\tau, z_1, z_2,t) &:=& e^{-2\pi imt} 
\sum_{j \in \zzz}
\frac{e^{2\pi imj(z_1+z_2)+2\pi isz_1} q^{mj^2+sj}}{1-e^{2\pi iz_1}q^j}
\label{n3:eqn:2022-111a}
\\[1mm]
\Phi^{[m,s]}_2(\tau, z_1, z_2,t) &:=& e^{-2\pi imt} 
\sum_{j \in \zzz}
\frac{e^{-2\pi imj(z_1+z_2)-2\pi isz_2} q^{mj^2+sj}}{1-e^{-2\pi iz_2}q^j}
\label{n3:eqn:2022-111b}
\end{eqnarray}
\end{subequations}
where $q:=e^{2\pi i\tau} \,\ (\tau \in \ccc_+)$ and $z_1, z_2, t \in \ccc$, 
and put 
\begin{equation}
\Phi^{[m,s]}(\tau, z_1, z_2,t) \,\ := \,\ 
\Phi^{[m,s]}_1(\tau, z_1, z_2,t) -\Phi^{[m,s]}_2(\tau, z_1, z_2,t) 
\label{n3:eqn:2022-111c}
\end{equation}

These functions appear usually in the characters of affine superalgebras 
and superconformal algebras, but do not have good modular properties.
This situation is improved by the Zwegers' modification, using the 
functions $R_{j;m}(\tau,w)$ for $(m, j) \in \frac12\nnn \times \frac12\zzz$ 
defined by 
\begin{eqnarray}
& &
R_{j;m}(\tau,w) := \hspace{-6mm}
\sum_{\substack{ \\[1mm] n \, \equiv \, j \, {\rm mod} \, 2m}}
\hspace{-6mm}
\bigg\{{\rm sgn}\Big(n-\frac12-j+2m\Big)
-E\bigg(\Big(n-2m\frac{{\rm Im}(w)}{{\rm Im}(\tau)}\Big)
\sqrt{\frac{{\rm Im}(\tau)}{m}}\bigg)\bigg\}
\nonumber
\\[0mm]
& & \hspace{40mm}
\times \,\ e^{-\frac{\pi in^2\tau}{2m} +2\pi inw}
\label{n3:eqn:2022-111h}
\end{eqnarray}
where $(\tau,w) \in \ccc_+ \times \ccc$ and 
$E(z):= 2\int_0^z e^{-\pi t^2}dt$.
For $m \in \frac12 \nnn$ 
and $s \in \frac12 \zzz$, we put 
\begin{subequations}
\begin{eqnarray}
& &\hspace{-10mm}
\Phi^{[m;s]}_{\rm add}(\tau, z_1,z_2,t) 
\nonumber
\\[1mm]
&:=&
-\frac12 e^{-2\pi imt} \hspace{-5mm}
\sum_{\substack{\\ k \in s+\zzz \\[1mm] 
s \leq k < s+2m }} \hspace{-5mm}
R_{k;m}\Big(\tau, \frac{z_1-z_2}{2}\Big) 
\big[\theta_{k;m}-\theta_{-k;m}\big](\tau, \, z_1+z_2) \hspace{10mm}
\label{n3:eqn:2022-111j}
\\[0mm]
& & \hspace{-10mm}
\widetilde{\Phi}^{[m;s]}(\tau, z_1,z_2,t) \, := \, 
\Phi^{[m;s]}(\tau, z_1,z_2,t) 
+
\Phi^{[m;s]}_{\rm add}(\tau, z_1,z_2,t) 
\label{n3:eqn:2022-111k}
\end{eqnarray}
\end{subequations}
and call $\widetilde{\Phi}^{[m;s]}$ the \lq \lq modification" of $\Phi^{[m;s]}$
and $\Phi^{[m;s]}_{\rm add}$ the \lq \lq correction term". In the formula 
\eqref{n3:eqn:2022-111j}, $\theta_{k,m}$ is the Jacobi's theta function :
\begin{equation}
\theta_{k,m}(\tau, z) \,\ := \,\ \sum_{j \in \zzz}
e^{2\pi im(j+\frac{k}{2m})z}q^{m(j+\frac{k}{2m})^2}
\label{n3:eqn:2022-120a}
\end{equation}

The properties of these functions were studied in 
\cite{KW2014}, \cite{KW2016a}, \cite{KW2016b}, \cite{KW2017a}, \cite{KW2017b} 
and \cite{Z}. Among them we collect here only the formulas which are necessary 
for our purpose in this paper.

\begin{lemma}
\label{n3:lemma:2022-116c}
If $m \in \nnn$ and $j \in \frac12 \zzz$, then 
$R_{j,m}(\tau, w \pm \frac12) = e^{\pm \pi ij}R_{j,m}(\tau, w)$
\end{lemma}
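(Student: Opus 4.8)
The plan is to compute directly from the defining series \eqref{n3:eqn:2022-111h} and track how each factor transforms when $w$ is shifted by $\pm\frac12$. Only the factor $e^{2\pi i n w}$ depends on $w$ inside the exponential, and the error-function term $E\big((n - 2m\,\mathrm{Im}(w)/\mathrm{Im}(\tau))\sqrt{\mathrm{Im}(\tau)/m}\big)$ depends on $\mathrm{Im}(w)$; since $\mathrm{Im}(w \pm \frac12) = \mathrm{Im}(w)$, the argument of $E$ is unchanged, and the $\mathrm{sgn}$-term has no $w$ at all. Hence the only change is $e^{2\pi i n(w\pm\frac12)} = e^{\pm\pi i n}\,e^{2\pi i n w}$, so
\begin{equation*}
R_{j;m}\Big(\tau, w \pm \tfrac12\Big) \;=\; \sum_{n \equiv j \bmod 2m} e^{\pm\pi i n}\,\Big\{\mathrm{sgn}(\cdots) - E(\cdots)\Big\}\, e^{-\frac{\pi i n^2\tau}{2m} + 2\pi i n w}.
\end{equation*}

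Next I would argue that the phase $e^{\pm\pi i n}$ is constant along the summation index. The index $n$ runs over the coset $j + 2m\zzz$, so writing $n = j + 2m\ell$ with $\ell \in \zzz$ gives $e^{\pm\pi i n} = e^{\pm\pi i j}\, e^{\pm 2\pi i m\ell}$. Here one uses the hypothesis $m \in \nnn$: then $m\ell \in \zzz$, so $e^{\pm 2\pi i m\ell} = 1$, and $e^{\pm\pi i n} = e^{\pm\pi i j}$ for every term. (Note $j \in \frac12\zzz$ is allowed, so $e^{\pm\pi i j}$ need not be $\pm 1$, but it is a fixed constant independent of $n$.) Pulling this constant out of the sum leaves exactly $e^{\pm\pi i j}$ times the original series \eqref{n3:eqn:2022-111h}, which is $R_{j;m}(\tau, w)$.

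There is essentially no obstacle here; the only point requiring a moment's care is the observation that $\mathrm{Im}(w)$ — and therefore the argument of $E$, which is the analytically subtle ingredient of the modification theory — is untouched by a real shift of $w$, so the completion terms match up term by term with the original. Everything else is a one-line manipulation of exponentials together with the integrality of $m$.
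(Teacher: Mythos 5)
Your proof is correct. The paper states this lemma without proof (it is among the formulas collected from the Kac--Wakimoto and Zwegers references), and your direct computation from the definition \eqref{n3:eqn:2022-111h} is exactly the natural argument: a real shift of $w$ leaves ${\rm Im}(w)$, hence the $E$-term and the ${\rm sgn}$-term, untouched, and on the coset $n=j+2m\ell$ the phase $e^{\pm\pi i n}=e^{\pm\pi i j}e^{\pm 2\pi i m\ell}$ collapses to the constant $e^{\pm\pi i j}$ precisely because $m\in\nnn$ --- the one place the integrality hypothesis enters, which you correctly isolate.
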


\begin{lemma}
\label{lemman3:n3:2022-111a}
\label{n3:lemma:2022-111a}
Let $m\in \frac12 \nnn$ and $s, s' \in \frac12 \zzz$. Then 

\begin{enumerate}
\item[{\rm 1)}] if $s-s'\in \zzz$, then 
$\widetilde{\Phi}^{[m;s]}=\widetilde{\Phi}^{[m;s']}$.

\item[{\rm 2)}] $\Phi^{[m,s]}(\tau, z_2, z_1,t) 
=\Phi^{[m,1-s]}(\tau, z_1, z_2,t)$. 

\item[{\rm 3)}] $\widetilde{\Phi}^{[m,s]}(\tau, z_2, z_1,t) 
=\widetilde{\Phi}^{[m,s]}(\tau, z_1, z_2,t)$. 

\item[{\rm 4)}] $\Phi^{[m,s]}_2(\tau, z_1, z_2,t) 
=\Phi^{[m,s]}_1(\tau, -z_2, -z_1,t)$.
\end{enumerate}
\end{lemma}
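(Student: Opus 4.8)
The plan is to verify the four identities of Lemma \ref{n3:lemma:2022-111a} directly from the series definitions \eqref{n3:eqn:2022-111a}--\eqref{n3:eqn:2022-111c}, \eqref{n3:eqn:2022-111j}--\eqref{n3:eqn:2022-111k}, treating parts 2) and 4) first since these are purely formal manipulations of the defining sums, then deducing parts 1) and 3) from them together with Lemma \ref{n3:lemma:2022-116c} and the standard periodicity of $\theta_{k;m}$.

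For part 4), I would start from $\Phi^{[m,s]}_1(\tau, -z_2, -z_1, t)$ and substitute into \eqref{n3:eqn:2022-111a}: the factor $e^{2\pi imj(z_1+z_2)}$ is symmetric under $(z_1,z_2)\mapsto(-z_2,-z_1)$, the factor $e^{2\pi isz_1}$ becomes $e^{-2\pi isz_2}$, and the denominator $1-e^{2\pi iz_1}q^j$ becomes $1-e^{-2\pi iz_2}q^j$, so the summand matches that of $\Phi^{[m,s]}_2$ term by term; this gives 4) with no reindexing. For part 2), I would similarly compute $\Phi^{[m,s]}(\tau, z_2, z_1, t) = \Phi^{[m,s]}_1(\tau,z_2,z_1,t) - \Phi^{[m,s]}_2(\tau,z_2,z_1,t)$; in $\Phi^{[m,s]}_1$ swapping $z_1\leftrightarrow z_2$ and then sending $j\mapsto -j$ converts the summand of $\Phi^{[m,s]}_1$ at parameter $s$ into (minus) the summand of $\Phi^{[m,s]}_2$ at parameter $1-s$ — the replacement $s\mapsto 1-s$ is exactly what is needed to absorb the $q^{j}$ coming from multiplying numerator and denominator by $-e^{-2\pi iz_2}q^{-j}$ after the sign flip on $j$. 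Doing the same bookkeeping on the $\Phi^{[m,s]}_2$ piece and using \eqref{n3:eqn:2022-111c} then yields $\Phi^{[m,s]}(\tau,z_2,z_1,t) = \Phi^{[m,1-s]}(\tau,z_1,z_2,t)$.

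For part 3), I would write $\widetilde{\Phi}^{[m,s]}(\tau,z_2,z_1,t) = \Phi^{[m,s]}(\tau,z_2,z_1,t) + \Phi^{[m;s]}_{\rm add}(\tau,z_2,z_1,t)$ and analyze the correction term \eqref{n3:eqn:2022-111j} under $z_1\leftrightarrow z_2$: the argument $\frac{z_1-z_2}{2}$ of $R_{k;m}$ changes sign, while $z_1+z_2$ is unchanged, so I need the parity $R_{k;m}(\tau,-w)\,[\theta_{k;m}-\theta_{-k;m}](\tau,w)$ versus $R_{k;m}(\tau,w)\,[\theta_{k;m}-\theta_{-k;m}](\tau,w)$. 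Using $R_{-k;m}(\tau,-w) = -R_{k;m}(\tau,w)$ (a sign symmetry of \eqref{n3:eqn:2022-111h}, which follows from $n\mapsto -n$ together with $\mathrm{sgn}$ and $E$ being odd), together with $\theta_{-k;m}(\tau,z) = \theta_{k;m}(\tau,-z)$ and reindexing the sum over $k$ by $k\mapsto -k$ (which, modulo the window $s\le k<s+2m$ and the $2m$-periodicity in $k$ of $\theta_{k;m}$, is legitimate after shifting by a multiple of $2m$), I expect $\Phi^{[m;s]}_{\rm add}(\tau,z_2,z_1,t) = \Phi^{[m;1-s]}_{\rm add}(\tau,z_1,z_2,t)$; combined with part 2) this gives $\widetilde{\Phi}^{[m,s]}(\tau,z_2,z_1,t) = \widetilde{\Phi}^{[m,1-s]}(\tau,z_1,z_2,t)$, which by part 1) (note $s - (1-s) = 2s-1 \in \zzz$) equals $\widetilde{\Phi}^{[m,s]}(\tau,z_1,z_2,t)$.

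Finally, for part 1), I would show that replacing $s$ by $s+1$ leaves $\widetilde{\Phi}^{[m;s]}$ invariant; iterating then covers all $s'$ with $s-s'\in\zzz$. Shifting $s\mapsto s+1$ in $\Phi^{[m,s]}_1$ and reindexing $j\mapsto j-1$ shows $\Phi^{[m,s]}$ itself is \emph{not} invariant — it picks up a "boundary" term — while in $\Phi^{[m;s]}_{\rm add}$ the summation window $\{k\in s+\zzz : s\le k<s+2m\}$ shifts, so one boundary term $k=s$ is dropped and one new term $k=s+2m$ is added; using the $2m$-periodicity $\theta_{k+2m;m} = \theta_{k;m}$ and Lemma \ref{n3:lemma:2022-116c} to compare $R_{s+2m;m}$ with $R_{s;m}$, these two changes should precisely cancel the boundary term from $\Phi^{[m,s]}$. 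The main obstacle I anticipate is exactly this cancellation in part 1): it requires matching the explicit residue/boundary contribution coming from the pole structure of the $\Phi_i$ series against the change in the finite $R\cdot\theta$ sum, and getting the half-integer shifts, the $\mathrm{sgn}$-term in \eqref{n3:eqn:2022-111h}, and the $e^{\pm\pi ij}$ factors of Lemma \ref{n3:lemma:2022-116c} to line up correctly — this is where the cited work (\cite{Z}, \cite{KW2014}) on the modular completion does the real work, and I would lean on those computations rather than redo them from scratch.
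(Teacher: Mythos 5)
The paper does not prove this lemma at all --- it is imported from the cited Kac--Wakimoto/Zwegers papers --- so there is no internal proof to compare against. Judged on its own terms, your overall strategy is the right one, and your arguments for 2) and 4) are complete and correct: 4) is an immediate substitution into \eqref{n3:eqn:2022-111a}--\eqref{n3:eqn:2022-111b}, and in 2) the reindexing $j\mapsto -j$ together with $\frac{1}{1-e^{2\pi iz}q^{-j}}=\frac{-q^{j}e^{-2\pi iz}}{1-e^{-2\pi iz}q^{j}}$ does convert the $\Phi_1$-summand at $s$ into minus the $\Phi_2$-summand at $1-s$, exactly as you say.

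For 1) and 3), however, two of the auxiliary identities you lean on are wrong as stated, and in both cases the error is precisely the boundary term that drives the cancellation you need. First, Lemma \ref{n3:lemma:2022-116c} cannot be used to compare $R_{s+2m;m}$ with $R_{s;m}$: it shifts the argument $w$ by $\pm\frac12$, not the index $j$ by $2m$. The correct comparison comes from the factor ${\rm sgn}\big(n-\frac12-j+2m\big)$ in \eqref{n3:eqn:2022-111h}, which depends on $j$ itself and not only on $j\bmod 2m$; the two sgn factors disagree only at $n=j$, giving
\begin{equation*}
R_{j;m}(\tau,w)-R_{j+2m;m}(\tau,w)\;=\;2\,q^{-\frac{j^2}{4m}}\,e^{2\pi ijw}.
\end{equation*}
Inserting this into \eqref{n3:eqn:2022-111j} with $w=\frac{z_1-z_2}{2}$ shows that $\Phi^{[m;s]}_{\rm add}-\Phi^{[m;s+1]}_{\rm add}$ is exactly the negative of the $a=1$ case of Lemma \ref{n3:lemma:2021-1213a}\,3), which finishes 1). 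Second, the parity you assert in 3) fails by the same kind of term: substituting $n\mapsto -n$ and using that $E$ is odd gives
\begin{equation*}
R_{-j;m}(\tau,-w)\;=\;-\,R_{j;m}(\tau,w)\;+\;2\,q^{-\frac{j^2}{4m}}\,e^{2\pi ijw},
\end{equation*}
not $-R_{j;m}(\tau,w)$. These correction terms, together with the further ones produced when you translate the reindexed window $\{-s-2m+1,\dots,-s\}$ back to $\{1-s,\dots,2m-s\}$ by $2m$ using the displayed index-shift identity, must be tracked explicitly and shown to cancel against one another (they do, because $[\theta_{k;m}-\theta_{-k;m}]$ is odd in $k$). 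That bookkeeping is the actual content of 3); as written, your outline asserts the conclusion of it rather than performing it.
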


\vspace{1mm}
\begin{lemma}
\label{n3:lemma:2022-111b}
\label{lemma:n3:2022-111b}
Let $m \in \frac12\nnn$ and $s \in \frac12 \zzz$. Then
\begin{enumerate}
\item[{\rm 1)}] if \, $s \in \zzz$, \quad 
$\widetilde{\Phi}^{[m;s]}
\Big(-\dfrac{1}{\tau}, \dfrac{z_1}{\tau},\dfrac{z_1}{\tau},t\Big) 
=
\tau e^{\frac{2\pi im}{\tau}z_1z_2}
\widetilde{\Phi}^{[m;s]}(\tau, z_1, z_2, t)$
\item[{\rm 2)}] if \, $m+s \in \zzz$, \, then 
\begin{enumerate}
\item[{\rm (i)}] $\widetilde{\Phi}^{[m;s]}(\tau+1, z_1, z_2, t) 
=\widetilde{\Phi}^{[m;s]}(\tau, z_1, z_2, t)$
\item[{\rm (ii)}] $\Phi^{[m;s]}(\tau+1, z_1, z_2, t) 
=\Phi^{[m;s]}(\tau, z_1, z_2, t)$
\end{enumerate}
\end{enumerate}
and similar formula holds for $\widetilde{\Phi}^{[m,s]}_i$ 
\,\ $(i \in \{1,2\})$.
\end{lemma}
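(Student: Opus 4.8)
The plan is to reduce everything to the known modular transformation laws of the building blocks: the Jacobi theta functions $\theta_{k,m}$, the Zwegers completion functions $R_{j;m}$, and the classical Appell-type sums $\Phi^{[m;s]}_i$. The guiding principle is that $\widetilde{\Phi}^{[m;s]}$ was defined precisely so as to transform like a (meromorphic) Jacobi form of index $m$ and weight $1$, so the content of the lemma is really a bookkeeping verification that the index, the multiplier, and the residual $s$-dependence all work out. I would treat the $S$-transformation (part 1) and the $T$-transformation (part 2) separately.

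For part 1, I would write $\widetilde{\Phi}^{[m;s]} = \Phi^{[m;s]} + \Phi^{[m;s]}_{\mathrm{add}}$ and transform each piece under $\tau \mapsto -1/\tau$, $z_i \mapsto z_i/\tau$. For $\Phi^{[m;s]}_{\mathrm{add}}$ one invokes the standard $S$-transformation of $\theta_{k;m}(\tau, z_1+z_2)$ (which produces a finite Gauss sum over the index $k \bmod 2m$ together with the factor $\tau^{1/2} e^{\frac{\pi i m (z_1+z_2)^2}{2\tau}}$, up to sign conventions) and the companion $S$-transformation law of $R_{k;m}\big(\tau, \tfrac{z_1-z_2}{2}\big)$ from \cite{Z} (which produces a matching $\tau^{1/2}$ and the factor $e^{-\frac{\pi i m (z_1-z_2)^2}{2\tau}}$ together with a dual finite sum plus the non-holomorphic correction that is the whole point of Zwegers' theory). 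Multiplying the exponential prefactors, $\frac{\pi i m}{2\tau}\big[(z_1+z_2)^2-(z_1-z_2)^2\big] = \frac{2\pi i m}{\tau} z_1 z_2$, which is exactly the desired index factor; the two finite Gauss sums combine into the identity (or a permutation) on the index set $s \le k < s+2m$, using $s \in \zzz$ to guarantee the set is a full set of residues mod $2m$ compatible with the $[\theta_{k;m}-\theta_{-k;m}]$ antisymmetrization. In parallel, the $S$-transformation of the meromorphic Appell sum $\Phi^{[m;s]}$ is the classical computation (e.g. as in \cite{KW2014}, \cite{KW2016a}): it produces $\tau e^{\frac{2\pi im}{\tau}z_1z_2}\Phi^{[m;s]}(\tau,z_1,z_2,t)$ plus exactly the anomalous term that is cancelled by the transform of $\Phi^{[m;s]}_{\mathrm{add}}$. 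Adding the two, the anomalies cancel and one is left with $\tau e^{\frac{2\pi im}{\tau}z_1z_2}\widetilde{\Phi}^{[m;s]}$. (I would note the factor is $\tau$, not $\tau^{1/2}$, because $\Phi$ carries the extra $\tau^{1/2}$ from the $j$-summation, matching weight $1$.)

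For part 2, the $T$-transformation is much easier. Replacing $\tau$ by $\tau+1$ multiplies $q^{mj^2+sj}$ by $e^{2\pi i(mj^2+sj)} = e^{2\pi i (m+s) j}$ after reducing $mj^2 \equiv mj \pmod{1}$ when... more carefully, $mj^2 + sj \equiv (m+s)j \pmod 1$ for integer $j$ since $j^2 \equiv j \pmod 2$, hence $mj^2 \equiv mj$; so when $m+s \in \zzz$ this phase is trivial and $\Phi^{[m;s]}_i$, hence $\Phi^{[m;s]}$, is $T$-invariant, giving (ii). For (i) one additionally checks that $\theta_{k;m}(\tau+1,z) = e^{\frac{\pi i k^2}{2m}}\theta_{k;m}(\tau,z)$ and, from \eqref{n3:eqn:2022-111h}, that $R_{k;m}(\tau+1,w) = e^{-\frac{\pi i k^2}{2m}}R_{k;m}(\tau,w)$ — the two phases are inverse, so each summand of $\Phi^{[m;s]}_{\mathrm{add}}$ is $T$-invariant, and hence so is $\widetilde{\Phi}^{[m;s]}$. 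The assertion for $\widetilde{\Phi}^{[m,s]}_i$ follows the same way since the anomalous $S$-term and the phases are already organized summand-by-summand in $i$.

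The main obstacle is part 1: bookkeeping the precise multiplier systems and the finite Gauss sums so that the anomalous (non-Jacobi) pieces coming from $\Phi^{[m;s]}$ and from $\Phi^{[m;s]}_{\mathrm{add}}$ cancel \emph{exactly}, with the correct normalization $-\tfrac12$ in \eqref{n3:eqn:2022-111j}, the correct argument $\tfrac{z_1-z_2}{2}$ in $R$, and the correct use of $s\in\zzz$ to close the index set. This is the step where sign and normalization conventions between \cite{Z} and the present paper have to be reconciled; once that is done, the identity $(z_1+z_2)^2 - (z_1-z_2)^2 = 4z_1z_2$ does the rest and the result is immediate. I would carry out part 1 in detail and treat part 2 as a short direct check.
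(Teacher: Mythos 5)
First, note that the paper does not actually prove this lemma: it is one of the formulas ``collected'' from \cite{KW2014}--\cite{KW2017b} and \cite{Z} without argument, so there is no in-paper proof to measure your attempt against. (Also, the displayed statement contains a typo --- the second argument on the left of 1) should be $z_2/\tau$, not $z_1/\tau$; you silently and correctly read it that way.) Your part 2 is essentially a complete and correct proof: the reduction $mj^2+sj\equiv(m+s)j\pmod{\zzz}$ using $j^2\equiv j\pmod 2$ and $2m\in\nnn$ settles (ii), and for (i) the phases $\theta_{k;m}(\tau+1,z)=e^{\pi ik^2/2m}\theta_{k;m}(\tau,z)$ and $R_{k;m}(\tau+1,w)=e^{-\pi ik^2/2m}R_{k;m}(\tau,w)$ do cancel summand by summand. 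One small point you should make explicit: the second of these phase laws is not unconditional --- writing $n=k+2m\ell$ one gets the extra phase $e^{-2\pi i(k\ell+m\ell^2)}$, which is trivial precisely because $k\ell+m\ell^2\equiv(k+m)\ell\pmod{\zzz}$ and $k+m\in\zzz$ by the hypothesis $m+s\in\zzz$ together with $k\in s+\zzz$. Without that hypothesis the individual terms of $\Phi^{[m;s]}_{\rm add}$ are not $T$-invariant.

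For part 1, however, what you have written is a plan rather than a proof, and the plan defers exactly the step where all of the content lies. You correctly identify the strategy (transform $\Phi^{[m;s]}$ and $\Phi^{[m;s]}_{\rm add}$ separately and show the anomalies cancel, with the index factor coming from $(z_1+z_2)^2-(z_1-z_2)^2=4z_1z_2$), but you never state, let alone verify, the $S$-transformation law of $R_{j;m}(\tau,w)$. That law is not a formal Gauss-sum manipulation of the series \eqref{n3:eqn:2022-111h}: because of the non-holomorphic factor $E(\cdot)$, it rests on an integral identity of Mordell type (the heart of Zwegers' thesis), and the ``anomalous term'' it produces must be matched \emph{exactly} against the anomalous term in the $S$-transformation of the meromorphic Appell sum $\Phi^{[m;s]}$, which you also do not compute. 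Asserting that these two uncomputed quantities cancel is precisely the statement being proved, so as it stands part 1 is circular in effect. To make this a proof you would either have to import the transformation laws of $R_{j;m}$ and of $\Phi^{[m;s]}$ as explicit citations (e.g.\ to the relevant theorems of \cite{Z} or \cite{KW2014}, which is in effect what the paper itself does by stating the lemma without proof), or carry out the contour-deformation and error-function computations yourself; the latter is several pages of work and is where the hypothesis $s\in\zzz$ (closing the index set $s\le k<s+2m$ to a full residue system mod $2m$ compatible with the antisymmetrization $\theta_{k;m}-\theta_{-k;m}$) actually gets used.
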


\vspace{1mm}
\begin{lemma}
\label{n3:lemma:2022-202b}
Let $m \in \nnn$ and $s \in \frac12 \zzz$ and $a, b \in \zzz$ such that 
$a+b \in 2\zzz$. Then
\begin{enumerate}
\item[{\rm 1)}] \,\ $\Phi^{[m,s]}(\tau, z_1+a, z_2+b, t) 
\, = \, 
(-1)^{2sa} \, \Phi^{[m,s]}(\tau, z_1, z_2, t)$ 
\item[{\rm 2)}] \,\ In the case $s \in \zzz$,
$$
\Phi^{[m,s]}(\tau, z_1+a, z_2+b, t) 
\, = \, 
\Phi^{[m,s]}(\tau, z_1, z_2, t) \hspace{5mm}
\text{for} \,\ {}^{\forall}a, \, {}^{\forall}b \, \in \, \zzz
$$ 
\end{enumerate}
and similar formula holds for $\Phi^{[m,s]}_i$, 
$\Phi_{i, {\rm add}}^{[m,s]}$ and $\widetilde{\Phi}^{[m,s]}_i$ 
\,\ $(i \in \{1,2\})$.
\end{lemma}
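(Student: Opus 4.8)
The plan is to prove the quasi-periodicity relation in Lemma \ref{n3:lemma:2022-202b} directly from the series definitions \eqref{n3:eqn:2022-111a}--\eqref{n3:eqn:2022-111c} for $\Phi_i^{[m,s]}$, and then bootstrap the statements for $\Phi^{[m,s]}$, $\Phi_{i,\mathrm{add}}^{[m,s]}$ and $\widetilde{\Phi}_i^{[m,s]}$ from it. First I would substitute $z_1 \mapsto z_1 + a$ into \eqref{n3:eqn:2022-111a}. The denominator $1 - e^{2\pi i z_1}q^j$ is unchanged because $e^{2\pi i a} = 1$ for $a \in \zzz$; the numerator picks up a factor $e^{2\pi i m j a + 2\pi i s a}$ from the shift in $z_1$, and then the shift $z_2 \mapsto z_2 + b$ contributes $e^{2\pi i m j b}$. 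Thus, after the substitution, the $j$-th summand is multiplied by $e^{2\pi i m j (a+b)} e^{2\pi i s a}$. Since $a + b \in 2\zzz$ and $m \in \nnn$, we have $m(a+b) \in 2\zzz$, so $e^{2\pi i m j (a+b)} = 1$ for every $j \in \zzz$; what survives is the global factor $e^{2\pi i s a} = (-1)^{2sa}$, which is independent of $j$ and can be pulled out of the sum. This gives part 1) for $\Phi_1^{[m,s]}$. The same computation for $\Phi_2^{[m,s]}$ in \eqref{n3:eqn:2022-111b} works verbatim: the denominator $1 - e^{-2\pi i z_2}q^j$ is invariant under $z_2 \mapsto z_2 + b$, and the numerator acquires $e^{-2\pi i m j (a+b)} e^{-2\pi i s b}$; again $e^{-2\pi i m j(a+b)} = 1$, leaving the $j$-independent factor $e^{-2\pi i s b} = (-1)^{-2sb} = (-1)^{2sb}$. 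Here I should check that $(-1)^{2sa} = (-1)^{2sb}$ under the hypothesis $a+b \in 2\zzz$: indeed $2s(a-b) = 2s\big((a+b) - 2b\big) \in 2s\cdot 2\zzz$, and since $2s \in \zzz$ this exponent is an even integer, so $(-1)^{2sa} = (-1)^{2sb}$, and the prefactors for $\Phi_1$ and $\Phi_2$ agree. Subtracting via \eqref{n3:eqn:2022-111c} then yields part 1) for $\Phi^{[m,s]}$ with the common prefactor $(-1)^{2sa}$. Part 2) is the special case $s \in \zzz$: then $2s \in 2\zzz$, so $(-1)^{2sa} = 1$ for every $a \in \zzz$ with no constraint linking $a$ and $b$ — but one still needs $e^{2\pi i m j (a+b)} = 1$ for all $j$, which now requires only $m(a+b) \in \zzz$, automatic since $m \in \nnn$ and $a,b \in \zzz$; so the hypothesis $a+b\in 2\zzz$ can indeed be dropped when $s \in \zzz$.

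For the correction term $\Phi_{\mathrm{add}}^{[m;s]}$ in \eqref{n3:eqn:2022-111j}, the relevant arguments of the constituent functions are $\frac{z_1 - z_2}{2}$ inside $R_{k;m}$ and $z_1 + z_2$ inside $\theta_{k;m} - \theta_{-k;m}$. Under $z_1 \mapsto z_1 + a$, $z_2 \mapsto z_2 + b$ these become $\frac{z_1 - z_2}{2} + \frac{a-b}{2}$ and $z_1 + z_2 + (a+b)$. Since $a + b \in 2\zzz$, the theta argument shifts by an even integer; from the definition \eqref{n3:eqn:2022-120a} one checks $\theta_{k,m}(\tau, z + 2\ell) = \theta_{k,m}(\tau, z)$ for $\ell \in \zzz$ (the summand gains $e^{2\pi i m (j + k/2m)\cdot 2\ell} = e^{2\pi i (mj + k/2)\cdot 2\ell}$, and $k$ runs over $s + \zzz$ so $k/2 \in \frac12\zzz$, giving $e^{2\pi i k \ell}$ which need not be $1$ unless $k \in \zzz$) — so here I would instead use the standard transformation $\theta_{k,m}(\tau, z+1) = e^{\pi i k}\theta_{k,m}(\tau,z)$ applied twice, giving $\theta_{k,m}(\tau, z + 2) = e^{2\pi i k}\theta_{k,m}(\tau,z)$, and note that $\theta_{-k,m}$ picks up $e^{-2\pi i k}$; combined with Lemma \ref{n3:lemma:2022-116c}, which handles the half-integer shift $\frac{a-b}{2}$ of the $R$-argument via $R_{k;m}(\tau, w + \frac{a-b}{2}) = e^{\pi i k (a-b)} R_{k;m}(\tau, w)$ (iterating the $\pm\frac12$ rule $|a-b|$ times, valid since $m \in \nnn$), the total phase on the $k$-th term is $e^{\pi i k(a-b)}\cdot e^{\pm 2\pi i k \cdot \frac{a+b}{2}}$; writing it carefully, $R_{k;m}$ contributes $e^{\pi i k (a-b)}$ while $\theta_{k;m}$ contributes $e^{\pi i k (a+b)}$ and $\theta_{-k;m}$ contributes $e^{-\pi i k(a+b)}$. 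I would pull the $R$-phase out and combine: the term with $\theta_{k;m}$ gets $e^{\pi i k(a-b)}e^{\pi i k(a+b)} = e^{2\pi i k a}$, and the term with $\theta_{-k;m}$ gets $e^{\pi i k(a-b)}e^{-\pi i k(a+b)} = e^{-2\pi i k b} = e^{-2\pi i k a}\cdot e^{2\pi i k(a-b)}$; since $a-b \in \zzz$... this is exactly the place where one must be careful, because $k \in s + \zzz$ is generally not an integer, so $e^{2\pi i k a}$ is $a$-dependent with a fractional part governed by $s$. The resolution is that the overall series for $\Phi_{\mathrm{add}}$ is to be compared after re-indexing $k \mapsto k \pm 2m$ (the range is a fundamental domain mod $2m$), and the combination $R_{k;m}[\theta_{k;m} - \theta_{-k;m}]$ is built precisely so that the phases reassemble into the single prefactor $(-1)^{2sa}$ matching $\Phi^{[m,s]}$; tracking this is the main obstacle, and I would do it by writing $k = s + \ell$, $\ell \in \zzz$, and separating the $e^{2\pi i s(\cdots)}$ factor (which produces the $(-1)^{2sa}$) from the $e^{2\pi i \ell(\cdots)}$ factor (which is $1$ because $\ell, a, b \in \zzz$ and $a+b \in 2\zzz$). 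Then $\widetilde{\Phi}^{[m;s]} = \Phi^{[m;s]} + \Phi_{\mathrm{add}}^{[m;s]}$ inherits the relation by additivity, and since the individual $\Phi_i^{[m,s]}$ already satisfy it, so do $\Phi_{i,\mathrm{add}}^{[m,s]}$ and $\widetilde{\Phi}_i^{[m,s]}$, proving the "similar formula" clause.

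The hard part, to summarize, is the phase bookkeeping for the correction term: one must verify that Lemma \ref{n3:lemma:2022-116c}'s $R$-transformation and the elliptic transformation of $\theta_{k,m}$ conspire — after the re-indexing of the summation range mod $2m$ — to reproduce exactly the scalar $(-1)^{2sa}$, with the integer part of $k$ contributing a trivial phase under the hypothesis $a+b \in 2\zzz$ and the fractional part $s$ contributing the stated sign. Everything else (the $\Phi_i$ computations, the reduction of part 2) to the $s \in \zzz$ case, the additivity argument for $\widetilde\Phi$) is a routine unwinding of the defining series.
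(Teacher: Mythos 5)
Your verification is correct, and in fact the paper offers no proof of this lemma at all --- it is one of the formulas ``collected'' from the references \cite{KW2014}--\cite{KW2017b} and \cite{Z} --- so there is no authorial argument to compare against; your direct computation from the definitions is the natural route. The treatment of $\Phi_1^{[m,s]}$ and $\Phi_2^{[m,s]}$ is exactly right, including the one place where the hypothesis $a+b\in 2\zzz$ is genuinely needed, namely to identify the two prefactors $(-1)^{2sa}$ and $(-1)^{2sb}$ via $2s(a-b)=2s\big((a+b)-2b\big)\in 2\zzz$. One remark on the correction term: the detour through ``re-indexing $k\mapsto k\pm 2m$'' is a red herring --- no re-indexing of the summation range is needed. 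Since $a+b\in 2\zzz$ forces $a-b\in 2\zzz$ as well, each summand individually transforms by
$$
R_{k;m}\Big(\tau,\tfrac{z_1-z_2}{2}+\tfrac{a-b}{2}\Big)\big[\theta_{k;m}-\theta_{-k;m}\big](\tau,z_1+z_2+a+b)
= e^{\pi ik(a-b)}R_{k;m}\Big(\tau,\tfrac{z_1-z_2}{2}\Big)\big[e^{\pi ik(a+b)}\theta_{k;m}-e^{-\pi ik(a+b)}\theta_{-k;m}\big](\tau,z_1+z_2),
$$
and the combined phases $e^{2\pi ika}$ (on the $\theta_{k;m}$ piece) and $e^{-2\pi ikb}$ (on the $\theta_{-k;m}$ piece) both reduce, upon writing $k=s+\ell$ with $\ell\in\zzz$, to the single $k$-independent scalar $(-1)^{2sa}$ --- which is precisely the resolution you arrive at in your final sentence. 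So the ``hard part'' you flag is in fact a two-line check, and the rest of your argument (additivity for $\widetilde\Phi^{[m,s]}$, the separate phases for the two theta pieces giving the claim for $\Phi^{[m,s]}_{i,\mathrm{add}}$ and $\widetilde\Phi^{[m,s]}_i$, and the relaxation to arbitrary $a,b$ when $s\in\zzz$) goes through as you describe.
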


\vspace{1mm}
\begin{lemma}
\label{n3:lemma:2022-111c}
\label{lemma:n3:2022-111c}
For $m \in \frac12 \nnn$ and $s \in \frac12\zzz$,
\begin{eqnarray*}
& & \hspace{-10mm}
\Phi^{[m,s]}(2\tau, z_1, z_2, t)
\\[1mm]
&=& \frac12 \left\{
\Phi^{[2m,2s]}
\left(\tau, \frac{z_1}{2}, \frac{z_2}{2}, \frac{t}{2}\right) 
+ 
e^{-2\pi is} \Phi^{[2m,2s]}
\Big(\tau, \frac{z_1+1}{2}, \frac{z_2-1}{2}, \frac{t}{2}\Big)
\right\}
\end{eqnarray*}
and similar formula holds for $\Phi^{[m,s]}_i$, 
$\Phi_{i, {\rm add}}^{[m,s]}$ and $\widetilde{\Phi}^{[m,s]}_i$ 
\,\ $(i \in \{1,2\})$.
\end{lemma}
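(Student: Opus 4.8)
The plan is to prove Lemma~\ref{n3:lemma:2022-111c} directly from the series definition \eqref{n3:eqn:2022-111a}--\eqref{n3:eqn:2022-111b} by the standard trick of splitting the summation index according to parity. Write $\Phi^{[m,s]}_1(2\tau,z_1,z_2,t)$ using \eqref{n3:eqn:2022-111a} with $q$ replaced by $q^2$; the summand is $e^{-2\pi i m t}\,e^{2\pi imj(z_1+z_2)+2\pi isz_1}q^{2mj^2+2sj}/(1-e^{2\pi iz_1}q^{2j})$. Now substitute $j=2k$ and $j=2k+1$ ($k\in\zzz$) to split the sum into two sub-series. For $j=2k$ one gets, after collecting exponents, a term whose $\tau$-dependence is $q^{2m(2k)^2+2s(2k)} = q^{8mk^2+4sk} = \bar q^{(2m)k^2+(2s)k}$ with $\bar q := e^{2\pi i\tau}$, i.e.\ exactly the $\Phi^{[2m,2s]}_1$-shape with variables $z_1/2$, $z_2/2$, $t/2$: note $e^{-2\pi i m t}=e^{-2\pi i(2m)(t/2)}$, $e^{2\pi im(2k)(z_1+z_2)}=e^{2\pi i(2m)k((z_1+z_2)/2)}\cdot(\text{phase }1)$, $e^{2\pi is(2k)z_1}=e^{2\pi i(2s)k(z_1/2)}$, and the denominator becomes $1-e^{2\pi i(z_1/2)\cdot 2}q^{2\cdot 2k}$, which is $1-e^{2\pi i(2)(z_1/2)}\bar q^{2k}$ — so this matches the $j$-even part of the definition of $\Phi^{[2m,2s]}_1(\tau,z_1/2,z_2/2,t/2)$ up to the factor $\tfrac12$ appearing because the even and odd parts are being recombined. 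I would carry out the bookkeeping of exponents carefully here, since the whole content of the lemma is the precise matching of these phase factors.

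For the $j=2k+1$ part, the same substitution produces $q^{2m(2k+1)^2+2s(2k+1)} = \bar q^{(2m)(k+\frac12)^2\cdot 4/\dots}$; more precisely $2m(2k+1)^2+2s(2k+1) = 8m(k+\tfrac12)^2+4s(k+\tfrac12) = (2m)(2k+1)^2/\dots$, and one rewrites this as the $\Phi^{[2m,2s]}$-summand with index running over $2\zzz+1$, which is exactly what appears when one evaluates $\Phi^{[2m,2s]}(\tau,\frac{z_1+1}{2},\frac{z_2-1}{2},\frac{t}{2})$: the shift $z_1\mapsto z_1+1$ inside $e^{2\pi i(2m)j\cdot\frac{z_1+z_2}{2}}$ and $e^{2\pi i(2s)j\cdot\frac{z_1}{2}}$ and the denominator $1-e^{2\pi i\frac{z_1+1}{2}}\bar q^{j}$ conspire to pick up the odd-index contribution, and the overall scalar $e^{-2\pi is}$ comes from $e^{2\pi i(2s)\cdot\frac12\cdot\frac12}\cdot(\text{something})$ — again pure phase arithmetic. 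I would present this computation for $\Phi_1$ in detail, then note that the identical argument (or, more slickly, the symmetry in Lemma~\ref{n3:lemma:2022-111a}(4), $\Phi^{[m,s]}_2(\tau,z_1,z_2,t)=\Phi^{[m,s]}_1(\tau,-z_2,-z_1,t)$) gives the analogous identity for $\Phi_2$, hence for $\Phi=\Phi_1-\Phi_2$ by \eqref{n3:eqn:2022-111c}.

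For the assertion about $\widetilde\Phi^{[m,s]}$ and $\Phi^{[m,s]}_{\rm add}$, the plan is to reduce to the analogous duplication identity for the building blocks of $\Phi_{\rm add}$, namely $R_{k;m}(\tau,w)$ and the theta functions $\theta_{k;m}(\tau,z)$ defined in \eqref{n3:eqn:2022-111h} and \eqref{n3:eqn:2022-120a}. Both satisfy classical level-doubling formulas — for $\theta_{k;m}$, splitting $j$ into even and odd exactly as above gives $\theta_{k;m}(2\tau,z) = \theta_{2k;2m}(\tau,\frac z2)$-type relations, and $R_{k;m}$ obeys a parallel identity with the $\mathrm{sgn}$ and $E$ terms transforming correctly because their arguments scale homogeneously in $(\mathrm{Im}\,\tau,\mathrm{Im}\,w)$. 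Feeding these into the definition \eqref{n3:eqn:2022-111j} of $\Phi^{[m;s]}_{\rm add}$ and matching the summation range $s\le k<s+2m$ against $2s\le k'<2s+4m$ yields the stated formula for $\Phi_{\rm add}$, and then \eqref{n3:eqn:2022-111k} gives it for $\widetilde\Phi$.

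The main obstacle I anticipate is not any single deep step but the sheer density of phase-factor bookkeeping: one must track four independent exponential arguments (in $z_1$, $z_2$, $z_1+z_2$, and $t$) simultaneously and verify that the cross-terms generated by the shifts $z_1\mapsto z_1+1$, $z_2\mapsto z_2-1$ produce exactly the claimed scalar $e^{-2\pi is}$ and no leftover $\tau$- or $z$-dependent factor — a sign error or a missing $q$-power would be easy to make. A clean way to organize this is to first prove the $\Phi_1$ identity, then derive everything else (the $\Phi_2$, $\Phi$, $\Phi_i$, $\Phi_{i,\rm add}$, $\widetilde\Phi_i$ cases) formally from Lemma~\ref{n3:lemma:2022-111a} and the definitions, so that the genuinely computational content is confined to one display.
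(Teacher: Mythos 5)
The paper itself gives no proof of this lemma (it is quoted from the Kac--Wakimoto--Zwegers references), so your proposal has to stand on its own — and its central computational step does not work. The identity is \emph{not} obtained by splitting the summation index $j$ into even and odd parts. Your own bookkeeping already shows the problem: for $j=2k$ the exponent is $q^{2m(2k)^2+2s(2k)}=q^{8mk^2+4sk}$, and this is \emph{not} equal to $\bar q^{\,2mk^2+2sk}$ with $\bar q=e^{2\pi i\tau}=q$. The even-index subsum of $\Phi^{[m,s]}_1(2\tau,\cdot)$ is a series of level $2m$ in the modulus $4\tau$ (equivalently, it matches a $\Phi^{[\,\cdot\,]}(4\tau,\ldots)$-shape), not the level-$2m$ series in the modulus $\tau$ that appears on the right-hand side of the lemma. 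Note also that both sides of the claimed identity run over all of $\zzz$ with the \emph{same} Gaussian exponent $q^{2mj^2+2sj}$, so no reindexing by parity can be involved: the matching must be term by term.

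The correct mechanism is a termwise partial-fraction decomposition of the denominator. Writing $x=e^{\pi iz_1}q^{j}$, one has for the $j$-th term of $\Phi^{[m,s]}_1(2\tau,z_1,z_2,t)$ the denominator $1-e^{2\pi iz_1}q^{2j}=1-x^2$, and
\begin{equation*}
\frac{1}{1-x^{2}}\;=\;\frac12\left\{\frac{1}{1-x}+\frac{1}{1+x}\right\}.
\end{equation*}
The first fraction is exactly the $j$-th term of $\Phi^{[2m,2s]}_1\bigl(\tau,\tfrac{z_1}{2},\tfrac{z_2}{2},\tfrac{t}{2}\bigr)$ (check: $e^{2\pi i(2m)j\frac{z_1+z_2}{2}}=e^{2\pi imj(z_1+z_2)}$, $q^{2mj^2+2sj}$ unchanged, $e^{2\pi i(2s)\frac{z_1}{2}}=e^{2\pi isz_1}$, denominator $1-e^{\pi iz_1}q^{j}=1-x$). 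The second comes from the shifted argument: replacing $\tfrac{z_1}{2}\mapsto\tfrac{z_1+1}{2}$, $\tfrac{z_2}{2}\mapsto\tfrac{z_2-1}{2}$ leaves $z_1+z_2$ unchanged, turns the denominator into $1-e^{\pi i(z_1+1)}q^{j}=1+x$, and multiplies the numerator by $e^{2\pi i(2s)\cdot\frac12}=e^{2\pi is}$, which is precisely cancelled by the prefactor $e^{-2\pi is}$ in the statement. Summing over $j$ gives the identity for $\Phi_1$; the case of $\Phi_2$ follows identically (or via Lemma \ref{n3:lemma:2022-111a}(4), as you suggest), and $\Phi=\Phi_1-\Phi_2$ follows by linearity. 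Your outline for the correction term also needs repair for a related reason: $\Phi^{[m;s]}_{\rm add}(2\tau,\cdot)$ is a sum of $2m$ terms while $\Phi^{[2m;2s]}_{\rm add}(\tau,\cdot)$ is a sum of $4m$ terms, so the ranges $s\le k<s+2m$ and $2s\le k'<2s+4m$ cannot be matched one-to-one; one has to use the identities for $R$ and $\theta$ under $(\tau,w)\mapsto(2\tau,\cdot)$ together with the half-period shift $w\mapsto w\pm\frac12$ (Lemma \ref{n3:lemma:2022-116c}) to see how the two halves of the longer range recombine. As written, the proposal would not compile into a proof.
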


\vspace{1mm}
\begin{lemma} 
\label{n3:lemma:2022-111d}
\label{lemma:n3:2022-111d}
Let $m \in \frac12 \nnn$, $s \in \frac12\zzz$ and $a, b \in \ccc$. Then
$$
\widetilde{\Phi}^{[m,s]}\left(\tau,  
z+a+\dfrac{\tau}{2}, z+b-\dfrac{\tau}{2}, 0\right)
=
e^{2\pi im(a-b)} 
\widetilde{\Phi}^{[m,s]}\left(\tau, 
z+a-\dfrac{\tau}{2}, z+b+\dfrac{\tau}{2}, 0\right)
$$
and similar formula holds for $\widetilde{\Phi}^{[m,s]}_i$ 
\,\ $(i \in \{1,2\})$.
\end{lemma}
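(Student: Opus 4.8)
The plan is to reduce the claimed identity to two separate facts: one for the ``honest'' part $\Phi^{[m,s]}$ and one for the correction term $\Phi^{[m;s]}_{\rm add}$, since $\widetilde{\Phi}^{[m,s]} = \Phi^{[m,s]} + \Phi^{[m;s]}_{\rm add}$ by \eqref{n3:eqn:2022-111k}. Actually, I expect the honest part $\Phi^{[m,s]}$ does \emph{not} satisfy the identity by itself (this is precisely the failure of modular-type behavior that forces one to introduce the correction term), so the two terms must be handled together or, better, one should track the discrepancy on each side. Concretely, I would set $w := \tfrac{(z+a+\tau/2)-(z+b-\tau/2)}{2} = \tfrac{a-b}{2}+\tfrac{\tau}{2}$ on the left and $w' := \tfrac{a-b}{2}-\tfrac{\tau}{2}$ on the right; note $w = w' + \tau$. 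The first step is then to work out how each of the three building blocks transforms under $w \mapsto w + \tau$ while the ``sum variable'' $z_1+z_2 = 2z + a + b$ stays fixed.

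First I would compute the elliptic transformation of $\Phi^{[m,s]}_i$ directly from the defining series \eqref{n3:eqn:2022-111a}, \eqref{n3:eqn:2022-111b}: shifting $z_1 \mapsto z_1 + \tfrac{\tau}{2}$ and $z_2 \mapsto z_2 - \tfrac{\tau}{2}$ (so $z_1+z_2$ is unchanged but $z_1 - z_2$ increases by $\tau$) multiplies the $j$-th summand by an elementary $q$-power times an exponential, and re-indexing $j \mapsto j+1$ (or $j-1$) in the geometric-series denominator produces the factor $e^{2\pi i m(a-b)}$ together with a residual ``boundary'' term coming from the single value of $j$ that is gained or lost — this is the standard Appell-function quasi-periodicity computation. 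Second, I would compute the corresponding shift of $\Phi^{[m;s]}_{\rm add}$ using \eqref{n3:eqn:2022-111j}: here the Jacobi theta factor $[\theta_{k;m}-\theta_{-k;m}](\tau, z_1+z_2)$ is untouched because $z_1+z_2$ is fixed, so everything is governed by how $R_{k;m}(\tau, w)$ behaves under $w \mapsto w + \tau$. From the definition \eqref{n3:eqn:2022-111h} one sees that the exponential $e^{-\pi i n^2 \tau/(2m) + 2\pi i n w}$ transforms cleanly, the error-function argument shifts by an integer-type amount (because $\mathrm{Im}(w)$ changes by $\mathrm{Im}(\tau)$, so $n - 2m\,\mathrm{Im}(w)/\mathrm{Im}(\tau)$ changes by $-2m$, i.e.\ stays in the same residue class), and the $\mathrm{sgn}$ term picks up a discrete correction — again yielding a main factor plus a finite boundary contribution.

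The crux is then to check that these two boundary/residual terms cancel exactly. I expect this to be the main obstacle: one must match the single extra lattice term in the Appell series against the finite difference produced by the $\mathrm{sgn}$-function jump in $R_{k;m}$, and confirm that after inserting $t=0$ and using $a+b$ (no parity constraint is imposed here, unlike in Lemma \ref{n3:lemma:2022-202b}, so one works with general $a,b \in \ccc$) the prefactors $e^{2\pi i m(a-b)}$ agree on both sides. A cleaner route, which I would try first, is to avoid the boundary bookkeeping altogether: combine Lemma \ref{lemma:n3:2022-111b}(1) (the $S$-transformation of $\widetilde{\Phi}^{[m,s]}$) with an elliptic shift in the already-transformed variables and transform back, so that the desired identity falls out of the known good modular behavior of the \emph{modified} function rather than from a direct series manipulation. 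Either way, once the $i=1,2$ cases are established, the ``similar formula'' for $\widetilde{\Phi}^{[m,s]}_i$ follows by taking the same computation component-wise, and the stated identity for $\widetilde{\Phi}^{[m,s]} = \widetilde{\Phi}^{[m,s]}_1 - \widetilde{\Phi}^{[m,s]}_2$ is then immediate.
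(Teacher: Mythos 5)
The paper does not actually prove this lemma: it is one of the properties imported wholesale from \cite{KW2014}--\cite{KW2017b} and \cite{Z} (see the sentence preceding Lemma \ref{n3:lemma:2022-116c}), so there is no in-paper argument to compare against. Your outline reproduces the standard proof from that literature: decompose $\widetilde{\Phi}=\Phi+\Phi_{\rm add}$, observe that the shift $(z_1,z_2)\mapsto(z_1+\tau,z_2-\tau)$ fixes $z_1+z_2$ and moves $w=\tfrac{z_1-z_2}{2}$ to $w+\tau$, compute the quasi-periodicity of each piece, and check that the two discrepancies cancel. Your observation that the theta factor $[\theta_{k;m}-\theta_{-k;m}](\tau,z_1+z_2)$ in \eqref{n3:eqn:2022-111j} is untouched, so that only the behaviour of $R_{k;m}(\tau,w)$ under $w\mapsto w+\tau$ matters, is exactly the right simplification, and your analysis of the ${\rm sgn}$/$E$ terms of \eqref{n3:eqn:2022-111h} is correct: re-indexing $n\mapsto n-2m$ preserves the residue class, the $E$-part transforms cleanly, and the ${\rm sgn}$-part flips for exactly one $n$ per class, giving the multiplier $q^{m}e^{4\pi imw}=q^{m}e^{2\pi im(z_1-z_2)}$ plus one exponential per $k$.

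Two points in your description of the Appell part are off and would need repair in a full write-up. First, the shift from the right-hand to the left-hand arguments is by $\pm\tau$ in each of $z_1,z_2$ (so $w\mapsto w+\tau$, consistent with your own $w=w'+\tau$), not by $\pm\tfrac{\tau}{2}$ as you later write; a half-period shift would put $q^{j+\frac12}$ in the denominator and the integer re-indexing you invoke would not be available. Second, re-indexing a bilateral sum over $j\in\zzz$ gains or loses no terms; the actual mechanism is that after the re-indexing one finds
$$
\Phi_1^{[m,s]}(\tau,z_1+\tau,z_2-\tau,0)=e^{2\pi im(z_1-z_2)}\,q^{m}\,
\Phi_1^{[m,s-2m]}(\tau,z_1,z_2,0),
$$
i.e.\ the parameter $s$ drops by $2m$, and the discrepancy against $e^{2\pi im(z_1-z_2)}q^{m}\Phi_1^{[m,s]}$ is then the sum of $2m$ theta functions given precisely by Lemma \ref{n3:lemma:2021-1213a} with $a=2m$ --- not a single boundary term. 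Matching these $2m$ theta terms against the $2m$ exponentials produced by the ${\rm sgn}$-jump in $R_{k;m}$ is the crux you correctly flag but do not carry out. Your alternative route via conjugating the integer shift of Lemma \ref{n3:lemma:2022-202b} by the $S$-transformation of Lemma \ref{n3:lemma:2022-111b} is workable only for $s\in\zzz$ (that is the hypothesis of Lemma \ref{n3:lemma:2022-111b}(1)), so it does not cover the half-integer $s$ allowed in the statement without an extra reduction such as Lemma \ref{n3:lemma:2022-111c}; the direct computation is the safer route.
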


\vspace{1mm}
\begin{lemma} ($\widehat{sl}(2|1)$-denominator identity) :
\label{n3:lemma:2022-111e}
\label{lemma:n3:2022-111e}
$$
\widetilde{\Phi}^{[1,s]}(\tau, z_1, z_2,t) 
= \Phi^{[1,s]}(\tau, z_1, z_2,t) 
= -i e^{-2\pi it}
\frac{
\eta(\tau)^3 \vartheta_{11}(\tau, z_1+z_2)}{
\vartheta_{11}(\tau, z_1)\vartheta_{11}(\tau, z_2)}
$$
for ${}^{\forall}s \in \zzz$, where 
$\vartheta_{ab}(\tau,z)$ are the Mumford's theta functions 
(\cite{Mum}).
\end{lemma}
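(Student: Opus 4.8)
The plan is to establish the two asserted equalities in turn: first that the correction term $\Phi^{[1;s]}_{\rm add}$ vanishes identically for $s\in\zzz$, so that $\widetilde{\Phi}^{[1,s]}=\Phi^{[1,s]}$, and then that $\Phi^{[1,s]}$ equals the displayed theta quotient. For the second equality it is enough to treat $s=0$: by part 1) of Lemma~\ref{n3:lemma:2022-111a} the function $\widetilde{\Phi}^{[1,s]}$ is independent of $s\in\zzz$, so once the correction term is known to vanish we get $\Phi^{[1,s]}=\Phi^{[1,0]}$ for every $s\in\zzz$, while the right-hand side is visibly $s$-independent.

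For the vanishing of the correction term, observe that when $m=1$ and $s\in\zzz$ the range $s\le k<s+2m$ in \eqref{n3:eqn:2022-111j} consists of the two integers $k=s$ and $k=s+1$, and for any integer $k$ the substitution $j\mapsto j+k$ in the defining series \eqref{n3:eqn:2022-120a} gives $\theta_{-k,1}=\theta_{k,1}$. Hence every bracket $[\theta_{k;1}-\theta_{-k;1}]$ occurring in $\Phi^{[1;s]}_{\rm add}$ is identically zero, so $\Phi^{[1;s]}_{\rm add}\equiv0$ and $\widetilde{\Phi}^{[1,s]}=\Phi^{[1,s]}$.

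For the main identity with $s=0$ I would argue by Liouville's theorem on the torus. Writing $a=e^{2\pi iz_1}$ and $b=e^{2\pi iz_2}$ and replacing $j$ by $-j$ in $\Phi^{[1,0]}_2$, the two Appell sums combine into
\[
\Phi^{[1,0]}(\tau,z_1,z_2,t)=e^{-2\pi it}\sum_{j\in\zzz}
\frac{(ab)^j q^{j^2}\bigl(1-abq^{2j}\bigr)}{(1-aq^j)(1-bq^j)},
\]
via $\frac{1}{1-aq^j}+\frac{bq^j}{1-bq^j}=\frac{1-abq^{2j}}{(1-aq^j)(1-bq^j)}$. Fixing $z_2\notin\zzz+\zzz\tau$ and viewing this as a meromorphic function of $z_1$, it has only simple poles and they lie on $\zzz+\zzz\tau$, exactly as does the right-hand side $-ie^{-2\pi it}\eta(\tau)^3\vartheta_{11}(\tau,z_1+z_2)/\bigl(\vartheta_{11}(\tau,z_1)\vartheta_{11}(\tau,z_2)\bigr)$. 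I would then check that both sides transform identically in $z_1$: they are invariant under $z_1\mapsto z_1+1$ (for the left side this is part 2) of Lemma~\ref{n3:lemma:2022-202b}, for the right side it is classical), and under $z_1\mapsto z_1+\tau$ each picks up the factor $e^{-2\pi iz_2}$. For the left side the latter follows by putting $a\mapsto aq$ in the alternative splitting $\frac{1-abq^{2j}}{(1-aq^j)(1-bq^j)}=\frac{1}{1-bq^j}+\frac{aq^j}{1-aq^j}$ and shifting the summation index in the resulting second sum, which recovers $e^{-2\pi iz_2}\bigl(\Phi^{[1,0]}_1-\Phi^{[1,0]}_2\bigr)$; for the right side it is $\vartheta_{11}(\tau,z+\tau)=-q^{-1/2}e^{-2\pi iz}\vartheta_{11}(\tau,z)$. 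Finally I would compare residues at $z_1=0$: on the left only the $j=0$ term of $\Phi^{[1,0]}_1$ contributes, with residue $-e^{-2\pi it}/(2\pi i)$, and on the right $\vartheta_{11}(\tau,z_1)=-2\pi\eta(\tau)^3 z_1+O(z_1^3)$ gives the same value; by the common transformation law the residues at all lattice points then agree. Consequently the difference of the two sides is holomorphic in $z_1$, is $1$-periodic, and multiplies by $e^{-2\pi iz_2}$ under $z_1\mapsto z_1+\tau$; expanding it as a Laurent series in $e^{2\pi iz_1}$ forces it to vanish identically as soon as $e^{-2\pi iz_2}\ne q^{n}$ for every $n\in\zzz$, i.e. for $z_2\notin\zzz+\zzz\tau$, and continuity gives the identity for all $z_1,z_2$.

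The main obstacle is that the individual Appell series $\Phi^{[1,0]}_1$ and $\Phi^{[1,0]}_2$ are genuinely mock, so neither is elliptic and the Liouville argument can only be applied to the difference $\Phi^{[1,0]}$; this is legitimate precisely because, for $m=1$ and $s\in\zzz$, the Zwegers correction term vanishes and $\widetilde{\Phi}^{[1,0]}=\Phi^{[1,0]}$ is an honest meromorphic Jacobi form. Concretely, the step that needs care is the $z_1\mapsto z_1+\tau$ transformation, where the cancellation of the mock parts of $\Phi^{[1,0]}_1$ and $\Phi^{[1,0]}_2$ has to be exhibited through the reindexing. Alternatively one could deduce the identity as the level-one case of the Weyl--Kac denominator identity for $\widehat{sl}(2|1)$.
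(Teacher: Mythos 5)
Your argument is correct. Note that the paper itself offers no proof of this lemma: it is quoted as a known formula (the $\widehat{sl}(2|1)$ denominator identity of Kac--Wakimoto) from the references, so there is no "paper proof" to compare against. Your two steps are both sound. The vanishing of the correction term is exactly right: for $m=1$ and $k\in\zzz$ the substitution $j\mapsto j+k$ in \eqref{n3:eqn:2022-120a} gives $\theta_{-k,1}=\theta_{k,1}$, so every bracket in \eqref{n3:eqn:2022-111j} dies and $\widetilde{\Phi}^{[1,s]}=\Phi^{[1,s]}$, which together with Lemma \ref{n3:lemma:2022-111a}\,1) reduces everything to $s=0$. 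The Liouville argument also checks out in detail: the combined kernel $\frac{1-abq^{2j}}{(1-aq^j)(1-bq^j)}$ is the correct symmetrization, the reindexing under $a\mapsto aq$ does produce the factor $b^{-1}$ (the two splittings differ by the entire theta series $\sum_j(ab)^jq^{j^2}$, which is what makes the cancellation of the mock parts work), the only $z_1$-poles for generic $z_2$ are the simple poles of $\Phi^{[1,0]}_1$ on $\zzz+\zzz\tau$, and the residues at $z_1=0$ agree since $\vartheta_{11}'(\tau,0)=-2\pi\eta(\tau)^3$ gives $\tfrac{i}{2\pi}e^{-2\pi it}=-\tfrac{1}{2\pi i}e^{-2\pi it}$ on both sides. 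The final Fourier-coefficient argument correctly identifies the excluded set $z_2\in\zzz+\zzz\tau$, where the identity extends by meromorphic continuation. This is essentially the classical proof from the Kac--Wakimoto Appell-function paper, and your closing remark that the lemma is the $\widehat{sl}(2|1)$ denominator identity is the intended provenance.
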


\vspace{1mm}

We note also the following simple but useful formulas:

\vspace{1mm}
\begin{lemma} 
\label{n3:lemma:2021-1213a}
Let $m \in \frac12 \nnn$, $s \in \frac12 \zzz$ and $a \in \nnn$. Then 

\begin{enumerate}
\item[{\rm 1)}] \quad $\Phi_1^{[m,s]}(\tau, z_1, z_2,0) 
\, - \, \Phi_1^{[m,s+a]}(\tau, z_1, z_2,0) $
$$
= \,\ \sum_{k=0}^{a-1} \, 
e^{\pi i(s+k)(z_1-z_2)} \, q^{-\frac{(s+k)^2}{4m}}
\, \theta_{s+k, \, m}(\tau, \, z_1+z_2)
$$

\item[{\rm 2)}] \quad $\Phi_2^{[m,s]}(\tau, z_1, z_2,0) 
\, - \, \Phi_2^{[m,s+a]}(\tau, z_1, z_2,0) $
$$
= \,\ \sum_{k=0}^{a-1} \, 
e^{\pi i(s+k)(z_1-z_2)} \, q^{-\frac{(s+k)^2}{4m}}
\, \theta_{-(s+k), \, m}(\tau, \, z_1+z_2)
$$

\item[{\rm 3)}] \quad $\Phi^{[m,s]}(\tau, z_1, z_2,t) 
-\Phi^{[m,s+a]}(\tau, z_1, z_2,t) $
$$
= \,\ e^{-2\pi imt} \sum_{k=0}^{a-1} \, 
e^{\pi i(s+k)(z_1-z_2)} \, q^{-\frac{(s+k)^2}{4m}}
\, \big[\theta_{s+k, \, m}-\theta_{-(s+k), \, m}
\big](\tau, \, z_1+z_2)
$$
\end{enumerate}
\end{lemma}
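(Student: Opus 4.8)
Lemma \ref{n3:lemma:2021-1213a} is a statement about the difference of two geometric-type series with shifted parameter $s$, so the natural approach is to work directly from the defining formulas \eqref{n3:eqn:2022-111a}, \eqref{n3:eqn:2022-111b}, \eqref{n3:eqn:2022-111c}, and to reduce the general case $a\in\nnn$ to the case $a=1$ by telescoping. Indeed, once we know
\[
\Phi_1^{[m,s]}(\tau,z_1,z_2,0)-\Phi_1^{[m,s+1]}(\tau,z_1,z_2,0)
= e^{\pi i s(z_1-z_2)}\,q^{-\frac{s^2}{4m}}\,\theta_{s,m}(\tau,z_1+z_2),
\]
the formula for general $a$ follows by writing the difference $\Phi_1^{[m,s]}-\Phi_1^{[m,s+a]}$ as the sum $\sum_{k=0}^{a-1}\big(\Phi_1^{[m,s+k]}-\Phi_1^{[m,s+k+1]}\big)$ and applying the $a=1$ case to each summand (with $s$ replaced by $s+k$). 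So the whole lemma rests on the single-step identity.

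\textbf{Establishing the $a=1$ step.} Setting $t=0$ in \eqref{n3:eqn:2022-111a}, I would compute
\[
\Phi_1^{[m,s]}-\Phi_1^{[m,s+1]}
= \sum_{j\in\zzz}\frac{e^{2\pi imj(z_1+z_2)+2\pi isz_1}q^{mj^2+sj}\big(1-e^{2\pi iz_1}q^j\big)}{1-e^{2\pi iz_1}q^j},
\]
since the two numerators differ exactly by the factor $e^{2\pi iz_1}q^j$ coming from the shift $s\mapsto s+1$, while the denominators are identical. The denominator cancels, leaving
\[
\Phi_1^{[m,s]}-\Phi_1^{[m,s+1]}
= \sum_{j\in\zzz} e^{2\pi imj(z_1+z_2)+2\pi isz_1}\,q^{mj^2+sj}.
\]
It then remains to check that this equals $e^{\pi i s(z_1-z_2)}q^{-s^2/4m}\,\theta_{s,m}(\tau,z_1+z_2)$; by the definition \eqref{n3:eqn:2022-120a} of $\theta_{s,m}$ this is the elementary identity $mj^2+sj = m(j+\tfrac{s}{2m})^2-\tfrac{s^2}{4m}$ together with $2\pi imj(z_1+z_2)+2\pi isz_1 = 2\pi im(j+\tfrac{s}{2m})(z_1+z_2)+\pi is(z_1-z_2)$, i.e. completing the square in the exponent of $q$ and in the $z$-linear term. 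This proves part 1).

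\textbf{Parts 2) and 3).} Part 2) is entirely parallel, starting from \eqref{n3:eqn:2022-111b}: the shift $s\mapsto s+1$ multiplies the numerator of the $j$-th term by $e^{-2\pi iz_2}q^j$, which is exactly the denominator factor $1-e^{-2\pi iz_2}q^j$ up to the constant $1$, so again the denominator cancels and one completes the square, this time obtaining $\theta_{-(s+k),m}$ because the sign of the $(z_1+z_2)$-coefficient is reversed. (Alternatively, part 2) follows from part 1) via Lemma \ref{n3:lemma:2022-111a}(4), which relates $\Phi_2^{[m,s]}$ to $\Phi_1^{[m,s]}$ with $(z_1,z_2)\mapsto(-z_2,-z_1)$, noting $\theta_{k,m}(\tau,-z)=\theta_{-k,m}(\tau,z)$.) Finally part 3) is obtained by subtracting part 2) from part 1), restoring the overall factor $e^{-2\pi imt}$ from \eqref{n3:eqn:2022-111c}. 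I do not anticipate a genuine obstacle here: the only point requiring a little care is the bookkeeping of the exponents when completing the square and the re-indexing in the telescoping sum, but nothing deeper is involved.
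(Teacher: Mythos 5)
Your proposal is correct and follows essentially the same route as the paper: the paper sums the finite geometric series $\frac{1-x^a}{1-x}=\sum_{k=0}^{a-1}x^k$ with $x=e^{2\pi iz_1}q^j$ in one step rather than telescoping the $a=1$ case, but this is the same cancellation, and the completion of the square, the deduction of 2) from 1) via Lemma \ref{n3:lemma:2022-111a}(4), and 3) from 1) and 2) all match the paper's argument.
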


\begin{proof} 1) Letting $s \rightarrow s+a$ \, in \eqref{n3:eqn:2022-111a},
we have
$$
\Phi_1^{[m,s+a]}(\tau, z_1, z_2,0) 
\,\ = \,\ \sum_{j \in \zzz} \,
\frac{e^{2\pi imj(z_1+z_2)} \, q^{mj^2}
\, (e^{2\pi iz_1}q^j)^{s+a}}{1-e^{2\pi iz_1}q^j} \, ,
$$
and so 
{\allowdisplaybreaks
\begin{eqnarray*}
\lefteqn{
\Phi_1^{[m,s]}(\tau, z_1, z_2,0) 
\, - \, \Phi_1^{[m,s+a]}(\tau, z_1, z_2,0)} 
\\[1mm]
&=&
\sum_{j \in \zzz}  e^{2\pi imj(z_1+z_2)} \, q^{mj^2}
\, (e^{2\pi iz_1}q^j)^s
\frac{1-(e^{2\pi iz_1}q^j)^a}{1-e^{2\pi iz_1}q^j}
\\[1mm]
&=&
\sum_{j \in \zzz} \sum_{k=0}^{a-1}e^{2\pi imj(z_1+z_2)} \, q^{mj^2}
\, (e^{2\pi iz_1}q^j)^s 
(e^{2\pi iz_1}q^j)^k
\\[1mm]
&=&
\sum_{k=0}^{a-1} e^{\pi i(s+k)(z_1-z_2)} 
q^{-\frac{(s+k)^2}{4m}}
\sum_{j \in \zzz}
e^{2\pi im(j+\frac{s+k}{2m})(z_1+z_2)} \, q^{m(j+\frac{s+k}{2m})^2}
\\[1mm]
&=&
\sum_{k=0}^{a-1} \, e^{\pi i(s+k)(z_1-z_2)} \, q^{-\frac{(s+k)^2}{4m}}
\, \theta_{s+k, \, m}(\tau, \, z_1+z_2) \, ,
\end{eqnarray*}}
proving 1). 

\medskip

\noindent
2) By Lemma \ref{n3:lemma:2022-111a}, we have  
{\allowdisplaybreaks
\begin{eqnarray*}
\lefteqn{\Phi_2^{[m,s]}(\tau, z_1, z_2,0)
\, - \, 
\Phi_2^{[m,s+a]}(\tau, z_1, z_2,0)}
\\[1mm]
&=&
\Phi_1^{[m,s]}(\tau, -z_2, -z_1,0) 
\, - \, \Phi_1^{[m,s+a]}(\tau, -z_2, -z_1,0)
\\[1mm]
&=&
\sum_{k=0}^{a-1} \, e^{\pi i(s+k)(-z_2+z_1)} \, q^{-\frac{(s+k)^2}{4m}}
\, \theta_{s+k, \, m}(\tau, \, -(z_1+z_2))
\\[1mm]
&=&
\sum_{k=0}^{a-1} \, e^{\pi i(s+k)(z_1-z_2)} \, q^{-\frac{(s+k)^2}{4m}}
\, \theta_{-(s+k), \, m}(\tau, \, z_1+z_2) \, ,
\end{eqnarray*}}
proving 2). \, 3) follows from 1) and 2).
\end{proof}

The following functions $\widetilde{A}^{[m]}_i(\tau, z)$ 
$(m \in \nnn$ and $i=1 \sim 6)$ play important roles 
to describe the characters of N=3 modules.
\begin{subequations}
{\allowdisplaybreaks
\begin{eqnarray}
\widetilde{A}^{[m]}_1(\tau, z) &:=&\widetilde{\Phi}^{[m,0]}\Big(
\tau, \,\ \frac{z}{2}+\frac{\tau}{4}+\frac14, \,\ 
\frac{z}{2}-\frac{\tau}{4}-\frac14, \,\ 0 \Big)
\label{n3:eqn:2022-111d1}
\\[1mm]
\widetilde{A}^{[m]}_2(\tau, z) &:=&\widetilde{\Phi}^{[m,0]}\Big(
\tau, \,\ \frac{z}{2}+\frac{\tau}{4}-\frac14, \,\ 
\frac{z}{2}-\frac{\tau}{4}+\frac14, \,\ 0 \Big)
\label{n3:eqn:2022-111d2}
\\[1mm]
\widetilde{A}^{[m]}_3(\tau, z) &:=&\widetilde{\Phi}^{[m,0]}\Big(
\tau, \,\ \frac{z}{2}+\frac{\tau}{4}+\frac12, \,\ 
\frac{z}{2}-\frac{\tau}{4}-\frac12, \,\ 0 \Big)
\label{n3:eqn:2022-111d3}
\\[1mm]
\widetilde{A}^{[m]}_4(\tau, z) &:=&\widetilde{\Phi}^{[m,0]}\Big(
\tau, \,\ \frac{z}{2}+\frac{\tau}{4}, \,\ 
\frac{z}{2}-\frac{\tau}{4}, \,\ 0 \Big)
\label{n3:eqn:2022-111d4}
\\[1mm]
\widetilde{A}^{[m]}_5(\tau, z) &:=&\widetilde{\Phi}^{[m,0]}\Big(
\tau, \,\ \frac{z}{2}+\frac14, \,\ \frac{z}{2}-\frac14, \,\ 0 \Big)
\label{n3:eqn:2022-111d5}
\\[1mm]
\widetilde{A}^{[m]}_6(\tau, z) &:=&\widetilde{\Phi}^{[m,0]}\Big(
\tau, \,\ \frac{z}{2}+\frac{\tau}{2}-\frac14, \,\ 
\frac{z}{2}-\frac{\tau}{2}+\frac14, \,\ 0 \Big)
\label{n3:eqn:2022-111d6}
\end{eqnarray}}
We note that, by Lemma \ref{n3:lemma:2022-111d}, 
$\widetilde{A}^{[m]}_6(\tau, z)$ is written as follows:
\begin{equation}
\widetilde{A}^{[m]}_6(\tau, z) = 
e^{-\pi im} \, \widetilde{\Phi}^{[m,0]}\Big(
\tau, \,\ \frac{z}{2}+\frac{\tau}{2}+\frac14, \,\ 
\frac{z}{2}-\frac{\tau}{2}-\frac14, \,\ 0 \Big) \, .
\label{n3:eqn:2022-111e}
\end{equation}
\end{subequations}

The modular transformation properties of these functions are easily 
computed by using Lemma \ref{n3:lemma:2022-111b} to obtain the following:

\vspace{1mm}
\begin{lemma} \,\ 
\label{n3:lemma:2022-108b}
\begin{enumerate}
\item[{\rm 1)}] \,\ $S$-transformation \,\ :
\begin{enumerate}
\item[{\rm (i)}] \quad $\widetilde{A}^{[m]}_1
\Big(-\dfrac{1}{\tau}, \, \dfrac{z}{\tau}\Big)
\,\ = \,\ 
\tau \, e^{\frac{2\pi im}{\tau} \, 
(\frac{z}{2}-\frac{1}{4}+\frac{\tau}{4})(\frac{z}{2}+\frac{1}{4}-\frac{\tau}{4})} \, 
\widetilde{A}^{[m]}_2 (\tau, \, z)$
\item[{\rm (ii)}] \quad $\widetilde{A}^{[m]}_2
\Big(-\dfrac{1}{\tau}, \, \dfrac{z}{\tau}\Big)
\,\ = \,\ 
\tau \, e^{\frac{2\pi im}{\tau} \, 
(\frac{z}{2}-\frac{1}{4}-\frac{\tau}{4})(\frac{z}{2}+\frac{1}{4}+\frac{\tau}{4})} \, 
\widetilde{A}^{[m]}_1 (\tau, \, z)$
\item[{\rm (iii)}] \quad $\widetilde{A}^{[m]}_3
\Big(-\dfrac{1}{\tau}, \, \dfrac{z}{\tau}\Big)
\,\ = \,\ 
\tau \, e^{\frac{2\pi im}{\tau} \, 
(\frac{z}{2}-\frac{1}{4}+\frac{\tau}{2})(\frac{z}{2}+\frac{1}{4}-\frac{\tau}{2})} \, 
\widetilde{A}^{[m]}_6 (\tau, \, z)$
\item[{\rm (iv)}] \quad $\widetilde{A}^{[m]}_4
\Big(-\dfrac{1}{\tau}, \, \dfrac{z}{\tau}\Big)
\,\ = \,\ 
\tau \, e^{\frac{2\pi im}{\tau} \, 
(\frac{z}{2}-\frac{1}{4})(\frac{z}{2}+\frac{1}{4})} \, 
\widetilde{A}^{[m]}_5 (\tau, \, z)$
\item[{\rm (v)}] \quad $\widetilde{A}^{[m]}_5
\Big(-\dfrac{1}{\tau}, \, \dfrac{z}{\tau}\Big)
\,\ = \,\ 
\tau \, e^{\frac{2\pi im}{\tau} \, 
(\frac{z}{2}+\frac{\tau}{4})(\frac{z}{2}-\frac{\tau}{4})} \, 
\widetilde{A}^{[m]}_4 (\tau, \, z)$
\item[{\rm (vi)}] \quad $\widetilde{A}^{[m]}_6
\Big(-\dfrac{1}{\tau}, \, \dfrac{z}{\tau}\Big)
\,\ = \,\ 
\tau \, e^{\frac{2\pi im}{\tau} \, 
(\frac{z}{2}-\frac{1}{2}-\frac{\tau}{4})
(\frac{z}{2}+\frac{1}{2}+\frac{\tau}{4})} \, 
\widetilde{A}^{[m]}_3 (\tau, \, z)$
\end{enumerate}

\item[{\rm 2)}] \,\ $T$-transformation \,\ :
\begin{enumerate}
\item[{\rm (i)}] \quad $\widetilde{A}^{[m]}_1(\tau+1, \, z) \,\ = \,\ 
\widetilde{A}^{[m]}_3(\tau, \, z)$
\item[{\rm (ii)}] \quad $\widetilde{A}^{[m]}_2(\tau+1, \, z) \,\ = \,\ 
\widetilde{A}{}^{[m]}_4(\tau, \, z)$
\item[{\rm (iii)}] \quad $\widetilde{A}^{[m]}_3(\tau+1, \, z) \,\ = \,\ 
\widetilde{A}{}^{[m]}_2(\tau, \, z)$
\item[{\rm (iv)}] \quad $\widetilde{A}^{[m]}_4(\tau+1, \, z) \,\ = \,\ 
\widetilde{A}{}^{[m]}_1(\tau, \, z)$
\item[{\rm (v)}] \quad $\widetilde{A}^{[m]}_5(\tau+1, \, z) \,\ = \,\ 
\widetilde{A}{}^{[m]}_5(\tau, \, z)$
\item[{\rm (vi)}] \quad $\widetilde{A}^{[m]}_6(\tau+1, \, z) \,\ = \,\ 
e^{\pi im} \, \widetilde{A}^{[m]}_6(\tau, \, z)$
\end{enumerate}
\end{enumerate}
\end{lemma}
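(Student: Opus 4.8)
The plan is to reduce both statements to the transformation behaviour of $\widetilde{\Phi}^{[m,0]}$ that is already in hand. By \eqref{n3:eqn:2022-111d1}--\eqref{n3:eqn:2022-111d6}, each $\widetilde{A}^{[m]}_i(\tau,z)$ is nothing but $\widetilde{\Phi}^{[m,0]}(\tau,z_1,z_2,0)$ evaluated at a pair $(z_1,z_2)$ of affine functions of $z$ and $\tau$; since $s=0\in\zzz$ and $m\in\nnn$, both part~1) and part~2)(i) of Lemma~\ref{n3:lemma:2022-111b} apply to $\widetilde{\Phi}^{[m,0]}$, and the symmetries in Lemma~\ref{lemman3:n3:2022-111a}~3), Lemma~\ref{n3:lemma:2022-202b}~2) and formula~\eqref{n3:eqn:2022-111e} will take care of the rest.

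For the $S$-transformation I would, for each $i$, substitute $\tau\mapsto-1/\tau$ and $z\mapsto z/\tau$ into the definition of $\widetilde{A}^{[m]}_i$. Each of the two elliptic slots then takes the form $\tfrac1\tau\bigl(\tfrac z2\pm\tfrac\tau4+c\bigr)$ (with $\tfrac\tau2$ in place of $\tfrac\tau4$ when $i=6$) for an explicit rational constant $c$; writing these as $z_1/\tau,\,z_2/\tau$ and invoking Lemma~\ref{n3:lemma:2022-111b}~1) produces $\tau\,e^{\frac{2\pi im}{\tau}z_1z_2}\,\widetilde{\Phi}^{[m,0]}(\tau,z_1,z_2,0)$. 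A short comparison of the constants identifies $(z_1,z_2)$ -- after an exchange $z_1\leftrightarrow z_2$ by Lemma~\ref{lemman3:n3:2022-111a}~3) in the cases $i=2,4,6$ -- with the defining arguments of $\widetilde{A}^{[m]}_{\sigma(i)}$, where $\sigma=(1\,2)(3\,6)(4\,5)$, and the prefactor $z_1z_2$ is exactly the quadratic exponent displayed in 1)(i)--(vi) (it is symmetric, so the exchange does not affect it).

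For the $T$-transformation I would substitute $\tau\mapsto\tau+1$ into the definition of $\widetilde{A}^{[m]}_i$: this replaces $\tfrac\tau4$ by $\tfrac\tau4+\tfrac14$ (resp.\ $\tfrac\tau2$ by $\tfrac\tau2+\tfrac12$), i.e.\ it shifts each elliptic argument by a quarter-period (resp.\ half-period), while Lemma~\ref{n3:lemma:2022-111b}~2)(i) removes the extra $+1$ in the modular variable. Reducing the elliptic arguments modulo $\zzz$ via Lemma~\ref{n3:lemma:2022-202b}~2) (needed only in the passage $i=3$) matches the outcome to $\widetilde{A}^{[m]}_3,\widetilde{A}^{[m]}_4,\widetilde{A}^{[m]}_2,\widetilde{A}^{[m]}_1,\widetilde{A}^{[m]}_5$ for $i=1,2,3,4,5$; for $i=6$ one instead applies \eqref{n3:eqn:2022-111e} (equivalently Lemma~\ref{n3:lemma:2022-111d}), which is precisely where the scalar $e^{\pi im}$ enters.

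No step presents a genuine obstacle; the argument is mechanical throughout. The only points requiring care are the bookkeeping ones: keeping track of which specialization lands in the ``first'' versus the ``second'' slot of $\widetilde{\Phi}^{[m,0]}$ -- hence the hidden $z_1\leftrightarrow z_2$ exchanges in the $S$-images of $\widetilde{A}^{[m]}_2,\widetilde{A}^{[m]}_4,\widetilde{A}^{[m]}_6$ -- and correctly carrying the factor $e^{\pm\pi im}$ attached to $\widetilde{A}^{[m]}_6$ through \eqref{n3:eqn:2022-111e}.
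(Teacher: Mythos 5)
Your proposal is correct and is exactly the computation the paper itself alludes to: the paper offers no written proof, stating only that the lemma is "easily computed by using Lemma \ref{n3:lemma:2022-111b}", and your reduction via Lemma \ref{n3:lemma:2022-111b} together with the symmetry of Lemma \ref{lemman3:n3:2022-111a}~3), the periodicity of Lemma \ref{n3:lemma:2022-202b}~2) and formula \eqref{n3:eqn:2022-111e} fills in precisely that computation. Your bookkeeping (the $z_1\leftrightarrow z_2$ exchanges for $i=2,4,6$ under $S$, and the $e^{\pi im}$ for $\widetilde{A}^{[m]}_6$ under $T$) checks out.
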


Define the functions $\overset{\circ}{A}{}^{[m]}_i(\tau, \, z)$ by
\begin{equation}
\overset{\circ}{A}{}^{[m]}_i(\tau, \, z) \,\ := \,\ \left\{
\begin{array}{rcl}
e^{- \, \frac{\pi im\tau}{8}} \, \widetilde{A}^{[m]}_i(\tau, \, z) & &
(1 \, \leq \, i \, \leq \, 4) \\[2mm]
\widetilde{A}^{[m]}_5(\tau, \, z) & & (i \, = \, 5) \\[2mm]
e^{- \, \frac{\pi i m\tau}{2}} \, \widetilde{A}^{[m]}_6(\tau, \, z) & & 
(i \, = \, 6)
\end{array} \right.
\label{n3:eqn:2022-108b}
\end{equation}
Then the transformation properties of these functions are obtained 
from Lemma \ref{n3:lemma:2022-108b} as follows:

\begin{lemma} \,\ 
\label{n3:lemma:2022-108c}
\begin{enumerate}
\item[{\rm 1)}] \,\ $S$\text{-}transformation \,\ :
\begin{enumerate}
\item[{\rm (i)}] \quad $\overset{\circ}{A}{}^{[m]}_1
\Big(-\dfrac{1}{\tau}, \dfrac{z}{\tau}\Big) 
\,\ = \,\ 
e^{\frac{\pi im}{4}} \, \tau \, e^{\frac{\pi imz^2}{2\tau}} \, 
\overset{\circ}{A}{}^{[m]}_2(\tau,z)$
\item[{\rm (ii)}] \quad $\overset{\circ}{A}{}^{[m]}_2
\Big(-\dfrac{1}{\tau}, \dfrac{z}{\tau}\Big) 
\,\ = \,\ 
e^{-\frac{\pi im}{4}} \, \tau \, e^{\frac{\pi imz^2}{2\tau}} \, 
\overset{\circ}{A}{}^{[m]}_1(\tau,z)$
\item[{\rm (iii)}] \quad $\overset{\circ}{A}{}^{[m]}_3
\Big(-\dfrac{1}{\tau}, \dfrac{z}{\tau}\Big) 
\,\ = \,\ 
e^{\frac{\pi im}{2}} \, \tau \, e^{\frac{\pi imz^2}{2\tau}} \, 
\overset{\circ}{A}{}^{[m]}_6(\tau,z)$
\item[{\rm (iv)}] \quad $\overset{\circ}{A}{}^{[m]}_4
\Big(-\dfrac{1}{\tau}, \dfrac{z}{\tau}\Big) 
\,\ = \,\ 
\tau \, e^{\frac{\pi imz^2}{2\tau}} \, \overset{\circ}{A}{}^{[m]}_5(\tau,z)$
\item[{\rm (v)}] \quad $\overset{\circ}{A}{}^{[m]}_5
\Big(-\dfrac{1}{\tau}, \dfrac{z}{\tau}\Big) 
\,\ = \,\ 
\tau \, e^{\frac{\pi imz^2}{2\tau}} \, \overset{\circ}{A}{}^{[m]}_4(\tau,z)$
\item[{\rm (vi)}] \quad $\overset{\circ}{A}{}^{[m]}_6
\Big(-\dfrac{1}{\tau}, \dfrac{z}{\tau}\Big) 
\,\ = \,\ 
e^{-\frac{\pi im}{2}} \, \tau \, e^{\frac{\pi imz^2}{2\tau}} \, 
\overset{\circ}{A}{}^{[m]}_3(\tau,z)$
\end{enumerate}

\item[{\rm 2)}] \,\ $T$\text{-}transformation \,\ :
\begin{enumerate}
\item[{\rm (i)}] \quad $\overset{\circ}{A}{}^{[m]}_1(\tau+1, \, z) \,\ = \,\ 
e^{-\frac{\pi im}{8}} \, \overset{\circ}{A}{}^{[m]}_3(\tau, \, z)$
\item[{\rm (ii)}] \quad $\overset{\circ}{A}{}^{[m]}_2(\tau+1, \, z) \,\ = \,\ 
e^{-\frac{\pi im}{8}} \, \overset{\circ}{A}{}^{[m]}_4(\tau, \, z)$
\item[{\rm (iii)}] \quad $\overset{\circ}{A}{}^{[m]}_3(\tau+1, \, z) \,\ = \,\ 
e^{-\frac{\pi im}{8}} \, \overset{\circ}{A}{}^{[m]}_2(\tau, \, z)$
\item[{\rm (iv)}] \quad $\overset{\circ}{A}{}^{[m]}_4(\tau+1, \, z) \,\ = \,\ 
e^{-\frac{\pi im}{8}} \, \overset{\circ}{A}{}^{[m]}_1(\tau, \, z)$
\item[{\rm (v)}] \quad $\overset{\circ}{A}{}^{[m]}_5(\tau+1, \, z) \,\ = \,\ 
\overset{\circ}{A}{}^{[m]}_5(\tau, \, z)$
\item[{\rm (vi)}] \quad $\overset{\circ}{A}{}^{[m]}_6(\tau+1, \, z) \,\ = \,\ 
e^{\frac{\pi im}{2}} \, \overset{\circ}{A}{}^{[m]}_6(\tau, \, z)$
\end{enumerate}
\end{enumerate}
\end{lemma}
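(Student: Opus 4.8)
The plan is to derive Lemma \ref{n3:lemma:2022-108c} purely mechanically from Lemma \ref{n3:lemma:2022-108b} together with the definition \eqref{n3:eqn:2022-108b}, so the only real content is bookkeeping of exponential prefactors. First I would record the two scalar substitutions that occur. For the $S$-transformation one replaces $\tau$ by $-1/\tau$ in the prefactor $e^{-\pi i m\tau/8}$ (resp.\ $e^{-\pi i m\tau/2}$), which produces $e^{\pi i m/(8\tau)}$ (resp.\ $e^{\pi i m/(2\tau)}$); this combines with the $\tau$-dependent exponential coming from Lemma \ref{n3:lemma:2022-108b}, namely $e^{\frac{2\pi i m}{\tau}(\frac z2\mp\frac14\pm\frac\tau4)(\frac z2\pm\frac14\mp\frac\tau4)}$ and its analogues, and with the $e^{-\pi i m\tau/8}$-type factor that is already attached to the \emph{target} function $\overset{\circ}{A}{}^{[m]}_j$ on the right-hand side. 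For the $T$-transformation one replaces $\tau$ by $\tau+1$ in $e^{-\pi i m\tau/8}$ (resp.\ $e^{-\pi i m\tau/2}$), which simply pulls out a constant $e^{-\pi i m/8}$ (resp.\ $e^{-\pi i m/2}$), and the statement follows by multiplying Lemma \ref{n3:lemma:2022-108b} 2) by the appropriate ratio of normalising factors.

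Concretely, for part 1)(i) I would write
\begin{align*}
\overset{\circ}{A}{}^{[m]}_1\!\Big(-\tfrac1\tau,\tfrac z\tau\Big)
&= e^{-\frac{\pi i m}{8}\cdot(-\frac1\tau)}\,\widetilde{A}^{[m]}_1\!\Big(-\tfrac1\tau,\tfrac z\tau\Big)\\
&= e^{\frac{\pi i m}{8\tau}}\,\tau\, e^{\frac{2\pi i m}{\tau}(\frac z2-\frac14+\frac\tau4)(\frac z2+\frac14-\frac\tau4)}\,\widetilde{A}^{[m]}_2(\tau,z),
\end{align*}
then expand the product $(\frac z2-\frac14+\frac\tau4)(\frac z2+\frac14-\frac\tau4)=\frac{z^2}{4}-(\frac14-\frac\tau4)^2 =\frac{z^2}{4}-\frac1{16}+\frac\tau8-\frac{\tau^2}{16}$, so that $\frac{2\pi i m}{\tau}$ times this equals $\frac{\pi i m z^2}{2\tau}-\frac{\pi i m}{8\tau}+\frac{\pi i m}{4}-\frac{\pi i m\tau}{8}$; the $\mp\frac{\pi i m}{8\tau}$ terms cancel, the leftover $e^{-\pi i m\tau/8}$ is exactly what turns $\widetilde{A}^{[m]}_2$ into $\overset{\circ}{A}{}^{[m]}_2$, and $e^{\pi i m/4}$ survives as the stated constant. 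The same completion-of-the-square computation handles (ii)--(v); for (iii) and (vi) one uses $e^{-\pi i m\tau/8}$ on the left against $e^{-\pi i m\tau/2}$ on the right (or vice versa) and the cross term $(\frac z2-\frac14\pm\frac\tau2)(\frac z2+\frac14\mp\frac\tau2)=\frac{z^2}{4}-\frac1{16}+\frac\tau4-\frac{\tau^2}{4}$ or $(\frac z2-\frac12\mp\frac\tau4)(\frac z2+\frac12\pm\frac\tau4)=\frac{z^2}{4}-\frac14+\frac\tau8-\frac{\tau^2}{16}$, where one checks that the $\frac1\tau$-constant contributed by the left normaliser cancels the corresponding constant inside the exponential, leaving the right normaliser and a pure phase. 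The $T$-part is immediate: e.g.\ $\overset{\circ}{A}{}^{[m]}_1(\tau+1,z)=e^{-\pi i m(\tau+1)/8}\widetilde{A}^{[m]}_1(\tau+1,z)=e^{-\pi i m/8}e^{-\pi i m\tau/8}\widetilde{A}^{[m]}_3(\tau,z)=e^{-\pi i m/8}\overset{\circ}{A}{}^{[m]}_3(\tau,z)$, and similarly for (ii)--(iv); (v) is unchanged since $\widetilde{A}^{[m]}_5$ carries no normaliser, and (vi) gives $e^{-\pi i m(\tau+1)/2}\cdot e^{\pi i m}\widetilde{A}^{[m]}_6(\tau,z)=e^{\pi i m/2}\overset{\circ}{A}{}^{[m]}_6(\tau,z)$ after using $e^{-\pi i m}\cdot e^{\pi i m}=1$.

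The only place where care is genuinely needed---and the step I expect to be the main obstacle---is the arithmetic of the quarter-integer shifts inside the completed squares, i.e.\ verifying in each of the six $S$-cases that the spurious $1/\tau$-constants produced by substituting $-1/\tau$ into the normalising exponentials $e^{-\pi i m\tau/8}$, $e^{-\pi i m\tau/2}$ exactly cancel the constant terms $-\frac{\pi i m}{8\tau}$, $-\frac{\pi i m}{4\tau}$, etc.\ coming from the cross products in Lemma \ref{n3:lemma:2022-108b}, and that what remains is precisely the $e^{\frac{\pi i m z^2}{2\tau}}$ factor plus the correct root-of-unity constant ($e^{\pm\pi i m/4}$ for indices $1,2$; $e^{\pm\pi i m/2}$ for indices $3,6$; trivial for $4,5$). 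Once this cancellation is confirmed pattern-by-pattern, the lemma follows with no further input; there is no conceptual difficulty beyond keeping signs straight.
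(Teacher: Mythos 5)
Your proposal is correct and takes exactly the paper's route: the paper offers no proof beyond the remark that the lemma follows from Lemma \ref{n3:lemma:2022-108b} and the definition \eqref{n3:eqn:2022-108b}, and the prefactor bookkeeping you carry out (illustrated correctly in full for case 1)(i)) is precisely that verification. One misprint to fix: in the expansion you quote for case 1)(vi), the middle term of $\bigl(\frac{z}{2}-\frac12\mp\frac{\tau}{4}\bigr)\bigl(\frac{z}{2}+\frac12\pm\frac{\tau}{4}\bigr)$ is $\mp\frac{\tau}{4}$, not $+\frac{\tau}{8}$; with that corrected, multiplication by $\frac{2\pi i m}{\tau}$ yields the constant $e^{-\frac{\pi i m}{2}}$ exactly as the lemma states.
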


We note that, by Lemma \ref{n3:lemma:2022-111c}, these functions 
$\overset{\circ}{A}{}^{[m]}_j(\tau,z)$ are connected to 
$\widetilde{\Phi}^{[\frac{m}{2},s]}$ by the following relations:
\begin{subequations}
{\allowdisplaybreaks
\begin{eqnarray}
& &
\widetilde{\Phi}^{[\frac{m}{2},0]}
\left(2\tau, z+\frac{\tau}{2}-\frac12,z-\frac{\tau}{2}+\frac12,\frac{\tau}{8}\right)
= \frac12 \big\{
\overset{\circ}{A}{}^{[m]}_2(\tau, z)+\overset{\circ}{A}{}^{[m]}_1(\tau, z)\big\}
\nonumber
\\[0mm]
& &
\label{n3:eqn:2022-117d1}
\\[0mm]
& &
\widetilde{\Phi}^{[\frac{m}{2},\frac12]}
\left(2\tau, z+\frac{\tau}{2}-\frac12,z-\frac{\tau}{2}+\frac12,\frac{\tau}{8}\right)
= \frac12 \big\{
\overset{\circ}{A}{}^{[m]}_2(\tau, z)-\overset{\circ}{A}{}^{[m]}_1(\tau, z)\big\}
\nonumber
\\[0mm]
& &
\label{n3:eqn:2022-117d2}
\\[0mm]
& &
\widetilde{\Phi}^{[\frac{m}{2},0]}
\left(2\tau, z+\frac{\tau}{2},z-\frac{\tau}{2},\frac{\tau}{8}\right)
= \frac12 \big\{
\overset{\circ}{A}{}^{[m]}_4(\tau, z)+\overset{\circ}{A}{}^{[m]}_3(\tau, z)\big\}
\label{n3:eqn:2022-117d3}
\\[1mm]
& &
\widetilde{\Phi}^{[\frac{m}{2},\frac12]}
\left(2\tau, z+\frac{\tau}{2},z-\frac{\tau}{2},\frac{\tau}{8}\right)
= \frac12 \big\{
\overset{\circ}{A}{}^{[m]}_4(\tau, z)-\overset{\circ}{A}{}^{[m]}_3(\tau, z)\big\}
\label{n3:eqn:2022-117d4}
\\[1mm]
& &
\widetilde{\Phi}^{[\frac{m}{2},0]}
\left(2\tau, z-\frac12,z+\frac12,0\right)
= \overset{\circ}{A}{}^{[m]}_5(\tau, z)
\label{n3:eqn:2022-117d5}
\\[1mm]
& &
\widetilde{\Phi}^{[\frac{m}{2},0]}
\left(2\tau, z+\tau-\frac12,z-\tau+\frac12,\frac{\tau}{2}\right)
= \frac12 \big\{1+(-1)^{m+2s}\big\} \, \overset{\circ}{A}{}^{[m]}_6(\tau, z)
\nonumber
\\[-1mm]
& &
\label{n3:eqn:2022-117d6}
\end{eqnarray}}
\end{subequations}

\section{Integrable $\widehat{osp}(3|2)$-modules and their characters}
\label{sec:B11}

We consider the Dynkin diagram of $\widehat{B}(1,1) = \widehat{osp}(3|2)$ 
\setlength{\unitlength}{1mm}
\begin{picture}(36,9)
\put(6,1){\circle{3}}
\put(17,1){\circle{3}}
\put(28,1){\circle{3}}
\put(7.5,1.5){\vector(1,0){8.1}}
\put(7.5,0.5){\vector(1,0){8.1}}
\put(18.5,1.5){\vector(1,0){8.1}}
\put(18.5,0.5){\vector(1,0){8.1}}
\put(17,1){\makebox(0,0){$\times$}}
\put(6,5){\makebox(0,0){$\alpha_0$}}
\put(17,5){\makebox(0,0){$\alpha_1$}}
\put(28,5){\makebox(0,0){$\alpha_2$}}
\end{picture}
with the inner product $( \,\ | \,\ )$ such that 
$\Big((\alpha_i|\alpha_j)\Big)_{i,j=0,1,2} = \, 
\left(
\begin{array}{ccrc}
2 & -1 & 0 \\[0mm]
-1 & 0 & \frac12 \\[1mm]
0 & \frac12 & -\frac12
\end{array}\right)$. 
Then the dual Coxeter number of $\widehat{B}(1,1)$ is $h^{\vee}=\frac12$.
Let $\Lambda_0$ be the element in $\hhh^*$ satisfying the conditions
$(\Lambda_0|\alpha_j) =\delta_{j,0}$ and $(\Lambda_0|\Lambda_0)=0$.
Let $\delta=\alpha_0+2\alpha_1+2\alpha_2$ be the primitive imaginary root
and $\rho := \frac12 \Lambda_0-\frac12 \alpha_1$ be the Weyl vector.

We put 
\begin{subequations}
\begin{eqnarray}
K(m) &:=& -\dfrac{m+2}{4} \qquad \text{so that} \quad
m=-4\left(K(m)+\frac12\right)
\label{n3:eqn:2022-110a}
\\[1mm]
\Lambda^{[K(m), m_2]} &:=& K(m)\Lambda_0-\dfrac{m_2}{2} \alpha_1
\label{n3:eqn:2022-110b}
\end{eqnarray}
\end{subequations}
Note that the weight $\Lambda^{[K(m), m_2]}$ is atypical with respect to 
$\alpha_1$, \\
namely $(\Lambda^{[K(m), m_2]}+\rho|\alpha_1)=0$.

\vspace{1mm}
\begin{lemma}
\label{n3:lemma:2022-108a}
The weight $\Lambda^{[K(m), m_2]}$ is integrable with respect to 
$\alpha_2$ and $\delta-\alpha_2$ if and only if 
$m$ and $m_2$ are non-negative integers satisfying $m_2 \leq m$.
\end{lemma}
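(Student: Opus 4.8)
The plan is to translate the integrability condition on $\Lambda^{[K(m),m_2]}$ with respect to the roots $\alpha_2$ and $\delta-\alpha_2$ into explicit numerical conditions on $m$ and $m_2$, using the standard criterion that a weight $\Lambda$ is integrable with respect to a real even root $\alpha$ precisely when $\frac{2(\Lambda+\rho|\alpha)}{(\alpha|\alpha)} - 1$ — equivalently, when $\alpha$ is simple, the Dynkin label $(\Lambda|\alpha^\vee)$ — is a non-negative integer. First I would record that $\alpha_2$ is an even simple root with $(\alpha_2|\alpha_2) = -\frac12$, so its coroot is $\alpha_2^\vee = \frac{2}{(\alpha_2|\alpha_2)}\alpha_2 = -4\alpha_2$; then I would compute $(\Lambda^{[K(m),m_2]}|\alpha_2^\vee)$ directly from \eqref{n3:eqn:2022-110b} and the given Gram matrix, noting $(\Lambda_0|\alpha_2)=0$ and $(\alpha_1|\alpha_2)=\frac12$, which yields $(\Lambda^{[K(m),m_2]}|\alpha_2^\vee) = -4\bigl(-\tfrac{m_2}{2}\bigr)(\alpha_1|\alpha_2) = m_2$. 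Hence integrability with respect to $\alpha_2$ is equivalent to $m_2 \in \zzz_{\geq 0}$.

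Next I would handle the affine-type root $\delta - \alpha_2$. Since $(\delta|\delta)=0$ and $(\delta|\alpha_j)=0$ for all $j$ (as $\delta$ is the primitive imaginary root), we have $(\delta-\alpha_2|\delta-\alpha_2)=(\alpha_2|\alpha_2)=-\frac12$, so this is again an even real root; moreover $(\Lambda+\rho|\delta-\alpha_2) = (\Lambda+\rho|\delta) - (\Lambda+\rho|\alpha_2)$. Here $(\Lambda^{[K(m),m_2]}|\delta)$ is the level, which I would compute from $\delta = \alpha_0+2\alpha_1+2\alpha_2$ and $(\Lambda_0|\alpha_0)=1$: writing $\Lambda^{[K(m),m_2]} = K(m)\Lambda_0 - \tfrac{m_2}{2}\alpha_1$, one gets $(\Lambda^{[K(m),m_2]}|\delta) = K(m)(\Lambda_0|\delta) + \dots$; in fact for this algebra $(\Lambda_0|\delta)=1$ after pairing through, and the $\alpha_1$-term contributes $0$ since $(\alpha_1|\delta)=0$, giving level $K(m)$. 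Combined with $(\rho|\delta) = h^\vee = \tfrac12$ and the value $(\Lambda+\rho|\alpha_2)=\tfrac12(\alpha_2|\alpha_2)\cdot(\text{Dynkin label}+1)$ computed as $-\tfrac{1}{8}(m_2+1)\cdot(\text{sign bookkeeping})$ — more cleanly, the coroot pairing $(\Lambda+\rho|(\delta-\alpha_2)^\vee) = \frac{2}{-1/2}\bigl[(\Lambda+\rho|\delta) - (\Lambda+\rho|\alpha_2)\bigr]$ — substituting $K(m) = -\tfrac{m+2}{4}$ and simplifying should produce exactly $m - m_2$. Therefore integrability with respect to $\delta-\alpha_2$ amounts to $m-m_2 \in \zzz_{\geq 0}$, i.e. $m_2 \leq m$ and $m-m_2 \in \zzz$.

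Finally I would assemble the two conditions: $m_2 \in \zzz_{\geq 0}$ from the $\alpha_2$-side, and $m - m_2 \in \zzz_{\geq 0}$ from the $(\delta-\alpha_2)$-side; together these force $m \in \zzz$ with $m \geq m_2 \geq 0$, which is precisely the claim that $m, m_2$ are non-negative integers with $m_2 \leq m$. I would also remark that $m_2=0$ with $m=0$ is not excluded by this lemma's statement as phrased, but since the paper elsewhere takes $m$ a positive integer this is consistent; if strict positivity of $m$ is wanted one notes $m=0$ forces $K=-\tfrac12$, the critical level, which is implicitly excluded.

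The main obstacle I anticipate is bookkeeping the normalizations: the root $\alpha_2$ has negative square $(\alpha_2|\alpha_2)=-\tfrac12$, so its coroot has a sign, and one must be careful that the "integrable with respect to a negative-norm real root" condition still reads as non-negativity of the coroot-pairing of $\Lambda$ (not of $\Lambda+\rho$, and with the correct orientation). Equally delicate is pinning down $(\Lambda_0|\delta)$ and $(\rho|\delta)$ correctly for $\widehat{B}(1,1)$, since the conventions for $\Lambda_0$ and $\rho$ here are nonstandard (e.g. $\rho = \tfrac12\Lambda_0 - \tfrac12\alpha_1$ rather than a sum of fundamental weights). Once these normalization constants are fixed, the remaining algebra is a short linear computation, and the equivalence drops out.
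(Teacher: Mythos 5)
The paper states this lemma without proof (it is effectively imported from the Kac--Wakimoto classification of integrable $\widehat{osp}(3|2)$-modules in \cite{KW2001}), so there is no proof in the text to compare against; but your proposed route does not reach the stated bound, and the failure is not mere bookkeeping. Your computation for $\alpha_2$ is correct: $\langle\Lambda^{[K(m),m_2]},\alpha_2^\vee\rangle=\frac{2(\Lambda|\alpha_2)}{(\alpha_2|\alpha_2)}=m_2$, giving $m_2\in\zzz_{\geq 0}$. For $\delta-\alpha_2$, however, carrying out the arithmetic you only sketch gives
$\langle\Lambda,(\delta-\alpha_2)^\vee\rangle=-4\bigl(K(m)+\tfrac{m_2}{4}\bigr)=m+2-m_2$, and with the $\rho$-shift $\langle\Lambda+\rho,(\delta-\alpha_2)^\vee\rangle=m-m_2-1$ (using $(\rho|\delta)=\tfrac12$ and $(\rho|\alpha_2)=-\tfrac14$). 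Neither equals $m-m_2$: the unshifted criterion yields $m_2\leq m+2$ and the shifted one $m_2\leq m-1$ or $m_2\leq m-2$ depending on whether you demand the pairing to be $\geq 0$ or $>0$. So the claim that the simplification ``should produce exactly $m-m_2$'' is false, and the criterion you invoke cannot deliver the lemma.

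The underlying issue is that $\delta-\alpha_2$ is not a simple root, and for the irreducible module $L(\Lambda)$ over a superalgebra the coroot pairing of the highest weight with a non-simple real root is only a \emph{necessary} condition for integrability in that direction; here it is strictly weaker than the true condition. The extra constraint comes from other singular vectors of the sub-$\widehat{sl}_2$ generated by the root spaces of $\pm\alpha_2$ and $\pm(\delta-\alpha_2)$. Concretely, $w=f_{\alpha_1}f_{\alpha_0}v_\Lambda$ is nonzero in $L(\Lambda)$ (since $e_{\alpha_1}w$ is a nonzero multiple of $f_{\alpha_0}v_\Lambda$, because $(\Lambda-\alpha_0|\alpha_1)=1\neq 0$) and is annihilated by both $e_{\alpha_2}$ and $e_{\delta-\alpha_2}$ (the bracket $[e_{\delta-\alpha_2},f_{\alpha_0+\alpha_1}]$ lies in $\ggg_{\alpha_1+\alpha_2}$, which kills $v_\Lambda$, and $2\alpha_2-\alpha_0-\alpha_1$ is not a root); its label is $\langle\Lambda-\alpha_0-\alpha_1,(\delta-\alpha_2)^\vee\rangle=m-m_2$. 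Local nilpotency of $f_{\delta-\alpha_2}$ applied to $w$ then forces $m-m_2\in\zzz_{\geq0}$, which is where the bound $m_2\leq m$ actually originates. Even with this, one only obtains necessity; the sufficiency of $0\leq m_2\leq m$ requires the module-theoretic analysis of \cite{KW2001} rather than an $sl_2$-string count on the highest weight vector alone.
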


In this paper,  a $\widehat{B}(1,1)$-module $L(\Lambda)$ which is integrable 
with respect to $\alpha_2$ and $\delta-\alpha_2$ is called simply 
an \lq \lq integrable" $\widehat{B}(1,1)$-module, and $\Lambda$ is called 
simply an \lq \lq integrable" weight. 

\medskip

Define the coordinates on the Cartan subalgebra $\hhh$ of $\widehat{B}(1,1)$ by 
\begin{equation}
(\tau, z_1, z_2,t) := 2\pi i \left\{-\tau \Lambda_0-
z_1(\alpha_1+2\alpha_2)-z_2\alpha_1+t\delta\right\}
\label{n3:eqn:2022-110g}
\end{equation}

For an integrable weight $\Lambda$ which is atypical with respect to 
$\alpha_1$, the supercharacter ${\rm ch}^{(-)}_{\Lambda}$ 
and the character ${\rm ch}^{(+)}_{\Lambda}$ of $L(\Lambda)$ 
are obtained by the formulas
\begin{subequations}
\begin{eqnarray}
& & \hspace{-15mm}
R^{(-)}{\rm ch}^{(-)}_{\Lambda} := 
\sum_{w \in \langle r_{\alpha_2}, r_{\delta-\alpha_2}\rangle }
\varepsilon(w) \, w 
\left(\frac{e^{\Lambda+\rho}}{1-e^{-\alpha_1}}\right)
\nonumber
\\[1mm]
&=&
\sum_{j \in \zzz}t_{4j\alpha_2}\left(\frac{e^{\Lambda+\rho}}{1-e^{-\alpha_1}}\right)
- 
\sum_{j \in \zzz}r_{\alpha_2}t_{4j\alpha_2}
\left(\frac{e^{\Lambda+\rho}}{1-e^{-\alpha_1}}\right)
\label{n3:eqn:2022-110c}
\\[1mm]
& &\hspace{-15mm}
(R^{(+)}{\rm ch}^{(+)}_{\Lambda})(\tau, z_1, z_2,t) :=
(R^{(-)}{\rm ch}^{(-)}_{\Lambda})\left(\tau, z_1-\frac12, z_2-\frac12,t\right)
\label{n3:eqn:2022-110d}
\end{eqnarray}
\end{subequations}

\noindent
where $t_{\alpha} \,\ (\alpha \in \hhh)$ is the linear automorphism of $\hhh$
defined, in \cite{K1},  by 
\begin{equation}
t_{\alpha}(\lambda) := \lambda + (\lambda|\delta)\alpha 
-\left\{
\frac{(\alpha|\alpha)}{2}(\lambda|\delta)+(\lambda|\alpha)\right\}\delta
\label{n3:eqn:2022-110h}
\end{equation}
and $R^{(\pm)}$ are (super)denominator of $\widehat{B}(1,1)$.

\begin{lemma} 
\label{n3:lemma:2022-108a}
For $m \in \nnn$, the (super)character of integrable $\widehat{B}(1,1)$-module 
$L(\Lambda^{[K(m),m_2]})$ is given by the following formulas:

\begin{enumerate}
\item[{\rm 1)}] $\Big(R^{(+)} {\rm ch}^{(+)}_{\Lambda^{[K(m),m_2]}}\Big)
(\tau, z_1, z_2, t) $
$$= \,\ 
\Phi^{(+)[\frac{m}{2}, \, \frac{m_2+1}{2}]}\Big(
2\tau, \,\ z_1-\dfrac12, \,\ -z_2+\dfrac12, \,\ \dfrac{t}{2}\Big)
$$

\item[{\rm 2)}] $\Big(R^{(-)} {\rm ch}^{(-)}_{\Lambda^{[K(m),m_2]}}\Big)
(\tau, z_1, z_2, t) 
\,\ = \,\ 
\Phi^{(+)[\frac{m}{2}, \, \frac{m_2+1}{2}]}\Big(
2\tau, \, z_1, \, -z_2, \, \dfrac{t}{2}\Big)$
\end{enumerate}
\end{lemma}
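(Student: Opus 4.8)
\textbf{Proof plan for Lemma \ref{n3:lemma:2022-108a} (the (super)character formula).}

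The plan is to start from the Weyl-type (super)denominator formulas \eqref{n3:eqn:2022-110c} and \eqref{n3:eqn:2022-110d} and to evaluate the sum over the group $\langle r_{\alpha_2}, r_{\delta-\alpha_2}\rangle$ explicitly in the coordinates \eqref{n3:eqn:2022-110g}. First I would compute $\Lambda+\rho$ for $\Lambda=\Lambda^{[K(m),m_2]}$: by \eqref{n3:eqn:2022-110b} and $\rho=\tfrac12\Lambda_0-\tfrac12\alpha_1$ one gets $\Lambda+\rho=\bigl(K(m)+\tfrac12\bigr)\Lambda_0-\tfrac{m_2+1}{2}\alpha_1$, and by \eqref{n3:eqn:2022-110a} the coefficient of $\Lambda_0$ is $-\tfrac{m}{4}$. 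Then I would write $e^{\Lambda+\rho}/(1-e^{-\alpha_1})$ as a geometric series in $e^{-\alpha_1}$ and apply the translation operators $t_{4j\alpha_2}$, using the explicit action \eqref{n3:eqn:2022-110h} together with the Gram matrix $((\alpha_i|\alpha_j))$ and the value $h^\vee=\tfrac12$. The key bookkeeping is that $(\alpha_2|\alpha_2)=-\tfrac12$, so $t_{4j\alpha_2}$ introduces $q$-powers of the shape $q^{-\,4j^2\cdot\frac{(\alpha_2|\alpha_2)}{2}\cdot(\text{level})+\cdots}$, which after substituting the level $-\tfrac{m}{4}$ produces exactly the Gaussian exponent $q^{\frac{m}{2}\cdot(2\tau)\,j^2}$-type term that matches the definition \eqref{n3:eqn:2022-111a}–\eqref{n3:eqn:2022-111b} of $\Phi^{[m/2,\,(m_2+1)/2]}_i$ at modular parameter $2\tau$.

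Next I would match the two terms in \eqref{n3:eqn:2022-110c}: the sum $\sum_j t_{4j\alpha_2}$ applied to the geometric series yields $\Phi^{(\pm)[\frac m2,\frac{m_2+1}{2}]}_1$ (the $1/(1-e^{2\pi i z_1}q^j)$ piece), while the sum $\sum_j r_{\alpha_2}t_{4j\alpha_2}$ yields $\Phi^{(\pm)[\frac m2,\frac{m_2+1}{2}]}_2$, since the simple reflection $r_{\alpha_2}$ acts on the coordinates by the exchange/sign change $z_2\leftrightarrow$ (something), so that $\Phi_2(\tau,z_1,z_2,t)=\Phi_1(\tau,-z_2,-z_1,t)$ of Lemma \ref{n3:lemma:2022-111a}(4) reproduces the second sum. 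The difference of the two then gives $\Phi^{(\pm)[\frac m2,\frac{m_2+1}{2}]}=\Phi_1-\Phi_2$ as in \eqref{n3:eqn:2022-111c}. Once part 2) is obtained in this way, part 1) follows immediately from it and the defining relation \eqref{n3:eqn:2022-110d}, which is exactly the substitution $(z_1,z_2)\mapsto(z_1-\tfrac12,z_2-\tfrac12)$; note this also explains why the arguments $-z_2+\tfrac12$ and $-z_2$ appear (the minus sign on $z_2$ comes from the $z_2\alpha_1$ versus $z_1(\alpha_1+2\alpha_2)$ asymmetry in \eqref{n3:eqn:2022-110g}).

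The main obstacle I expect is purely computational rather than conceptual: carefully tracking all the quadratic and linear exponents produced by $t_{4j\alpha_2}$ under \eqref{n3:eqn:2022-110h} — in particular getting the shift $s=\tfrac{m_2+1}{2}$, the doubling $\tau\mapsto 2\tau$, and the halving $t\mapsto t/2$ all consistent with the normalization in \eqref{n3:eqn:2022-111a}–\eqref{n3:eqn:2022-111c} — and verifying that the orbit of $\langle r_{\alpha_2},r_{\delta-\alpha_2}\rangle$ is faithfully enumerated by $\{t_{4j\alpha_2}\}\cup\{r_{\alpha_2}t_{4j\alpha_2}\}$ with the signs $\varepsilon(w)$ matching $\pm1$ in \eqref{n3:eqn:2022-110c}. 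A secondary point to handle with care is the coordinate dictionary: one must check that the pairing of $t_{\alpha}(\Lambda+\rho)$ with the point $(\tau,z_1,z_2,t)$ of \eqref{n3:eqn:2022-110g} indeed turns $e^{-\alpha_1}$ into $e^{-2\pi i z_2}$ (up to the geometric-series index) and $e^{-(\alpha_1+2\alpha_2)}$ into $e^{-2\pi i z_1}$, so that the summand lands in the precise form of $\Phi_1$ and $\Phi_2$; this is where the asymmetric roles of $z_1$ and $z_2$, and hence the appearance of $\Phi^{(+)}$ evaluated at $(z_1,-z_2)$ rather than $(z_1,z_2)$, is pinned down.
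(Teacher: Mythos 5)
The paper states this lemma without proof, so your plan to derive it directly from \eqref{n3:eqn:2022-110c}--\eqref{n3:eqn:2022-110d} by evaluating the orbit sum in the coordinates \eqref{n3:eqn:2022-110g} is exactly the right thing to do, and the skeleton is sound: $\Lambda+\rho=-\tfrac m4\Lambda_0-\tfrac{m_2+1}{2}\alpha_1$ is correct, the translation $t_{4j\alpha_2}$ does produce $q^{mj^2+(m_2+1)j}$ (i.e.\ $\widetilde q^{\frac m2 j^2+\frac{m_2+1}{2}j}$ with $\widetilde q=e^{2\pi i(2\tau)}$, whence the doubling $\tau\mapsto2\tau$ and the shift $s=\tfrac{m_2+1}{2}$), the untwisted sum matches $\Phi_1$ and the $r_{\alpha_2}$-twisted sum matches $\Phi_2$ with $\varepsilon(r_{\alpha_2})=-1$ giving the difference \eqref{n3:eqn:2022-111c}, and part 1) is then just the substitution $z_i\mapsto z_i-\tfrac12$ from \eqref{n3:eqn:2022-110d}. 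I have checked that carrying this computation through does land exactly on $\Phi^{[\frac m2,\frac{m_2+1}{2}]}(2\tau,z_1,-z_2,\tfrac t2)$ after the harmless relabeling $j\mapsto -j$.

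The one concrete misstatement is your coordinate dictionary. Since $(\alpha_1|\alpha_1)=0$ and $(\alpha_1|\alpha_1+2\alpha_2)=1$, the root $\alpha_1$ pairs with the \emph{coefficient of} $\alpha_1+2\alpha_2$ in \eqref{n3:eqn:2022-110g} and vice versa; hence $e^{-\alpha_1}\mapsto e^{2\pi iz_1}$ and $e^{-(\alpha_1+2\alpha_2)}\mapsto e^{2\pi iz_2}$, not $e^{-2\pi iz_2}$ and $e^{-2\pi iz_1}$ as you wrote. With the dictionary as you stated it, the denominator $1-e^{-\alpha_1}$ of the untwisted sum would be of the $\Phi_2$ shape $1-e^{-2\pi i(\cdot)}q^{j}$ rather than the $\Phi_1$ shape $1-e^{2\pi iz_1}q^{j}$, contradicting your (correct) assignment of the two sums to $\Phi_1$ and $\Phi_2$ in the next sentence. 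The corrected dictionary is also what forces the second argument of $\Phi$ to be $-z_2$: the reflected denominator $1-e^{-(\alpha_1+2\alpha_2)}$ evaluates to $1-e^{2\pi iz_2}q^{2j}$, which equals the $\Phi_2$ denominator $1-e^{-2\pi iz_2'}q^{2j}$ only for $z_2'=-z_2$. Since you flagged this as a point to verify rather than relying on it, the fix is local and the rest of the plan goes through unchanged.
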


In order to have $SL_2(\zzz)$-invariance of the space of characters, we need 
\lq \lq twisted" characters. For this sake, we define the 
character twisted by $\sigma_{j,k}:=t_{j\alpha_1+k (\alpha_1+2\alpha_2)}$ 
as follows:
\begin{equation}
(R^{(+){\rm tw}} 
{\rm ch}^{(+){\rm tw}(\sigma_{j,k})}_{\Lambda})(h) 
:= (R^{(+)} {\rm ch}^{(+)}_{\Lambda})(\sigma_{j,k}(h)) 
\qquad (h \in \hhh)
\label{n3:eqn:2022-110j}
\end{equation}
where $R^{(+){\rm tw}}$ is the twisted denominator, but 
we will not need here to know its explicit formula. The twisted numerator 
$(R^{(+){\rm tw}} {\rm ch}^{(+){\rm tw}(\sigma_{j,k})}_{\Lambda^{[K(m), m_2]}})(h)$
is computed easily as follows:

\begin{lemma}
\label{n3:lemma:2022-112a}
$(R^{(+){\rm tw}} 
{\rm ch}^{(+){\rm tw}(\sigma_{j,k})}_{\Lambda^{[K(m), m_2]}})(\tau, z_1, z_2,t)$
$$
= \Phi^{[\frac{m}{2}, \frac{m_2+1}{2}]}\left(2\tau, 
z_1+k\tau-\frac12, -z_2-j\tau+\frac12, \frac12(t+jz_1+kz_2+jk\tau)\right)
$$
\end{lemma}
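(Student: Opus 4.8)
The plan is to start from Lemma \ref{n3:lemma:2022-108a}, part 1), which gives the untwisted numerator
$$
\big(R^{(+)} {\rm ch}^{(+)}_{\Lambda^{[K(m),m_2]}}\big)(\tau, z_1, z_2, t)
= \Phi^{[\frac{m}{2}, \frac{m_2+1}{2}]}\Big(2\tau,\, z_1-\tfrac12,\, -z_2+\tfrac12,\, \tfrac{t}{2}\Big),
$$
and then to simply substitute $h \mapsto \sigma_{j,k}(h)$ according to definition \eqref{n3:eqn:2022-110j}. The first step is therefore to work out how $\sigma_{j,k} = t_{j\alpha_1 + k(\alpha_1+2\alpha_2)}$ acts on the coordinates $(\tau, z_1, z_2, t)$ fixed in \eqref{n3:eqn:2022-110g}. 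Using the explicit formula \eqref{n3:eqn:2022-110h} for $t_\alpha$ together with the Gram matrix $\big((\alpha_i|\alpha_j)\big)$ displayed before Lemma \ref{n3:lemma:2022-108a}, one computes the relevant inner products: writing $\lambda = 2\pi i\{-\tau\Lambda_0 - z_1(\alpha_1+2\alpha_2) - z_2\alpha_1 + t\delta\}$ one has $(\lambda|\delta) = -2\pi i\tau$ (since $(\Lambda_0|\delta)=1$ and $\delta$ is isotropic and orthogonal to the $\alpha_i$ through the $\hhh$ pairing), and one needs $(\lambda|\alpha_1)$ and $(\lambda|\alpha_1+2\alpha_2)$. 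From the Gram matrix, $(\alpha_1|\alpha_1)=0$, $(\alpha_1+2\alpha_2|\alpha_1+2\alpha_2)=0$, $(\alpha_1|\alpha_1+2\alpha_2)=1$, and $(\Lambda_0|\alpha_1)=(\Lambda_0|\alpha_1+2\alpha_2)=0$. Plugging these into \eqref{n3:eqn:2022-110h} for $\alpha = j\alpha_1 + k(\alpha_1+2\alpha_2)$ yields the shift: the $\Lambda_0$-coefficient $\tau$ is unchanged, the coefficient of $(\alpha_1+2\alpha_2)$ (i.e. $z_1$) picks up $-j\tau$ wait—one must be careful with signs, so I will track them via the pairing of $\sigma_{j,k}(\lambda)$ against $\Lambda_0$, $\alpha_1$, $\alpha_1+2\alpha_2$ and read off the new $(\tau, z_1, z_2, t)$ directly.

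The expected outcome of that computation is that $\sigma_{j,k}$ sends $(\tau, z_1, z_2, t)$ to $(\tau,\, z_1 + k\tau,\, z_2 + j\tau,\, t + jz_1 + kz_2 + jk\tau)$ — or something equivalent up to the sign conventions hidden in \eqref{n3:eqn:2022-110g} — which is exactly the shift appearing in the claimed formula once one accounts for the sign flip $z_2 \mapsto -z_2$ built into the argument of $\Phi$ in Lemma \ref{n3:lemma:2022-108a}.1). Thus the second step is purely bookkeeping: substitute this shifted $h$ into the right-hand side of Lemma \ref{n3:lemma:2022-108a}.1), i.e. replace $z_1 \to z_1 + k\tau$, $z_2 \to z_2 + j\tau$, $t \to t + jz_1 + kz_2 + jk\tau$, and observe that $2\tau$ stays $2\tau$ while $\tfrac{t}{2}$ becomes $\tfrac12(t + jz_1 + kz_2 + jk\tau)$, the first argument $z_1 - \tfrac12$ becomes $z_1 + k\tau - \tfrac12$, and the second argument $-z_2 + \tfrac12$ becomes $-z_2 - j\tau + \tfrac12$. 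This reproduces exactly the asserted right-hand side, completing the proof.

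The only real point requiring care — and the step I expect to be the main obstacle — is getting the action of $t_{j\alpha_1+k(\alpha_1+2\alpha_2)}$ on the coordinates exactly right, including all signs and the factor $\tfrac12$ that links $(\tau, z_1, z_2, t)$ to $2\tau$ and $\tfrac{t}{2}$ in Lemma \ref{n3:lemma:2022-108a}. In particular one must check that the quadratic cocycle term $\tfrac12 jk\tau$ (the $\delta$-coefficient produced by the cross term $(j\alpha_1 + k(\alpha_1+2\alpha_2) \,|\, j\alpha_1+k(\alpha_1+2\alpha_2)) = 2jk$ combined with $(\lambda|\delta)$) comes out with the right coefficient after the $t \mapsto t/2$ rescaling, and that the linear terms $jz_1$ and $kz_2$ attach to the correct variables; a transposition here (e.g. $jz_2 + kz_1$ instead of $jz_1 + kz_2$) would be easy to commit. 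Once the coordinate action is pinned down, everything else is a direct substitution into the already-established Lemma \ref{n3:lemma:2022-108a}.1), and no further properties of $\Phi^{[m,s]}$ are needed.
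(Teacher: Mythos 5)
Your proposal is correct and is essentially the paper's own proof: the paper likewise uses \eqref{n3:eqn:2022-110h} to compute $\sigma_{j,k}(\Lambda_0)$, $\sigma_{j,k}(\alpha_1)$, $\sigma_{j,k}(\alpha_1+2\alpha_2)$, concludes that $\sigma_{j,k}(\tau,z_1,z_2,t)=(\tau,\,z_1+k\tau,\,z_2+j\tau,\,t+jz_1+kz_2+jk\tau)$, and then substitutes into Lemma \ref{n3:lemma:2022-108a}.1). The coordinate shift you anticipate — including the attachment of $jz_1$ to $z_1$ and $kz_2$ to $z_2$ and the $jk\tau$ term — is exactly what the paper obtains, so the only remaining work is the routine verification you already flagged.
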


\begin{proof} By \eqref{n3:eqn:2022-110h} we have 
$$
\left\{
\begin{array}{lcl}
\sigma_{j,k}(\Lambda_0) &=& \Lambda_0+j\alpha_1+k(\alpha_1+2\alpha_2)-jk\delta
\\[1mm]
\sigma_{j,k}(\alpha_1) &=& \alpha_1-k\delta
\\[1mm]
\sigma_{j,k}(\alpha_1+2\alpha_2) &=&\alpha_1+2\alpha_2-j\delta .
\end{array}\right.
$$
Then we have
\begin{eqnarray*}
& &
\sigma_{j,k}(\tau, z_1, z_2, t) 
=2\pi i \left\{-\tau \sigma_{j,k}(\Lambda_0)
-z_1\sigma_{j,k}(\alpha_1+2\alpha_2)
-z_2\sigma_{j,k}(\alpha_1)+t\delta\right\}
\\[2mm]
&=&
2\pi i\left\{
-\tau \Lambda_0
-(z_1+k\tau)(\alpha_1+2\alpha_2)
-(z_2+j\tau)\alpha_1
+(t+jz_1+kz_2+jk\tau)\delta\right\}
\\[2mm]
&=&
(\tau, \,\ z_1+k\tau, \,\ z_2+j\tau, \,\ t+jz_1+kz_2+jk\tau),
\end{eqnarray*}
so
\begin{eqnarray*}
& &
(R^{(+){\rm tw}} 
{\rm ch}^{(+){\rm tw}(\sigma_{j,k})}_{\Lambda^{[K(m), m_2]}})(\tau, z_1, z_2,t)
\\[1mm]
&=& (R^{(+)} {\rm ch}^{(+)}_{\Lambda^{[K(m), m_2]}})
(\tau, z_1+k\tau, z_2+j\tau, t+jz_1+kz_2+jk\tau)
\\[1mm]
&=&
\Phi^{[\frac{m}{2}, \frac{m_2+1}{2}]}\left(2\tau, 
z_1+k\tau-\frac12, -z_2-j\tau+\frac12, \frac12(t+jz_1+kz_2+jk\tau)\right)
\end{eqnarray*}
by Lemma \ref{n3:lemma:2022-108a}, proving the claim.
\end{proof}

\section{Characters of quantum reduction}
\label{sec:quantum}

We now consider the quantum Hamiltonian reduction associated to the pair 
$(x=\frac12 \theta, \, f=e_{-\theta})$, 
where $\theta := 2(\alpha_1+\alpha_2)$ 
is the highest root of the finite-dimensional Lie superalgebra 
$\overline{\ggg} := osp(3|2)$. The Cartan subalgebra of $\overline{\ggg}$ is 
$\overline{\hhh}:= \ccc \alpha_1 \oplus \ccc \alpha_2$. 
Taking a basis $J_0 := -2 \alpha_2$ of $\overline{\hhh}^f$, we have  
\begin{eqnarray}
& & 
2\pi i \left\{-\tau\Lambda_0-\tau x+zJ_0 + \frac{\tau}{2}(x|x) \delta\right\}
\nonumber
\\[0mm]
&=& 
2\pi i \left\{-\tau \Lambda_0-\left(z+\frac{\tau}{2}\right)(\alpha_1+2\alpha_2)
-\left(-z+\frac{\tau}{2}\right)\alpha_1 + \frac{\tau}{4}\delta
\right\}
\nonumber
\\[0mm]
&=&
\left(\tau, \, z+\frac{\tau}{2}, \, -z+\frac{\tau}{2}, \, \frac{\tau}{4}\right)
\label{n3:eqn:2022-111f}
\end{eqnarray}
Then the (super)character of the quantum Hamiltonian reduction $H(\Lambda)$
of $\widehat{B}(1,1)$ module $L(\Lambda)$ is obtained by the formula :
\begin{eqnarray}
& &
(\overset{N=3}{R}{}^{(\pm)} {\rm ch}^{(\pm)}_{H(\Lambda)})(\tau, z) =
(R^{(\pm)} {\rm ch}^{(\pm)}_{\Lambda})
\left(
2\pi i \left\{-\tau\Lambda_0-\tau x+zJ_0 + \frac{\tau}{2}(x|x) \delta\right\}
\right)
\nonumber
\\[1mm]
& & \hspace{30mm}
= \,\ 
(R^{(\pm)} {\rm ch}^{(\pm)}_{\Lambda})
\left(\tau, \, z+\frac{\tau}{2}, \, -z+\frac{\tau}{2}, \, \frac{\tau}{4}\right)
\label{n3:eqn:2022-111g}
\end{eqnarray}
And also similar for the twisted characters, where 
$\overset{N=3}{R}{}^{(+)}$ and $\overset{N=3}{R}{}^{(-)}$ and 
$\overset{N=3}{R}{}^{(+){\rm tw}}$ are the denominator 
and superdenominator and twisted denominator respectively of the N=3 
superconformal algebra defined, by using the Mumford's theta functions 
$\vartheta_{ab}(\tau,z)$  $(a,b=0,1)$, as follows:
\begin{equation} \left\{
\begin{array}{lcl}
\overset{N=3}{R}{}^{(+)}(\tau,z) 
&:=& 
\eta(\frac{\tau}{2}) \eta(2\tau) \, 
\dfrac{\vartheta_{11}(\tau,z)}{\vartheta_{00}(\tau,z)}
\\[3mm]
\overset{N=3}{R}{}^{(-)}(\tau,z) 
&:=& 
\dfrac{\eta(\tau)^3}{\eta(\frac{\tau}{2})} \cdot 
\dfrac{\vartheta_{11}(\tau,z)}{\vartheta_{01}(\tau,z)}
\\[3mm]
\overset{N=3}{R}{}^{(+){\rm tw}}(\tau,z) 
&:=& \dfrac{1}{\sqrt{2}} \cdot
\dfrac{\eta(\tau)^3}{\eta(2\tau)} \cdot 
\dfrac{\vartheta_{11}(\tau,z)}{\vartheta_{10}(\tau,z)}
\end{array}\right.
\label{n3:eqn:2022-115a}
\end{equation}

Then, by \eqref{n3:eqn:2022-111g} and Lemma \ref{n3:lemma:2022-108a}
and Lemma \ref{n3:lemma:2022-112a}, we obtain the following : 

\begin{prop}
\label{n3:prop:2022-111a}
For $m \in \nnn$ and $m_2 \in \zzz_{\geq 0}$ such that 
$m_2 \leq m$, the (super or twisted) characters of the N=3 module 
$H(\Lambda^{[K(m), m_2]})$ are as follows:

\begin{enumerate}
\item[{\rm 1)}] $(\overset{N=3}{R}{}^{(+)} 
{\rm ch}^{(+)}_{H(\Lambda^{[K(m), m_2]})})
(\tau, z) =
 \Phi^{[\frac{m}{2}, \frac{m_2+1}{2}]}
\Big(2\tau, z+\dfrac{\tau}{2}-\dfrac12, 
z-\dfrac{\tau}{2}+\dfrac12, \dfrac{\tau}{8}\Big)$
\item[{\rm 2)}] $(\overset{N=3}{R}{}^{(-)} 
{\rm ch}^{(-)}_{H(\Lambda^{[K(m), m_2]})})
(\tau, z) =
\Phi^{[\frac{m}{2}, \frac{m_2+1}{2}]}
\Big(2\tau, z+\dfrac{\tau}{2}, z-\dfrac{\tau}{2}, \dfrac{\tau}{8}\Big)$
\item[{\rm 3)}] $(\overset{N=3}{R}{}^{(+){\rm tw}} 
{\rm ch}^{(+){\rm tw}(\sigma_{j,k})}_{H(\Lambda^{[K(m), m_2]})})(\tau, z) $
\begin{eqnarray*}
&=&
\Phi^{[\frac{m}{2}, \frac{m_2+1}{2}]}\left(2\tau, 
z+\left(k+\frac12\right)\tau-\frac12,
z-\left(j+\frac12\right)\tau+\frac12, \right.
\\[1mm]
& & \hspace{20mm} \left.
\frac{(j-k)z}{2}+\frac12 \left(j+\frac12\right)\left(k+\frac12\right)\tau
\right)
\end{eqnarray*}
\item[{\rm 4)}] Let $\sigma_{\pm}$ be the linear automorphisms of $\hhh$ 
defined by 
$$
\left\{
\begin{array}{lclcl}
\sigma_+ &:=& \sigma_{\frac12, \frac12} &=& t_{\alpha_1+\alpha_2}
\\[2mm]
\sigma_- &:=& \sigma_{-\frac12, -\frac12} &=& t_{-(\alpha_1+\alpha_2)}
\end{array}\right. \, .
$$ 
Then
\begin{enumerate}
\item[{\rm (i)}] $(\overset{N=3}{R}{}^{(+){\rm tw}} 
{\rm ch}^{(+){\rm tw}(\sigma_+)}_{H(\Lambda^{[K(m), m_2]})})(\tau, z)$ 
$$
= \Phi^{[\frac{m}{2}, \frac{m_2+1}{2}]}
\Big(2\tau, z+\tau-\dfrac12, z-\tau+\dfrac12, \dfrac{\tau}{2}\Big)
$$
\item[{\rm (ii)}] $(\overset{N=3}{R}{}^{(+){\rm tw}(} 
{\rm ch}^{(+){\rm tw}(\sigma_-)}_{H(\Lambda^{[K(m), m_2]})})
(\tau, z) =
\Phi^{[\frac{m}{2}, \frac{m_2+1}{2}]}
\Big(2\tau, z-\dfrac12, z+\dfrac12, 0\Big)$
\end{enumerate}
\end{enumerate}
\end{prop}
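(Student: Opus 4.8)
The plan is to derive each of the four statements by a direct substitution of the appropriate element of $\hhh$ into the (twisted) numerator formulas established in the previous section, combined with the coordinate change \eqref{n3:eqn:2022-111f}. The backbone is formula \eqref{n3:eqn:2022-111g}, which says that the $N=3$ (super)character of $H(\Lambda)$ is the $\widehat{B}(1,1)$ (super)character of $L(\Lambda)$ evaluated at $(\tau, z+\tfrac{\tau}{2}, -z+\tfrac{\tau}{2}, \tfrac{\tau}{4})$, and its evident twisted analogue. So for part 1) I would take Lemma \ref{n3:lemma:2022-108a} 1), which gives $(R^{(+)}{\rm ch}^{(+)}_{\Lambda^{[K(m),m_2]}})(\tau,z_1,z_2,t) = \Phi^{[\frac{m}{2},\frac{m_2+1}{2}]}(2\tau, z_1-\tfrac12, -z_2+\tfrac12, \tfrac{t}{2})$, and substitute $z_1 = z+\tfrac{\tau}{2}$, $z_2 = -z+\tfrac{\tau}{2}$, $t = \tfrac{\tau}{4}$; the arguments become $2\tau$, $z+\tfrac{\tau}{2}-\tfrac12$, $z-\tfrac{\tau}{2}+\tfrac12$, $\tfrac{\tau}{8}$, which is exactly the claim. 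Part 2) is identical using Lemma \ref{n3:lemma:2022-108a} 2).

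For part 3) I would apply the twisted version of \eqref{n3:eqn:2022-111g} together with Lemma \ref{n3:lemma:2022-112a}. That is, $(\overset{N=3}{R}{}^{(+){\rm tw}}{\rm ch}^{(+){\rm tw}(\sigma_{j,k})}_{H(\Lambda^{[K(m),m_2]})})(\tau,z)$ equals $(R^{(+){\rm tw}}{\rm ch}^{(+){\rm tw}(\sigma_{j,k})}_{\Lambda^{[K(m),m_2]}})$ evaluated at $(\tau, z+\tfrac{\tau}{2}, -z+\tfrac{\tau}{2}, \tfrac{\tau}{4})$. Feeding $z_1 = z+\tfrac{\tau}{2}$, $z_2 = -z+\tfrac{\tau}{2}$, $t = \tfrac{\tau}{4}$ into the formula of Lemma \ref{n3:lemma:2022-112a}, the first two slots of $\Phi^{[\frac{m}{2},\frac{m_2+1}{2}]}$ become $z_1+k\tau-\tfrac12 = z+(k+\tfrac12)\tau-\tfrac12$ and $-z_2-j\tau+\tfrac12 = z-(j+\tfrac12)\tau+\tfrac12$, while the last slot is $\tfrac12(t+jz_1+kz_2+jk\tau) = \tfrac12\big(\tfrac{\tau}{4} + j(z+\tfrac{\tau}{2}) + k(-z+\tfrac{\tau}{2}) + jk\tau\big)$. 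The only real work is simplifying this scalar: collecting the $z$-terms gives $\tfrac{(j-k)z}{2}$, and collecting the $\tau$-terms gives $\tfrac12\big(\tfrac14 + \tfrac{j}{2} + \tfrac{k}{2} + jk\big)\tau = \tfrac12(j+\tfrac12)(k+\tfrac12)\tau$ after factoring. This matches the stated expression.

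Part 4) is then just the specialization of part 3) to the two cases $(j,k)=(\tfrac12,\tfrac12)$ and $(j,k)=(-\tfrac12,-\tfrac12)$, using the definitions $\sigma_\pm = \sigma_{\pm\frac12,\pm\frac12}$. For (i), $k+\tfrac12 = 1$, $j+\tfrac12 = 1$, $j-k = 0$, so the arguments become $2\tau$, $z+\tau-\tfrac12$, $z-\tau+\tfrac12$, and $\tfrac12\cdot1\cdot1\cdot\tau = \tfrac{\tau}{2}$. For (ii), $k+\tfrac12 = 0$, $j+\tfrac12 = 0$, $j-k = 0$, giving $2\tau$, $z-\tfrac12$, $z+\tfrac12$, $0$. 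Both are as claimed.

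The proof is essentially bookkeeping: there is no genuine obstacle, only the risk of sign or factor-of-two slips in the scalar (fourth) argument of $\Phi$. The one point worth stating carefully is that the twisted analogue of \eqref{n3:eqn:2022-111g} holds in the same form — this is what the phrase ``And also similar for the twisted characters'' in Section \ref{sec:quantum} guarantees — since $\sigma_{j,k}$ and the quantum-reduction substitution commute appropriately as linear automorphisms of $\hhh$, so applying the reduction substitution inside a $\sigma_{j,k}$-twisted character is the same as evaluating the twisted numerator of Lemma \ref{n3:lemma:2022-112a} at the reduced point. I would therefore present parts 1)--4) in that order, doing the scalar simplification in part 3) explicitly and obtaining part 4) as two immediate corollaries.
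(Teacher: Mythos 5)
Your proposal is correct and coincides with the paper's own proof: parts 1) and 2) by substituting the quantum-reduction point \eqref{n3:eqn:2022-111f} into Lemma \ref{n3:lemma:2022-108a}, part 3) by the twisted version of \eqref{n3:eqn:2022-111g} together with Lemma \ref{n3:lemma:2022-112a} and the same simplification of the scalar slot, and part 4) as the immediate specialization $(j,k)=(\pm\tfrac12,\pm\tfrac12)$. No gaps.
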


\begin{proof} 1) and 2) follow from \eqref{n3:eqn:2022-111g}
and Lemma \ref{n3:lemma:2022-108a}. 3) is shown as follows.
By the twisted version of \eqref{n3:eqn:2022-111g}, we have
\begin{equation}
(\overset{N=3}{R}{}^{(+){\rm tw}} 
{\rm ch}^{(+){\rm tw}(\sigma_{j,k})}_{H(\Lambda^{[K(m), m_2]})})(\tau, z) 
=
(R^{(+){\rm tw}}{\rm ch}^{(+){\rm tw}(\sigma_{j,k})}_{\Lambda^{[K(m), m_2]}})
\left(\tau, z+\frac{\tau}{2}, -z+\frac{\tau}{2}, \frac{\tau}{4}\right) 
\label{n3:eqn:2022-112b}
\end{equation}
The RHS of this equation \eqref{n3:eqn:2022-112b} is rewitten, 
by using Lemma \ref{n3:lemma:2022-112a}, as follows : 
\begin{eqnarray*}
& & \text{RHS of \eqref{n3:eqn:2022-112b}}
\\[0mm]
&=&
\Phi^{[\frac{m}{2}, \frac{m_2+1}{2}]}\left(2\tau, \,\ 
\left(z+\frac{\tau}{2}\right)+k\tau-\frac12, \,\ 
-\left(-z+\frac{\tau}{2}\right)-j\tau+\frac12, 
\right.
\\[0mm]
& & \hspace{30mm} \left.
\frac12 \left(\frac{\tau}{4}+j\left(z+\frac{\tau}{2}\right)
+k\left(-z+\frac{\tau}{2}\right)+jk\tau\right)\right)
\\[0mm]
&=&
\Phi^{[\frac{m}{2}, \frac{m_2+1}{2}]}\left(2\tau, \,\ 
z+\left(k+\frac12\right)\tau-\frac12, \,\ 
z-\left(j+\frac12\right)\tau+\frac12, \right.
\\[1mm]
& & \hspace{30mm} \left.
\frac{(j-k)z}{2}+\frac12 \left(j+\frac12\right)\left(k+\frac12\right)\tau
\right) \, ,
\end{eqnarray*}
proveng 3).  4) follows from 3) immediately.
\end{proof}

We call these characters \lq \lq honest" characters. 
\lq \lq Modified" characters $\widetilde{\rm ch}_{H(\Lambda)}$ are 
defined by replacing $\Phi$ with $\widetilde{\Phi}$ in the formulas 
in Proposition \ref{n3:prop:2022-111a}, namely
{\allowdisplaybreaks
\begin{eqnarray}
& &
(\overset{N=3}{R}{}^{(+)} 
\widetilde{\rm ch}^{(+)}_{H(\Lambda^{[K(m), m_2]})})
(\tau, z) :=
\widetilde{\Phi}^{[\frac{m}{2}, \frac{m_2+1}{2}]}
\left(2\tau, z+\frac{\tau}{2}-\frac12, z-\frac{\tau}{2}+\frac12, 
\frac{\tau}{8}\right)
\nonumber
\\[1mm]
& &
(\overset{N=3}{R}{}^{(-)} 
\widetilde{\rm ch}^{(-)}_{H(\Lambda^{[K(m), m_2]})})
(\tau, z) :=
\widetilde{\Phi}^{[\frac{m}{2}, \frac{m_2+1}{2}]}
\left(2\tau, z+\frac{\tau}{2}, z-\frac{\tau}{2}, \frac{\tau}{8}\right)
\nonumber
\\[1mm]
& &
(\overset{N=3}{R}{}^{(+){\rm tw}} 
\widetilde{\rm ch}^{(+){\rm tw}(\sigma_+)}_{H(\Lambda^{[K(m), m_2]})})
(\tau, z) :=
\widetilde{\Phi}^{[\frac{m}{2}, \frac{m_2+1}{2}]}
\left(2\tau, z+\tau-\frac12, z-\tau+\frac12, \frac{\tau}{2}\right)
\nonumber
\\[1mm]
& &
(\overset{N=3}{R}{}^{(+){\rm tw}(} 
\widetilde{\rm ch}^{(+){\rm tw}(\sigma_-)}_{H(\Lambda^{[K(m), m_2]})})
(\tau, z) :=
\widetilde{\Phi}^{[\frac{m}{2}, \frac{m_2+1}{2}]}
\left(2\tau, z-\frac12, z+\frac12, 0\right)
\label{n3:eqn:2022-112c}
\end{eqnarray}}

By Lemma \ref{n3:lemma:2022-111a} and Lemma \ref{n3:lemma:2022-111c} and 
the formula \eqref{n3:eqn:2022-111e}, these formulas are rewritten 
as follows :

\begin{prop} \,\ 
\label{n3:prop:2022-112a}
\begin{enumerate}
\item[{\rm 1)}] $(\overset{N=3}{R}{}^{(+)} 
\widetilde{\rm ch}^{(+)}_{H(\Lambda^{[K(m), m_2]})})(\tau, z) 
= \dfrac12 \left\{
\overset{\circ}{A}{}^{[m]}_2(\tau,z)
+e^{\pi i(m_2+1)} \overset{\circ}{A}{}^{[m]}_1(\tau,z)\right\}$

\item[{\rm 2)}] $(\overset{N=3}{R}{}^{(-)} 
\widetilde{\rm ch}^{(-)}_{H(\Lambda^{[K(m), m_2]})})(\tau, z) 
= \dfrac12 \left\{
\overset{\circ}{A}{}^{[m]}_4(\tau,z)
+e^{\pi i(m_2+1)} \overset{\circ}{A}{}^{[m]}_3(\tau,z)\right\}$

\item[{\rm 3)}] $(\overset{N=3}{R}{}^{(+){\rm tw}} 
\widetilde{\rm ch}^{(+){\rm tw}(\sigma_+)}_{H(\Lambda^{[K(m), m_2]})})
(\tau, z) 
= \dfrac12 \left\{1+e^{\pi i(m+m_2+1)}\right\} 
\overset{\circ}{A}{}^{[m]}_6(\tau,z)$

\item[{\rm 4)}] $(\overset{N=3}{R}{}^{(+){\rm tw}} 
\widetilde{\rm ch}^{(+){\rm tw}(\sigma_-)}_{H(\Lambda^{[K(m), m_2]})})
(\tau, z) 
= \dfrac12 \left\{1+e^{\pi i(m_2+1)}\right\} 
\overset{\circ}{A}{}^{[m]}_5(\tau,z)$
\end{enumerate}
\end{prop}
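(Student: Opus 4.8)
The plan is to start from the four defining equations \eqref{n3:eqn:2022-112c} and feed each of them through the relations \eqref{n3:eqn:2022-117d1}--\eqref{n3:eqn:2022-117d6}, which express $\widetilde{\Phi}^{[\frac{m}{2},s]}$ evaluated at exactly these four shifted arguments in terms of the $\overset{\circ}{A}{}^{[m]}_i$. The only mismatch is the second slot $s$: the relations are stated for $s=0$ and $s=\frac12$, whereas the characters carry $s=\frac{m_2+1}{2}$. So the first step is to reduce $s=\frac{m_2+1}{2}$ to one of these two representatives. By part 1) of Lemma \ref{lemman3:n3:2022-111a}, $\widetilde{\Phi}^{[m;s]}$ depends on $s$ only through $s \bmod \zzz$; hence $\widetilde{\Phi}^{[\frac{m}{2},\frac{m_2+1}{2}]}=\widetilde{\Phi}^{[\frac{m}{2},0]}$ when $m_2$ is odd and $=\widetilde{\Phi}^{[\frac{m}{2},\frac12]}$ when $m_2$ is even.

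For parts 1) and 2) I would then split into the two parities of $m_2$ and apply the appropriate pair among \eqref{n3:eqn:2022-117d1}--\eqref{n3:eqn:2022-117d4}. When $m_2$ is odd we get directly $\frac12\{\overset{\circ}{A}{}^{[m]}_2+\overset{\circ}{A}{}^{[m]}_1\}$ (resp.\ $\frac12\{\overset{\circ}{A}{}^{[m]}_4+\overset{\circ}{A}{}^{[m]}_3\}$), and since $e^{\pi i(m_2+1)}=1$ this is exactly the claimed right-hand side. When $m_2$ is even we get $\frac12\{\overset{\circ}{A}{}^{[m]}_2-\overset{\circ}{A}{}^{[m]}_1\}$ (resp.\ $\frac12\{\overset{\circ}{A}{}^{[m]}_4-\overset{\circ}{A}{}^{[m]}_3\}$), and now $e^{\pi i(m_2+1)}=-1$, so again the two cases are packaged by the single factor $e^{\pi i(m_2+1)}$ in front of $\overset{\circ}{A}{}^{[m]}_1$ (resp.\ $\overset{\circ}{A}{}^{[m]}_3$). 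Part 4) is immediate from \eqref{n3:eqn:2022-117d5}: that relation gives $\widetilde{\Phi}^{[\frac{m}{2},0]}(2\tau,z-\tfrac12,z+\tfrac12,0)=\overset{\circ}{A}{}^{[m]}_5$, valid for odd $m_2$; for even $m_2$ one needs the $s=\frac12$ analogue of \eqref{n3:eqn:2022-117d5}, and the ``$\frac12\{1+e^{\pi i(m_2+1)}\}$'' prefactor is exactly what records whether that analogue produces $\overset{\circ}{A}{}^{[m]}_5$ or $0$ — i.e.\ it is the $s$-dependent normalization hidden in the ``similar formula holds'' clauses.

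Part 3) is the one to watch, and I expect it to be the main technical point. The argument $(z+\tau-\tfrac12,\,z-\tau+\tfrac12,\,\tfrac{\tau}{2})$ appearing in the twisted modified character matches the left-hand side of \eqref{n3:eqn:2022-117d6}, which already carries the prefactor $\frac12\{1+(-1)^{m+2s}\}$. Substituting $s=\frac{m_2+1}{2}$ turns $(-1)^{m+2s}$ into $(-1)^{m+m_2+1}=e^{\pi i(m+m_2+1)}$, producing precisely $\frac12\{1+e^{\pi i(m+m_2+1)}\}\overset{\circ}{A}{}^{[m]}_6$. The subtlety is that \eqref{n3:eqn:2022-117d6} implicitly uses $2s\in\zzz$ and the reduction of $s\bmod\zzz$ together with \eqref{n3:eqn:2022-111e} (the $e^{-\pi im}$ twist relating $\widetilde{A}^{[m]}_6$ to the ``$+\tfrac14$'' version); I would verify that this $e^{-\pi im}$ factor and the parity prefactor are consistent, i.e.\ that no spurious sign is introduced when $m$ is odd. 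Once \eqref{n3:eqn:2022-117d1}--\eqref{n3:eqn:2022-117d6} are granted together with Lemma \ref{lemman3:n3:2022-111a}.1), the proof is just this case-by-case bookkeeping, so the write-up can be kept to a few lines per item.
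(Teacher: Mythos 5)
Your route is essentially the paper's own: the paper proves this proposition by invoking Lemma \ref{n3:lemma:2022-111a}, Lemma \ref{n3:lemma:2022-111c} and \eqref{n3:eqn:2022-111e}, and the relations \eqref{n3:eqn:2022-117d1}--\eqref{n3:eqn:2022-117d6} you feed the definitions \eqref{n3:eqn:2022-112c} through are precisely those three ingredients packaged for $s\in\{0,\tfrac12\}$. Your parity case-split on $m_2$ is just the unfolded version of carrying the factor $e^{-2\pi is}$ with $s=\tfrac{m_2+1}{2}$ through Lemma \ref{n3:lemma:2022-111c} in one stroke, and parts 1), 2) and 3) check out as you describe (including the $(-1)^{m+2s}$ bookkeeping in \eqref{n3:eqn:2022-117d6}).

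The one place your write-up stops short of a proof is part 4) for even $m_2$. There the claim is that $\widetilde{\Phi}^{[\frac{m}{2},\frac12]}\big(2\tau,z-\tfrac12,z+\tfrac12,0\big)=0$, and you justify it by saying the prefactor $\tfrac12\{1+e^{\pi i(m_2+1)}\}$ ``records'' whether the $s=\tfrac12$ analogue of \eqref{n3:eqn:2022-117d5} gives $\overset{\circ}{A}{}^{[m]}_5$ or $0$ --- but that is the statement to be proved, not a reason. The fix is one line: by Lemma \ref{n3:lemma:2022-111c}, $\widetilde{\Phi}^{[\frac{m}{2},\frac12]}\big(2\tau,z-\tfrac12,z+\tfrac12,0\big)=\tfrac12\big\{\widetilde{\Phi}^{[m,1]}\big(\tau,\tfrac{z}{2}-\tfrac14,\tfrac{z}{2}+\tfrac14,0\big)-\widetilde{\Phi}^{[m,1]}\big(\tau,\tfrac{z}{2}+\tfrac14,\tfrac{z}{2}-\tfrac14,0\big)\big\}$, and the two terms coincide by Lemma \ref{n3:lemma:2022-111a} parts 1) and 3) (the $z_1\leftrightarrow z_2$ symmetry of $\widetilde{\Phi}^{[m,s]}$ together with $\widetilde{\Phi}^{[m,1]}=\widetilde{\Phi}^{[m,0]}$), so the bracket vanishes. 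With that inserted, the proof is complete and matches the paper's.
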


\medskip

From these formulas we see that 
\begin{equation}
\begin{array}{lcl}
\widetilde{\rm ch}^{(+){\rm tw}(\sigma_+)}_{H(\Lambda^{[K(m), m_2]})})
(\tau,z)
&=& 0 \qquad \text{if} \quad m+m_2 \in 2\zzz
\\[3mm]
\widetilde{\rm ch}^{(+){\rm tw}(\sigma_-)}_{H(\Lambda^{[K(m), m_2]})})
(\tau,z)
&=& 0 \qquad \text{if} \quad m_2 \in 2\zzz
\end{array}
\label{n3:eqn:2022-112d}
\end{equation}
Then, putting 
\begin{equation}
p(m) \,\ := \,\ \left\{
\begin{array}{ccl}
1 & & {\rm if} \,\ m \in 2 \zzz \\[1mm]
0 & & {\rm if} \,\ m \in \zzz_{\rm odd}
\end{array}\right. \,\ ,
\label{n3:eqn:2022-112d2}
\end{equation}
we have
\begin{equation}\left\{
\begin{array}{lcl}
(\overset{N=3}{R}{}^{(+){\rm tw}} 
\widetilde{\rm ch}^{(+){\rm tw}(\sigma_+)}_{H(\Lambda^{[K(m), p(m)]})})
(\tau, z) &=&
\overset{\circ}{A}{}^{[m]}_6(\tau,z)
\\[2mm]
(\overset{N=3}{R}{}^{(+){\rm tw}} 
\widetilde{\rm ch}^{(+){\rm tw}(\sigma_-)}_{H(\Lambda^{[K(m), 1]})})
(\tau, z) &=&
\overset{\circ}{A}{}^{[m]}_5(\tau,z)
\end{array} \right. \,\ .
\label{n3:eqn:2022-112d3}
\end{equation}
Note also that
\begin{equation}\left\{
\begin{array}{lcc}
\widetilde{\rm ch}^{(\pm)}_{H(\Lambda^{[K(m), m_2]})}(\tau, z) 
&=&\widetilde{\rm ch}^{(\pm)}_{H(\Lambda^{[K(m), m'_2]})}(\tau, z)
\\[3mm]
\widetilde{\rm ch}^{(+){\rm tw}(\sigma_{\pm})}_{H(\Lambda^{[K(m), m_2]})}
(\tau,z) &=&
\widetilde{\rm ch}^{(+){\rm tw}(\sigma_{\pm})}_{H(\Lambda^{[K(m), m'_2]})}
(\tau,z)
\end{array}\right. \,\  \text{if} \,\ m_2-m'_2 \in 2\zzz \, .
\label{n3:eqn:2022-112e}
\end{equation}
So, among modified characters, we need to consider only the following 6 characters:
\begin{equation}\left\{
\begin{array}{l}
\widetilde{\rm ch}^{(\pm)}_{H(\Lambda^{[K(m), m_2]})}(\tau, z) \qquad 
m_2 \in \{0, 1\} \\[2mm]
\widetilde{\rm ch}^{(+){\rm tw}(\sigma_+)}_{H(\Lambda^{[K(m), p(m)]})}
(\tau,z), \quad 
\widetilde{\rm ch}^{(+){\rm tw}(\sigma_-)}_{H(\Lambda^{[K(m), 1]})}
(\tau,z)
\end{array}\right.
\label{n3:eqn:2022-112f}
\end{equation}
Then, by Proposition \ref{n3:prop:2022-112a}, we have

\begin{cor} \,\ 
\label{n3:cor:2022-112a}
\begin{enumerate}
\item[{\rm 1)}] $(\overset{N=3}{R}{}^{(+)} 
\widetilde{\rm ch}^{(+)}_{H(\Lambda^{[K(m), 0]})})(\tau, z) 
= \dfrac12 \left\{
\overset{\circ}{A}{}^{[m]}_2(\tau,z)
- \overset{\circ}{A}{}^{[m]}_1(\tau,z)\right\}$
\item[{\rm 2)}] $(\overset{N=3}{R}{}^{(+)} 
\widetilde{\rm ch}^{(+)}_{H(\Lambda^{[K(m), 1]})})(\tau, z) 
= \dfrac12 \left\{
\overset{\circ}{A}{}^{[m]}_2(\tau,z)
+ \overset{\circ}{A}{}^{[m]}_1(\tau,z)\right\}$
\item[{\rm 3)}] $(\overset{N=3}{R}{}^{(-)} 
\widetilde{\rm ch}^{(-)}_{H(\Lambda^{[K(m), 0]})})(\tau, z) 
= 
\dfrac12 \left\{
\overset{\circ}{A}{}^{[m]}_4(\tau,z)
- \overset{\circ}{A}{}^{[m]}_3(\tau,z)\right\}$
\item[{\rm 4)}] $(\overset{N=3}{R}{}^{(-)} 
\widetilde{\rm ch}^{(-)}_{H(\Lambda^{[K(m), 1]})})(\tau, z) 
= 
\dfrac12 \left\{
\overset{\circ}{A}{}^{[m]}_4(\tau,z)
+ \overset{\circ}{A}{}^{[m]}_3(\tau,z)\right\}$
\item[{\rm 5)}] $(\overset{N=3}{R}{}^{(+){\rm tw}} 
\widetilde{\rm ch}^{(+){\rm tw}(\sigma_+)}_{H(\Lambda^{[K(m), p(m)]})})
(\tau, z) 
= 
\overset{\circ}{A}{}^{[m]}_6(\tau,z)$
\item[{\rm 6)}] $(\overset{N=3}{R}{}^{(+){\rm tw}} 
\widetilde{\rm ch}^{(+){\rm tw}(\sigma_-)}_{H(\Lambda^{[K(m), 1]})})
(\tau, z) = \overset{\circ}{A}{}^{[m]}_5(\tau,z)$
\end{enumerate}
\end{cor}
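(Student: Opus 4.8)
The plan is to derive each of the six identities directly from Proposition \ref{n3:prop:2022-112a} by specializing the parameter $m_2$ and applying the normalization \eqref{n3:eqn:2022-108b} together with the vanishing observations \eqref{n3:eqn:2022-112d}. Concretely, for item 1) I set $m_2=0$ in Proposition \ref{n3:prop:2022-112a} 1); then $e^{\pi i(m_2+1)}=e^{\pi i}=-1$, and the right-hand side becomes $\frac12\{\overset{\circ}{A}{}^{[m]}_2(\tau,z)-\overset{\circ}{A}{}^{[m]}_1(\tau,z)\}$, which is exactly the claim. For item 2) I set $m_2=1$, so $e^{\pi i(m_2+1)}=e^{2\pi i}=1$, giving $\frac12\{\overset{\circ}{A}{}^{[m]}_2+\overset{\circ}{A}{}^{[m]}_1\}$. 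Items 3) and 4) are obtained in exactly the same way from Proposition \ref{n3:prop:2022-112a} 2), taking $m_2=0$ and $m_2=1$ respectively and using the same sign computation $e^{\pi i(m_2+1)}=\mp 1$.

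For items 5) and 6) the argument uses the definition of $p(m)$ in \eqref{n3:eqn:2022-112d2}. In item 5) I take $m_2=p(m)$, so that $m+m_2\in 2\zzz$ always: if $m$ is even then $p(m)=1$ is odd — wait, here I must be careful, since $m+m_2\in2\zzz$ requires $m_2\equiv m\pmod 2$, whereas $p(m)$ is defined as $1$ when $m$ is even. In fact the correct reading is that $m+p(m)$ is odd precisely when $m$ is even (then $m+1$ is odd), so $e^{\pi i(m+m_2+1)}=e^{\pi i(m+p(m)+1)}=e^{2\pi i k}=1$ in all cases: if $m$ is even, $m+p(m)+1=m+2\in2\zzz$; if $m$ is odd, $p(m)=0$ and $m+p(m)+1=m+1\in2\zzz$. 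Hence the coefficient $\frac12\{1+e^{\pi i(m+m_2+1)}\}$ equals $1$, and Proposition \ref{n3:prop:2022-112a} 3) gives $\overset{\circ}{A}{}^{[m]}_6(\tau,z)$. Similarly, in item 6) I take $m_2=1$, so $e^{\pi i(m_2+1)}=e^{2\pi i}=1$, the coefficient $\frac12\{1+e^{\pi i(m_2+1)}\}$ in Proposition \ref{n3:prop:2022-112a} 4) equals $1$, and the right-hand side is $\overset{\circ}{A}{}^{[m]}_5(\tau,z)$. Alternatively, items 5) and 6) are literally the already-recorded identities \eqref{n3:eqn:2022-112d3}, so one may simply cite those.

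Since every step is a one-line substitution into a previously established proposition, there is no genuine obstacle; the only thing to watch is the parity bookkeeping in the root-of-unity factors $e^{\pi i(m_2+1)}$ and $e^{\pi i(m+m_2+1)}$, in particular making sure the definition of $p(m)$ is applied with the correct congruence. The proof I would write is therefore short: state that the six formulas follow from Proposition \ref{n3:prop:2022-112a} by substituting $m_2\in\{0,1\}$ (and $m_2=p(m)$ in case 5)), evaluating the elementary exponentials $e^{\pi i(m_2+1)}=(-1)^{m_2+1}$ and $e^{\pi i(m+m_2+1)}=(-1)^{m+m_2+1}$, and invoking \eqref{n3:eqn:2022-112d3} for the twisted cases. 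No further computation is needed.
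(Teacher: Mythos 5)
Your proof is correct and follows exactly the paper's route: the corollary is stated there as an immediate consequence of Proposition \ref{n3:prop:2022-112a} obtained by substituting $m_2\in\{0,1\}$ (resp.\ $m_2=p(m)$ for the $\sigma_+$-twisted case) and evaluating the root-of-unity factors, which is precisely your computation. Your parity check that $m+p(m)+1$ is always even, so the coefficient $\frac12\{1+e^{\pi i(m+m_2+1)}\}$ equals $1$, is the only nontrivial point and you resolve it correctly, consistently with \eqref{n3:eqn:2022-112d3}.
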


\medskip

The modular transformation properties of these functions are obtained 
by Lemma \ref{n3:lemma:2022-108c} as follows:

\begin{lemma} \,\ 
\label{n3:lemma:2022-115a}
\begin{enumerate}
\item[{\rm 1)}] \,\ $S$-transformation :
\begin{enumerate}
\item[{\rm (i)}] \,\ $\big(\overset{N=3}{R}{}^{(+)} 
\widetilde{\rm ch}^{(+)}_{H(\Lambda^{[K(m), 0]})}\big)
\Big(-\dfrac{1}{\tau}, \dfrac{z}{\tau}\Big) 
\,\ = \,\ 
\dfrac{\tau}{2} \, e^{-\frac{\pi im}{4}}e^{\frac{\pi imz^2}{2\tau}} \, 
\Big\{$
$$\hspace{-10mm}
(1-e^{\frac{\pi im}{2}})
\big(\overset{N=3}{R}{}^{(+)} 
\widetilde{\rm ch}^{(+)}_{H(\Lambda^{[K(m), 1]})}\big)(\tau,z)
-
(e^{\frac{\pi im}{2}}+1)
\big(\overset{N=3}{R}{}^{(+)} 
\widetilde{\rm ch}^{(+)}_{H(\Lambda^{[K(m), 0]})}\big)(\tau,z)
\Big\}
$$
\item[{\rm (ii)}] \,\ $\big(\overset{N=3}{R}{}^{(+)} 
\widetilde{\rm ch}^{(+)}_{H(\Lambda^{[K(m), 1]})}\big)
\Big(-\dfrac{1}{\tau}, \dfrac{z}{\tau}\Big) 
\,\ = \,\ 
\dfrac{\tau}{2} \, e^{-\frac{\pi im}{4}} e^{\frac{\pi imz^2}{2\tau}} \, 
\Big\{ $
$$\hspace{-10mm}
(1+e^{\frac{\pi im}{2}})
\big(\overset{N=3}{R}{}^{(+)} 
\widetilde{\rm ch}^{(+)}_{H(\Lambda^{[K(m), 1]})}\big)(\tau,z)
+
(e^{\frac{\pi im}{2}}-1)
\big(\overset{N=3}{R}{}^{(+)} 
\widetilde{\rm ch}^{(+)}_{H(\Lambda^{[K(m), 0]})}\big)(\tau,z)
\Big\}
$$
\item[{\rm (iii)}] \,\ $\big(\overset{N=3}{R}{}^{(-)} 
\widetilde{\rm ch}^{(-)}_{H(\Lambda^{[K(m), 0]})}\big)
\Big(-\dfrac{1}{\tau}, \dfrac{z}{\tau}\Big) 
\,\ = \,\ 
\dfrac{\tau}{2} \, e^{\frac{\pi imz^2}{2\tau}} \, \Big\{ $
$$
\big(\overset{N=3}{R}{}^{(+){\rm tw}} 
\widetilde{\rm ch}^{(+){\rm tw}(\sigma_-)}_{H(\Lambda^{[K(m), 1]})}\big)
(\tau, z)
\, - \, e^{\frac{\pi im}{2}}
\big(\overset{N=3}{R}{}^{(+){\rm tw}} 
\widetilde{\rm ch}^{(+){\rm tw}(\sigma_+)}_{H(\Lambda^{[K(m), 1]})}\big)
(\tau, z)
\Big\}
$$
\item[{\rm (iv)}] \,\ $\big(\overset{N=3}{R}{}^{(-)} 
\widetilde{\rm ch}^{(-)}_{H(\Lambda^{[K(m), 1]})}\big)
\Big(-\dfrac{1}{\tau}, \dfrac{z}{\tau}\Big) 
\,\ = \,\ 
\dfrac{\tau}{2} \, e^{\frac{\pi imz^2}{2\tau}} \, \Big\{ $
$$
\big(\overset{N=3}{R}{}^{(+){\rm tw}} 
\widetilde{\rm ch}^{(+){\rm tw}(\sigma_-)}_{H(\Lambda^{[K(m), 1]})}\big)
(\tau, z)
\, + \, e^{\frac{\pi im}{2}}
\big(\overset{N=3}{R}{}^{(+){\rm tw}} 
\widetilde{\rm ch}^{(+){\rm tw}(\sigma_+)}_{H(\Lambda^{[K(m), 1]})}\big)
(\tau, z)
\Big\}
$$
\item[{\rm (v)}] \,\ $\big(\overset{N=3}{R}{}^{(+){\rm tw}} 
\widetilde{\rm ch}^{(+){\rm tw}(\sigma_+)}_{H(\Lambda^{[K(m), p(m)]})}\big)
\Big(-\dfrac{1}{\tau}, \dfrac{z}{\tau}\Big)$
$$
= \,\ 
\tau \, e^{\frac{\pi imz^2}{2\tau}} \, \Big\{
\big(\overset{N=3}{R}{}^{(-)} 
\widetilde{\rm ch}^{(-)}_{H(\Lambda^{[K(m), 1]})}\big)(\tau,z)
-
\big(\overset{N=3}{R}{}^{(-)} 
\widetilde{\rm ch}^{(-)}_{H(\Lambda^{[K(m), 0]})}\big)(\tau,z)
\Big\}
$$
\item[{\rm (vi)}] \,\ $\big(\overset{N=3}{R}{}^{(+){\rm tw}} 
\widetilde{\rm ch}^{(+){\rm tw}(\sigma_-)}_{H(\Lambda^{[K(m), 1]})}\big)
\Big(-\dfrac{1}{\tau}, \dfrac{z}{\tau}\Big)$
$$
= \,\ 
\tau \, e^{\frac{\pi imz^2}{2\tau}} \, \Big\{
\big(\overset{N=3}{R}{}^{(-)} 
\widetilde{\rm ch}^{(-)}_{H(\Lambda^{[K(m), 1]})}\big)(\tau,z)
+
\big(\overset{N=3}{R}{}^{(-)} 
\widetilde{\rm ch}^{(-)}_{H(\Lambda^{[K(m), 0]})}\big)(\tau,z)
\Big\}
$$
\end{enumerate}

\item[{\rm 2)}] \,\ $T$-transformation :
\begin{enumerate}
\item[{\rm (i)}] \,\ $\big(\overset{N=3}{R}{}^{(+)} 
\widetilde{\rm ch}^{(+)}_{H(\Lambda^{[K(m), 0]})}\big)(\tau+1,z)
\,\ = \,\ 
e^{-\frac{\pi im}{8}} \, 
(\overset{N=3}{R}{}^{(-)} 
\widetilde{\rm ch}^{(-)}_{H(\Lambda^{[K(m), 0]})})(\tau,z)$
\item[{\rm (ii)}] \,\ $\big(\overset{N=3}{R}{}^{(+)} 
\widetilde{\rm ch}^{(+)}_{H(\Lambda^{[K(m), 1]})}\big)(\tau+1,z)
\,\ = \,\ 
e^{-\frac{\pi im}{8}} \, 
(\overset{N=3}{R}{}^{(-)} 
\widetilde{\rm ch}^{(-)}_{H(\Lambda^{[K(m), 1]})})(\tau,z)$
\item[{\rm (iii)}] \,\ $\big(\overset{N=3}{R}{}^{(-)} 
\widetilde{\rm ch}^{(-)}_{H(\Lambda^{[K(m), 0]})}\big)(\tau+1,z)
\,\ = \,\ 
- \, e^{-\frac{\pi im}{8}} \, 
(\overset{N=3}{R}{}^{(+)} 
\widetilde{\rm ch}^{(+)}_{H(\Lambda^{[K(m), 0]})})(\tau,z)$
\item[{\rm (iv)}] \,\ $\big(\overset{N=3}{R}{}^{(-)} 
\widetilde{\rm ch}^{(-)}_{H(\Lambda^{[K(m), 1]})}\big)(\tau+1,z)
\,\ = \,\ 
e^{-\frac{\pi im}{8}} \, 
(\overset{N=3}{R}{}^{(+)} 
\widetilde{\rm ch}^{(+)}_{H(\Lambda^{[K(m), 1]})})(\tau,z)$
\item[{\rm (v)}] \,\ $\big(\overset{N=3}{R}{}^{(+){\rm tw}} 
\widetilde{\rm ch}^{(+){\rm tw}(\sigma_+)}_{H(\Lambda^{[K(m), p(m)]})}\big)
(\tau+1,z)$
$$
= \,\ 
e^{\frac{\pi im}{2}} \, 
\big(\overset{N=3}{R}{}^{(+){\rm tw}} 
\widetilde{\rm ch}^{(+){\rm tw}(\sigma_+)}_{H(\Lambda^{[K(m), p(m)]})}\big)
(\tau,z)
$$
\item[{\rm (vi)}] \,\ $\big(\overset{N=3}{R}{}^{(+){\rm tw}} 
\widetilde{\rm ch}^{(+){\rm tw}(\sigma_-)}_{H(\Lambda^{[K(m), 1]})}\big)
(\tau+1,z)
\,\ = \,\ 
\big(\overset{N=3}{R}{}^{(+){\rm tw}} 
\widetilde{\rm ch}^{(+){\rm tw}(\sigma_-)}_{H(\Lambda^{[K(m), 1]})}\big)
(\tau,z)$
\end{enumerate}
\end{enumerate}
\end{lemma}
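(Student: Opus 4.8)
The plan is to derive Lemma \ref{n3:lemma:2022-115a} directly from Corollary \ref{n3:cor:2022-112a}, which expresses all six relevant (super/twisted) numerators in terms of the functions $\overset{\circ}{A}{}^{[m]}_i(\tau,z)$, together with the modular transformation formulas for the $\overset{\circ}{A}{}^{[m]}_i$ collected in Lemma \ref{n3:lemma:2022-108c}. In other words, each assertion is a purely formal substitution: there is no analytic content beyond what is already packaged in Lemma \ref{n3:lemma:2022-108c}, and the task is bookkeeping with the $\pm$ signs and the $e^{\pi i m \tau}$-type prefactors.

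First I would handle the $S$-transformation parts. For item 1)(i), I start from $\overset{N=3}{R}{}^{(+)}\widetilde{\rm ch}^{(+)}_{H(\Lambda^{[K(m),0]})} = \tfrac12(\overset{\circ}{A}{}^{[m]}_2 - \overset{\circ}{A}{}^{[m]}_1)$ by Corollary \ref{n3:cor:2022-112a} 1), apply $\overset{\circ}{A}{}^{[m]}_2(-1/\tau, z/\tau) = e^{-\pi i m/4}\tau e^{\pi i m z^2/(2\tau)}\overset{\circ}{A}{}^{[m]}_1(\tau,z)$ and $\overset{\circ}{A}{}^{[m]}_1(-1/\tau, z/\tau) = e^{\pi i m/4}\tau e^{\pi i m z^2/(2\tau)}\overset{\circ}{A}{}^{[m]}_2(\tau,z)$ from Lemma \ref{n3:lemma:2022-108c} 1)(i)--(ii), and then re-express the resulting combination $\tfrac12 e^{\pi i m z^2/(2\tau)}\tau(e^{-\pi i m/4}\overset{\circ}{A}{}^{[m]}_1 - e^{\pi i m/4}\overset{\circ}{A}{}^{[m]}_2)$ back in terms of the characters using Corollary \ref{n3:cor:2022-112a} 1)--2): since $\overset{\circ}{A}{}^{[m]}_1 = (\overset{N=3}{R}{}^{(+)}\widetilde{\rm ch}^{(+)}_{H(\Lambda^{[K(m),1]})}) - (\overset{N=3}{R}{}^{(+)}\widetilde{\rm ch}^{(+)}_{H(\Lambda^{[K(m),0]})})$ and $\overset{\circ}{A}{}^{[m]}_2 = (\overset{N=3}{R}{}^{(+)}\widetilde{\rm ch}^{(+)}_{H(\Lambda^{[K(m),1]})}) + (\overset{N=3}{R}{}^{(+)}\widetilde{\rm ch}^{(+)}_{H(\Lambda^{[K(m),0]})})$, one collects the coefficients of each character and factors out $e^{-\pi i m/4}$ to match the stated form. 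Items 1)(ii)--(vi) are identical in spirit: (ii) uses the same two $S$-formulas with the $+$ combination; (iii)--(iv) use Lemma \ref{n3:lemma:2022-108c} 1)(iii)--(iv) and (vi) relating $\overset{\circ}{A}{}^{[m]}_{3,4}$ to $\overset{\circ}{A}{}^{[m]}_{6,5}$, re-expressing the latter via Corollary \ref{n3:cor:2022-112a} 5)--6) (the twisted characters); (v) and (vi) are the inverse substitutions, reading Lemma \ref{n3:lemma:2022-108c} 1)(iv)--(vi) and (v) in the direction $\overset{\circ}{A}{}^{[m]}_{5,6}(-1/\tau, z/\tau) \to \overset{\circ}{A}{}^{[m]}_{4,3}(\tau,z)$ and then invoking Corollary \ref{n3:cor:2022-112a} 3)--4).

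Next I would do the $T$-transformation parts, which are even shorter: starting again from Corollary \ref{n3:cor:2022-112a}, substitute $\tau \to \tau+1$ and apply Lemma \ref{n3:lemma:2022-108c} 2)(i)--(vi). For instance, $(\overset{N=3}{R}{}^{(+)}\widetilde{\rm ch}^{(+)}_{H(\Lambda^{[K(m),0]})})(\tau+1,z) = \tfrac12(\overset{\circ}{A}{}^{[m]}_2(\tau+1,z) - \overset{\circ}{A}{}^{[m]}_1(\tau+1,z)) = \tfrac12 e^{-\pi i m/8}(\overset{\circ}{A}{}^{[m]}_4(\tau,z) - \overset{\circ}{A}{}^{[m]}_3(\tau,z)) = e^{-\pi i m/8}(\overset{N=3}{R}{}^{(-)}\widetilde{\rm ch}^{(-)}_{H(\Lambda^{[K(m),0]})})(\tau,z)$ by Corollary \ref{n3:cor:2022-112a} 3), which is exactly 2)(i). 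The one place demanding a little care is 2)(iii): applying $T$ to $\tfrac12(\overset{\circ}{A}{}^{[m]}_4 - \overset{\circ}{A}{}^{[m]}_3)$ gives $\tfrac12 e^{-\pi i m/8}(\overset{\circ}{A}{}^{[m]}_1 - \overset{\circ}{A}{}^{[m]}_2) = -\tfrac12 e^{-\pi i m/8}(\overset{\circ}{A}{}^{[m]}_2 - \overset{\circ}{A}{}^{[m]}_1)$, hence the minus sign in front; the asymmetry between (iii) and (iv) is traced to the fact that the $m_2 = 0$ combination is antisymmetric under swapping $\overset{\circ}{A}{}^{[m]}_1 \leftrightarrow \overset{\circ}{A}{}^{[m]}_2$ whereas the $m_2 = 1$ combination is symmetric. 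Items (v)--(vi) are immediate from Lemma \ref{n3:lemma:2022-108c} 2)(v)--(vi) and Corollary \ref{n3:cor:2022-112a} 5)--6).

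The only genuine ``obstacle'' is sign-and-phase bookkeeping — matching the coefficients $(1 \pm e^{\pi i m/2})$ and the overall $e^{-\pi i m/4}$ prefactors in the $S$-formulas 1)(i)--(iv), which requires keeping track of which of $\overset{\circ}{A}{}^{[m]}_1, \overset{\circ}{A}{}^{[m]}_2$ picks up $e^{\pm \pi i m/4}$ and then correctly inverting the $2\times 2$ change of basis between $\{\overset{\circ}{A}{}^{[m]}_1, \overset{\circ}{A}{}^{[m]}_2\}$ and $\{\widetilde{\rm ch}^{(+)}_{H(\Lambda^{[K(m),0]})}, \widetilde{\rm ch}^{(+)}_{H(\Lambda^{[K(m),1]})}\}$ (and similarly for the $\overset{\circ}{A}{}^{[m]}_{3,4}$ versus $\overset{\circ}{A}{}^{[m]}_{5,6}$ blocks). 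Since all of these are constant $2\times 2$ linear-algebra manipulations over $\ccc$, the verification is routine and I would simply carry it out case by case, so no new idea is needed beyond Lemma \ref{n3:lemma:2022-108c} and Corollary \ref{n3:cor:2022-112a}.
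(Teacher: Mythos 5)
Your proposal is correct and is exactly the paper's (unwritten) argument: the lemma is obtained by substituting the expressions of Corollary \ref{n3:cor:2022-112a} into the $S$- and $T$-formulas of Lemma \ref{n3:lemma:2022-108c} and inverting the constant $2\times 2$ change of basis, and your sample computations (e.g.\ for 1)(i) and 2)(i),(iii)) carry the phases correctly. One caveat: if you execute 1)(v) faithfully, Lemma \ref{n3:lemma:2022-108c} 1)(vi) produces $\overset{\circ}{A}{}^{[m]}_6\bigl(-\tfrac{1}{\tau},\tfrac{z}{\tau}\bigr)=e^{-\frac{\pi i m}{2}}\,\tau\, e^{\frac{\pi i m z^2}{2\tau}}\,\overset{\circ}{A}{}^{[m]}_3(\tau,z)$, so the right-hand side of 1)(v) should carry an extra factor $e^{-\frac{\pi i m}{2}}$ which is absent from the stated lemma (for $m=2$ this is the sign visible in Lemma \ref{n3:lemma:2022-113a} 1)(vi) and Corollary \ref{n3:cor:2022-108a} 1)(vi)); this is a slip in the statement rather than in your method.
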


\medskip

We note that the denominators of the N=3 SCA defined by 
\eqref{n3:eqn:2022-115a} satisfy the following modular 
transformation properties:
\begin{subequations}
{\allowdisplaybreaks
\begin{eqnarray}
& & \left\{
\begin{array}{lcl}
\overset{N=3}{R}{}^{(+)}\Big(-\dfrac{1}{\tau}, \dfrac{z}{\tau}\Big) &=& 
- \, \tau \, \overset{N=3}{R}{}^{(+)}(\tau, z)
\\[3mm]
\overset{N=3}{R}{}^{(-)}\Big(-\dfrac{1}{\tau}, \dfrac{z}{\tau}\Big) 
&=& 
- \, \tau \, \overset{N=3}{R}{}^{(+){\rm tw}}(\tau, z)
\\[3mm]
\overset{N=3}{R}{}^{(+){\rm tw}}\Big(-\dfrac{1}{\tau}, \dfrac{z}{\tau}\Big) 
&=&
- \, \tau \, \overset{N=3}{R}{}^{(-)}(\tau, z)
\end{array}\right.
\label{n3:eqn:2022-115b}
\\[1mm]
& & \left\{
\begin{array}{lcl}
\overset{N=3}{R}{}^{(+)}(\tau+1, \, z) 
&=& 
e^{\frac{11\pi i}{24}} \, \overset{N=3}{R}{}^{(-)}(\tau, z)
\\[1mm]
\overset{N=3}{R}{}^{(-)}(\tau+1, \, z)
&=& 
e^{\frac{11\pi i}{24}} \, \overset{N=3}{R}{}^{(+)}(\tau,z)
\\[1mm]
\overset{N=3}{R}{}^{(+){\rm tw}}(\tau+1, \, z)
&=& 
e^{\frac{\pi i}{12}} \, \overset{N=3}{R}{}^{(+){\rm tw}}(\tau,z)
\end{array}\right.
\label{n3:eqn:2022-115c}
\end{eqnarray}}
\end{subequations}
Then by these formulas \eqref{n3:eqn:2022-115b} and \eqref{n3:eqn:2022-115c}
and by Lemma \ref{n3:lemma:2022-115a}, we obtain the modular transformation 
properties of the characters as follows : 

\begin{prop} \,\ 
\label{n3:prop:2022-115a}
\begin{enumerate}
\item[{\rm 1)}] \,\ $S$-transformation  :
\begin{enumerate}
\item[{\rm (i)}] \,\ $
\widetilde{\rm ch}^{(+)}_{H(\Lambda^{[K(m), 0]})}
\Big(-\dfrac{1}{\tau}, \dfrac{z}{\tau}\Big)
\,\ = \,\ 
- \dfrac12 \, e^{-\frac{\pi im}{4}} \, e^{\frac{\pi imz^2}{2\tau}} 
\, \Big\{ $
$$
(1-e^{\frac{\pi im}{2}}) \, 
\widetilde{\rm ch}^{(+)}_{H(\Lambda^{[K(m), 1]})}(\tau,z)
\, - \, 
(e^{\frac{\pi im}{2}}+1) \,  \, 
\widetilde{\rm ch}^{(+)}_{H(\Lambda^{[K(m), 0]})}(\tau,z)\Big\}
$$
\item[{\rm (ii)}] \,\ $
\widetilde{\rm ch}^{(+)}_{H(\Lambda^{[K(m), 1]})}
\Big(-\dfrac{1}{\tau}, \dfrac{z}{\tau}\Big)
\,\ = \,\ 
- \dfrac12 \, e^{-\frac{\pi im}{4}} \, e^{\frac{\pi imz^2}{2\tau}} 
\, \Big\{ $
$$
(1+e^{\frac{\pi im}{2}}) \, 
\widetilde{\rm ch}^{(+)}_{H(\Lambda^{[K(m), 1]})}(\tau,z)
\, + \, 
(e^{\frac{\pi im}{2}}-1) \,  \, 
\widetilde{\rm ch}^{(+)}_{H(\Lambda^{[K(m), 0]})}(\tau,z)\Big\}
$$
\item[{\rm (iii)}] \,\ $
\widetilde{\rm ch}^{(-)}_{H(\Lambda^{[K(m), 0]})}
\Big(-\dfrac{1}{\tau}, \dfrac{z}{\tau}\Big)$
$$
= \,\ 
- \, \dfrac12 \, e^{\frac{\pi imz^2}{2\tau}} \, \Big\{
\widetilde{\rm ch}^{(+){\rm tw}(\sigma_-)}_{H(\Lambda^{[K(m), 1]})}(\tau, z)
\, - \, e^{\frac{\pi im}{2}} \, 
\widetilde{\rm ch}^{(+){\rm tw}(\sigma_+)}_{H(\Lambda^{[K(m), p(m)]})}(\tau, z)
\Big\} 
$$
\item[{\rm (iv)}] \,\ $
\widetilde{\rm ch}^{(-)}_{H(\Lambda^{[K(m), 1]})}
\Big(-\dfrac{1}{\tau}, \dfrac{z}{\tau}\Big)$
$$
= \,\ 
- \, \dfrac12 \, e^{\frac{\pi imz^2}{2\tau}} \, \Big\{
\widetilde{\rm ch}^{(+){\rm tw}(\sigma_-)}_{H(\Lambda^{[K(m), 1]})}(\tau, z)
\, + \, e^{\frac{\pi im}{2}} \, 
\widetilde{\rm ch}^{(+){\rm tw}(\sigma_+)}_{H(\Lambda^{[K(m), p(m)]})}(\tau, z)
\Big\}
$$
\item[{\rm (v)}] \,\ $
\widetilde{\rm ch}^{(+){\rm tw}(\sigma_+)}_{H(\Lambda^{[K(m), p(m)]})}
\Big(-\dfrac{1}{\tau}, \dfrac{z}{\tau}\Big)$
$$
= \,\ 
e^{\frac{\pi imz^2}{2\tau}} \, \Big\{
\widetilde{\rm ch}^{(-)}_{H(\Lambda^{[K(m), 0]})}(\tau,z)
\,\ - \,\ 
\widetilde{\rm ch}^{(-)}_{H(\Lambda^{[K(m), 1]})}(\tau,z)
\Big\}
$$
\item[{\rm (vi)}] \,\ $
\widetilde{\rm ch}^{(+){\rm tw}(\sigma_-)}_{H(\Lambda^{[K(m), 1]})}
\Big(-\dfrac{1}{\tau}, \dfrac{z}{\tau}\Big)$
$$
= \,\ 
- \, e^{\frac{\pi imz^2}{2\tau}} \, \Big\{
\widetilde{\rm ch}^{(-)}_{H(\Lambda^{[K(m), 0]})}(\tau,z)
\,\ + \,\ 
\widetilde{\rm ch}^{(-)}_{H(\Lambda^{[K(m), 1]})}(\tau,z)
\Big\}
$$
\end{enumerate}

\item[{\rm 2)}] \,\ $T$-transformation  :
\begin{enumerate}
\item[{\rm (i)}] \quad $\widetilde{\rm ch}^{(+)}_{H(\Lambda^{[K(m), 0]})}
(\tau+1, \, z)
\,\ = \,\ 
e^{-(\frac{m}{8}+ \frac{11}{24}) \, \pi i} \, 
\widetilde{\rm ch}^{(-)}_{H(\Lambda^{[K(m), 0]})}(\tau,z)$
\item[{\rm (ii)}] \quad $\widetilde{\rm ch}^{(+)}_{H(\Lambda^{[K(m), 1]})}
(\tau+1, \, z)
\,\ = \,\ 
e^{-(\frac{m}{8}+ \frac{11}{24}) \, \pi i} \, 
\widetilde{\rm ch}^{(-)}_{H(\Lambda^{[K(m), 1]})}(\tau,z)$
\item[{\rm (iii)}] \quad $\widetilde{\rm ch}^{(-)}_{H(\Lambda^{[K(m), 0]})}
(\tau+1, \, z)
\,\ = \,\ 
- \, e^{-(\frac{m}{8}+ \frac{11}{24}) \, \pi i} \, 
\widetilde{\rm ch}^{(+)}_{H(\Lambda^{[K(m), 0]})}(\tau,z)$
\item[{\rm (iv)}] \quad $\widetilde{\rm ch}^{(-)}_{H(\Lambda^{[K(m), 1]})}
(\tau+1, \, z)
\,\ = \,\ 
e^{-(\frac{m}{8}+ \frac{11}{24}) \, \pi i} \, 
\widetilde{\rm ch}^{(+)}_{H(\Lambda^{[K(m), 1]})}(\tau,z)$
\item[{\rm (v)}] \quad 
$\widetilde{\rm ch}^{(+){\rm tw} (\sigma_+)}_{H(\Lambda^{[K(m), p(m)]})}
(\tau+1, \, z)
\,\ = \,\ 
e^{(\frac{m}{2}- \frac{1}{12}) \, \pi i} \, 
\widetilde{\rm ch}^{(+){\rm tw}(\sigma_+)}_{H(\Lambda^{[K(m), p(m)]})}(\tau,z)$
\item[{\rm (vi)}] \quad 
$\widetilde{\rm ch}^{(+){\rm tw} (\sigma_-)}_{H(\Lambda^{[K(m), 1]})}
(\tau+1, \, z)
\,\ = \,\ 
e^{-\frac{\pi i}{12}} \, 
\widetilde{\rm ch}^{(+){\rm tw}(\sigma_-)}_{H(\Lambda^{[K(m), 1]})}(\tau,z)$
\end{enumerate}
\end{enumerate}
\end{prop}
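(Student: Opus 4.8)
The plan is to deduce every formula in Proposition \ref{n3:prop:2022-115a} by dividing the corresponding ``numerator'' transformation already recorded in Lemma \ref{n3:lemma:2022-115a} by the appropriate denominator transformation of the N=3 (super/twisted) denominator --- \eqref{n3:eqn:2022-115b} for the $S$-transformation and \eqref{n3:eqn:2022-115c} for the $T$-transformation. For each of the six characters $\widetilde{\rm ch}$, with associated denominator $\overset{N=3}{R}$ (that is, $\overset{N=3}{R}{}^{(+)}$ for $\widetilde{\rm ch}^{(+)}$, $\overset{N=3}{R}{}^{(-)}$ for $\widetilde{\rm ch}^{(-)}$, and $\overset{N=3}{R}{}^{(+){\rm tw}}$ for the $\sigma_{\pm}$-twisted characters), I would write $\widetilde{\rm ch}(-1/\tau,z/\tau) = (\overset{N=3}{R}\,\widetilde{\rm ch})(-1/\tau,z/\tau)\big/\overset{N=3}{R}(-1/\tau,z/\tau)$ and $\widetilde{\rm ch}(\tau+1,z) = (\overset{N=3}{R}\,\widetilde{\rm ch})(\tau+1,z)\big/\overset{N=3}{R}(\tau+1,z)$, substitute Lemma \ref{n3:lemma:2022-115a} into the numerator and \eqref{n3:eqn:2022-115b} resp.\ \eqref{n3:eqn:2022-115c} into the denominator, and check that the $\overset{N=3}{R}$-factors cancel.

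For the $S$-transformation the point is that \eqref{n3:eqn:2022-115b} sends $\overset{N=3}{R}{}^{(+)}$ to $-\tau\,\overset{N=3}{R}{}^{(+)}$ and interchanges $\overset{N=3}{R}{}^{(-)}$ and $\overset{N=3}{R}{}^{(+){\rm tw}}$ up to the factor $-\tau$ --- precisely the swap visible on the right-hand sides of Lemma \ref{n3:lemma:2022-115a} 1), where parts (i),(ii) carry the common factor $\overset{N=3}{R}{}^{(+)}(\tau,z)$, parts (iii),(iv) carry $\overset{N=3}{R}{}^{(+){\rm tw}}(\tau,z)$, and parts (v),(vi) carry $\overset{N=3}{R}{}^{(-)}(\tau,z)$. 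In each case these factors cancel, the prefactor $\tau/2$ (resp.\ $\tau$) of Lemma \ref{n3:lemma:2022-115a} divided by $-\tau$ yields the overall $-1/2$ in (i)--(iv) and, in (v),(vi), an overall $-1$ which becomes $+1$ in (v) after exchanging the two summands, while the Gaussian factor $e^{\pi imz^2/2\tau}$ and the phases $e^{\pm\pi im/4},e^{\pi im/2}$ are carried over unchanged; this gives 1)(i)--(vi). The index $p(m)$ rather than a fixed value of $m_2$ on the right of 1)(iii)--(v) is the correct one because, by Corollary \ref{n3:cor:2022-112a} 5), $(\overset{N=3}{R}{}^{(+){\rm tw}}\widetilde{\rm ch}^{(+){\rm tw}(\sigma_+)}_{H(\Lambda^{[K(m),p(m)]})})(\tau,z) = \overset{\circ}{A}{}^{[m]}_6(\tau,z)$ independently of the parity of $m$.

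For the $T$-transformation I would use \eqref{n3:eqn:2022-115c}, which swaps $\overset{N=3}{R}{}^{(+)}$ and $\overset{N=3}{R}{}^{(-)}$ with the factor $e^{11\pi i/24}$ and fixes $\overset{N=3}{R}{}^{(+){\rm tw}}$ with the factor $e^{\pi i/12}$; these match the denominators occurring in Lemma \ref{n3:lemma:2022-115a} 2) after the $(+)\leftrightarrow(-)$ interchange, so once they cancel only the scalars remain: $e^{-\pi im/8}/e^{11\pi i/24} = e^{-(\frac m8+\frac{11}{24})\pi i}$ in (i)--(iv) (carrying the extra minus sign of (iii) along), $e^{\pi im/2}/e^{\pi i/12} = e^{(\frac m2-\frac1{12})\pi i}$ in (v), and $1/e^{\pi i/12} = e^{-\pi i/12}$ in (vi). There is no real difficulty here; the only delicate part is the bookkeeping --- keeping straight which denominator goes to which under $S$ and $T$ (the interchanges $\overset{N=3}{R}{}^{(-)}\leftrightarrow\overset{N=3}{R}{}^{(+){\rm tw}}$ under $S$ and $\overset{N=3}{R}{}^{(+)}\leftrightarrow\overset{N=3}{R}{}^{(-)}$ under $T$) and assembling the overall sign ($-\tau$ versus $\tau$) with the scalar phases so that they combine into the factors as stated.
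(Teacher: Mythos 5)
Your proposal is correct and is precisely the paper's own argument: the paper derives Proposition \ref{n3:prop:2022-115a} by combining Lemma \ref{n3:lemma:2022-115a} for the numerators with the denominator transformations \eqref{n3:eqn:2022-115b} and \eqref{n3:eqn:2022-115c}, exactly as you do, and your bookkeeping of the cancellations, signs, and phase factors (including the role of $p(m)$ via Corollary \ref{n3:cor:2022-112a}) checks out.
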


\begin{rem}
\label{n3:rem:2022-625a}
From this Proposition \ref{n3:prop:2022-115a} we can know 
all of modified (super)characters if we know only one of them, 
except in the case $m \in 4\nnn$. 
\end{rem}

We note that the cantral charge $c(m)$ of the N=3 module 
$H(\Lambda^{[K(m), m_2]})$ is 
\begin{equation}
c(m) \, = \,\ -6K(m)-\frac72 \,\ = \, \frac{3m-1}{2}
\label{n3:eqn:2022-122a}
\end{equation}

\section{Modified characters in the case $m=2$}
\label{sec:m2:modified}

In this section, we deduce the explicit formulas 
for the modified characters in the case $m=2$, namely 
for the N=3 modules $H(\Lambda^{[-1,m_2]})$ \,\ $(m_2=0,1,2)$ 
of central charage $c(2)=\frac52$.
By \eqref{n3:eqn:2022-112f}, we need to consider
the following 6 characters in this case:
$$
\widetilde{\rm ch}^{(\pm)}_{H(\Lambda^{[-1, 0]})}, \,\ 
\widetilde{\rm ch}^{(\pm)}_{H(\Lambda^{[-1, 1]})}, \,\ 
\widetilde{\rm ch}^{(+){\rm tw}(\sigma_{\pm})}_{H(\Lambda^{[-1, 1]})}
$$
The numerators of these characters are written by $
\overset{\circ}{A}{}^{[2]}_j(\tau, z)$ which, 
by Proposition \ref{n3:prop:2022-115a}, 
satisfy the following modular transformation properties:

\begin{lemma}
\label{n3:lemma:2022-113a}
\begin{enumerate}
\item[{\rm 1)}] \,\ $S$-transformation \,\ :
\begin{enumerate}
\item[{\rm (i)}] \quad $\overset{\circ}{A}{}^{[2]}_1\Big(-\dfrac{1}{\tau}, \dfrac{z}{\tau}\Big) 
\,\ = \,\ 
i \, \tau \, e^{\frac{\pi iz^2}{\tau}} \, \overset{\circ}{A}{}^{[2]}_2(\tau,z)$
\item[{\rm (ii)}] \quad $\overset{\circ}{A}{}^{[2]}_2\Big(-\dfrac{1}{\tau}, \dfrac{z}{\tau}\Big) 
\,\ = \,\ 
- \, i \, \tau \, e^{\frac{\pi iz^2}{\tau}} \, \overset{\circ}{A}{}^{[2]}_1(\tau,z)$
\item[{\rm (iii)}] \quad $\overset{\circ}{A}{}^{[2]}_3\Big(-\dfrac{1}{\tau}, \dfrac{z}{\tau}\Big) 
\,\ = \,\ 
- \, \tau \, e^{\frac{\pi iz^2}{\tau}} \, \overset{\circ}{A}{}^{[2]}_6(\tau,z)$
\item[{\rm (iv)}] \quad $\overset{\circ}{A}{}^{[2]}_4\Big(-\dfrac{1}{\tau}, \dfrac{z}{\tau}\Big) 
\,\ = \,\ 
\tau \, e^{\frac{\pi iz^2}{\tau}} \, \overset{\circ}{A}{}^{[2]}_5(\tau,z)$
\item[{\rm (v)}] \quad $\overset{\circ}{A}{}^{[2]}_5\Big(-\dfrac{1}{\tau}, \dfrac{z}{\tau}\Big) 
\,\ = \,\ 
\tau \, e^{\frac{\pi iz^2}{\tau}} \, \overset{\circ}{A}{}^{[2]}_4(\tau,z)$
\item[{\rm (vi)}] \quad $\overset{\circ}{A}{}^{[2]}_6\Big(-\dfrac{1}{\tau}, \dfrac{z}{\tau}\Big) 
\,\ = \,\ 
- \, \tau \, e^{\frac{\pi iz^2}{\tau}} \, \overset{\circ}{A}{}^{[2]}_3(\tau,z)$
\end{enumerate}

\item[{\rm 2)}] \,\ $T$-transformation \,\ :

\begin{enumerate}
\item[{\rm (i)}] \quad $\overset{\circ}{A}{}^{[2]}_1(\tau+1, \, z) \,\ = \,\ 
e^{-\frac{\pi i}{4}} \, \overset{\circ}{A}{}^{[2]}_3(\tau, \, z)$
\item[{\rm (ii)}] \quad $\overset{\circ}{A}{}^{[2]}_2(\tau+1, \, z) \,\ = \,\ 
e^{-\frac{\pi i}{4}} \, \overset{\circ}{A}{}^{[2]}_4(\tau, \, z)$
\item[{\rm (iii)}] \quad $\overset{\circ}{A}{}^{[2]}_3(\tau+1, \, z) \,\ = \,\ 
e^{-\frac{\pi i}{4}} \, \overset{\circ}{A}{}^{[2]}_2(\tau, \, z)$
\item[{\rm (iv)}] \quad $\overset{\circ}{A}{}^{[2]}_4(\tau+1, \, z) \,\ = \,\ 
e^{-\frac{\pi i}{4}} \, \overset{\circ}{A}{}^{[2]}_1(\tau, \, z)$
\item[{\rm (v)}] \quad $\overset{\circ}{A}{}^{[2]}_5(\tau+1, \, z) \,\ = \,\ 
\overset{\circ}{A}_5(\tau, \, z)$
\item[{\rm (vi)}] \quad $\overset{\circ}{A}{}^{[2]}_6(\tau+1, \, z) \,\ = \,\ 
- \, \overset{\circ}{A}{}^{[2]}_6(\tau, \, z)$
\end{enumerate}
\end{enumerate}
\end{lemma}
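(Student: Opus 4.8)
The statement is the specialization to $m=2$ of the general modular data recorded in Lemma~\ref{n3:lemma:2022-108c} (equivalently, the $\overset{\circ}{A}$-input used to prove Proposition~\ref{n3:prop:2022-115a}), so no new mechanism is required: the entire task is to substitute $m=2$ and evaluate the roots of unity that appear as automorphy constants. The plan is therefore to recall the general-$m$ formulas, set $m=2$, and simplify.

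\textbf{Step 1: recall the general shape.} From Lemma~\ref{n3:lemma:2022-108c}, the $S$-transformation permutes the six functions by the involution $1\leftrightarrow2$, $3\leftrightarrow6$, $4\leftrightarrow5$, attaching the common automorphy factor $\tau\,e^{\pi i m z^{2}/(2\tau)}$ together with the scalar $e^{\pi i m/4}$ (resp.\ $e^{-\pi i m/4}$) for $i=1$ (resp.\ $i=2$), the scalar $e^{\pi i m/2}$ (resp.\ $e^{-\pi i m/2}$) for $i=3$ (resp.\ $i=6$), and $1$ for $i=4,5$; the $T$-transformation realizes the $4$-cycle $1\mapsto3\mapsto2\mapsto4\mapsto1$ with scalar $e^{-\pi i m/8}$, fixes $\overset{\circ}{A}{}^{[m]}_5$, and multiplies $\overset{\circ}{A}{}^{[m]}_6$ by $e^{\pi i m/2}$.

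\textbf{Step 2: put $m=2$.} For the $S$-transformation, $e^{\pi i m z^{2}/(2\tau)}=e^{\pi i z^{2}/\tau}$, while $e^{\pi i m/4}=e^{\pi i/2}=i$, $e^{-\pi i m/4}=-i$, and $e^{\pm\pi i m/2}=e^{\pm\pi i}=-1$; substituting these into the six $S$-formulas of Lemma~\ref{n3:lemma:2022-108c} produces exactly parts 1)(i)--(vi). For the $T$-transformation, $e^{-\pi i m/8}=e^{-\pi i/4}$ and $e^{\pi i m/2}=e^{\pi i}=-1$; substituting into the six $T$-formulas gives parts 2)(i)--(vi). (Alternatively, one may run the same computation through Corollary~\ref{n3:cor:2022-112a} and the denominator transformations \eqref{n3:eqn:2022-115b}--\eqref{n3:eqn:2022-115c}, i.e.\ through Proposition~\ref{n3:prop:2022-115a}; the two routes give the same result.)

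\textbf{Main point to watch.} There is no real obstacle; the only places where a sign is not immediately obvious are 1)(iii), 1)(vi) and 2)(vi), where the prefactor is $e^{\pm\pi i m/2}=e^{\pm\pi i}=-1$ at $m=2$, and the factors $\pm i$ in 1)(i)--(ii), coming from $e^{\pm\pi i m/4}=e^{\pm\pi i/2}$. So the proof is a one-line specialization once Lemma~\ref{n3:lemma:2022-108c} is in hand.
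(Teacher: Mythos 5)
Your proposal is correct and is essentially the paper's own route: the paper states this lemma without proof as an immediate consequence of the general-$m$ transformation laws (Lemma \ref{n3:lemma:2022-108c}, cited in the text via Proposition \ref{n3:prop:2022-115a}), and your specialization $e^{\pi i m/4}=i$, $e^{-\pi i m/4}=-i$, $e^{\pm\pi i m/2}=-1$, $e^{-\pi i m/8}=e^{-\pi i/4}$ at $m=2$ reproduces all twelve formulas. Nothing is missing.
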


\medskip

In order to write these functions $\overset{\circ}{A}{}^{[2]}_j(\tau,  z)$ 
explicitly, we consider the following functions $g_i^{(\pm)}(\tau,z)$ :
\begin{subequations}
{\allowdisplaybreaks
\begin{eqnarray}
g_1^{(+)}(\tau, z) &:=& \dfrac{\eta(2\tau)^2}{\eta(\tau)} \cdot \dfrac{
\vartheta_{11}(\tau,z) \, \vartheta_{10}(\tau,z)}{
\vartheta_{01}(\tau,z)} 
\label{n3:eqn:2022-119c1}
\\[1mm]
g_1^{(-)}(\tau, z) &:=& \dfrac{\eta(2\tau)^2}{\eta(\tau)} \cdot \dfrac{
\vartheta_{11}(\tau,z) \, \vartheta_{10}(\tau,z)}{
\vartheta_{00}(\tau,z)}
\label{n3:eqn:2022-119c2}
\\[1mm]
g_2^{(+)}(\tau, z) &:=& \frac{\eta(\frac{\tau}{2})^2}{\eta(\tau)} \cdot 
\frac{\vartheta_{11}(\tau,z) \, \vartheta_{01}(\tau,z)}{
\vartheta_{10}(\tau,z)}
\label{n3:eqn:2022-119c3}
\\[1mm]
g_2^{(-)}(\tau, z) &:=& \frac{\eta(\frac{\tau}{2})^2}{\eta(\tau)} \cdot 
\frac{\vartheta_{11}(\tau,z) \, \vartheta_{01}(\tau,z)}{
\vartheta_{00}(\tau,z)}
\label{n3:eqn:2022-119c4}
\\[1mm]
g_3^{(+)}(\tau, z) &:=& 
\frac{\eta(\tau)^5}{\eta(\frac{\tau}{2})^2 \eta(2\tau)^2} 
\cdot 
\frac{\vartheta_{11}(\tau,z) \, \vartheta_{00}(\tau,z)}{\vartheta_{10}(\tau,z)}
\label{n3:eqn:2022-119c5}
\\[1mm]
g_3^{(-)}(\tau, z) &:=& 
\frac{\eta(\tau)^5}{\eta(\frac{\tau}{2})^2 \eta(2\tau)^2} 
\cdot 
\frac{\vartheta_{11}(\tau,z) \, \vartheta_{00}(\tau,z)}{\vartheta_{01}(\tau,z)}
\label{n3:eqn:2022-119c6}
\end{eqnarray}}
\end{subequations}

\begin{lemma}
\label{n3:lemma:2022-108d}
The functions $g^{(\pm)}_i$ defined above satisfy the following 
transformation properties:

\begin{enumerate}
\item[{\rm 1)}] \,\ $S$-transformation :
\begin{enumerate}
\item[{\rm (i)}] \quad $g_1^{(+)}\Big(-\dfrac{1}{\tau}, \dfrac{z}{\tau}\Big) 
\,\ = \,\ 
- \, \frac12 \, \tau \, e^{\frac{\pi i z^2}{\tau}} \, g_2^{(+)}(\tau,z)$
\item[{\rm (ii)}] \quad $g_1^{(-)}\Big(-\dfrac{1}{\tau}, \dfrac{z}{\tau}\Big) 
\,\ = \,\ 
- \, \frac12 \, \tau \, e^{\frac{\pi i z^2}{\tau}} \, g_2^{(-)}(\tau,z)$
\item[{\rm (iii)}] \quad $g_2^{(+)}\Big(-\dfrac{1}{\tau}, \dfrac{z}{\tau}\Big) 
\,\ = \,\ 
- \, 2 \, \tau \, e^{\frac{\pi i z^2}{\tau}} \, g_1^{(+)}(\tau,z)$
\item[{\rm (iv)}] \quad $g_2^{(-)}\Big(-\dfrac{1}{\tau}, \dfrac{z}{\tau}\Big) 
\,\ = \,\ 
- \, 2 \, \tau \, e^{\frac{\pi i z^2}{\tau}} \, g_1^{(-)}(\tau,z)$
\item[{\rm (v)}] \quad $g_3^{(+)}\Big(-\dfrac{1}{\tau}, \dfrac{z}{\tau}\Big) 
\,\ = \,\ 
- \, \tau \, e^{\frac{\pi i z^2}{\tau}} \, g_3^{(-)}(\tau,z)$
\item[{\rm (vi)}] \quad $g_3^{(-)}\Big(-\dfrac{1}{\tau}, \dfrac{z}{\tau}\Big) 
\,\ = \,\ 
- \, \tau \, e^{\frac{\pi i z^2}{\tau}} \, g_3^{(+)}(\tau,z)$
\end{enumerate}

\item[{\rm 2)}] \,\ $T$-transformation :
\begin{enumerate}
\item[{\rm (i)}] \quad $g_1^{(+)} (\tau+1, \, z) 
\,\ = \,\ e^{\frac{3\pi i}{4}} \, g_1^{(-)} (\tau, z)$
\item[{\rm (ii)}] \quad $g_1^{(-)} (\tau+1, \, z) 
\,\ = \,\ e^{\frac{3\pi i}{4}} \, g_1^{(+)} (\tau, z)$
\item[{\rm (iii)}] \quad $g_2^{(+)} (\tau+1, \, z) 
\,\ = \,\ \hspace{6mm} g_3^{(+)} (\tau, z)$
\item[{\rm (iv)}] \quad $g_2^{(-)} (\tau+1, \, z) 
\,\ = \,\ e^{\frac{\pi i}{4}} \, g_3^{(-)} (\tau, z)$
\item[{\rm (v)}] \quad $g_3^{(+)} (\tau+1, \, z) 
\,\ = \,\ \hspace{6mm} g_2^{(+)} (\tau, z)$
\item[{\rm (vi)}] \quad $g_3^{(-)} (\tau+1, \, z) 
\,\ = \,\ e^{\frac{\pi i}{4}} \, g_2^{(-)} (\tau, z)$
\end{enumerate}
\end{enumerate}
\end{lemma}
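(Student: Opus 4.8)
The plan is to reduce each $g_i^{(\pm)}$ to a monomial in the Mumford theta functions $\vartheta_{ab}$ and then invoke their classical modular transformation laws. First I would record the classical eta--theta identities
\[
\eta(2\tau)^2=\tfrac12\,\eta(\tau)\,\vartheta_{10}(\tau,0),\qquad
\eta\big(\tfrac{\tau}{2}\big)^2=\eta(\tau)\,\vartheta_{01}(\tau,0),\qquad
\vartheta_{00}(\tau,0)\,\vartheta_{01}(\tau,0)\,\vartheta_{10}(\tau,0)=2\,\eta(\tau)^3 ,
\]
which show that the three eta--quotient prefactors in \eqref{n3:eqn:2022-119c1}--\eqref{n3:eqn:2022-119c6} are, respectively, $\tfrac12\vartheta_{10}(\tau,0)$, $\vartheta_{01}(\tau,0)$ and $\vartheta_{00}(\tau,0)$. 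Thus, for instance, $g_1^{(+)}=\tfrac12\vartheta_{10}(\tau,0)\cdot\vartheta_{11}(\tau,z)\vartheta_{10}(\tau,z)/\vartheta_{01}(\tau,z)$ and $g_1^{(-)}$ is the same with $\vartheta_{01}$ replaced by $\vartheta_{00}$ in the denominator, and likewise for $g_2^{(\pm)}$ and $g_3^{(\pm)}$. The point of this reduction is that one then never has to deal with the awkward half--period shift $\eta(\tau/2)\mapsto\eta((\tau+1)/2)$ directly.

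For part~1) I would substitute $\tau\mapsto-1/\tau$, $z\mapsto z/\tau$ into these expressions using $\vartheta_{00}(-1/\tau,z/\tau)=\sqrt{-i\tau}\,e^{\pi i z^2/\tau}\vartheta_{00}(\tau,z)$, the same formula with $01$ and $10$ interchanged on the right--hand side, and $\vartheta_{11}(-1/\tau,z/\tau)=-i\sqrt{-i\tau}\,e^{\pi i z^2/\tau}\vartheta_{11}(\tau,z)$, together with their $z=0$ specializations for the prefactor. Each $g_i^{(\pm)}$ carries two theta factors in the numerator and one in the denominator, so the exponentials collapse to a single $e^{\pi i z^2/\tau}$ and the factors $\sqrt{-i\tau}$ collapse to $(\sqrt{-i\tau})^2=-i\tau$; the leftover constants ($-i$ from $\vartheta_{11}$, and the mismatched $\tfrac12$ versus $1$ in the prefactors of $g_1$ versus $g_2$) produce the coefficients $-\tfrac12,-2,-1$ and the swaps $g_1^{(\pm)}\leftrightarrow g_2^{(\pm)}$, $g_3^{(+)}\leftrightarrow g_3^{(-)}$ exactly as stated.

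For part~2) I would use $\vartheta_{00}(\tau+1,z)=\vartheta_{01}(\tau,z)$, $\vartheta_{01}(\tau+1,z)=\vartheta_{00}(\tau,z)$, $\vartheta_{10}(\tau+1,z)=e^{\pi i/4}\vartheta_{10}(\tau,z)$, $\vartheta_{11}(\tau+1,z)=e^{\pi i/4}\vartheta_{11}(\tau,z)$, and likewise at $z=0$. Substituting, the swaps of the even characteristics give the pairings $g_1^{(+)}\leftrightarrow g_1^{(-)}$, $g_2^{(\pm)}\to g_3^{(\pm)}$, $g_3^{(\pm)}\to g_2^{(\pm)}$, while the accumulated $e^{\pi i/4}$ factors give the phases $e^{3\pi i/4}$, $1$, $e^{\pi i/4}$. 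The only real obstacle is bookkeeping: pinning down every eighth root of unity and every $\tfrac12$--versus--$2$ normalization, and checking that the sign conventions for $\vartheta_{11}$ under $S$ match those of \cite{Mum} used elsewhere in the paper. Once the eta--theta identities above are in hand, each of the twelve assertions is a one--line substitution.
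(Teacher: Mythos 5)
Your proposal is correct. The paper states this lemma without proof (it is treated as a standard computation), and your argument supplies exactly the intended verification: the three eta--quotient prefactors are indeed the theta constants $\tfrac12\vartheta_{10}(\tau,0)$, $\vartheta_{01}(\tau,0)$ and $\vartheta_{00}(\tau,0)$ (the last following from Jacobi's identity $\vartheta_{00}\vartheta_{01}\vartheta_{10}=2\eta^3$ at $z=0$ combined with the first two), after which each of the twelve formulas reduces to the classical $S$- and $T$-laws for $\vartheta_{ab}$. I checked the constants: under $S$ each $g_i^{(\pm)}$ picks up $(-i)\cdot(-i\tau)=-\tau$ from the four theta factors (including the nullwert prefactor), and the relative normalizations $\tfrac12$ versus $1$ account for the coefficients $-\tfrac12$, $-2$, $-1$; under $T$ the counts of $e^{\pi i/4}$ give $e^{3\pi i/4}$, $1$, $e^{\pi i/4}$ as stated. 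Rewriting the prefactors as theta constants is a sensible way to sidestep the transformation of $\eta(\tau/2)$ under $\tau\mapsto\tau+1$, which would otherwise require the slightly less familiar identity for $\eta\big(\tfrac{\tau+1}{2}\big)$.
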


Now we see that the functions $\overset{\circ}{A}{}^{[2]}_j(\tau,z)$ 
and $g_j^{(\pm)}(\tau,z)$ are related as follows:

\begin{lemma} \,\ 
\label{n3:lemma:2022-108e}
\begin{enumerate}
\item[{\rm 1)}]
\begin{enumerate}
\item[{\rm (i)}] \quad $\overset{\circ}{A}{}^{[2]}_1(\tau, z) 
\,\ + \,\ \overset{\circ}{A}{}^{[2]}_2(\tau, z)
\,\ = \,\ 2 \, i \, g^{(-)}_1(\tau, z)$
\item[{\rm (ii)}] \quad $\overset{\circ}{A}{}^{[2]}_3(\tau, z) 
\,\ + \,\ \overset{\circ}{A}{}^{[2]}_4(\tau, z)
\,\ = \,\ - \, 2 \, i \, g^{(+)}_1(\tau, z)$
\item[{\rm (iii)}] \quad $\overset{\circ}{A}{}^{[2]}_1(\tau, z) 
\,\ - \,\ \overset{\circ}{A}{}^{[2]}_2(\tau, z) 
\,\ = \,\ g^{(-)}_2(\tau, z)$
\item[{\rm (iv)}] \quad $\overset{\circ}{A}{}^{[2]}_5(\tau, z) 
\,\ - \,\ \overset{\circ}{A}{}^{[2]}_6(\tau, z) 
\,\ = \,\ i \, g^{(+)}_2(\tau, z)$
\item[{\rm (v)}] \quad $\overset{\circ}{A}{}^{[2]}_3(\tau, z) 
\,\ - \,\ \overset{\circ}{A}{}^{[2]}_4(\tau, z) 
\,\ = \,\ i \, g^{(-)}_3(\tau, z) $
\item[{\rm (vi)}] \quad $\overset{\circ}{A}{}^{[2]}_5(\tau, z) 
\,\ + \,\ \overset{\circ}{A}{}^{[2]}_6(\tau, z) 
\,\ = \,\ i \, g^{(+)}_3(\tau, z)$
\end{enumerate}

\item[{\rm 2)}]
\begin{enumerate}
\item[{\rm (i)}] \quad $2 \, \overset{\circ}{A}{}^{[2]}_1(\tau, z) 
\,\ = \,\ 
2 \, i \, g^{(-)}_1(\tau, z) \,\ + \,\ g^{(-)}_2(\tau, z)$
\item[{\rm (ii)}] \quad $2 \, \overset{\circ}{A}{}^{[2]}_2(\tau, z) 
\,\ = \,\ 
2 \, i \, g^{(-)}_1(\tau, z) \,\ - \,\ g^{(-)}_2(\tau, z)$
\item[{\rm (iii)}] \quad $2 \, \overset{\circ}{A}{}^{[2]}_3(\tau, z) 
\,\ = \,\ i \, \big\{
- \, 2 \, g^{(+)}_1(\tau, z) \,\ + \,\ g^{(-)}_3(\tau, z)\big\}$
\item[{\rm (iv)}] \quad $2 \, \overset{\circ}{A}{}^{[2]}_4(\tau, z) 
\,\ = \,\ - \, i \, \big\{
2 \, g^{(+)}_1(\tau, z) \,\ + \,\ g^{(-)}_3(\tau, z)\big\}$
\item[{\rm (v)}] \quad $2 \, \overset{\circ}{A}{}^{[2]}_5(\tau, z) 
\,\ = \,\ i \, \big\{
g^{(+)}_3(\tau, z) \,\ + \,\ g^{(+)}_2(\tau, z)\big\}$
\item[{\rm (vi)}] \quad $2 \, \overset{\circ}{A}{}^{[2]}_6(\tau, z) 
\,\ = \,\ i \, \big\{
g^{(+)}_3(\tau, z) \,\ - \,\ g^{(+)}_2(\tau, z)\big\}$
\end{enumerate}
\end{enumerate}
\end{lemma}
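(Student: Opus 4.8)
The plan is to reduce the entire statement to the single identity 1)(i), $\overset{\circ}{A}{}^{[2]}_1+\overset{\circ}{A}{}^{[2]}_2=2ig^{(-)}_1$, to prove that one by a direct theta-function computation, and to deduce all the remaining identities in 1) from it using only the modular transformation laws already recorded in Lemma~\ref{n3:lemma:2022-113a} and Lemma~\ref{n3:lemma:2022-108d}. Part 2) is then pure linear algebra: 2)(i),(ii) are 1)(i)\,$\pm$\,1)(iii), 2)(iii),(iv) are 1)(ii)\,$\pm$\,1)(v), and 2)(v),(vi) are 1)(vi)\,$\pm$\,1)(iv).

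To prove 1)(i) I would specialize the connection formula~\eqref{n3:eqn:2022-117d1} to $m=2$, which gives $\tfrac12\bigl(\overset{\circ}{A}{}^{[2]}_1+\overset{\circ}{A}{}^{[2]}_2\bigr)(\tau,z)=\widetilde{\Phi}^{[1,0]}\bigl(2\tau,\,z+\tfrac{\tau}{2}-\tfrac12,\,z-\tfrac{\tau}{2}+\tfrac12,\,\tfrac{\tau}{8}\bigr)$, and then apply the $\widehat{sl}(2|1)$-denominator identity (Lemma~\ref{n3:lemma:2022-111e}) to rewrite the right-hand side as $-i\,e^{-\pi i\tau/4}\,\eta(2\tau)^3\,\vartheta_{11}(2\tau,2z)\big/\bigl(\vartheta_{11}(2\tau,z+\tfrac{\tau}{2}-\tfrac12)\,\vartheta_{11}(2\tau,z-\tfrac{\tau}{2}+\tfrac12)\bigr)$. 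It then remains to check that this equals $i\,g^{(-)}_1(\tau,z)=i\,\tfrac{\eta(2\tau)^2}{\eta(\tau)}\,\vartheta_{11}(\tau,z)\,\vartheta_{10}(\tau,z)\big/\vartheta_{00}(\tau,z)$, a concrete identity relating theta-values of modulus $2\tau$ to those of modulus $\tau$. I would establish it either by the usual reduction machinery (half-period translations, which interchange $\vartheta_{00},\vartheta_{01},\vartheta_{10},\vartheta_{11}$ up to exponential factors; the duplication formula for $\vartheta_{11}(2\tau,2z)$; and the $\eta$-product formulas relating $\eta(2\tau)$ and $\eta(\tau)$), or, more cheaply, by observing that the two sides are meromorphic in $z$ with the same quasi-periods and the same divisor, hence proportional, and that the proportionality constant (a function of $\tau$) is $1$ by comparing leading behaviour as $z\to0$.

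The bootstrap is purely formal, using that $(\tau,z)\mapsto(-\tfrac1\tau,\tfrac z\tau)$ and $(\tau,z)\mapsto(\tau+1,z)$ are onto $\ccc_+\times\ccc$: applying the $T$-laws of Lemmas~\ref{n3:lemma:2022-113a}\,2) and~\ref{n3:lemma:2022-108d}\,2) to 1)(i) gives 1)(ii); applying the $S$-laws of Lemmas~\ref{n3:lemma:2022-113a}\,1) and~\ref{n3:lemma:2022-108d}\,1) to 1)(i) and to 1)(ii) gives 1)(iii) and 1)(iv), the common factors $\tau$ and $e^{\pi iz^2/\tau}$ cancelling; applying the $T$-laws once more to 1)(iii) and 1)(iv) gives 1)(v) and 1)(vi). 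At each step one only checks that the eighth roots of unity balance (e.g.\ the passage from 1)(i) to 1)(ii) uses $g^{(-)}_1(\tau+1,z)=e^{3\pi i/4}g^{(+)}_1(\tau,z)$ together with $e^{-\pi i/4}\cdot e^{3\pi i/4}\cdot2i=-2i$), and the several redundant $S,T$-routes to each relation furnish built-in consistency checks. Thus the only non-formal ingredient — and the main obstacle — is the single theta-function identity behind 1)(i); its verification is routine but demands care with the roots of unity, the sign conventions for $\vartheta_{ab}$ under half-period shifts, and the precise duplication and $\eta$-product formulas. (One could instead anchor on, say, $\overset{\circ}{A}{}^{[2]}_5=\widetilde{\Phi}^{[1,0]}(2\tau,z-\tfrac12,z+\tfrac12,0)$ from~\eqref{n3:eqn:2022-117d5} and bootstrap from there; but 1)(i) is the most economical choice since its right-hand side is a single $g^{(\pm)}_i$ rather than a combination of two.)
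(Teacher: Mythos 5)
Your proposal is correct and follows essentially the same route as the paper: the paper also specializes \eqref{n3:eqn:2022-117d1} (and \eqref{n3:eqn:2022-117d3}) to $m=2$, evaluates the resulting $\widetilde{\Phi}^{[1,0]}$ via the $\widehat{sl}(2|1)$-denominator identity of Lemma~\ref{n3:lemma:2022-111e} together with the product formulas \eqref{n3:eqn:2022-120g1}--\eqref{n3:eqn:2022-120g3}, and then obtains (iii)--(vi) by exactly the $S$- and $T$-bootstrap you describe, with 2) following by linear combination; the only (immaterial) difference is that the paper proves 1)(ii) directly from \eqref{n3:eqn:2022-117d3} rather than deducing it from 1)(i) by $T$. (Minor slip in your illustrative check: solving $e^{-\pi i/4}\bigl(\overset{\circ}{A}{}^{[2]}_3+\overset{\circ}{A}{}^{[2]}_4\bigr)=2i\,e^{3\pi i/4}g^{(+)}_1$ requires multiplying by $e^{+\pi i/4}$, giving $2i\,e^{\pi i}=-2i$ as you claim, whereas the product you wrote, $e^{-\pi i/4}\cdot e^{3\pi i/4}\cdot 2i$, equals $-2$.)
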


\begin{proof} 1) In order to prove (i) and (ii), we let $m=2$ in the formulas 
\eqref{n3:eqn:2022-117d1} and \eqref{n3:eqn:2022-117d3} :
\begin{subequations}
{\allowdisplaybreaks
\begin{eqnarray}
& &
\widetilde{\Phi}^{[1,0]}
\left(2\tau, z+\frac{\tau}{2}-\frac12,z-\frac{\tau}{2}+\frac12,\frac{\tau}{8}\right)
= \frac12 \big\{
\overset{\circ}{A}{}^{[2]}_1(\tau, z)+\overset{\circ}{A}{}^{[2]}_2(\tau, z)\big\}
\nonumber
\\[0mm]
& &
\label{n3:eqn:2022-119h1}
\\[0mm]
& &
\widetilde{\Phi}^{[1,0]}
\left(2\tau, z+\frac{\tau}{2},z-\frac{\tau}{2},\frac{\tau}{8}\right)
= \frac12 \big\{
\overset{\circ}{A}{}^{[2]}_3(\tau, z)+\overset{\circ}{A}{}^{[2]}_4(\tau, z)\big\}
\label{n3:eqn:2022-119h2}
\end{eqnarray}}
\end{subequations}
The LHS of these equations can be computed by using Lemma \ref{n3:lemma:2022-111e}
as follows:
\begin{subequations}
{\allowdisplaybreaks
\begin{eqnarray}
& & \hspace{-10mm}
\widetilde{\Phi}^{[1,0]}
\left(2\tau, \, z+\frac{\tau}{2}-\frac12, \, z-\frac{\tau}{2}+\frac12, \, 
\frac{\tau}{8}\right)
\nonumber
\\[1mm]
&=&
-i q^{-\frac18}
\frac{\eta(\tau)^3 \, \vartheta_{11}(2\tau, 2z)}{
\vartheta_{11}(2\tau, z+\frac{\tau}{2}-\frac12) 
\vartheta_{11}(2\tau, z-\frac{\tau}{2}+\frac12) }
\nonumber
\\[1mm]
&=&
i q^{-\frac18}
\frac{\eta(\tau)^3 \, \vartheta_{11}(2\tau, 2z)}{
\vartheta_{10}(2\tau, z+\frac{\tau}{2}) 
\vartheta_{10}(2\tau, z-\frac{\tau}{2})}
\nonumber
\\[1mm]
&=&
i \frac{\eta(2\tau)^2}{\eta(\tau)} \cdot 
\frac{\vartheta_{11}(\tau, z)\vartheta_{10}(\tau, z)
}{\vartheta_{00}(\tau, z)}
\,\ = \,\ 
i g^{(-)}_1(\tau, z)
\label{n3:eqn:2022-119j1}
\\[2mm]
& & \hspace{-10mm}
\widetilde{\Phi}^{[1,0]}
\left(2\tau, z+\frac{\tau}{2},z-\frac{\tau}{2},\frac{\tau}{8}\right)
\, = \, 
-i q^{-\frac18}
\frac{\eta(\tau)^3 \, \vartheta_{11}(2\tau, 2z)}{
\vartheta_{11}(2\tau, z+\frac{\tau}{2}) 
\vartheta_{11}(2\tau, z-\frac{\tau}{2}) }
\nonumber
\\[1mm]
&=&
-i \frac{\eta(2\tau)^2}{\eta(\tau)} \cdot 
\frac{\vartheta_{11}(\tau, z)\vartheta_{10}(\tau, z)
}{\vartheta_{01}(\tau, z)}
\,\ = \,\ 
-i g^{(+)}_1(\tau, z)
\label{n3:eqn:2022-119j2}
\end{eqnarray}}
\end{subequations}
where we used the following formulas:
\begin{subequations}
{\allowdisplaybreaks
\begin{eqnarray}
& &
\vartheta_{11}(2\tau, 2z)
= \frac{\eta(2\tau)}{\eta(\tau)^2}
\vartheta_{11}(\tau,z)\vartheta_{10}(\tau,z)
\label{n3:eqn:2022-120g1}
\\[1mm]
& &
\vartheta_{10}\Big(2\tau, z+\frac{\tau}{2}\Big) 
\vartheta_{10}\Big(2\tau, z-\frac{\tau}{2}\Big) 
= q^{-\frac18} \frac{\eta(2\tau)^2}{\eta(\tau)}
\vartheta_{00}(\tau,z)
\label{n3:eqn:2022-120g2}
\\[1mm]
& &
\vartheta_{11}\Big(2\tau, z+\frac{\tau}{2}\Big) 
\vartheta_{11}\Big(2\tau, z-\frac{\tau}{2}\Big) 
= q^{-\frac18} \frac{\eta(2\tau)^2}{\eta(\tau)}
\vartheta_{01}(\tau,z) \, ,
\label{n3:eqn:2022-120g3}
\end{eqnarray}}
\end{subequations}
Then, by \eqref{n3:eqn:2022-119h1} and \eqref{n3:eqn:2022-119j1}
we obtain (i), and 
by \eqref{n3:eqn:2022-119h2} and \eqref{n3:eqn:2022-119j2} we obtain (ii). 
Since (i) and (ii) are thus established, 
(iii) is obtained by applying $S$-transformation to (i),
(iv) is obtained by applying $S$-transformation to (ii),
(v) is obtained by applying $T$-transformation to (iii),
(vi) is obtained by applying $T$-transformation to (iv), 
which complete proof of 1).  2) follows from 1) immediately.
\end{proof}

\begin{thm} 
\label{n3:thm:2022-108a}
In the case $m=2$, the modified characters are as follows:

\begin{enumerate}
\item[{\rm 1)}] $\widetilde{\rm ch}{}^{(+)}_{H(\Lambda^{[-1, 0]})}(\tau,z) 
= 
- \dfrac12 \cdot 
\dfrac{\eta(\frac{\tau}{2})}{\eta(2\tau)\eta(\tau)}
\cdot \vartheta_{01}(\tau,z)$ 
\item[{\rm 2)}] ${\rm ch}{}^{(+)}_{H(\Lambda^{[-1, 1]})}(\tau,z) 
= 
i \, \dfrac{\eta(2\tau)}{\eta(\frac{\tau}{2})\eta(\tau)} 
\cdot \vartheta_{10}(\tau,z)$
\item[{\rm 3)}] $\widetilde{\rm ch}{}^{(-)}_{H(\Lambda^{[-1, 0]})}(\tau,z) 
= 
- \dfrac{i}{2} \cdot 
\dfrac{\eta(\tau)^2}{\eta(\frac{\tau}{2})\eta(2\tau)^2} 
\cdot \vartheta_{00}(\tau,z)$
\item[{\rm 4)}]  ${\rm ch}{}^{(-)}_{H(\Lambda^{[-1, 1]})}(\tau,z) 
= 
- i \, \dfrac{\eta(\frac{\tau}{2})\eta(2\tau)^2}{\eta(\tau)^4} \cdot 
\vartheta_{10}(\tau,z)$
\item[{\rm 5)}] ${\rm ch}{}^{(+) {\rm tw} (\sigma_-)
}_{H(\Lambda^{[-1, 1]})}(\tau,z) 
= 
\dfrac{i}{\sqrt{2}} \bigg\{
\dfrac{\eta(\tau)^2}{\eta(\frac{\tau}{2})^2 \eta(2\tau)} 
\vartheta_{00}(\tau,z)
+ 
\dfrac{\eta(\frac{\tau}{2})^2\eta(2\tau)}{\eta(\tau)^4} \, \vartheta_{01}(\tau,z)
\bigg\} $
\item[{\rm 6)}] ${\rm ch}{}^{(+) {\rm tw} (\sigma_+)
}_{H(\Lambda^{[-1, 1]})}(\tau,z) 
= 
\dfrac{i}{\sqrt{2}} \bigg\{
\dfrac{\eta(\tau)^2}{\eta(\frac{\tau}{2})^2 \eta(2\tau)} 
\vartheta_{00}(\tau,z)
- 
\dfrac{\eta(\frac{\tau}{2})^2\eta(2\tau)}{\eta(\tau)^4} \vartheta_{01}(\tau,z)
\bigg\} $
\end{enumerate}
\end{thm}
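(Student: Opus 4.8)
The plan is to derive Theorem~\ref{n3:thm:2022-108a} directly from the material already in place: Corollary~\ref{n3:cor:2022-112a}, Lemma~\ref{n3:lemma:2022-108e}, the definitions \eqref{n3:eqn:2022-119c1}--\eqref{n3:eqn:2022-119c6} of the functions $g_i^{(\pm)}$, and the formulas \eqref{n3:eqn:2022-115a} for the N=3 denominator, superdenominator and twisted denominator. Specializing $m=2$ in Corollary~\ref{n3:cor:2022-112a}, each numerator $\overset{N=3}{R}{}^{(\pm)}\widetilde{\rm ch}^{(\pm)}_{H(\Lambda^{[-1,m_2]})}$ (resp. $\overset{N=3}{R}{}^{(+){\rm tw}}\widetilde{\rm ch}^{(+){\rm tw}(\sigma_\pm)}$) is a short linear combination of the $\overset{\circ}{A}{}^{[2]}_j$; note that $p(2)=1$, so the two twisted characters in \eqref{n3:eqn:2022-112f} are indeed those of $H(\Lambda^{[-1,1]})$.

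First I would rewrite these six numerators in terms of the $g_i^{(\pm)}$ using Lemma~\ref{n3:lemma:2022-108e}. Thus $\tfrac12\{\overset{\circ}{A}{}^{[2]}_2-\overset{\circ}{A}{}^{[2]}_1\}=-\tfrac12 g_2^{(-)}$ gives $\overset{N=3}{R}{}^{(+)}\widetilde{\rm ch}^{(+)}_{H(\Lambda^{[-1,0]})}=-\tfrac12 g_2^{(-)}$, and similarly one obtains $i\,g_1^{(-)}$, $-\tfrac{i}{2}g_3^{(-)}$, $-i\,g_1^{(+)}$ for the numerators in parts 2)--4), together with $\overset{\circ}{A}{}^{[2]}_6=\tfrac{i}{2}(g_3^{(+)}-g_2^{(+)})$ and $\overset{\circ}{A}{}^{[2]}_5=\tfrac{i}{2}(g_3^{(+)}+g_2^{(+)})$ for the twisted numerators entering parts 6) and 5) respectively.

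Next I would divide each of these identities by the corresponding denominator in \eqref{n3:eqn:2022-115a} and substitute the explicit product formulas for the $g_i^{(\pm)}$. In every case the common factor $\vartheta_{11}(\tau,z)$ cancels, together with exactly one theta-null $\vartheta_{ab}(\tau,z)$, leaving a ratio of Dedekind $\eta$'s times a single $\vartheta_{ab}(\tau,z)$; for example $-\tfrac12 g_2^{(-)}/\overset{N=3}{R}{}^{(+)}=-\tfrac12\cdot\frac{\eta(\frac{\tau}{2})}{\eta(\tau)\eta(2\tau)}\vartheta_{01}(\tau,z)$, which is 1). For parts 5) and 6) one keeps the factor $\tfrac{1}{\sqrt2}$ in $\overset{N=3}{R}{}^{(+){\rm tw}}$; dividing $\tfrac{i}{2}(g_3^{(+)}\pm g_2^{(+)})$ by it, the $g_3^{(+)}$ term contributes $\frac{\eta(\tau)^2}{\eta(\frac{\tau}{2})^2\eta(2\tau)}\vartheta_{00}$ and the $g_2^{(+)}$ term contributes $\frac{\eta(\frac{\tau}{2})^2\eta(2\tau)}{\eta(\tau)^4}\vartheta_{01}$, so the result is $\tfrac{i}{\sqrt2}\{\frac{\eta(\tau)^2}{\eta(\frac{\tau}{2})^2\eta(2\tau)}\vartheta_{00}\pm\frac{\eta(\frac{\tau}{2})^2\eta(2\tau)}{\eta(\tau)^4}\vartheta_{01}\}$, which gives 5) and 6).

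Since every ingredient is already available, there is no serious obstacle beyond careful bookkeeping: the points to watch are the sign and $\sqrt2$ normalizations in \eqref{n3:eqn:2022-115a}, the identification $p(2)=1$ so that both twisted characters are attached to $m_2=1$, and the matching of each $\vartheta_{ab}$ appearing in $g_i^{(\pm)}$ with the correct denominator so that the cancellations take place. No new theta identities are needed — the product formulas \eqref{n3:eqn:2022-120g1}--\eqref{n3:eqn:2022-120g3} used in the proof of Lemma~\ref{n3:lemma:2022-108e} already suffice.
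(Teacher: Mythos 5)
Your proposal is correct and follows essentially the same route as the paper's own proof: specialize Corollary \ref{n3:cor:2022-112a} to $m=2$, convert the $\overset{\circ}{A}{}^{[2]}_j$-combinations to the $g_i^{(\pm)}$ via Lemma \ref{n3:lemma:2022-108e}, and divide by the denominators \eqref{n3:eqn:2022-115a}; all your intermediate identifications ($-\tfrac12 g_2^{(-)}$, $i\,g_1^{(-)}$, $-\tfrac{i}{2}g_3^{(-)}$, $-i\,g_1^{(+)}$, $\tfrac{i}{2}(g_3^{(+)}\pm g_2^{(+)})$) and the resulting cancellations check out. The only point you leave implicit, which the paper states explicitly, is that for the $m_2=1$ (and twisted) cases the modified characters coincide with the honest ones by Lemma \ref{n3:lemma:2022-111e}, which is why the theorem writes ${\rm ch}$ rather than $\widetilde{\rm ch}$ there.
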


\begin{proof} This theorem is obtained from 
Corollary \ref{n3:cor:2022-112a} and Lemma \ref{n3:lemma:2022-108c}
and Lemma \ref{n3:lemma:2022-108e} and the formula \eqref{n3:eqn:2022-115a} 
as follows:
{\allowdisplaybreaks
\begin{eqnarray*}
& & \hspace{-10mm}
\widetilde{\rm ch}{}^{(+)}_{H(\Lambda^{[-1, 0]})}(\tau,z) 
= \frac{1}{\overset{N=3}{R}{}^{(+)}(\tau,z)} 
\big(\overset{N=3}{R}{}^{(+)} \widetilde{\rm ch}{}^{(+)}_{H(\Lambda^{[-1, 0]})}\big)
(\tau,z) 
\\[1mm]
&=&
\frac{1}{\overset{N=3}{R}{}^{(+)}(\tau, z)} \cdot 
\Big[\overset{N=3}{R}{}^{(+)} \, 
\widetilde{\rm ch}{}^{(+)}_{H(\Lambda^{[m, 0]})}\Big](\tau,z) 
\\[1mm]
&=&
- \, \frac12 \, 
\frac{1}{\eta(\frac{\tau}{2}) \eta(2\tau)} \cdot 
\frac{\vartheta_{00}(\tau, z)}{\vartheta_{11}(\tau, z)}
\cdot \frac{\eta(\frac{\tau}{2})^2}{\eta(\tau)} \cdot 
\frac{\vartheta_{11}(\tau,z) \, \vartheta_{01}(\tau,z)}{
\vartheta_{00}(\tau,z)}
\\[1mm]
&=&
\,\ = \,\ 
- \, \frac12 \, 
\frac{\eta(\frac{\tau}{2})}{\eta(\tau)\eta(2\tau)} \cdot \vartheta_{01}(\tau,z)
\\[2mm]
& & \hspace{-10mm}
\widetilde{\rm ch}{}^{(+)}_{H(\Lambda^{[-1, 1]})}(\tau,z) 
= \frac{1}{\overset{N=3}{R}{}^{(+)}(\tau,z)} 
\big(\overset{N=3}{R}{}^{(+)} \widetilde{\rm ch}{}^{(+)}_{H(\Lambda^{[-1,1]})}\big)
(\tau,z) 
\\[1mm]
&=&
i \, 
\frac{1}{\eta(\frac{\tau}{2}) \eta(2\tau)} \cdot 
\frac{\vartheta_{00}(\tau, z)}{\vartheta_{11}(\tau, z)}
\cdot 
\frac{\eta(2\tau)^2}{\eta(\tau)} \cdot \dfrac{
\vartheta_{11}(\tau,z) \, \vartheta_{10}(\tau,z)}{
\vartheta_{00}(\tau,z)}
\,\ = \,\ 
i \, 
\frac{\eta(2\tau)}{\eta(\frac{\tau}{2})\eta(\tau)} \cdot \vartheta_{10}(\tau,z)
\\[2mm]
& & \hspace{-10mm}
\widetilde{\rm ch}{}^{(-)}_{H(\Lambda^{[-1, 0]})}(\tau,z) 
= \frac{1}{\overset{N=3}{R}{}^{(-)}(\tau,z)} 
\big(\overset{N=3}{R}{}^{(-)} \widetilde{\rm ch}{}^{(+)}_{H(\Lambda^{[-1, 0]})}\big)
(\tau,z) 
\\[1mm]
&=&
- \, i \cdot 
\frac{\eta(\frac{\tau}{2})}{ \eta(\tau)^3} \cdot 
\frac{\vartheta_{01}(\tau, z)}{\vartheta_{11}(\tau, z)} \cdot 
\frac{\eta(\tau)^5}{\eta(\frac{\tau}{2})^2 \eta(2\tau)^2} 
\cdot 
\frac{\vartheta_{11}(\tau,z) \, \vartheta_{00}(\tau,z)}{\vartheta_{01}(\tau,z)}
\\[1mm]
&=&
- \, \frac{i}{2} \cdot 
\frac{\eta(\tau)^2}{\eta(\frac{\tau}{2})\eta(2\tau)^2} \cdot \vartheta_{00}(\tau,z)
\\[2mm]
& & \hspace{-10mm}
\widetilde{\rm ch}{}^{(-)}_{H(\Lambda^{[-1, 1]})}(\tau,z) 
= \frac{1}{\overset{N=3}{R}{}^{(-)}(\tau,z)} 
\big(\overset{N=3}{R}{}^{(-)} \widetilde{\rm ch}{}^{(+)}_{H(\Lambda^{[-1,1]})}\big)
(\tau,z) 
\\[1mm]
&=&
- \, i \, \cdot 
\frac{\eta(\frac{\tau}{2})}{ \eta(\tau)^3} \cdot 
\frac{\vartheta_{01}(\tau, z)}{\vartheta_{11}(\tau, z)} \cdot 
\frac{\eta(2\tau)^2}{\eta(\tau)} \cdot \frac{
\vartheta_{11}(\tau,z) \, \vartheta_{10}(\tau,z)}{\vartheta_{01}(\tau,z)} 
\\[1mm]
&=&
- \, i \, \cdot 
\frac{\eta(\frac{\tau}{2})\eta(2\tau)^2}{\eta(\tau)^4} \cdot 
\vartheta_{10}(\tau,z)
\\[2mm]
& & \hspace{-10mm}
\widetilde{\rm ch}{}^{(+) \, {\rm tw}(\sigma_-)}_{H(\Lambda^{[-1, 1]})}(\tau,z) 
\,\ = \,\ 
\frac{1}{\overset{N=3}{R}{}^{(+) \, {\rm tw}}(\tau, z)} \cdot 
\Big[\overset{N=3}{R}{}^{(+) \, {\rm tw}} \cdot 
\widetilde{\rm ch}{}^{(+) \, {\rm tw}(\sigma_-)}_{H(\Lambda^{[-1, 1]})}\Big](\tau,z) 
\\[1mm]
&=&
\frac{i}{2} \, \cdot \sqrt{2} \cdot 
\frac{\eta(2\tau)}{ \eta(\tau)^3} \cdot 
\frac{\vartheta_{10}(\tau, z)}{\vartheta_{11}(\tau, z)} 
\\[1mm]
& & \hspace{3mm}
\times \,\ \bigg\{
\frac{\eta(\frac{\tau}{2})^2}{\eta(\tau)} \cdot 
\frac{\vartheta_{11}(\tau,z) \, \vartheta_{01}(\tau,z)}{
\vartheta_{10}(\tau,z)}
\,\ + \,\ 
\frac{\eta(\tau)^5}{\eta(\frac{\tau}{2})^2 \eta(2\tau)^2} 
\cdot 
\frac{\vartheta_{11}(\tau,z) \, \vartheta_{00}(\tau,z)}{\vartheta_{10}(\tau,z)}
\bigg\}
\\[1mm]
&=&
\frac{i}{\sqrt{2}} \,\ \bigg\{
\frac{\eta(\frac{\tau}{2})^2\eta(2\tau)}{\eta(\tau)^4} \, \vartheta_{01}(\tau,z)
\,\ + \,\ 
\frac{\eta(\tau)^2}{\eta(\frac{\tau}{2})^2 \eta(2\tau)} 
\, \vartheta_{00}(\tau,z)
\bigg\}
\\[2mm]
& & \hspace{-10mm}
\widetilde{\rm ch}{}^{(+) \, {\rm tw}(\sigma_+)}_{H(\Lambda^{[-1, 1]})}(\tau,z) 
\,\ = \,\ 
\frac{1}{\overset{N=3}{R}{}^{(+) \, {\rm tw}}(\tau, z)} \cdot 
\Big[\overset{N=3}{R}{}^{(+) \, {\rm tw}} \cdot 
\widetilde{\rm ch}{}^{(+) \, {\rm tw}(\sigma_+)}_{H(\Lambda^{[-1, 1]})}\Big](\tau,z) 
\\[1mm]
&=&
\frac{i}{2} \, \cdot \sqrt{2} \cdot 
\frac{\eta(2\tau)}{ \eta(\tau)^3} \cdot 
\frac{\vartheta_{10}(\tau, z)}{\vartheta_{11}(\tau, z)} 
\\[3mm]
& & \hspace{3mm}
\times \,\ \bigg\{
\frac{\eta(\tau)^5}{\eta(\frac{\tau}{2})^2 \eta(2\tau)^2} 
\cdot 
\frac{\vartheta_{11}(\tau,z) \, \vartheta_{00}(\tau,z)}{\vartheta_{10}(\tau,z)}
\,\ - \,\ 
\frac{\eta(\frac{\tau}{2})^2}{\eta(\tau)} \cdot 
\frac{\vartheta_{11}(\tau,z) \, \vartheta_{01}(\tau,z)}{
\vartheta_{10}(\tau,z)}
\bigg\}
\\[1mm]
&=&
\frac{i}{\sqrt{2}} \,\ \bigg\{
\frac{\eta(\tau)^2}{\eta(\frac{\tau}{2})^2 \eta(2\tau)} 
\, \vartheta_{00}(\tau,z)
\,\ - \,\ 
\frac{\eta(\frac{\tau}{2})^2\eta(2\tau)}{\eta(\tau)^4} \, \vartheta_{01}(\tau,z)
\bigg\}
\end{eqnarray*}}
Thus the proof is completed. Since, by Lemma \ref{n3:lemma:2022-111e}, 
the modified characters are the same with the honest characters for 
$H(\Lambda^{[-1,1]})$, the formulas for the characters of $H(\Lambda^{[-1,1]})$
are exhibited by using \lq \lq ch" in place of \lq \lq $\widetilde{\rm ch}$"
in this theorem.
\end{proof}

Then by modular transformation properties of $\eta(\tau)$ and 
$\vartheta_{ab}(\tau, z)$, we obtain the following transformation 
properties of the modified characters in the case $m=2$ which, 
of course, coincide with Proposition \ref{n3:prop:2022-115a}.

\begin{cor} \,\ 
\label{n3:cor:2022-108a}
\begin{enumerate}
\item[{\rm 1)}] \,\ $S$-transformation :
\begin{enumerate}
\item[{\rm (i)}] \quad $\widetilde{\rm ch}^{(+)}_{H(\Lambda^{[-1,0]})}
\Big(- \dfrac{1}{\tau}, \dfrac{z}{\tau}\Big) 
\,\ = \,\
i \, e^{\frac{\pi i z^2}{\tau}} \, 
{\rm ch}^{(+)}_{H(\Lambda^{[-1,1]})}(\tau, z)$
\item[{\rm (ii)}] \quad ${\rm ch}^{(+)}_{H(\Lambda^{[-1,1]})}
\Big(- \dfrac{1}{\tau}, \dfrac{z}{\tau}\Big) 
\,\ = \,\ 
- \, i \, e^{\frac{\pi i z^2}{\tau}} \, 
\widetilde{\rm ch}^{(+)}_{H(\Lambda^{[-1,0]})}(\tau, z)$
\item[{\rm (iii)}] \quad $\widetilde{\rm ch}^{(-)}_{H(\Lambda^{[-1,0]})}
\Big(- \dfrac{1}{\tau}, \dfrac{z}{\tau}\Big)$
$$
= \,\ 
- \, \frac12 \, e^{\frac{\pi iz^2}{\tau}} \, \Big\{
{\rm ch}{}^{(+) \, {\rm tw} \, (\sigma_-)}_{H(\Lambda^{[-1, 1]})}(\tau,z)
\,\ + \,\ 
\widetilde{\rm ch}{}^{(+) \, {\rm tw} \, (\sigma_+)}_{H(\Lambda^{[-1, 1]})}(\tau,z)
\Big\}
$$
\item[{\rm (iv)}] \quad ${\rm ch}^{(-)}_{H(\Lambda^{[-1,1]})}
\Big(- \dfrac{1}{\tau}, \dfrac{z}{\tau}\Big) $
$$
\,\ = \,\ 
- \, \frac12 \, e^{\frac{\pi iz^2}{\tau}} \, \Big\{
{\rm ch}{}^{(+) \, {\rm tw} \, (\sigma_-)}_{H(\Lambda^{[-1, 1]})}(\tau,z)
\,\ - \,\ 
\widetilde{\rm ch}{}^{(+) \, {\rm tw} \, (\sigma_+)}_{H(\Lambda^{[-1, 1]})}(\tau,z)
\Big\}
$$
\item[{\rm (v)}] ${\rm ch}^{(+) {\rm tw}(\sigma_-)}_{H(\Lambda^{[-1,1]})}
\Big(- \dfrac{1}{\tau}, \dfrac{z}{\tau}\Big) 
= - 
e^{\frac{\pi iz^2}{\tau}} \Big\{
\widetilde{\rm ch}^{(-)}_{H(\Lambda^{[-1,0]})}(\tau,z)
+ 
{\rm ch}^{(-)}_{H(\Lambda^{[-1,1]})}(\tau,z) \Big\}$
\item[{\rm (vi)}] $
\widetilde{\rm ch}^{(+) {\rm tw}(\sigma_+)}_{H(\Lambda^{[-1,1]})}
\Big(- \dfrac{1}{\tau}, \dfrac{z}{\tau}\Big) 
= - 
e^{\frac{\pi iz^2}{\tau}} \Big\{
\widetilde{\rm ch}^{(-)}_{H(\Lambda^{[-1,0]})}(\tau,z)
- 
{\rm ch}^{(-)}_{H(\Lambda^{[-1,1]})}(\tau,z) \Big\}$
\end{enumerate}

\item[{\rm 2)}] \,\ $T$-transformation :

\begin{enumerate}
\item[{\rm (i)}] \quad $\widetilde{\rm ch}^{(+)}_{H(\Lambda^{[-1,0]})}
(\tau+1, \, z) \,\ = \,\
- \,\ e^{ \frac{7}{24} \pi i} \, 
\widetilde{\rm ch}{}^{(-)}_{H(\Lambda^{[-1, 0]})}(\tau,z) $
\item[{\rm (ii)}] \quad ${\rm ch}^{(+)}_{H(\Lambda^{[-1,1]})}
(\tau+1, \, z) \,\ = \,\
- \,\ e^{\frac{7}{24} \pi i} \, 
{\rm ch}{}^{(-)}_{H(\Lambda^{[-1, 1]})}(\tau,z) $
\item[{\rm (iii)}] \quad $\widetilde{\rm ch}^{(-)}_{H(\Lambda^{[-1,0]})}
(\tau+1, \, z) \,\ = \,\
e^{\frac{7}{24} \pi i} \, 
\widetilde{\rm ch}{}^{(+)}_{H(\Lambda^{[-1, 0]})}(\tau,z)$
\item[{\rm (iv)}] \quad ${\rm ch}^{(-)}_{H(\Lambda^{[-1,1]})}
(\tau+1, \, z) 
\,\ = \,\ - \,\ 
e^{\frac{7}{24} \pi i} \, 
{\rm ch}{}^{(+)}_{H(\Lambda^{[-1, 1]})}(\tau,z)$
\item[{\rm (v)}] \quad 
${\rm ch}^{(+) \, {\rm tw} (\sigma_-)}_{H(\Lambda^{[-1,1]})}
(\tau+1, \, z) \,\ = \,\
e^{-\frac{\pi i}{12}} \,\ 
{\rm ch}{}^{(+) \, {\rm tw} \, (\sigma_-)}_{H(\Lambda^{[-1, 1]})}(\tau,z)$
\item[{\rm (vi)}] \quad 
$\widetilde{\rm ch}^{(+) \, {\rm tw} (\sigma_+)}_{H(\Lambda^{[-1,1]})}
(\tau+1, \, z) \,\ = \,\
- \,\ e^{-\frac{\pi i}{12}} \,\ 
\widetilde{\rm ch}{}^{(+) \, {\rm tw} \, (\sigma_+)}_{H(\Lambda^{[-1, 1]})}
(\tau,z)$
\end{enumerate}
\end{enumerate}
\end{cor}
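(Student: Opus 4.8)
The plan is to substitute the closed-form expressions of Theorem~\ref{n3:thm:2022-108a} into the left-hand sides and reduce using only the classical modular transformation laws of $\eta$ and of the Mumford theta functions $\vartheta_{ab}$; the stated right-hand sides then fall out after collecting constants. For the $S$-transformation I would use $\eta(-1/\tau)=\sqrt{-i\tau}\,\eta(\tau)$ together with its two rescaled avatars $\eta(-1/(2\tau))=\sqrt2\,\sqrt{-i\tau}\,\eta(2\tau)$ and $\eta(-2/\tau)=\tfrac1{\sqrt2}\sqrt{-i\tau}\,\eta(\tfrac\tau2)$ (so that $S$ interchanges the two arguments $\tfrac\tau2$ and $2\tau$ up to a factor $\sqrt2$), along with $\vartheta_{00}(-1/\tau,z/\tau)=\sqrt{-i\tau}\,e^{\pi iz^2/\tau}\vartheta_{00}(\tau,z)$, $\vartheta_{01}(-1/\tau,z/\tau)=\sqrt{-i\tau}\,e^{\pi iz^2/\tau}\vartheta_{10}(\tau,z)$ and $\vartheta_{10}(-1/\tau,z/\tau)=\sqrt{-i\tau}\,e^{\pi iz^2/\tau}\vartheta_{01}(\tau,z)$; for the $T$-transformation I would use $\eta(\tau+1)=e^{\pi i/12}\eta(\tau)$, $\vartheta_{00}(\tau+1,z)=\vartheta_{01}(\tau,z)$, $\vartheta_{01}(\tau+1,z)=\vartheta_{00}(\tau,z)$ and $\vartheta_{10}(\tau+1,z)=e^{\pi i/4}\vartheta_{10}(\tau,z)$.

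For part~1) I would run each of the six formulas through the $S$-laws above. Since every character in Theorem~\ref{n3:thm:2022-108a} is a monomial in $\eta(\tfrac\tau2),\eta(\tau),\eta(2\tau)$ times a single $\vartheta_{ab}$ (a sum of two such monomials in 5) and 6)), the reduction is mechanical: the factors of $\sqrt{-i\tau}$ contributed by the numerator and denominator $\eta$'s and by the $\vartheta$-factor cancel, leaving exactly one surviving $e^{\pi iz^2/\tau}$; the powers of $\sqrt2$ produced by the interchange $\eta(\tfrac\tau2)\leftrightarrow\eta(2\tau)$ combine correctly with the numerical prefactors in Theorem~\ref{n3:thm:2022-108a} (including the $\tfrac1{\sqrt2}$ in 5)--6)); and the interchange $\vartheta_{01}\leftrightarrow\vartheta_{10}$ with $\vartheta_{00}$ fixed, together with $\eta(\tfrac\tau2)\leftrightarrow\eta(2\tau)$, carries each character into the character --- or linear combination of two characters, in (iii)--(vi) --- claimed on the right. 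For instance, in 1)(i) the monomial $-\tfrac12\,\eta(\tfrac\tau2)\eta(\tau)^{-1}\eta(2\tau)^{-1}\vartheta_{01}$ is sent to $-\tfrac12\cdot\tfrac{\sqrt2}{1/\sqrt2}\cdot\eta(2\tau)\eta(\tfrac\tau2)^{-1}\eta(\tau)^{-1}e^{\pi iz^2/\tau}\vartheta_{10}=-\eta(2\tau)\eta(\tfrac\tau2)^{-1}\eta(\tau)^{-1}e^{\pi iz^2/\tau}\vartheta_{10}=i\,e^{\pi iz^2/\tau}\bigl(i\,\eta(2\tau)\eta(\tfrac\tau2)^{-1}\eta(\tau)^{-1}\vartheta_{10}\bigr)=i\,e^{\pi iz^2/\tau}\,{\rm ch}^{(+)}_{H(\Lambda^{[-1,1]})}$, which is 1)(i); the other five cases are the same in spirit (in (iii), say, the sum of the $\sigma_-$- and $\sigma_+$-twisted characters reassembles the single $\vartheta_{00}$ obtained from $\widetilde{\rm ch}^{(-)}_{H(\Lambda^{[-1,0]})}$). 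Alternatively, part~1) is just Proposition~\ref{n3:prop:2022-115a} specialised to $m=2$, since there $\tfrac m2=1$ makes the $m_2=1$ modified characters coincide with the honest ones by Lemma~\ref{n3:lemma:2022-111e}.

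Part~2) runs the same way with the $T$-laws, and the one point that needs care --- and it occurs in all six formulas, since $\eta(\tfrac\tau2)$ appears in each --- is that $T$ sends $\eta(\tfrac\tau2)$ to $\eta(\tfrac{\tau+1}{2})=\eta(\tfrac\tau2+\tfrac12)$, a half-period rather than an integer shift. The new ingredient I would supply is the classical $\eta$-product identity
\[
\eta\!\left(\tfrac{\tau+1}{2}\right)\;=\;e^{\pi i/24}\,\frac{\eta(\tau)^3}{\eta(\tfrac\tau2)\,\eta(2\tau)}\,,
\]
proved by comparing Euler products: with $q=e^{2\pi i\tau}$ one has $\prod_{n\ge1}(1-q^n)=q^{-1/24}\eta(\tau)$ and $\prod_{n\ge1}(1-q^{n/2})=q^{-1/48}\eta(\tfrac\tau2)$, hence $\prod_{n\ge1}(1-q^{2n-1})=q^{1/24}\eta(\tau)/\eta(2\tau)$ and $\prod_{n\ge1}(1+q^{n-1/2})=q^{1/48}\eta(\tau)^2/(\eta(\tfrac\tau2)\eta(2\tau))$, while $\eta(\tfrac\tau2+\tfrac12)=e^{\pi i/24}q^{1/48}\prod_{n\ge1}(1+q^{n-1/2})(1-q^n)$. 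Granting this, each of 2)(i)--(vi) collapses to collecting a single $24$th root of unity; for example, in 1) the prefactor $-\tfrac12\,\eta(\tfrac\tau2)\eta(\tau)^{-1}\eta(2\tau)^{-1}$ becomes $-\tfrac12\,e^{-5\pi i/24}\,\eta(\tau)^2\eta(\tfrac\tau2)^{-1}\eta(2\tau)^{-2}$, and, together with $\vartheta_{01}\mapsto\vartheta_{00}$, this yields $-i\,e^{-5\pi i/24}\,\widetilde{\rm ch}^{(-)}_{H(\Lambda^{[-1,0]})}=-e^{7\pi i/24}\,\widetilde{\rm ch}^{(-)}_{H(\Lambda^{[-1,0]})}$ (using $-i\,e^{-5\pi i/24}=e^{-17\pi i/24}=-e^{7\pi i/24}$), which is 2)(i) and which agrees with Proposition~\ref{n3:prop:2022-115a} at $m=2$, where $e^{-(\frac m8+\frac{11}{24})\pi i}=e^{-17\pi i/24}$. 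The main obstacle is therefore not conceptual: it is entirely the bookkeeping of the factors $\sqrt2$, $\sqrt{-i\tau}$ and the eighth and twenty-fourth roots of unity, the only genuinely new input being the half-shift $\eta$-identity displayed above; and since the whole corollary also follows by setting $m=2$ in Proposition~\ref{n3:prop:2022-115a}, the computation here doubles as a consistency check of Theorem~\ref{n3:thm:2022-108a}.
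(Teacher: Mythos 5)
Your proposal is correct and is exactly the route the paper takes: the paper derives this corollary by applying the classical modular transformation laws of $\eta$ and $\vartheta_{ab}$ to the closed-form expressions of Theorem \ref{n3:thm:2022-108a}, noting (as you do) that the result must coincide with Proposition \ref{n3:prop:2022-115a} at $m=2$. Your explicit supply of the half-shift identity $\eta(\tfrac{\tau+1}{2})=e^{\pi i/24}\eta(\tau)^3/\bigl(\eta(\tfrac{\tau}{2})\eta(2\tau)\bigr)$ and the verified sample computations for 1)(i) and 2)(i) are accurate and fill in the bookkeeping the paper leaves implicit.
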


\begin{rem}
\label{n3:rem:2022-115a} 
As is explained in the proof of Theorem \ref{n3:cor:2022-108a},
the modified characters of $H(\Lambda^{[-1,1]})$ coinside with the honest 
characters by Lemma \ref{n3:lemma:2022-111e}, namely 
$$
\widetilde{\rm ch}^{(\pm)}_{H(\Lambda^{[-1,1]})}
={\rm ch}^{(\pm)}_{H(\Lambda^{[-1,1]})} 
\quad \text{and} \quad 
\widetilde{\rm ch}^{(+) \, {\rm tw} (\sigma_{\pm})}_{H(\Lambda^{[-1,1]})}
= {\rm ch}^{(+) \, {\rm tw} (\sigma_{\pm})}_{H(\Lambda^{[-1,1]})} \, ,
$$
so, in the above Theorem \ref{n3:cor:2022-108a} and Corollary \ref{n3:cor:2022-108a},
formulas are exposed in the form where {\rm ch} and $\widetilde{\rm ch}$ are mixed.
Corollary \ref{n3:cor:2022-108a} shows that, in the case $m=2$, the 
$SL_2(\zzz)$-invariance holds for the space in which 
honest characters and modified characters collaborate.
\end{rem}

\section{Honest characters in the case $m=2$}
\label{sec:m2:honest}

In this section, we compute the correction term $\Phi^{[1, \frac12]}_{\rm add}$
to obtain the honest (super)characters of $H(\Lambda^{[-1,m_2]})$ 
for $m_2=0,2$.

\begin{lemma} \quad
\label{n3:lemma:2022-115b}
\begin{enumerate}
\item[{\rm 1)}] $R_{j,1}\Big(\tau, \, \dfrac{\tau}{4}\Big) 
\,\ = \,\ \left\{
\begin{array}{ccc}
q^{\frac{1}{16}} & & \text{if} \quad j=\frac12 \\[2mm]
0 & & \text{if} \quad j=\frac32
\end{array} \right. $
\item[{\rm 2)}] $\Phi^{[1, \frac12]}_{\rm add}
\Big(\tau, z+\dfrac{\tau}{4},  z-\dfrac{\tau}{4}, 0\Big)
\,\ = \,\ 
-\dfrac12 \, q^{\frac{1}{16}} \, 
\big[\theta_{\frac12,1}-\theta_{-\frac12,1}\big](\tau, 2z)$
\item[{\rm 3)}] $\Phi^{[1, \frac12]}_{\rm add}
\Big(2\tau, z+\dfrac{\tau}{2},  z-\dfrac{\tau}{2}, 0\Big)
\,\ = \,\ 
\dfrac{i}{2} \, q^{\frac{1}{8}} \, \vartheta_{11}(\tau,z)$
\item[{\rm 4)}] $\Phi^{[1, \frac12]}_{\rm add}
\Big(2\tau, z+\dfrac{\tau}{2},  z-\dfrac{\tau}{2}, \dfrac{\tau}{8}\Big)
\,\ = \,\ 
\dfrac{i}{2} \, \vartheta_{11}(\tau,z)$
\end{enumerate}
\end{lemma}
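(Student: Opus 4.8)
The four assertions form a chain, with 1) the only real computation and 2)--4) specializations of it. The plan is: evaluate $R_{j,1}\big(\tau,\tfrac{\tau}{4}\big)$ straight from the defining series \eqref{n3:eqn:2022-111h}; substitute the result into the definition \eqref{n3:eqn:2022-111j} of $\Phi^{[1,\frac12]}_{\rm add}$ to get 2); apply \eqref{n3:eqn:2022-111j} once more, now with modular argument $2\tau$, and invoke a single classical theta identity to get 3); and deduce 4) from 3) by noticing that $t$ enters $\Phi_{\rm add}$ only through the overall scalar $e^{-2\pi imt}$. The crux is 1); everything after it is bookkeeping plus one identity.

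For 1), put $m=1$ and $w=\tfrac{\tau}{4}$ in \eqref{n3:eqn:2022-111h}. Then $2m\,\tfrac{{\rm Im}(w)}{{\rm Im}(\tau)}=\tfrac12$, so the term indexed by $n\equiv j\bmod 2$ is
\[
\Big\{{\rm sgn}\big(n-\tfrac12-j+2\big)-E\big((n-\tfrac12)\sqrt{{\rm Im}(\tau)}\,\big)\Big\}\;
e^{-\frac{\pi i\tau}{2}\,n(n-1)} .
\]
I would then symmetrize by the involution $n\mapsto 1-n$: it preserves the residue class $j\bmod 2$, fixes $n(n-1)$ and hence the exponential, sends $E\big((n-\tfrac12)\sqrt{{\rm Im}(\tau)}\,\big)$ to its negative because $E$ is odd, and sends the argument $n-\tfrac12-j+2$ to $(4-2j)-\big(n-\tfrac12-j+2\big)$, whose sign is the opposite of that of $n-\tfrac12-j+2$ for every term in the series except at the fixed point of the involution, where the involution acts trivially anyway. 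Hence the bracketed factor flips sign under $n\mapsto 1-n$ while the rest is invariant, so the whole series telescopes and pairs of terms cancel. For $j=\tfrac12$ the involution has the single fixed point $n=\tfrac12$, which contributes $\big({\rm sgn}(\tfrac32)-E(0)\big)\,e^{-\frac{\pi i\tau}{2}\cdot\frac12\cdot(-\frac12)}=e^{\pi i\tau/8}=q^{\frac1{16}}$; for $j=\tfrac32$ there is no fixed point (as $\tfrac12\notin\tfrac32+2\zzz$), so the sum is $0$. Setting up this cancellation — in particular checking that the sign of ${\rm sgn}(\cdots)$ genuinely reverses away from the fixed point, and isolating the one surviving term — is the main obstacle.

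For 2), take $m=1$, $s=\tfrac12$ in \eqref{n3:eqn:2022-111j}: the index range $s\le k<s+2m$ is $k\in\{\tfrac12,\tfrac32\}$; with $(z_1,z_2)=\big(z+\tfrac{\tau}{4},\,z-\tfrac{\tau}{4}\big)$ and $t=0$ one has $\tfrac{z_1-z_2}{2}=\tfrac{\tau}{4}$, $z_1+z_2=2z$ and $e^{-2\pi imt}=1$, so part 1) annihilates the $k=\tfrac32$ summand and leaves exactly $-\tfrac12\,q^{\frac1{16}}\big[\theta_{\frac12,1}-\theta_{-\frac12,1}\big](\tau,2z)$. For 3), apply \eqref{n3:eqn:2022-111j} with first argument $2\tau$ and $(z_1,z_2)=\big(z+\tfrac{\tau}{2},\,z-\tfrac{\tau}{2}\big)$, so $\tfrac{z_1-z_2}{2}=\tfrac{\tau}{2}$; by part 1) at modular parameter $2\tau$ we have $R_{\frac12,1}\big(2\tau,\tfrac{\tau}{2}\big)=q^{\frac18}$ and $R_{\frac32,1}\big(2\tau,\tfrac{\tau}{2}\big)=0$, giving $-\tfrac12\,q^{\frac18}\big[\theta_{\frac12,1}-\theta_{-\frac12,1}\big](2\tau,2z)$. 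One then uses the classical identity $\big[\theta_{\frac12,1}-\theta_{-\frac12,1}\big](2\tau,2z)=-i\,\vartheta_{11}(\tau,z)$, checked by matching $q$-expansions (both sides run over $l\equiv 1\bmod 4$ with general term $q^{l^2/8}\big(e^{\pi ilz}-e^{-\pi ilz}\big)$, up to the factor $i$ in Mumford's normalization of $\vartheta_{11}$), which turns the right side into $\tfrac{i}{2}\,q^{\frac18}\,\vartheta_{11}(\tau,z)$, proving 3). Finally 4) drops out of 3): since $t$ occurs in \eqref{n3:eqn:2022-111j} only via $e^{-2\pi i\cdot1\cdot t}$, the left side of 4) is $e^{-2\pi i\tau/8}=q^{-\frac18}$ times the left side of 3), i.e. $q^{-\frac18}\cdot\tfrac{i}{2}q^{\frac18}\vartheta_{11}(\tau,z)=\tfrac{i}{2}\vartheta_{11}(\tau,z)$.
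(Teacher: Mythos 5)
Your proof is correct and follows essentially the same route as the paper: parts 2)--4) are the same bookkeeping (specialize \eqref{n3:eqn:2022-111j}, rescale $\tau\mapsto 2\tau$, use $[\theta_{\frac12,1}-\theta_{-\frac12,1}](2\tau,2z)=-i\,\vartheta_{11}(\tau,z)$, and pull out $e^{-2\pi it}$), while your evaluation of $R_{j,1}(\tau,\tfrac{\tau}{4})$ via the involution $n\mapsto 1-n$ is just a repackaging of the paper's argument, which cancels the ${\rm sgn}$-part under $k\mapsto -k$ (resp.\ $k\mapsto -1-k$) and the $E$-part via $E(-x)=-E(x)$ --- the same symmetry, with the same surviving fixed-point term $q^{\frac{1}{16}}$.
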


\begin{proof} 1) \,\ Letting $m=1$ and $w=\frac{\tau}{4}$ in 
\eqref{n3:eqn:2022-111h}, we have
$$
R_{j;1}\left(\tau,\frac{\tau}{4}\right) = \hspace{-4mm}
\sum_{\substack{ \\[1mm] n \equiv j \, {\rm mod} \, 2}}
\hspace{-4mm}
\left\{{\rm sgn}\left(n+\frac32-j\right)
-E\left(\left(n-\frac12\right)
\sqrt{{\rm Im}(\tau)}\right)\right\}
e^{-\frac{\pi in^2\tau}{2} +\frac{\pi in\tau}{2}}
$$
Putting $n=j+2k$, this equation is rewitten as follows:
{\allowdisplaybreaks
\begin{eqnarray*}
& &
R_{j;1}\left(\tau,\frac{\tau}{4}\right) 
\\[1mm]
&=& \sum_{k \in \zzz}
\left\{{\rm sgn}\left(2k+\frac32\Big)
-E\left(\Big(j+2k-\frac12\right)
\sqrt{{\rm Im}(\tau)}\right)\right\}
q^{\frac14(j+2k)(1-j-2k)}
\\[1mm]
&=&
\underbrace{
\Big[\sum_{k \geq 0} \, - \, \sum_{k<0}\Big]q^{\frac14(j+2k)(1-j-2k)}}_{
\substack{\hspace{6mm} || \,\ put \\[1mm] {\rm (I)}_j
}}
\\[0mm]
& &
\underbrace{- \, \sum_{k \in \zzz}
E\left(\left(j+2k-\frac12\right)\sqrt{{\rm Im}(\tau)}\right)
q^{\frac14(j+2k)(1-j-2k)}}_{
\substack{\hspace{6mm} || \,\ put \\[1mm] {\rm (II)}_j
}}
\end{eqnarray*}}
It is easy to see that

\vspace{-5mm}

$$
{\rm (I)}_j \,\ = \,\ \left\{
\begin{array}{ccc}
q^{\frac{1}{16}} & & {\rm if} \,\ j=\frac12 \\[2mm]
0 & & {\rm if} \,\ j=\frac32
\end{array}\right. 
$$
and that ${\rm (II)}_{\frac12} = {\rm (II)}_{\frac32}=0$ since $E(-x)=-E(x)$.
Thus we have 
$$
{\rm (I)}_j +{\rm (II)}_j \,\ = \,\ \left\{
\begin{array}{ccc}
q^{\frac{1}{16}} & & {\rm if} \,\ j=\frac12 \\[2mm]
0 & & {\rm if} \,\ j=\frac32
\end{array}\right. \, ,
$$
proving 1).

\medskip

\noindent
2) By the equation \eqref{n3:eqn:2022-111j} and 1), we have 
{\allowdisplaybreaks
\begin{eqnarray*}
\lefteqn{
\Phi_{\rm add}^{[1, \frac12]}
\left(\tau, z+\frac{\tau}{4}, z-\frac{\tau}{4}, 0\right)}
\\[1mm]
&=&
-\frac12 \left\{
R_{\frac12,1}\left(\tau, \frac{\tau}{4}\right)
-
R_{\frac32,1}\left(\tau, \frac{\tau}{4}\right)\right\}
[\theta_{\frac12,1}- \theta_{-\frac12,1}](\tau, 2z)
\\[1mm]
&=&
-\frac12 q^{\frac{1}{16}}[\theta_{\frac12,1}- \theta_{-\frac12,1}](\tau, 2z)
\end{eqnarray*}}
proving 2).

\medskip

\noindent
3) Letting $\tau \rightarrow 2\tau$ in 2), we have 
{\allowdisplaybreaks
\begin{eqnarray*}
& &
\Phi^{[1, \frac12]}\left(2\tau, z+\frac{\tau}{2}, z-\frac{\tau}{2}, 0\right)
=
-\frac12 q^{\frac{1}{8}}[\theta_{\frac12,1}- \theta_{-\frac12,1}](2\tau, 2z) \, .
\end{eqnarray*}}
Since 
\begin{equation}
[\theta_{\frac12,1}- \theta_{-\frac12,1}](2\tau, 2z)
= 
[\theta_{1,2}- \theta_{-1,2}](\tau, z)
= -i \vartheta_{11}(\tau,z) \, ,
\label{n3:eqn:2022-119g}
\end{equation}
we obtain 3). \,\ 4) follows from 3) immediately.
\end{proof}

\begin{lemma} \,\
\label{n3:lemma:2022-116d}
\begin{enumerate}
\item[{\rm 1)}] $R_{j,1}\Big(\tau, \, \dfrac{\tau}{4}-\dfrac12\Big) 
\,\ = \,\ \left\{
\begin{array}{ccc}
-iq^{\frac{1}{16}} & & \text{if} \quad j=\frac12 \\[2mm]
0 & & \text{if} \quad j=\frac32
\end{array} \right. $
\item[{\rm 2)}] $\Phi^{[1, \frac12]}_{\rm add}
\Big(\tau, z+\dfrac{\tau}{4}-\dfrac12,  z-\dfrac{\tau}{4}+\dfrac12, 0\Big)
\,\ = \,\ 
\dfrac{i}{2} \, q^{\frac{1}{16}} \, 
\big[\theta_{\frac12,1}-\theta_{-\frac12,1}\big](\tau, 2z)$
\item[{\rm 3)}] $\Phi^{[1, \frac12]}_{\rm add}
\Big(2\tau, z+\dfrac{\tau}{2}-\dfrac12,  z-\dfrac{\tau}{2}+\dfrac12, 0\Big)
\,\ = \,\ 
\dfrac12 \, q^{\frac{1}{8}} \, \vartheta_{11}(\tau,z)$
\item[{\rm 4)}] $\Phi^{[1, \frac12]}_{\rm add}
\Big(2\tau, z+\dfrac{\tau}{2}-\dfrac12,  z-\dfrac{\tau}{2}+\dfrac12, 
\dfrac{\tau}{8}\Big)
\,\ = \,\ 
\dfrac12 \, \vartheta_{11}(\tau,z)$
\end{enumerate}
\end{lemma}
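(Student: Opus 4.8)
The plan is to follow exactly the route used for Lemma~\ref{n3:lemma:2022-115b}, the only genuinely new input being the half-period shift formula of Lemma~\ref{n3:lemma:2022-116c}. For part~1), I would apply Lemma~\ref{n3:lemma:2022-116c} with $m=1$, $w=\frac{\tau}{4}$ and the lower sign, giving $R_{j,1}(\tau,\frac{\tau}{4}-\frac12)=e^{-\pi ij}R_{j,1}(\tau,\frac{\tau}{4})$ for $j\in\frac12\zzz$, and then substitute the values of $R_{j,1}(\tau,\frac{\tau}{4})$ already computed in part~1) of Lemma~\ref{n3:lemma:2022-115b}. For $j=\frac12$ the prefactor is $e^{-\pi i/2}=-i$, so the value becomes $-iq^{\frac{1}{16}}$; for $j=\frac32$ the value was already $0$, hence stays $0$.

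For part~2), I would write out the definition \eqref{n3:eqn:2022-111j} of $\Phi^{[1,\frac12]}_{\rm add}$: the index $k$ runs over $\frac12+\zzz$ with $\frac12\le k<\frac52$, i.e. $k\in\{\frac12,\frac32\}$. Specializing $z_1=z+\frac{\tau}{4}-\frac12$, $z_2=z-\frac{\tau}{4}+\frac12$, $t=0$ gives $\frac{z_1-z_2}{2}=\frac{\tau}{4}-\frac12$ and $z_1+z_2=2z$, so by part~1) the $k=\frac32$ term vanishes and $R_{\frac12;1}(\tau,\frac{\tau}{4}-\frac12)=-iq^{\frac{1}{16}}$, leaving $-\frac12\cdot(-iq^{\frac{1}{16}})\,[\theta_{\frac12,1}-\theta_{-\frac12,1}](\tau,2z)=\frac{i}{2}q^{\frac{1}{16}}\,[\theta_{\frac12,1}-\theta_{-\frac12,1}](\tau,2z)$. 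For part~3), I would replace $\tau$ by $2\tau$ in part~2), so that $q^{\frac{1}{16}}$ becomes $q^{\frac18}$ and the theta functions become $[\theta_{\frac12,1}-\theta_{-\frac12,1}](2\tau,2z)$, and then invoke the identity \eqref{n3:eqn:2022-119g}, namely $[\theta_{\frac12,1}-\theta_{-\frac12,1}](2\tau,2z)=-i\vartheta_{11}(\tau,z)$, to obtain $\frac{i}{2}q^{\frac18}\cdot(-i)\vartheta_{11}(\tau,z)=\frac12q^{\frac18}\vartheta_{11}(\tau,z)$. Finally, for part~4), since the $t$-dependence of $\Phi^{[1,s]}_{\rm add}$ is entirely the overall factor $e^{-2\pi it}$ exhibited in \eqref{n3:eqn:2022-111j}, passing from $t=0$ to $t=\frac{\tau}{8}$ multiplies the result by $e^{-2\pi i\cdot\frac{\tau}{8}}=q^{-\frac18}$, which cancels the $q^{\frac18}$ and leaves $\frac12\vartheta_{11}(\tau,z)$.

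There is no real obstacle: each step is a direct substitution into a formula established earlier in the excerpt. The only points demanding a moment of care are the bookkeeping of the root-of-unity prefactor $e^{-\pi ij}$ from Lemma~\ref{n3:lemma:2022-116c} at the two half-integer values $j=\frac12,\frac32$, and remembering that once the first slot is rescaled to $2\tau$ the symbol $q=e^{2\pi i\tau}$ is unchanged so that the exponents $\frac18$ and $-\frac18$ genuinely cancel in part~4).
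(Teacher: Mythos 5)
Your proposal is correct and follows essentially the same route as the paper: part 1) via the half-period shift of Lemma \ref{n3:lemma:2022-116c} applied to the values from Lemma \ref{n3:lemma:2022-115b}, and parts 2)--4) by the same substitutions into \eqref{n3:eqn:2022-111j}, the rescaling $\tau\mapsto 2\tau$, the identity \eqref{n3:eqn:2022-119g}, and the overall factor $e^{-2\pi imt}$ (the paper simply states that 2)--4) follow "by similar arguments" to Lemma \ref{n3:lemma:2022-115b}, which is exactly what you carry out). All prefactors and exponents check out.
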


\begin{proof} 1) By Lemma \ref{n3:lemma:2022-116c} and 
Lemma \ref{n3:lemma:2022-115b}, we have
$$
R_{j,1}\left(\tau, \, \frac{\tau}{4}-\frac12\right) = e^{-\pi ij}
R_{j,1}\left(\tau, \, \frac{\tau}{4}\right) 
= 
\left\{
\begin{array}{ccc}
-iq^{\frac{1}{16}} & & \text{if} \quad j=\frac12 \\[2mm]
0 & & \text{if} \quad j=\frac32
\end{array} \right.
$$
proving 1). Proof of 2) and 3) and 4) is obtained by similar arguments 
as in the proof of Lemma \ref{n3:lemma:2022-115b}.
\end{proof}

Using these Lemmas, we obtain $\Phi^{[1, \frac12]}$ and $\Phi^{[1, \frac32]}$ 
as follows:

\begin{prop} \,\
\label{n3:prop:2022-116b}
\begin{enumerate}
\item[{\rm 1)}]
\begin{enumerate}
\item[{\rm (i)}] $\Phi^{[1, \frac12]}
\Big(2\tau, z+ \dfrac{\tau}{2}, z- \dfrac{\tau}{2}, \dfrac{\tau}{8}\Big) 
\, = \,
- \, \dfrac{i}{2} \, \big\{
g_3^{(-)}(\tau, z) \, + \, \vartheta_{11}(\tau, z)\big\}$
$$
= \,\ - \, \frac{i}{2} \, \bigg\{
\frac{\eta(\tau)^5}{\eta(\frac{\tau}{2})^2 \, \eta(2\tau)^2} \cdot 
\frac{\vartheta_{11}(\tau,z) \, \vartheta_{00}(\tau,z)
}{\vartheta_{01}(\tau,z)}
\,\ + \,\ \vartheta_{11}(\tau, z)\bigg\}
$$
\item[{\rm (ii)}] $\Phi^{[1, \frac12]}
\Big(2\tau, z+ \dfrac{\tau}{2}-\dfrac12,  
z- \dfrac{\tau}{2}+\dfrac12, \dfrac{\tau}{8}\Big) 
\, = \,
- \, \dfrac{1}{2} \, \big\{
g_2^{(-)}(\tau, z) \, + \, \vartheta_{11}(\tau, z)\big\}$
$$
= \,\ - \, \frac{1}{2} \, \bigg\{
\frac{\eta(\frac{\tau}{2})^2}{\eta(\tau)} \cdot 
\frac{
\vartheta_{11}(\tau,z) \, \vartheta_{01}(\tau,z)}{\vartheta_{00}(\tau,z)}
\,\ + \,\ \vartheta_{11}(\tau, z)\bigg\}
$$
\end{enumerate}

\item[{\rm 2)}] 
\begin{enumerate}
\item[{\rm (i)}] $\Phi^{[1, \frac32]}
\Big(2\tau, z+ \dfrac{\tau}{2}, z- \dfrac{\tau}{2}, \dfrac{\tau}{8}\Big) 
\, = \,
- \, \dfrac{i}{2} \,  \big\{
g_3^{(-)}(\tau, z) \, - \, \vartheta_{11}(\tau, z)\big\}$
$$
= \,\ - \, \frac{i}{2} \,  \bigg\{
\frac{\eta(\tau)^5}{\eta(\frac{\tau}{2})^2 \, \eta(2\tau)^2} \cdot 
\frac{\vartheta_{11}(\tau,z) \, \vartheta_{00}(\tau,z)
}{\vartheta_{01}(\tau,z)}
\,\ - \,\ \vartheta_{11}(\tau, z)\bigg\}
$$

\item[{\rm (ii)}] $\Phi^{[1, \frac32]}
\Big(2\tau, z+ \dfrac{\tau}{2}-\dfrac12, 
z- \dfrac{\tau}{2}+\dfrac12, \dfrac{\tau}{8}\Big) 
\, = \,
- \dfrac{1}{2} \, \big\{
g_2^{(-)}(\tau, z) \, - \, \vartheta_{11}(\tau, z)\big\}$
$$
= \,\ - \, \frac{1}{2} \, \bigg\{
\frac{\eta(\frac{\tau}{2})^2}{\eta(\tau)} \cdot 
\frac{
\vartheta_{11}(\tau,z) \, \vartheta_{01}(\tau,z)}{\vartheta_{00}(\tau,z)}
\,\ - \,\ \vartheta_{11}(\tau, z)\bigg\}
$$
\end{enumerate}
\end{enumerate}
\end{prop}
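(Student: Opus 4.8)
The plan is to use the defining relation $\Phi^{[1,s]} = \widetilde{\Phi}^{[1,s]} - \Phi^{[1,s]}_{\rm add}$ from \eqref{n3:eqn:2022-111k}, evaluating the modified function and the correction term separately at the two points appearing in the statement. For the modified side with $s=\tfrac12$ I would set $m=2$ in the connection formulas \eqref{n3:eqn:2022-117d2} and \eqref{n3:eqn:2022-117d4}, which identify $\widetilde{\Phi}^{[1,\frac12]}$ at $\bigl(2\tau,\,z+\tfrac\tau2-\tfrac12,\,z-\tfrac\tau2+\tfrac12,\,\tfrac\tau8\bigr)$ and at $\bigl(2\tau,\,z+\tfrac\tau2,\,z-\tfrac\tau2,\,\tfrac\tau8\bigr)$ with $\tfrac12\bigl(\overset{\circ}{A}{}^{[2]}_2-\overset{\circ}{A}{}^{[2]}_1\bigr)$ and $\tfrac12\bigl(\overset{\circ}{A}{}^{[2]}_4-\overset{\circ}{A}{}^{[2]}_3\bigr)$ respectively. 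By Lemma \ref{n3:lemma:2022-108e}\,1)(iii) and 1)(v) these are $-\tfrac12\,g_2^{(-)}(\tau,z)$ and $-\tfrac{i}{2}\,g_3^{(-)}(\tau,z)$.

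Next I would subtract the correction terms already computed: by Lemma \ref{n3:lemma:2022-116d}\,4) the term $\Phi^{[1,\frac12]}_{\rm add}$ at $\bigl(2\tau,\,z+\tfrac\tau2-\tfrac12,\,z-\tfrac\tau2+\tfrac12,\,\tfrac\tau8\bigr)$ equals $\tfrac12\,\vartheta_{11}(\tau,z)$, and by Lemma \ref{n3:lemma:2022-115b}\,4) the term at $\bigl(2\tau,\,z+\tfrac\tau2,\,z-\tfrac\tau2,\,\tfrac\tau8\bigr)$ equals $\tfrac{i}{2}\,\vartheta_{11}(\tau,z)$. Subtracting gives $\Phi^{[1,\frac12]} = -\tfrac12\bigl(g_2^{(-)}+\vartheta_{11}\bigr)$ resp. $-\tfrac{i}{2}\bigl(g_3^{(-)}+\vartheta_{11}\bigr)$, which is exactly part 1); the explicit second equalities in 1)(i) and 1)(ii) follow by inserting the definitions \eqref{n3:eqn:2022-119c4} and \eqref{n3:eqn:2022-119c6} of $g_2^{(-)}$ and $g_3^{(-)}$.

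For part 2) I would pass from $s=\tfrac12$ to $s=\tfrac32$ using Lemma \ref{n3:lemma:2021-1213a}\,3) with $m=1$, $s=\tfrac12$, $a=1$, after the substitutions $\tau\mapsto 2\tau$ and the relevant shifts of $z_1,z_2,t$. In each case only the $k=0$ summand survives, with prefactor $e^{-2\pi i t}\,e^{\pi i\frac12(z_1-z_2)}\,\bigl(e^{4\pi i\tau}\bigr)^{-1/16}$; a short computation shows this collapses to $1$ at the point $\bigl(2\tau,\,z+\tfrac\tau2,\,z-\tfrac\tau2,\,\tfrac\tau8\bigr)$ and to $-1$ at $\bigl(2\tau,\,z+\tfrac\tau2-\tfrac12,\,z-\tfrac\tau2+\tfrac12,\,\tfrac\tau8\bigr)$, while the theta-difference is $[\theta_{\frac12,1}-\theta_{-\frac12,1}](2\tau,2z)=-i\,\vartheta_{11}(\tau,z)$ by \eqref{n3:eqn:2022-119g}. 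Hence $\Phi^{[1,\frac12]}-\Phi^{[1,\frac32]}$ equals $-i\,\vartheta_{11}$ at the first point and $-\vartheta_{11}$ at the second; adding these to the formulas of part 1) and simplifying yields part 2).

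The computations of the preceding lemmas do essentially all the work, so the only place that requires genuine care is this last bookkeeping step: one must track the three exponential factors and the rescaled $q^{1/16}$ precisely to obtain the constants $1$ and $-1$, and must invoke the doubling identity \eqref{n3:eqn:2022-119g} with the correct modulus. Everything else is substitution into results established earlier in this section.
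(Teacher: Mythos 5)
Your argument is essentially identical to the paper's proof: part 1) via $\Phi^{[1,\frac12]}=\widetilde{\Phi}^{[1,\frac12]}-\Phi^{[1,\frac12]}_{\rm add}$ combined with \eqref{n3:eqn:2022-117d2}, \eqref{n3:eqn:2022-117d4}, Lemma \ref{n3:lemma:2022-108e} and the correction terms from Lemmas \ref{n3:lemma:2022-115b} and \ref{n3:lemma:2022-116d}, and part 2) via Lemma \ref{n3:lemma:2021-1213a} together with \eqref{n3:eqn:2022-119g}. One small correction in the bookkeeping you flagged as delicate: at the point $\bigl(2\tau,\,z+\tfrac{\tau}{2}-\tfrac12,\,z-\tfrac{\tau}{2}+\tfrac12,\,\tfrac{\tau}{8}\bigr)$ the three exponential factors collapse to $e^{-\frac{\pi i}{2}}=-i$, not $-1$; multiplied by $[\theta_{\frac12,1}-\theta_{-\frac12,1}](2\tau,2z)=-i\,\vartheta_{11}(\tau,z)$ this still gives the difference $-\vartheta_{11}(\tau,z)$ that you state (and note that passing from $\Phi^{[1,\frac12]}$ to $\Phi^{[1,\frac32]}$ means \emph{subtracting} these differences), so the conclusion stands.
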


\begin{proof} 1)  \quad By \eqref{n3:eqn:2022-111k}, we have
\begin{subequations}
{\allowdisplaybreaks
\begin{eqnarray}
& & \hspace{-5mm}
\Phi^{[1, \frac12]}
\left(2\tau, \, z+ \dfrac{\tau}{2}, \, z- \dfrac{\tau}{2}, \, \frac{\tau}{8}\right)
\nonumber
\\[1mm]
&=&
\widetilde{\Phi}^{[1, \frac12]}
\left(2\tau, z+ \dfrac{\tau}{2}, z- \dfrac{\tau}{2}, \frac{\tau}{8}\right)
- 
\Phi^{[1, \frac12]}_{\rm add}
\left(2\tau, z+ \dfrac{\tau}{2}, z- \dfrac{\tau}{2}, \frac{\tau}{8}\right)
\label{n3:eqn:2022-119a}
\\[1mm]
& & \hspace{-5mm}
\Phi^{[1, \frac12]}
\left(2\tau, \, z+ \frac{\tau}{2}-\frac12, \, 
z- \frac{\tau}{2}+\frac12, \, \frac{\tau}{8}\right)
\nonumber
\\[1mm]
&=&
\widetilde{\Phi}^{[1, \frac12]}
\left(2\tau, z+ \dfrac{\tau}{2}-\frac12, 
z- \dfrac{\tau}{2}+\frac12, \frac{\tau}{8}\right)
- 
\Phi^{[1, \frac12]}_{\rm add}
\left(2\tau, z+ \dfrac{\tau}{2}-\frac12, 
z- \dfrac{\tau}{2}+\frac12, \frac{\tau}{8}\right)
\nonumber
\\[0mm]
& &
\label{n3:eqn:2022-119b}
\end{eqnarray}}
\end{subequations}
The RHS of these equations are rewitten, by \eqref{n3:eqn:2022-117d2} 
and \eqref{n3:eqn:2022-117d4} and 
Lemma \ref{n3:lemma:2022-108e} and Lemma \ref{n3:lemma:2022-115b} 
and Lemma \ref{n3:lemma:2022-116d} and \eqref{n3:eqn:2022-119c4}
and \eqref{n3:eqn:2022-119c6}, as follows:
{\allowdisplaybreaks
\begin{eqnarray*}
& & \hspace{-5mm}
\text{RHS of \eqref{n3:eqn:2022-119a}} 
= \frac12 \big\{
\overset{\circ}{A}{}^{[2]}_4(\tau,z)-\overset{\circ}{A}{}^{[2]}_3(\tau,z)\big\}
- \frac{i}{2} \vartheta_{11}(\tau,z)
\\[0mm]
&=& -\frac{i}{2} g^{(-)}_3(\tau, z)- \frac{i}{2} \vartheta_{11}(\tau,z)
=
-\frac{i}{2} \frac{\eta(\tau)^5}{\eta(\frac{\tau}{2})^2 \eta(2\tau)^2} 
\cdot 
\frac{\vartheta_{11}(\tau,z) \, \vartheta_{00}(\tau,z)}{\vartheta_{01}(\tau,z)}
- \frac{i}{2} \vartheta_{11}(\tau,z)
\\[1mm]
& &\hspace{-5mm}
\text{RHS of \eqref{n3:eqn:2022-119b}} 
= \frac12 \big\{
\overset{\circ}{A}{}^{[2]}_2(\tau,z)-\overset{\circ}{A}{}^{[2]}_1(\tau,z)\big\}
-\frac12 \vartheta_{11}(\tau,z)
\\[0mm]
&=&-\frac12g^{(-)}_2(\tau, z)-\frac12 \vartheta_{11}(\tau,z)
= 
- \frac12 \frac{\eta(\frac{\tau}{2})^2}{\eta(\tau)} \cdot 
\frac{\vartheta_{11}(\tau,z) \, \vartheta_{01}(\tau,z)}{
\vartheta_{00}(\tau,z)}
-\frac12 \vartheta_{11}(\tau,z)
\end{eqnarray*}}
Thus we have proved 1).

\medskip
\noindent
2) Letting $m=1$, $s=\frac12$, $a=1$ and $\tau \rightarrow 2\tau$ in 
Lemma \ref{n3:lemma:2021-1213a}, we have 
$$
\Big[\Phi^{[1, \frac12]}-\Phi^{[1,\frac32]}\Big](2\tau, z_1, z_2,t)
= 
e^{-2\pi it} e^{\frac{\pi i}{2} (z_1-z_2)} 
q^{2(-\frac14) \, (\frac12)^2}
\big[\theta_{\frac12, 1}-\theta_{-\frac12, 1}\big]
(2\tau, z_1+z_2)
$$

\noindent
Letting $t=\frac{\tau}{8}$, we have
\begin{equation}
\Big[\Phi^{[1, \frac12]}-\Phi^{[1,\frac32]}\Big]
\left(2\tau, z_1, z_2,\frac{\tau}{8}\right)
=
e^{\frac{\pi i}{2} (z_1-z_2)} q^{-\frac14}  
\big[\theta_{\frac12, 1}-\theta_{-\frac12, 1}\big]
(2\tau, z_1+z_2)
\label{n3:eqn:2022-119d}
\end{equation}

\noindent
In this equation \eqref{n3:eqn:2022-119d}, we let 
$\left\{
\begin{array}{lcl}
z_1 &=& z+\frac{\tau}{2} \\[1mm]
z_2 &=& z-\frac{\tau}{2}
\end{array}\right.  \,\ {\rm and} \,\ \left\{
\begin{array}{lcl}
z_1 &=& z+\frac{\tau}{2}-\frac12 \\[1mm]
z_2 &=& z-\frac{\tau}{2}+\frac12
\end{array}\right. $ .

\noindent
Then, since $\left\{
\begin{array}{ccc}
z_1-z_2 &=& \tau \\[1mm]
z_1+z_2 &=& 2z
\end{array}\right. \,\ {\rm and} \,\ \left\{
\begin{array}{ccc}
z_1-z_2 &=& \tau-1 \\[1mm]
z_1+z_2 &=& 2z
\end{array}\right. $ respectively, we have 
\begin{subequations}
{\allowdisplaybreaks
\begin{eqnarray}
& &\hspace{-10mm}
\Big[\Phi^{[1, \frac12]}-\Phi^{[1,\frac32]}\Big]
\Big(2\tau, \, z+\frac{\tau}{2}, \, z-\frac{\tau}{2}, \, \frac{\tau}{8}\Big)
\nonumber
\\[1mm]
&=&
e^{\frac{\pi i}{2} \tau} q^{-\frac14}  
\big[\theta_{\frac12, 1}-\theta_{-\frac12, 1}\big](2\tau, 2z)
\,\ = \,\ -i\vartheta_{11}(\tau,z)
\label{n3:eqn:2022-119e}
\\[1mm]
& & \hspace{-10mm}
\Big[\Phi^{[1, \frac12]}-\Phi^{[1,\frac32]}\Big]
\Big(2\tau, \, z+\frac{\tau}{2}-\frac12, \, 
z-\frac{\tau}{2}+\frac12, \, \frac{\tau}{8}\Big)
\nonumber
\\[1mm]
&=&
e^{\frac{\pi i}{2} (\tau-1)} q^{-\frac14}  
\big[\theta_{\frac12, 1}-\theta_{-\frac12, 1}\big](2\tau, 2z)
\,\ = \,\ -\vartheta_{11}(\tau,z)
\label{n3:eqn:2022-119f}
\end{eqnarray}}
\end{subequations}
by \eqref{n3:eqn:2022-119g}.
Now the claim 2) follows from 1) and \eqref{n3:eqn:2022-119e}
and \eqref{n3:eqn:2022-119f}.
\end{proof}

Then we can obtain the honest characters of the N=3 modules $H(\Lambda^{[-1, 0]})$
and $H(\Lambda^{[-1, 2]})$ as follows:

\begin{thm} \,\ 
\label{n3:thm:2022-116a}
\begin{enumerate}
\item[{\rm 1)}]
\begin{enumerate}
\item[{\rm (i)}] \, ${\rm ch}^{(+)}_{H(\Lambda^{[-1, 0]})}(\tau, z) $
$$
= \,\ 
- \, \frac12 \, \left\{
\frac{\eta(\frac{\tau}{2})}{\eta(2\tau) \, \eta(\tau)} \,
\vartheta_{01}(\tau, z)
\,\ + \,\ 
\frac{1}{\eta(\frac{\tau}{2}) \, \eta(2\tau)} \, \vartheta_{00}(\tau, z)
\right\} 
$$
\item[{\rm (ii)}] \, ${\rm ch}^{(+)}_{H(\Lambda^{[-1, 2]})}(\tau, z) $
$$
= \, 
- \, \frac12 \, \left\{
\frac{\eta(\frac{\tau}{2})}{\eta(2\tau) \, \eta(\tau)} \,
\vartheta_{01}(\tau, z)
\,\ - \,\ 
\frac{1}{\eta(\frac{\tau}{2}) \, \eta(2\tau)} \, \vartheta_{00}(\tau, z)
\right\} 
$$
\end{enumerate}

\item[{\rm 2)}]
\begin{enumerate}
\item[{\rm (i)}] \, ${\rm ch}^{(-)}_{H(\Lambda^{[-1, 0]})}(\tau, z) $
$$= \,\ 
- \, \frac{i}{2} \,\ \left\{
\frac{\eta(\tau)^2}{\eta(\frac{\tau}{2}) \, \eta(2\tau)^2} \, 
\vartheta_{00}(\tau,z)
\,\ + \,\ 
\frac{\eta(\frac{\tau}{2})}{\eta(\tau)^3} \, \vartheta_{01}(\tau, z)
\right\} 
$$
\item[{\rm (ii)}] \, ${\rm ch}^{(-)}_{H(\Lambda^{[-1, 2]})}(\tau, z) $
$$
= \,\ 
- \, \frac{i}{2} \,\ \left\{
\frac{\eta(\tau)^2}{\eta(\frac{\tau}{2}) \, \eta(2\tau)^2} \, 
\vartheta_{00}(\tau,z)
\,\ - \,\ 
\frac{\eta(\frac{\tau}{2})}{\eta(\tau)^3} \, \vartheta_{01}(\tau, z)
\right\} 
$$
\end{enumerate}
\end{enumerate}
\end{thm}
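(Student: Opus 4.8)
The plan is to obtain each of the four characters as a quotient: by Proposition~\ref{n3:prop:2022-111a} the numerator $\overset{N=3}{R}{}^{(\pm)}{\rm ch}^{(\pm)}_{H(\Lambda^{[-1,m_2]})}$ is a specific evaluation of $\Phi^{[1,\frac{m_2+1}{2}]}$, these evaluations have just been computed in closed form in Proposition~\ref{n3:prop:2022-116b}, and it remains to divide by the N=3 denominators recorded in \eqref{n3:eqn:2022-115a}.

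First I would specialize Proposition~\ref{n3:prop:2022-111a} to $m=2$ (so that $\tfrac m2=1$ and $K(2)=-1$) and to $m_2\in\{0,2\}$ (so that $\tfrac{m_2+1}{2}\in\{\tfrac12,\tfrac32\}$), obtaining
\begin{align*}
\big(\overset{N=3}{R}{}^{(+)}{\rm ch}^{(+)}_{H(\Lambda^{[-1,m_2]})}\big)(\tau,z)
&=\Phi^{[1,\frac{m_2+1}{2}]}\Big(2\tau,\,z+\tfrac\tau2-\tfrac12,\,z-\tfrac\tau2+\tfrac12,\,\tfrac\tau8\Big),\\
\big(\overset{N=3}{R}{}^{(-)}{\rm ch}^{(-)}_{H(\Lambda^{[-1,m_2]})}\big)(\tau,z)
&=\Phi^{[1,\frac{m_2+1}{2}]}\Big(2\tau,\,z+\tfrac\tau2,\,z-\tfrac\tau2,\,\tfrac\tau8\Big).
\end{align*}
Then I would substitute the right-hand sides of Proposition~\ref{n3:prop:2022-116b}: parts 1)(ii) and 2)(ii) give the $(+)$-numerators for $m_2=0$ and $m_2=2$, namely $-\tfrac12\{g_2^{(-)}(\tau,z)\pm\vartheta_{11}(\tau,z)\}$, while parts 1)(i) and 2)(i) give the $(-)$-numerators, namely $-\tfrac i2\{g_3^{(-)}(\tau,z)\pm\vartheta_{11}(\tau,z)\}$ (upper sign for $m_2=0$, lower sign for $m_2=2$).

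Finally I would divide by $\overset{N=3}{R}{}^{(+)}(\tau,z)=\eta(\tfrac\tau2)\eta(2\tau)\,\vartheta_{11}(\tau,z)/\vartheta_{00}(\tau,z)$ and $\overset{N=3}{R}{}^{(-)}(\tau,z)=\eta(\tau)^3\vartheta_{11}(\tau,z)/\big(\eta(\tfrac\tau2)\vartheta_{01}(\tau,z)\big)$ from \eqref{n3:eqn:2022-115a}. Inserting the definitions \eqref{n3:eqn:2022-119c4} of $g_2^{(-)}$ and \eqref{n3:eqn:2022-119c6} of $g_3^{(-)}$, the factor $\vartheta_{11}$ cancels throughout, the factors $\vartheta_{00}$ and $\vartheta_{01}$ brought in by the $g$'s cancel against those sitting in the denominators, and the remaining products of $\eta(\tfrac\tau2),\eta(\tau),\eta(2\tau)$ collapse to exactly the expressions in the statement. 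This step is purely mechanical; there is no genuine obstacle, the only points requiring attention being the bookkeeping of the powers of the three Dedekind eta factors and the correct matching of $m_2=0$ with $s=\tfrac12$ and $m_2=2$ with $s=\tfrac32$, which is what fixes the sign of the $\vartheta_{11}$-term (hence the $\pm$ distinguishing the two characters in each of 1) and 2)).
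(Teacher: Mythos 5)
Your proposal is correct and is essentially the route the paper intends: the theorem is stated immediately after Proposition \ref{n3:prop:2022-116b} precisely so that one reads off the numerators from that proposition (via Proposition \ref{n3:prop:2022-111a} with $m=2$, $s=\frac{m_2+1}{2}\in\{\frac12,\frac32\}$) and divides by the denominators \eqref{n3:eqn:2022-115a}. Your matching of parts 1)(ii), 2)(ii) to the $(+)$-numerators and 1)(i), 2)(i) to the $(-)$-numerators, and of $m_2=0,2$ to the $\pm\vartheta_{11}$ signs, is exactly right, and the eta-factor bookkeeping checks out.
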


\begin{cor} 
\label{n3:cor:2022-116a}
The modified (super)characters $\widetilde{\rm ch}^{(\pm)}_{H(\Lambda^{[-1, 0]})}$
are written by the honest (super)characters as follows:
$$
\widetilde{\rm ch}^{(\pm)}_{H(\Lambda^{[-1, 0]})}(\tau, z)
\, = \, \frac12 \, \big\{{\rm ch}^{(\pm)}_{H(\Lambda^{[-1, 0]})}(\tau, z)
+
{\rm ch}^{(\pm)}_{H(\Lambda^{[-1, 2]})}(\tau, z)\big\}
$$
\end{cor}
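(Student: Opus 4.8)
The plan is to obtain the identity by a direct comparison of the closed $\eta$-$\vartheta$ expressions that have just been established, with an optional more structural variant at the level of numerators. On the right-hand side the honest (super)characters ${\rm ch}^{(\pm)}_{H(\Lambda^{[-1,0]})}$ and ${\rm ch}^{(\pm)}_{H(\Lambda^{[-1,2]})}$ are given by Theorem \ref{n3:thm:2022-116a}, and on the left the modified (super)character $\widetilde{\rm ch}^{(\pm)}_{H(\Lambda^{[-1,0]})}$ is given by Theorem \ref{n3:thm:2022-108a}; so in principle it suffices to add the two honest formulas and halve.

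Concretely, for the character ($+$)-case I would add the formulas of Theorem \ref{n3:thm:2022-116a} 1)(i) and 1)(ii): the two $\vartheta_{00}(\tau,z)$ terms cancel and the two $\vartheta_{01}(\tau,z)$ terms add, producing $-\frac{\eta(\frac{\tau}{2})}{\eta(2\tau)\eta(\tau)}\vartheta_{01}(\tau,z)$, which is exactly twice the expression of Theorem \ref{n3:thm:2022-108a} 1) for $\widetilde{\rm ch}^{(+)}_{H(\Lambda^{[-1,0]})}$. For the supercharacter ($-$)-case I would add Theorem \ref{n3:thm:2022-116a} 2)(i) and 2)(ii): now the $\vartheta_{01}(\tau,z)$ terms cancel and the $\vartheta_{00}(\tau,z)$ terms add to $-i\,\frac{\eta(\tau)^2}{\eta(\frac{\tau}{2})\eta(2\tau)^2}\vartheta_{00}(\tau,z)$, again twice the expression of Theorem \ref{n3:thm:2022-108a} 3) for $\widetilde{\rm ch}^{(-)}_{H(\Lambda^{[-1,0]})}$. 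Halving then gives the asserted relation in both parities.

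An alternative that avoids the explicit $\eta$-$\vartheta$ forms argues at the level of numerators. By the definitions \eqref{n3:eqn:2022-112c} together with Proposition \ref{n3:prop:2022-111a}, the numerators $(\overset{N=3}{R}{}^{(\pm)}{\rm ch}^{(\pm)}_{H(\Lambda^{[-1,m_2]})})(\tau,z)$ for $m_2=0,2$ are the values $A$ and $B$ of $\Phi^{[1,\frac12]}$ and $\Phi^{[1,\frac32]}$ at the arguments $\big(2\tau,\,z\pm(\tfrac{\tau}{2}-\tfrac12),\,\tfrac{\tau}{8}\big)$ in the ($+$)-case and $\big(2\tau,\,z\pm\tfrac{\tau}{2},\,\tfrac{\tau}{8}\big)$ in the ($-$)-case, whereas $(\overset{N=3}{R}{}^{(\pm)}\widetilde{\rm ch}^{(\pm)}_{H(\Lambda^{[-1,0]})})(\tau,z)$ equals $A+\Phi^{[1,\frac12]}_{\rm add}$ at the same arguments by \eqref{n3:eqn:2022-111k}. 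Lemma \ref{n3:lemma:2022-115b} 4) and Lemma \ref{n3:lemma:2022-116d} 4), compared with the difference identities \eqref{n3:eqn:2022-119e} and \eqref{n3:eqn:2022-119f}, show that $\Phi^{[1,\frac12]}_{\rm add}=\tfrac12(B-A)$ at both sets of arguments, so $\widetilde{\Phi}^{[1,\frac12]}=A+\tfrac12(B-A)=\tfrac12(A+B)$; dividing through by $\overset{N=3}{R}{}^{(\pm)}(\tau,z)$, which depends only on the parity and not on $m_2$, yields the corollary.

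There is essentially no serious obstacle here; the only points demanding care are the bookkeeping of the signs and of the factor $i$ distinguishing the ($+$)- and ($-$)-normalizations, and the observation that the characters for $m_2=0$ and $m_2=2$ carry the same N=3 (super)denominator, so that one may legitimately cancel it.
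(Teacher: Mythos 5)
Your primary argument is exactly the paper's: the proof there simply observes that the identity is immediate upon adding the explicit $\eta$--$\vartheta$ formulas of Theorem \ref{n3:thm:2022-116a} and comparing with Theorem \ref{n3:thm:2022-108a}, and your cancellation bookkeeping (the $\vartheta_{00}$ terms cancel in the $(+)$-case, the $\vartheta_{01}$ terms in the $(-)$-case) is correct. Your alternative numerator-level argument, showing $\Phi^{[1,\frac12]}_{\rm add}=\tfrac12\big(\Phi^{[1,\frac32]}-\Phi^{[1,\frac12]}\big)$ at the relevant arguments via Lemmas \ref{n3:lemma:2022-115b}, \ref{n3:lemma:2022-116d} and \eqref{n3:eqn:2022-119e}--\eqref{n3:eqn:2022-119f}, also checks out and explains structurally why the modification averages the two honest characters, but it is not needed for the corollary as stated.
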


\begin{proof} This is obvious from Theorems 
\ref{n3:thm:2022-108a} and \ref{n3:thm:2022-116a}.
\end{proof}

Then, by the above Corollary \ref{n3:cor:2022-116a} and 
Remark \ref{n3:rem:2022-115a}, we obtain the following:

\begin{cor} 
\label{n3:cor:2022-116b}
The linear space spanned by 
$$
{\rm ch}^{(\pm)}_{H(\Lambda^{[-1,1]})}(\tau, z) , \,\ \big[
{\rm ch}^{(\pm)}_{H(\Lambda^{[-1,0]})}+{\rm ch}^{(\pm)}_{H(\Lambda^{[-1,2]})}
\big](\tau, z), \,\ 
{\rm ch}^{(+){\rm tw}(\sigma_{\pm})}_{H(\Lambda^{[-1,1]})}(\tau, z) 
$$
is $SL_2(\zzz)$-invariant. Namely, though the space of honest characters 
is not $SL_2(\zzz)$-invariant, it contains (non-trivial) 
$SL_2(\zzz)$-invariant subspace.
\end{cor}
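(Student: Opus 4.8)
The plan is to recognize the span in the statement as the span of the six modified characters attached to $m=2$, and then to read $SL_2(\zzz)$-invariance off the transformation laws already obtained for those.

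\textbf{Step 1: rewriting the span.} By Corollary \ref{n3:cor:2022-116a}, $\big[{\rm ch}^{(\pm)}_{H(\Lambda^{[-1,0]})}+{\rm ch}^{(\pm)}_{H(\Lambda^{[-1,2]})}\big](\tau,z)=2\,\widetilde{\rm ch}^{(\pm)}_{H(\Lambda^{[-1,0]})}(\tau,z)$, and by Remark \ref{n3:rem:2022-115a} the honest characters ${\rm ch}^{(\pm)}_{H(\Lambda^{[-1,1]})}$ and ${\rm ch}^{(+){\rm tw}(\sigma_\pm)}_{H(\Lambda^{[-1,1]})}$ coincide with $\widetilde{\rm ch}^{(\pm)}_{H(\Lambda^{[-1,1]})}$ and $\widetilde{\rm ch}^{(+){\rm tw}(\sigma_\pm)}_{H(\Lambda^{[-1,1]})}$. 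Hence the linear space in the statement equals
\[
V:=\mathrm{span}_{\CCC}\Big\{
\widetilde{\rm ch}^{(+)}_{H(\Lambda^{[-1,0]})},\
\widetilde{\rm ch}^{(-)}_{H(\Lambda^{[-1,0]})},\
\widetilde{\rm ch}^{(+)}_{H(\Lambda^{[-1,1]})},\
\widetilde{\rm ch}^{(-)}_{H(\Lambda^{[-1,1]})},\
\widetilde{\rm ch}^{(+){\rm tw}(\sigma_-)}_{H(\Lambda^{[-1,1]})},\
\widetilde{\rm ch}^{(+){\rm tw}(\sigma_+)}_{H(\Lambda^{[-1,1]})}
\Big\},
\]
which is precisely the family \eqref{n3:eqn:2022-112f} of modified characters specialized to $m=2$ (with $p(2)=1$, cf. \eqref{n3:eqn:2022-112d2}).

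\textbf{Step 2: invariance, and the remaining claims.} Corollary \ref{n3:cor:2022-108a} writes, for each of these six functions $\phi$, both $\phi(-1/\tau,z/\tau)$ and $\phi(\tau+1,z)$ as $e^{\pi i z^2/\tau}$ (resp. a root of unity) times a $\CCC$-linear combination of the same six functions; since $S$ and $T$ generate $SL_2(\zzz)$, the space $V$ is $SL_2(\zzz)$-invariant, which is the main assertion. For the parenthetical one, $V\neq0$, and $V$ is a \emph{proper} subspace of the span $W$ of all honest characters ${\rm ch}^{(\pm)}_{H(\Lambda^{[-1,m_2]})}\ (m_2=0,1,2)$ and ${\rm ch}^{(+){\rm tw}(\sigma_\pm)}_{H(\Lambda^{[-1,1]})}$: by Theorems \ref{n3:thm:2022-108a} and \ref{n3:thm:2022-116a} one has ${\rm ch}^{(+)}_{H(\Lambda^{[-1,0]})}-{\rm ch}^{(+)}_{H(\Lambda^{[-1,2]})}=-\tfrac{1}{\eta(\frac\tau2)\eta(2\tau)}\vartheta_{00}(\tau,z)\notin V$, because the $\eta$-quotient $\tfrac{1}{\eta(\frac\tau2)\eta(2\tau)}$ is $\CCC$-linearly independent of the $\vartheta_{00}$-coefficients $\tfrac{\eta(\tau)^2}{\eta(\frac\tau2)\eta(2\tau)^2}$ and $\tfrac{\eta(\tau)^2}{\eta(\frac\tau2)^2\eta(2\tau)}$ occurring in $V$ (compare leading $q$-powers). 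That $W$ itself fails to be $SL_2(\zzz)$-invariant is the premise of the paper (see the Introduction): $\Phi^{[1,1/2]}$ is genuinely mock, its correction term $\Phi^{[1,1/2]}_{\rm add}$ being nonzero by Lemmas \ref{n3:lemma:2022-115b}, \ref{n3:lemma:2022-116d}, so the $S$-image of ${\rm ch}^{(+)}_{H(\Lambda^{[-1,0]})}$ — obtained from Corollary \ref{n3:cor:2022-108a}\,1)(i) and the standard $\eta$-, $\vartheta_{ab}$-transformations applied to the decomposition ${\rm ch}^{(+)}_{H(\Lambda^{[-1,0]})}=\widetilde{\rm ch}^{(+)}_{H(\Lambda^{[-1,0]})}-\tfrac12\tfrac{1}{\eta(\frac\tau2)\eta(2\tau)}\vartheta_{00}$ — is not a standard automorphy factor times an element of $W$.

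The whole argument is bookkeeping; the only mildly delicate points are the $\CCC$-linear independence of the $\eta$-quotients used to conclude $V\subsetneq W$ and the tracking of the $S$-automorphy factor of the term $\tfrac{1}{\eta(\frac\tau2)\eta(2\tau)}\vartheta_{00}$, both of which reduce to elementary comparisons of $q$-expansions together with the classical transformation formulas for $\eta$ and $\vartheta_{ab}$. I do not expect any genuine obstacle beyond these routine checks.
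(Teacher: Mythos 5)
Your proof is correct and follows essentially the same route as the paper: reduce the given span to the span of the six $m=2$ modified characters via Corollary \ref{n3:cor:2022-116a} and Remark \ref{n3:rem:2022-115a}, then read off closure under $S$ and $T$ from Corollary \ref{n3:cor:2022-108a}. Your additional checks (non-triviality, properness in the space of honest characters, and the failure of that larger space to be invariant) go beyond what the paper records for this corollary but do not change the argument.
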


\section{Asymptotics of characters in the case $m=2$}
\label{sec:asymptotics}

In this section we consider the asymptotic behavior of characters 
of N=3 modules $H(\Lambda^{[-1, m_2]})$ as $\tau \downarrow 0$, 
namely $\tau=iT$  $(T>0)$ and $T \rightarrow 0$. 

\begin{lemma}
\label{n3:lemma:2022-120a}
For $a \in \ccc$, the asymptotics of $\vartheta_{ab}(\tau, a\tau)$
as $\tau \downarrow 0$ are as follows:
\begin{enumerate}
\item[{\rm 1)}] \,\ 
$\vartheta_{00}(\tau, \, a\tau) 
\,\ \overset{\substack{\tau \downarrow 0 \\[0.5mm] }}{\sim} \,\ 
(-i\tau)^{-\frac12}$
\item[{\rm 2)}] \,\ 
$\vartheta_{01}(\tau, \, a\tau) 
\,\ \overset{\substack{\tau \downarrow 0 \\[0.5mm] }}{\sim} \,\ 
(-i\tau)^{-\frac12}
\cdot 2 \cos (a\pi) \,\ e^{-\frac{\pi i}{4\tau}}$
\item[{\rm 3)}] \,\ 
$\vartheta_{10}(\tau, \, a\tau) 
\,\ \overset{\substack{\tau \downarrow 0 \\[0.5mm] }}{\sim} \,\ 
(-i\tau)^{-\frac12}$
\item[{\rm 4)}] \,\ 
$\vartheta_{11}(\tau, \, a\tau) 
\,\ \overset{\substack{\tau \downarrow 0 \\[0.5mm] }}{\sim} \,\ 
- \, (-i\tau)^{-\frac12}
\cdot 2i \sin (a\pi) \,\ e^{-\frac{\pi i}{4\tau}}$
\end{enumerate}
\end{lemma}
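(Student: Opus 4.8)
The plan is to use the classical $S$-transformation law of the Mumford theta functions (see \cite{Mum}) to turn the limit $\tau\downarrow 0$ into the limit $-1/\tau\to i\infty$, where each theta value is governed by the leading term of its $q$-expansion. Concretely I would start from
\[
\vartheta_{00}\Big(-\tfrac1w,\tfrac{z}{w}\Big)=(-iw)^{1/2}e^{\pi iz^2/w}\vartheta_{00}(w,z),\qquad
\vartheta_{01}\Big(-\tfrac1w,\tfrac{z}{w}\Big)=(-iw)^{1/2}e^{\pi iz^2/w}\vartheta_{10}(w,z),
\]
\[
\vartheta_{10}\Big(-\tfrac1w,\tfrac{z}{w}\Big)=(-iw)^{1/2}e^{\pi iz^2/w}\vartheta_{01}(w,z),\qquad
\vartheta_{11}\Big(-\tfrac1w,\tfrac{z}{w}\Big)=-i(-iw)^{1/2}e^{\pi iz^2/w}\vartheta_{11}(w,z).
\]
Setting $w=-1/\tau$ and noting $a\tau=\dfrac{-a}{w}$, each $\vartheta_{ab}(\tau,a\tau)$ has the form $\vartheta_{ab}(-1/w,z/w)$ with $z=-a$; e.g.
\[
\vartheta_{00}(\tau,a\tau)=(-iw)^{1/2}e^{\pi ia^2/w}\vartheta_{00}\Big(-\tfrac1\tau,-a\Big)=(-i\tau)^{-1/2}e^{-\pi ia^2\tau}\vartheta_{00}\Big(-\tfrac1\tau,-a\Big),
\]
where I use $(-iw)^{1/2}=(-i(-1/\tau))^{1/2}=(i/\tau)^{1/2}=(-i\tau)^{-1/2}$ (the branches agree since $i\cdot(-i)=1$ and $\tau\in\ccc_+$; on the ray $\tau=iT$ both sides equal $T^{-1/2}>0$) and $e^{\pi ia^2/w}=e^{-\pi ia^2\tau}$.

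Next I would read off the leading behaviour of $\vartheta_{a'b'}(-1/\tau,-a)$ as $\tau\downarrow 0$. Writing $\tau=iT$ with $T>0$, one has $\tilde q:=e^{-2\pi i/\tau}=e^{-2\pi/T}\to 0$, so $\vartheta_{00}(-1/\tau,-a)\to 1$ and $\vartheta_{01}(-1/\tau,-a)\to 1$ (the $n=0$ term), while the half-integer series give $\vartheta_{10}(-1/\tau,-a)\sim 2\,\tilde q^{1/8}\cos(a\pi)$ and $\vartheta_{11}(-1/\tau,-a)\sim 2\,\tilde q^{1/8}\sin(a\pi)$ (the $n=0$ and $n=-1$ terms), with $\tilde q^{1/8}=e^{-\pi i/(4\tau)}$. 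Also $e^{-\pi ia^2\tau}\to 1$. Substituting these into the four identities from the previous step — and recalling that for $ab=01$ the $S$-transform feeds in $\vartheta_{10}$, and for $ab=10$ it feeds in $\vartheta_{01}$ — yields statements 1)--4) exactly; in 4) the extra factor $-i$ in the $\vartheta_{11}$ law produces the sign in front of $2i\sin(a\pi)$.

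The points needing care are bookkeeping rather than conceptual. First, one must fix the branch of $(-i\tau)^{1/2}$ consistently so that $(-iw)^{1/2}=(-i\tau)^{-1/2}$ holds with the right square root, which is exactly where $\tau\in\ccc_+$ is used. Second, one must check that the discarded tail of the $\tilde q$-series is $o$ of the leading term: for fixed $a\in\ccc$ the $n$-th coefficient is bounded by $e^{2\pi|n|\,|{\rm Im}\,a|}$, whereas $|\tilde q^{n^2/2}|=e^{-\pi n^2/T}$ (respectively $|\tilde q^{(n+1/2)^2/2}|=e^{-\pi(n+1/2)^2/T}$), so the ratio of the next-to-leading term to the leading term tends to $0$ as $T\downarrow 0$. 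I expect this tail estimate, together with keeping all signs and square-root branches straight, to be the only slightly delicate part. I would also note that in 2) and 4) the stated asymptotics give the true leading order only when $\cos(a\pi)\neq 0$ (respectively $\sin(a\pi)\neq 0$), which holds for the values of $a$ used in the sequel.
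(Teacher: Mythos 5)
Your proposal is correct and follows the same route as the paper: the paper's proof is exactly "apply the $S$-transformation law $\vartheta_{ab}(-\frac{1}{\tau},\frac{z}{\tau})=(-i)^{ab}(-i\tau)^{1/2}e^{\pi iz^2/\tau}\vartheta_{ba}(\tau,z)$ and read off the leading terms of the power series," which is what you do, with the branch of the square root and the tail estimate spelled out in more detail. No gaps.
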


\begin{proof} These are obtained easily from the $S$-transformation 
property of $\vartheta_{ab}(\tau, z)$ :
$$
\vartheta_{ab}\Big(-\frac{1}{\tau}, \, \frac{z}{\tau}\Big)
\, = \, 
(-i)^{ab} \, (-i\tau)^{\frac12} \, e^{\frac{\pi iz^2}{\tau}} \, 
\vartheta_{ba}(\tau, z)
$$
and the power series expansion of $\vartheta_{ab}(\tau, z)$:
{\allowdisplaybreaks
\begin{eqnarray*}
\vartheta_{00}(\tau, z) &=& \sum_{n \in \zzz} \, 
e^{2\pi inz} \, q^{\frac{n^2}{2}}
\\[0mm]
\vartheta_{01}(\tau, z) &=& \sum_{n \in \zzz} \, (-1)^n \, 
e^{2\pi inz} \, q^{\frac{n^2}{2}}
\\[0mm]
\vartheta_{10}(\tau, z) &=& \sum_{n \in \zzz} \, 
e^{2\pi i(n+\frac12)z} \, q^{\frac12 (n+\frac12)^2}
\\[0mm]
\vartheta_{11}(\tau, z) &=& i \, \sum_{n \in \zzz} \, (-1)^n \, 
e^{2\pi i(n+\frac12)z} \, q^{\frac12 (n+\frac12)^2}
\end{eqnarray*}}.

\vspace{-10mm}

\end{proof}

Then, by using 
\begin{equation}
\eta(\tau) 
\,\ \overset{\substack{\tau \downarrow 0 \\[0.5mm] }}{\sim} \,\ 
(-i\tau)^{\frac12} \, e^{-\frac{\pi i}{12\tau}}
\label{n3:eqn:2022-120b}
\end{equation}
and Lemma \ref{n3:lemma:2022-120a}, we obtain the asymptotics of 
characters of N=3 modules as follows:

\begin{prop}
\label{n3:prop:2022-120a}
For $a \in \ccc$, the asymptotics of the honest and modified (super)characters 
are as follows:
\begin{enumerate}
\item[{\rm 1)}] Asymptotics of characters:
\begin{enumerate}
\item[{\rm (i)}] \,\ $\widetilde{\rm ch}{}^{(+)}_{H(\Lambda^{[-1, 0]})}
(\tau, a\tau) 
\,\ \overset{\substack{\tau \downarrow 0 \\[0.5mm] }}{\sim} \,\ 
-2 \cos (a\pi) \, e^{-\frac{7}{24} \cdot \frac{\pi i}{\tau}}$
\item[{\rm (ii)}] \,\ ${\rm ch}{}^{(+)}_{H(\Lambda^{[-1, 0]})}
(\tau, a\tau) 
\,\ \overset{\substack{\tau \downarrow 0 \\[0.5mm] }}{\sim} \,\ 
- \dfrac12 \, (-i\tau)^{\frac12} \, e^{\frac{5}{24} \cdot \frac{\pi i}{\tau}}$
\item[{\rm (iii)}] \,\ ${\rm ch}{}^{(+)}_{H(\Lambda^{[-1, 2]})}
(\tau, a\tau) 
\,\ \overset{\substack{\tau \downarrow 0 \\[0.5mm] }}{\sim} \,\ 
\dfrac12 \, (-i\tau)^{\frac12} \, e^{\frac{5}{24} \cdot \frac{\pi i}{\tau}}$
\item[{\rm (iv)}] \,\ ${\rm ch}{}^{(+)}_{H(\Lambda^{[-1, 1]})}
(\tau, a\tau) 
\,\ \overset{\substack{\tau \downarrow 0 \\[0.5mm] }}{\sim} \,\ 
\dfrac{i}{2} \, e^{\frac{5}{24} \cdot \frac{\pi i}{\tau}}$
\end{enumerate}

\item[{\rm 2)}] Asymptotics of super-characters:
\begin{enumerate}
\item[{\rm (i)}] \,\ $\widetilde{\rm ch}{}^{(-)}_{H(\Lambda^{[-1, 0]})}
(\tau, a\tau) 
\,\ \overset{\substack{\tau \downarrow 0 \\[0.5mm] }}{\sim} \,\ 
- 
\dfrac{i}{\sqrt{2}} \, e^{\frac{1}{12} \cdot \frac{\pi i}{\tau}}$
\item[{\rm (ii)}] \,\ ${\rm ch}{}^{(-)}_{H(\Lambda^{[-1, m_2]})}
(\tau, a\tau) 
\,\ \overset{\substack{\tau \downarrow 0 \\[0.5mm] }}{\sim} \,\ 
- 
\dfrac{i}{\sqrt{2}} \, e^{\frac{1}{12} \cdot \frac{\pi i}{\tau}}
\qquad (m_2=0, 1, 2)$
\end{enumerate}
\end{enumerate}
\end{prop}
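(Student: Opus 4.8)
The plan is to feed each of the explicit theta/eta formulas for the characters obtained in Theorem \ref{n3:thm:2022-108a} and Theorem \ref{n3:thm:2022-116a} into the $\tau\downarrow 0$ asymptotic data collected in Lemma \ref{n3:lemma:2022-120a} and formula \eqref{n3:eqn:2022-120b}, and simply read off the leading term. Concretely, for each character one substitutes $\tau = iT$, writes the character as a product of factors of the form $\eta(\tfrac{\tau}{2})$, $\eta(\tau)$, $\eta(2\tau)$ and one Mumford theta $\vartheta_{ab}(\tau,z)$ evaluated at $z = a\tau$, and then replaces each factor by its asymptotic equivalent. For the $\eta$-factors one uses \eqref{n3:eqn:2022-120b} applied at $\tfrac{\tau}{2}$, $\tau$ and $2\tau$, i.e. $\eta(\tfrac{\tau}{2})\sim(-i\tfrac{\tau}{2})^{1/2}e^{-\pi i/(6\tau)}$, $\eta(\tau)\sim(-i\tau)^{1/2}e^{-\pi i/(12\tau)}$, $\eta(2\tau)\sim(-2i\tau)^{1/2}e^{-\pi i/(24\tau)}$. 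For the theta-factor one uses the relevant item of Lemma \ref{n3:lemma:2022-120a}. Then one collects the powers of $(-i\tau)^{1/2}$ and the exponents of $e^{\pi i/\tau}$; the $\sqrt{2}$'s coming from $\eta(\tfrac{\tau}{2})$ and $\eta(2\tau)$ are what produce the $\tfrac{1}{\sqrt 2}$ in part 2).

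The bookkeeping of the exponential rates is the only point that needs care, so I would do it once schematically. Write a character as $C\cdot\eta(\tfrac{\tau}{2})^{p}\eta(\tau)^{q}\eta(2\tau)^{r}\vartheta_{ab}(\tau,a\tau)$ with $p+q+r$ equal to $-1$ for the characters in part 1) (i)--(iv) when $\vartheta = \vartheta_{01}$ or $\vartheta_{00}$, respectively; the weight count $(-i\tau)^{1/2}$ then combines with the $(-i\tau)^{-1/2}$ from Lemma \ref{n3:lemma:2022-120a} to give either a constant or a half-power, matching the stated answers. The exponent of $e^{\pi i/\tau}$ is $-\tfrac{1}{12}\big(\tfrac{p}{2}+q+2r\big)$ from the $\eta$'s, plus $0$ for $\vartheta_{00}$ and $-\tfrac14$ for $\vartheta_{01}$ from Lemma \ref{n3:lemma:2022-120a}; checking that this equals $-\tfrac{7}{24}$, $+\tfrac{5}{24}$, or $+\tfrac{1}{12}$ in each case is a one-line arithmetic verification. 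For part 1)(i) one must also note that the two terms in $\widetilde{\rm ch}^{(+)}_{H(\Lambda^{[-1,0]})}=\tfrac12({\rm ch}^{(+)}_{H(\Lambda^{[-1,0]})}+{\rm ch}^{(+)}_{H(\Lambda^{[-1,2]})})$ (Corollary \ref{n3:cor:2022-116a}) have their $\vartheta_{00}$-pieces cancelling and their $\vartheta_{01}$-pieces adding, so the surviving leading term is governed by $\vartheta_{01}$, which is why the rate jumps from $\tfrac{5}{24}$ (for the honest characters, governed by $\vartheta_{00}$) to $-\tfrac{7}{24}$ (for the modified one). Conversely, in part 2) every supercharacter's leading behaviour is governed by $\vartheta_{00}$, and the $\vartheta_{01}$-term is subleading by a factor $e^{-\pi i/(4\tau)}\cdot(\text{ratio of }\eta\text{'s})$, which vanishes as $\tau\downarrow 0$; this is why ${\rm ch}^{(-)}_{H(\Lambda^{[-1,m_2]})}$ has the same asymptotics for all $m_2\in\{0,1,2\}$ and equals that of $\widetilde{\rm ch}^{(-)}_{H(\Lambda^{[-1,0]})}$.

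The main (and really the only) obstacle is the comparison of the two competing terms in each bracket: one has to verify that the $\vartheta_{01}$-term genuinely dominates in 1)(i) — because there the $\vartheta_{00}$-contributions cancel between the two honest summands — whereas in 2)(i),(ii) it is the $\vartheta_{00}$-term that dominates, since $\vartheta_{01}(\tau,a\tau)/\vartheta_{00}(\tau,a\tau)\sim 2\cos(a\pi)\,e^{-\pi i/(4\tau)}\to 0$ after one also accounts for the $\eta$-ratio multiplying it. So in each of the six/eight items I would (i) identify which $\vartheta$-factor controls the leading term, (ii) multiply the asymptotic prefactors, (iii) simplify the resulting constant and the exponent of $e^{\pi i/\tau}$, and (iv) record that the other term is $o(1)$ relative to the first. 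No new ideas beyond Lemma \ref{n3:lemma:2022-120a}, \eqref{n3:eqn:2022-120b}, Theorem \ref{n3:thm:2022-108a}, Theorem \ref{n3:thm:2022-116a} and Corollary \ref{n3:cor:2022-116a} are needed; the proof is essentially a finite list of substitutions, and the remark in section \ref{sec:asymptotics} about the ``somehow strange'' behaviour of characters is exactly the observation that the modified character in 1)(i) has a \emph{slower} decay rate ($e^{-7\pi i/(24\tau)}$, i.e. actually growing after the sign of $\operatorname{Im}(1/\tau)$ is taken into account) than the honest ones.
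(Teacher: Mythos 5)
Your proposal is correct and is essentially the paper's own proof: substitute the closed forms of Theorems \ref{n3:thm:2022-108a} and \ref{n3:thm:2022-116a} into Lemma \ref{n3:lemma:2022-120a} and \eqref{n3:eqn:2022-120b}, identify which of the two theta-terms dominates in each bracket, and read off the constants and exponents (the paper does exactly this, precomputing the eta-quotient asymptotics \eqref{n3:eqn:2022-120c}--\eqref{n3:eqn:2022-120d} first). Three small slips to repair when writing it out: \eqref{n3:eqn:2022-120b} as printed should read $\eta(\tau)\sim(-i\tau)^{-\frac12}e^{-\frac{\pi i}{12\tau}}$, and only with the exponent $-\tfrac12$ do the powers of $(-i\tau)$ you quote actually come out as stated; your schematic rate $-\tfrac{1}{12}\big(\tfrac{p}{2}+q+2r\big)$ has the coefficients of $p$ and $r$ transposed (it should be $-\tfrac{1}{12}\big(2p+q+\tfrac{r}{2}\big)$, since $\eta(\tfrac{\tau}{2})$ contributes $e^{-\frac{\pi i}{6\tau}}$ and $\eta(2\tau)$ contributes $e^{-\frac{\pi i}{24\tau}}$); and the closing parenthetical is backwards, because with $\tau=iT$ one has $e^{-\frac{7}{24}\cdot\frac{\pi i}{\tau}}=e^{-\frac{7\pi}{24T}}\to 0$, so the modified character in 1)(i) decays while the honest characters grow.
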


\begin{proof} First we note that \eqref{n3:eqn:2022-120b} gives 
the following asymptotics:
\begin{subequations}
\begin{equation}
\left\{
\begin{array}{lcl}
\dfrac{\eta(\frac{\tau}{2})}{\eta(2\tau)\eta(\tau)} 
&\overset{\substack{\tau \downarrow 0 \\[1mm]}}{\sim}& 
2 \, (-i\tau)^{\frac12} \, e^{-\frac{1}{24} \cdot \frac{\pi i}{\tau}}
\\[4mm]
\dfrac{\eta(2\tau)}{\eta(\frac{\tau}{2})\eta(\tau)} 
&\overset{\substack{\tau \downarrow 0 \\[1mm]}}{\sim}& 
\dfrac12 \, (-i\tau)^{\frac12} \, e^{\frac{5}{24} \cdot \frac{\pi i}{\tau}}
\\[4mm]
\dfrac{\eta(\tau)^2}{\eta(\frac{\tau}{2})\eta(2\tau)^2} 
&\overset{\substack{\tau \downarrow 0 \\[1mm]}}{\sim}& 
\sqrt{2} \,\ (-i\tau)^{\frac12} \, e^{\frac{1}{12} \cdot \frac{\pi i}{\tau}}
\\[4mm]
\dfrac{\eta(\frac{\tau}{2})\eta(2\tau)^2}{\eta(\tau)^4} 
&\overset{\substack{\tau \downarrow 0 \\[1mm]}}{\sim}& 
\dfrac{1}{\sqrt{2}} \,\ 
(-i\tau)^{\frac12} \, e^{\frac{1}{12} \cdot \frac{\pi i}{\tau}}
\end{array} \right.
\label{n3:eqn:2022-120c}
\end{equation}
and
\begin{equation}
\left\{
\begin{array}{lcl}
\dfrac{1}{\eta(\frac{\tau}{2}) \eta(2\tau)}
&\overset{\substack{\tau \downarrow 0 \\[0.5mm]}}{\sim}& 
-i\tau \, e^{\frac{5}{24} \cdot \frac{\pi i}{\tau}}
\\[4mm]
\dfrac{\eta(\frac{\tau}{2})}{\eta(\tau)^3} 
&\overset{\substack{\tau \downarrow 0 \\[0.5mm]}}{\sim}& 
\dfrac{1}{\sqrt{2}} \, (-i\tau) \, 
e^{\frac{1}{12} \cdot \frac{\pi i}{\tau}}
\end{array}\right.
\label{n3:eqn:2022-120d}
\end{equation}
\end{subequations}
Then, by these formulas
\eqref{n3:eqn:2022-120c} and \eqref{n3:eqn:2022-120d}
and Theorem \ref{n3:thm:2022-108a} and
Lemma \ref{n3:lemma:2022-120a}, we have
{\allowdisplaybreaks
\begin{eqnarray*}
& &\hspace{-10mm}
\widetilde{\rm ch}{}^{(+)}_{H(\Lambda^{[-1, 0]})}(\tau, a\tau) 
= 
- \frac12 \cdot 
\dfrac{\eta(\frac{\tau}{2})}{\eta(2\tau)\eta(\tau)}
\cdot \vartheta_{01}(\tau, a\tau)
\\[1mm]
&\overset{\substack{\tau \downarrow 0 \\[0.5mm] }}{\sim}& 
-\frac12 \cdot 
2 \, (-i\tau)^{\frac12} \, e^{-\frac{1}{24} \cdot \frac{\pi i}{\tau}}
\cdot 
(-i\tau)^{-\frac12} \cdot 2\cos(a\pi) e^{-\frac{\pi i}{4\tau}}
\, = \, 
-2 \cos (a\pi) \, e^{-\frac{7}{24} \cdot \frac{\pi i}{\tau}}
\\[1mm]
& &\hspace{-10mm}
{\rm ch}{}^{(+)}_{H(\Lambda^{[-1, 1]})}(\tau, a\tau) 
= 
i \, \dfrac{\eta(2\tau)}{\eta(\frac{\tau}{2})\eta(\tau)} 
\cdot \vartheta_{10}(\tau, a\tau)
\\[1mm]
&\overset{\substack{\tau \downarrow 0 \\[0.5mm] }}{\sim}& 
i \cdot 
\dfrac12 \, (-i\tau)^{\frac12} \, e^{\frac{5}{24} \cdot \frac{\pi i}{\tau}}
\cdot (-i\tau)^{-\frac12}
\,\ = \,\ 
\frac{i}{2} \, e^{\frac{5}{24} \cdot \frac{\pi i}{\tau}}
\\[1mm]
& &\hspace{-10mm}
\widetilde{\rm ch}{}^{(-)}_{H(\Lambda^{[-1, 0]})}(\tau, a\tau) 
= 
- \dfrac{i}{2} \cdot 
\dfrac{\eta(\tau)^2}{\eta(\frac{\tau}{2})\eta(2\tau)^2} 
\cdot \vartheta_{00}(\tau, a\tau)
\\[1mm]
&\overset{\substack{\tau \downarrow 0 \\[0.5mm] }}{\sim}& 
- \dfrac{i}{2} \cdot 
\sqrt{2} \,\ (-i\tau)^{\frac12} \, e^{\frac{1}{12} \cdot \frac{\pi i}{\tau}} 
\cdot (-i\tau)^{-\frac12}
\,\ = \,\ - 
\frac{i}{\sqrt{2}} \, e^{\frac{1}{12} \cdot \frac{\pi i}{\tau}}
\\[1mm]
& &\hspace{-10mm}
{\rm ch}{}^{(-)}_{H(\Lambda^{[-1, 1]})}(\tau, a\tau) 
= 
- i \, \dfrac{\eta(\frac{\tau}{2})\eta(2\tau)^2}{\eta(\tau)^4} \cdot 
\vartheta_{10}(\tau, a\tau)
\\[1mm]
&\overset{\substack{\tau \downarrow 0 \\[0.5mm] }}{\sim}& 
-i \cdot 
\dfrac{1}{\sqrt{2}} \,\ 
(-i\tau)^{\frac12} \, e^{\frac{1}{12} \cdot \frac{\pi i}{\tau}} \cdot 
(-i\tau)^{-\frac12}
\,\ = \,\ - 
\frac{i}{\sqrt{2}} \, e^{\frac{1}{12} \cdot \frac{\pi i}{\tau}}
\end{eqnarray*}}
and 
{\allowdisplaybreaks
\begin{eqnarray*}
& & \hspace{-15mm}
\dfrac{1}{\eta(\frac{\tau}{2}) \eta(2\tau)} \, \vartheta_{00}(\tau, a\tau)
\, \overset{\substack{\tau \downarrow 0 \\[0.5mm]}}{\sim} \, 
-i\tau \, e^{\frac{5}{24} \cdot \frac{\pi i}{\tau}} \cdot (-i\tau)^{-\frac12}
\,\ = \,\ 
(-i\tau)^{\frac12} \, e^{\frac{5}{24} \cdot \frac{\pi i}{\tau}}
\\[2mm]
& & \hspace{-15mm}
\dfrac{\eta(\frac{\tau}{2})}{\eta(\tau)^3} \, \vartheta_{01}(\tau, a\tau)
\overset{\substack{\tau \downarrow 0 \\[0.5mm]}}{\sim} 
\dfrac{1}{\sqrt{2}} \, (-i\tau) \, 
e^{\frac{1}{12} \cdot \frac{\pi i}{\tau}} \cdot 
(-i\tau)^{-\frac12} \cdot 2 \cos(a\pi) \, e^{-\frac{\pi i}{4\tau}}
\\[1mm]
& &\hspace{11mm}
= \,\ 
\sqrt{2} \, \cos(a\pi) \, (-i\tau)^{\frac12} \, e^{-\frac{\pi i}{6\tau}}
\end{eqnarray*}}
Then, by Theorem \ref{n3:thm:2022-116a}, the asymptotics of honest 
(super)characters are as follows:
{\allowdisplaybreaks
\begin{eqnarray*}
& &
{\rm ch}^{(+)}_{H(\Lambda^{[-1, 0]})}(\tau, a\tau) 
\,\ \overset{\substack{\tau \downarrow 0 \\[0.5mm]}}{\sim} \,\
- \frac12 \, (-i\tau)^{\frac12} \, e^{\frac{5}{24} \cdot \frac{\pi i}{\tau}}
\\[1mm]
& &
{\rm ch}^{(+)}_{H(\Lambda^{[-1, 2]})}(\tau, a\tau) 
\,\ \overset{\substack{\tau \downarrow 0 \\[0.5mm]}}{\sim} \,\
\frac12 \, (-i\tau)^{\frac12} \, e^{\frac{5}{24} \cdot \frac{\pi i}{\tau}}
\\[1mm]
& &
{\rm ch}^{(-)}_{H(\Lambda^{[-1, m_2]})}(\tau, a\tau) 
\,\ \overset{\substack{\tau \downarrow 0 \\[0.5mm]}}{\sim} \,\
- 
\frac{i}{\sqrt{2}} \, e^{\frac{1}{12} \cdot \frac{\pi i}{\tau}}
\qquad (m_2=0, 2) .
\end{eqnarray*}}
Thus the proof of Proposition is completed.
\end{proof}

\section{Modified characters in the case $m=4$}
\label{sec:m4:modified}

In this section, we consider the relations of modified characters
between $m$ and $2m$.

\begin{prop} 
\label{n3:prop:2022-117a}
$\overset{\circ}{A}{}^{[m]}_j(\tau,z)$'s and 
$\overset{\circ}{A}{}^{[2m]}_j(\tau,z)$'s are related to each other 
by the following formulas:
\begin{subequations}
{\allowdisplaybreaks
\begin{eqnarray}
2 \, \overset{\circ}{A}{}^{[m]}_3(2\tau, \, 2z) 
&=& 
\overset{\circ}{A}{}^{[2m]}_1(\tau, z) 
\,\ + \,\ \overset{\circ}{A}{}^{[2m]}_2(\tau, z)
\label{n3:eqn:2022-202a1}
\\[1mm]
\overset{\circ}{A}{}^{[m]}_6\Big(\dfrac{\tau}{2}, \, z\Big)
&=& 
\overset{\circ}{A}{}^{[2m]}_1(\tau,z)
\,\ + \,\ e^{- \pi im} \, \overset{\circ}{A}{}^{[2m]}_2(\tau,z)
\label{n3:eqn:2022-202a2}
\\[1mm]
2 \, \overset{\circ}{A}{}^{[m]}_4(2\tau, 2z)
&=& 
\overset{\circ}{A}{}^{[2m]}_3(\tau, z)
\,\ + \,\ 
\overset{\circ}{A}{}^{[2m]}_4(\tau, z)
\label{n3:eqn:2022-202a3}
\\[1mm]
e^{\frac{\pi im}{4}} \, 
\overset{\circ}{A}{}^{[m]}_6\Big(\dfrac{\tau+1}{2}, \, z\Big) 
&=&
\overset{\circ}{A}{}^{[2m]}_3(\tau, z)
\,\ + \,\ 
e^{-\pi im} \, \overset{\circ}{A}{}^{[2m]}_4(\tau, z)
\label{n3:eqn:2022-202a4}
\\[1mm]
\overset{\circ}{A}{}^{[m]}_5\Big(\dfrac{\tau}{2}, \, z\Big)
&=& 
\overset{\circ}{A}{}^{[2m]}_5(\tau,z)
\,\ + \,\ e^{\pi im} \, \overset{\circ}{A}{}^{[2m]}_6(\tau,z)
\label{n3:eqn:2022-202a5}
\\[1mm]
\overset{\circ}{A}{}^{[m]}_6
\Big(-\dfrac{1}{2\tau}+\dfrac12, \,\ \dfrac{z}{\tau}\Big) 
&=& \tau \, e^{\frac{\pi im}{2}} e^{\frac{\pi imz^2}{\tau}} 
\big\{
\overset{\circ}{A}{}^{[2m]}_5(\tau,z)
+ 
\overset{\circ}{A}{}^{[2m]}_6(\tau,z)\big\}
\label{n3:eqn:2022-202a6}
\end{eqnarray}}
\end{subequations}
\end{prop}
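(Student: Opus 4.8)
The plan is to derive all six identities from a single source: the ``doubling'' relation of Lemma~\ref{n3:lemma:2022-111c}, which expresses $\widetilde\Phi^{[m,s]}(2\tau,\cdot)$ as a sum of two copies of $\widetilde\Phi^{[2m,2s]}(\tau,\cdot)$, together with the explicit dictionary \eqref{n3:eqn:2022-117d1}--\eqref{n3:eqn:2022-117d6} relating the $\widetilde\Phi^{[\frac{m}{2},s]}$'s (at doubled modulus) to the $\overset{\circ}{A}{}^{[m]}_j$'s. The key observation is that in both collections the functions $\overset{\circ}{A}{}$ at level $m$ and at level $2m$ are built from the \emph{same} master function $\widetilde\Phi$, only with the roles of the lattice parameter and the modulus interchanged; so each identity is obtained by writing out the relevant $\overset{\circ}{A}{}^{[m]}_j$ via its defining formula \eqref{n3:eqn:2022-111d1}--\eqref{n3:eqn:2022-111d6}, substituting the appropriate modulus (either $2\tau$, $\tau/2$, or $(\tau+1)/2$, or $-1/(2\tau)+\tfrac12$), applying Lemma~\ref{n3:lemma:2022-111c} to split the level, and then recognizing each of the two resulting summands as an $\overset{\circ}{A}{}^{[2m]}_j$ through \eqref{n3:eqn:2022-117d1}--\eqref{n3:eqn:2022-117d6}.

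Concretely, for \eqref{n3:eqn:2022-202a1} I would start from $\overset{\circ}{A}{}^{[m]}_3(2\tau,2z)=e^{-\frac{\pi i m(2\tau)}{8}}\widetilde A^{[m]}_3(2\tau,2z)$, unfold $\widetilde A^{[m]}_3$ as $\widetilde\Phi^{[m,0]}$ at the arguments in \eqref{n3:eqn:2022-111d3} with $(\tau,z)\mapsto(2\tau,2z)$, apply the ``similar formula'' for $\widetilde\Phi^{[m,0]}(2\tau,\cdot)$ from Lemma~\ref{n3:lemma:2022-111c} (with $s=0$, so $2s=0$ and the phase $e^{-2\pi is}=1$), and match the two pieces against \eqref{n3:eqn:2022-117d1} to read off $\tfrac12(\overset{\circ}{A}{}^{[2m]}_2+\overset{\circ}{A}{}^{[2m]}_1)$; the prefactors $e^{-\frac{\pi i m\tau}{4}}$ coming from \eqref{n3:eqn:2022-108b} have to be checked to cancel correctly, which is the only bookkeeping subtlety. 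Identities \eqref{n3:eqn:2022-202a3}--\eqref{n3:eqn:2022-202a4} are the same computation starting from $\overset{\circ}{A}{}^{[m]}_4$ and using $\widetilde A^{[m]}_6(\tau+1,z)=e^{\pi i m}\widetilde A^{[m]}_6(\tau,z)$ (Lemma~\ref{n3:lemma:2022-108b}, 2)(vi)) and the $T$-transformation rules of Lemma~\ref{n3:lemma:2022-108c} to absorb the shift $\tau\mapsto(\tau+1)/2$. For the ``halving'' identities \eqref{n3:eqn:2022-202a2}, \eqref{n3:eqn:2022-202a5}, I would instead write the \emph{right}-hand side — i.e.\ express $\overset{\circ}{A}{}^{[2m]}_1,\overset{\circ}{A}{}^{[2m]}_2$ (resp.\ $\overset{\circ}{A}{}^{[2m]}_5,\overset{\circ}{A}{}^{[2m]}_6$) via \eqref{n3:eqn:2022-117d1}--\eqref{n3:eqn:2022-117d6} as values of $\widetilde\Phi^{[m,\cdot]}$ at modulus $2\tau$, add them, and collapse the two terms using Lemma~\ref{n3:lemma:2022-111c} read in the direction $\widetilde\Phi^{[m/2,s]}(2\tau,\cdot)\!\rightsquigarrow\!\widetilde\Phi^{[m,2s]}(\tau,\cdot)$ — and here the phase $e^{-\pi i m}$ on the second summand arises exactly from the $e^{-2\pi i s}$ with $s=\tfrac12$ in Lemma~\ref{n3:lemma:2022-111c}, while the argument shift by $\pm\tfrac14$ in \eqref{n3:eqn:2022-111d6} is handled by Lemma~\ref{n3:lemma:2022-116c} (giving the factor $e^{\pm\pi i j}$).

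The last identity \eqref{n3:eqn:2022-202a6} is the one I expect to be the main obstacle, since it mixes an $S$-type argument $z\mapsto z/\tau$ with the half-shift $\tau\mapsto -1/(2\tau)+\tfrac12=-(1-\tau)/(2\tau)$ of the modulus; here the clean route is to \emph{compose} known transformations rather than recompute from scratch. I would combine the $S$-transformation of $\overset{\circ}{A}{}^{[m]}_6$ (Lemma~\ref{n3:lemma:2022-108c}, 1)(vi), giving $\overset{\circ}{A}{}^{[m]}_3$ up to $e^{-\pi i m/2}\tau e^{\pi i m z^2/2\tau}$), then the $T$-shift relating $\overset{\circ}{A}{}^{[m]}_3$ and $\overset{\circ}{A}{}^{[m]}_6$ or $\overset{\circ}{A}{}^{[m]}_4$ (Lemma~\ref{n3:lemma:2022-108c}, 2)), so as to reduce the left side to a value of $\overset{\circ}{A}{}^{[m]}_6$ or $\overset{\circ}{A}{}^{[m]}_4$ at modulus $\tau/2$ or $(\tau+1)/2$, at which point \eqref{n3:eqn:2022-202a2}, \eqref{n3:eqn:2022-202a4} or \eqref{n3:eqn:2022-202a5} already proved above finish the job; alternatively one applies the level-$2m$ $S$-transformation (Lemma~\ref{n3:lemma:2022-108c}) to the already-established \eqref{n3:eqn:2022-202a5} or \eqref{n3:eqn:2022-202a2} and tracks the automorphy factors. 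The real work in all six cases is purely keeping the exponential prefactors straight — the factors $e^{-\pi i m\tau/8}$, $e^{-\pi i m\tau/2}$ from \eqref{n3:eqn:2022-108b}, the $e^{\frac{\pi i m}{2\tau}(\cdots)}$ Gaussians from the $S$-transformation in Lemma~\ref{n3:lemma:2022-111b}, and the $q$-powers $e^{-2\pi i s}$ and $e^{\pm\pi i j}$ from Lemmas~\ref{n3:lemma:2022-111c} and~\ref{n3:lemma:2022-116c} — and verifying that they combine into exactly the stated $\tau$, $e^{\pi i m/2}$, $e^{\pi i m z^2/\tau}$, $e^{\pm\pi i m}$ prefactors; the underlying $\widetilde\Phi$-identities are immediate from Lemma~\ref{n3:lemma:2022-111c}.
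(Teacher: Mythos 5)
Your treatment of \eqref{n3:eqn:2022-202a1} and \eqref{n3:eqn:2022-202a3} is correct and essentially the paper's argument: unfolding $\overset{\circ}{A}{}^{[m]}_3(2\tau,2z)$ and $\overset{\circ}{A}{}^{[m]}_4(2\tau,2z)$ as $\widetilde{\Phi}^{[m,0]}\big(2\tau,\,z+\frac{\tau}{2}\mp\frac12,\,z-\frac{\tau}{2}\pm\frac12,\,\frac{\tau}{8}\big)$ (after discarding the irrelevant $\pm 1$ shift by Lemma \ref{n3:lemma:2022-202b}) and reading off the right-hand sides from \eqref{n3:eqn:2022-117d1}, \eqref{n3:eqn:2022-117d3} with $m$ replaced by $2m$; the paper reaches the same point via $\overset{\circ}{A}{}^{[m]}_2(2\tau-1,2z)$, which is the same computation up to a $T$-shift.

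The gap is in your proposed derivation of the halving identities \eqref{n3:eqn:2022-202a2} and \eqref{n3:eqn:2022-202a5}. You want to express $\overset{\circ}{A}{}^{[2m]}_1,\overset{\circ}{A}{}^{[2m]}_2$ (resp.\ $\overset{\circ}{A}{}^{[2m]}_5,\overset{\circ}{A}{}^{[2m]}_6$) as values of $\widetilde{\Phi}^{[m,\cdot]}$ at modulus $2\tau$ and then collapse the sum by Lemma \ref{n3:lemma:2022-111c}. But that lemma only trades $\widetilde{\Phi}^{[M,s]}$ at modulus $2T$ for $\widetilde{\Phi}^{[2M,2s]}$ at modulus $T$, whereas the left-hand sides $\overset{\circ}{A}{}^{[m]}_6\big(\frac{\tau}{2},z\big)$ and $\overset{\circ}{A}{}^{[m]}_5\big(\frac{\tau}{2},z\big)$ are values of $\widetilde{\Phi}^{[m,0]}$ at modulus $\frac{\tau}{2}$: any collapse of the right-hand side lands at modulus $2\tau$ (or, going the other way, $\frac{\tau}{4}$), so there is an irreducible factor-of-four mismatch in the modulus that Lemmas \ref{n3:lemma:2022-111c} and \ref{n3:lemma:2022-116c} cannot bridge; moreover for \eqref{n3:eqn:2022-202a5} the two summands differ in their $(z_1,z_2)$-arguments by $\pm\big(\frac{\tau}{2}-\frac12\big)$, not by the $\pm\frac12$ that the collapse requires. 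The missing idea is that \eqref{n3:eqn:2022-202a2} and \eqref{n3:eqn:2022-202a5} are the $S$-transforms of \eqref{n3:eqn:2022-202a1} and \eqref{n3:eqn:2022-202a3}: under $(\tau,z)\mapsto\big(-\frac{1}{\tau},\frac{z}{\tau}\big)$ one has $\overset{\circ}{A}{}^{[m]}_j(2\tau,2z)\mapsto \overset{\circ}{A}{}^{[m]}_j\big(-\frac{1}{\tau/2},\frac{z}{\tau/2}\big)$, and Lemma \ref{n3:lemma:2022-108c} converts this into an $\overset{\circ}{A}{}$ at modulus $\frac{\tau}{2}$ — this is precisely how the factor of four is absorbed, and it is the paper's route. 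Since \eqref{n3:eqn:2022-202a4} requires \eqref{n3:eqn:2022-202a2} at $\tau\pm1$ (the shift $\tau\mapsto\frac{\tau+1}{2}$ is not reachable by the integer $T$-rules alone) and \eqref{n3:eqn:2022-202a6} requires a further $S$, the last three identities inherit the gap. The repair is short: keep your proof of \eqref{n3:eqn:2022-202a1}, then obtain \eqref{n3:eqn:2022-202a2}--\eqref{n3:eqn:2022-202a6} by successively applying $S$ and $T$ from Lemma \ref{n3:lemma:2022-108c} (using $e^{\pi im}=e^{-\pi im}$ for $m\in\nnn$ to match the stated phases), exactly as the paper does.
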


\begin{proof} First look at the formula for 
$\overset{\circ}{A}{}^{[m]}_2(\tau,z)$ in 
\eqref{n3:eqn:2022-111d2} and \eqref{n3:eqn:2022-108b} :
$$
\overset{\circ}{A}{}^{[m]}_2(\tau,z) =
\widetilde{\Phi}^{[m,0]}\left(\tau, \,\ 
\frac{z}{2}+\frac{\tau}{4}-\frac14, \,\ 
\frac{z}{2}-\frac{\tau}{4}+\frac14, \,\ \frac{\tau}{16}\right) \, .
$$
Letting $\tau \rightarrow \tau-1$, we have
{\allowdisplaybreaks
\begin{eqnarray*}
\overset{\circ}{A}{}^{[m]}_2(\tau-1,z) 
&=&
\widetilde{\Phi}^{[m,0]}\left(\tau, \,\ 
\frac{z}{2}+\frac{\tau}{4}-\frac12, \,\ 
\frac{z}{2}-\frac{\tau}{4}+\frac12, \,\ \frac{\tau-1}{16}\right)
\\[0mm]
&=&
e^{\frac{\pi im}{8}}
\widetilde{\Phi}^{[m,0]}\left(\tau, \,\ 
\frac{z}{2}+\frac{\tau}{4}-\frac12, \,\ 
\frac{z}{2}-\frac{\tau}{4}+\frac12, \,\ \frac{\tau}{16}\right)
\end{eqnarray*}}
Letting $\tau \rightarrow 2\tau$ and $z \rightarrow 2z$, we have 
\begin{subequations}
\begin{equation}
\overset{\circ}{A}{}^{[m]}_2(2\tau-1,2z) 
= 
e^{\frac{\pi im}{8}}
\widetilde{\Phi}^{[m,0]}\left(2\tau, \,\ 
\frac{z}{2}+\frac{\tau}{2}-\frac12, \,\ 
\frac{z}{2}-\frac{\tau}{2}+\frac12, \,\ \frac{\tau}{8}\right)
\label{n3:eqn:2022-117a}
\end{equation}
By Lemma \ref{n3:lemma:2022-111c} and the formulas 
\eqref{n3:eqn:2022-111d1} and \eqref{n3:eqn:2022-111d2}, 
the RHS of this equation is written as follows:
\begin{equation}
\text{RHS of \eqref{n3:eqn:2022-117a}}= e^{\frac{\pi im}{8}} \cdot 
\frac12 \, \Big\{
\overset{\circ}{A}{}^{[2m]}_1(\tau,z) \, + \, 
\overset{\circ}{A}{}^{[2m]}_2(\tau,z)\Big\}
\label{n3:eqn:2022-117b}
\end{equation}
The LHS of \eqref{n3:eqn:2022-117a} is written, by using the $T$-transformation
properties in Lemma \ref{n3:lemma:2022-108c}, as follows:
\begin{equation}
\text{LHS of \eqref{n3:eqn:2022-117a}}= 
e^{\frac{\pi im}{8}}\overset{\circ}{A}{}^{[m]}_3(2\tau,2z) 
\label{n3:eqn:2022-117c}
\end{equation}
\end{subequations}
Then by \eqref{n3:eqn:2022-117a} and \eqref{n3:eqn:2022-117b}
and \eqref{n3:eqn:2022-117c}, we have 
$$
\overset{\circ}{A}{}^{[m]}_3(2\tau,2z) \, = \, 
\frac12 \, \Big\{
\overset{\circ}{A}{}^{[2m]}_1(\tau,z) \, + \, 
\overset{\circ}{A}{}^{[2m]}_2(\tau,z)\Big\} \, ,
$$
proving \eqref{n3:eqn:2022-202a1}.
Then \eqref{n3:eqn:2022-202a2} is obtained by applying $S$-transformation 
to \eqref{n3:eqn:2022-202a1}.
\eqref{n3:eqn:2022-202a3} is obtained by applying $T$-transformation 
to \eqref{n3:eqn:2022-202a1}.
\eqref{n3:eqn:2022-202a4} is obtained by applying $T$-transformation 
to \eqref{n3:eqn:2022-202a2}.
\eqref{n3:eqn:2022-202a5} is obtained by applying $S$-transformation 
to \eqref{n3:eqn:2022-202a3}.
\eqref{n3:eqn:2022-202a6} is obtained by applying $S$-transformation 
to \eqref{n3:eqn:2022-202a4}.
\end{proof}

Writing the formulas \eqref{n3:eqn:2022-202a1} $\sim$ 
\eqref{n3:eqn:2022-202a5} in the case $m=2$, we obtain the following:

\begin{lemma} 
\label{n3:lemma:2022-202d}
$\overset{\circ}{A}{}^{[4]}_j$'s are related to 
$\overset{\circ}{A}{}^{[2]}_j$'s as follows: 
\begin{enumerate}
\item[{\rm 1)}] \quad $2 \, \overset{\circ}{A}{}^{[2]}_3(2\tau, \, 2z) 
\,\ = \,\ 
\overset{\circ}{A}{}^{[4]}_1(\tau, z) 
\,\ + \,\ 
\overset{\circ}{A}{}^{[4]}_2(\tau, z)$
\item[{\rm 2)}] \quad $2 \, \overset{\circ}{A}{}^{[2]}_4(2\tau, 2z)
\,\ = \,\ 
\overset{\circ}{A}{}^{[4]}_3(\tau, z)
\,\ + \,\ 
\overset{\circ}{A}{}^{[4]}_4(\tau, z)$
\item[{\rm 3)}] \quad $\overset{\circ}{A}{}^{[2]}_5\Big(\dfrac{\tau}{2}, \, z\Big)
\,\ = \,\ 
\overset{\circ}{A}{}^{[4]}_5(\tau,z)
\,\ + \,\ 
\overset{\circ}{A}{}^{[4]}_6(\tau,z)$
\item[{\rm 4)}] \quad $2 \, \overset{\circ}{A}{}^{[4]}_3(2\tau, 2z) 
\,\ = \,\ 
\overset{\circ}{A}{}^{[2]}_6\Big(\dfrac{\tau}{2}, \, z\Big) $
\end{enumerate}
\end{lemma}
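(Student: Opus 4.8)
The plan is to read off all four identities from Proposition \ref{n3:prop:2022-117a} by specializing to $m=2$ (so that $2m=4$) and then simplifying the phase factors. Since $m=2$ is even, every occurrence of $e^{\pm\pi im}$ collapses to $e^{\pm 2\pi i}=1$, while the phase $e^{\pi im/4}$ in \eqref{n3:eqn:2022-202a4} becomes $e^{\pi i/2}=i$. Thus no new analytic input beyond Proposition \ref{n3:prop:2022-117a} is required; the entire task is substitution together with careful bookkeeping of these phases and of the superscripts.

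Parts 1), 2) and 3) are then immediate: specializing \eqref{n3:eqn:2022-202a1}, \eqref{n3:eqn:2022-202a3} and \eqref{n3:eqn:2022-202a5} to $m=2$ gives exactly these three relations, the factor $e^{\pi im}=1$ in \eqref{n3:eqn:2022-202a5} being precisely what removes the phase in front of $\overset{\circ}{A}{}^{[4]}_6$. For item 4) the route I would take is to eliminate the combination $\overset{\circ}{A}{}^{[4]}_1(\tau,z)+\overset{\circ}{A}{}^{[4]}_2(\tau,z)$. Indeed, \eqref{n3:eqn:2022-202a2} at $m=2$ reads $\overset{\circ}{A}{}^{[2]}_6(\frac{\tau}{2},z)=\overset{\circ}{A}{}^{[4]}_1(\tau,z)+\overset{\circ}{A}{}^{[4]}_2(\tau,z)$ (using $e^{-\pi im}=1$), which is the same combination that appears as the right-hand side of part 1). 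Equating the two expressions for $\overset{\circ}{A}{}^{[4]}_1+\overset{\circ}{A}{}^{[4]}_2$ then produces an identity relating a single function at the doubled argument $(2\tau,2z)$ to $\overset{\circ}{A}{}^{[2]}_6(\frac{\tau}{2},z)$.

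The step I expect to be the main obstacle — and the one that governs the correct reading of the displayed superscript on the left of 4) — is pinning down the level of the function that appears at $(2\tau,2z)$. The elimination just described pairs $\overset{\circ}{A}{}^{[2]}_6(\frac{\tau}{2},z)$ with $2\,\overset{\circ}{A}{}^{[2]}_3(2\tau,2z)$, a level-$2$ function. To test whether the superscript $[4]$ written in item 4) can instead stand, I would apply the doubling Lemma \ref{n3:lemma:2022-111c} directly to $\overset{\circ}{A}{}^{[4]}_3(2\tau,2z)$ — equivalently, read \eqref{n3:eqn:2022-202a1} at $m=4$ — which gives $2\,\overset{\circ}{A}{}^{[4]}_3(2\tau,2z)=\overset{\circ}{A}{}^{[8]}_1(\tau,z)+\overset{\circ}{A}{}^{[8]}_2(\tau,z)$, a combination of level-$8$ functions. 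Since this cannot coincide with the level-$2$ right-hand side $\overset{\circ}{A}{}^{[2]}_6(\frac{\tau}{2},z)$, the only function compatible with 4) is $\overset{\circ}{A}{}^{[2]}_3(2\tau,2z)$; hence the superscript $[4]$ in the displayed item 4) is to be read as $[2]$, and with this reading the identity is exactly the one obtained by equating \eqref{n3:eqn:2022-202a1} and \eqref{n3:eqn:2022-202a2} at $m=2$.
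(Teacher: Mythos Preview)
Your argument is correct and follows the paper's own route exactly: the paper's one-line proof is just ``writing the formulas \eqref{n3:eqn:2022-202a1}--\eqref{n3:eqn:2022-202a5} in the case $m=2$'', which is precisely the specialization you carry out, with parts 1), 2), 3) coming from \eqref{n3:eqn:2022-202a1}, \eqref{n3:eqn:2022-202a3}, \eqref{n3:eqn:2022-202a5} and part 4) from \eqref{n3:eqn:2022-202a2} combined with part 1). Your diagnosis of item 4) is also correct: specializing \eqref{n3:eqn:2022-202a2} at $m=2$ and comparing with part 1) yields $2\,\overset{\circ}{A}{}^{[2]}_3(2\tau,2z)=\overset{\circ}{A}{}^{[2]}_6\bigl(\tfrac{\tau}{2},z\bigr)$, so the superscript $[4]$ on the left of the displayed item 4) is indeed a misprint for $[2]$.
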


Then by Corollary \ref{n3:cor:2022-112a}, we obtain the explicit formula 
for $\widetilde{\rm ch}^{(\pm)}_{H(\Lambda^{[K(4),1]})}(\tau, z)$ as follows: 

\begin{prop} \,\ 
\label{n3:prop:2022-204b}
\begin{enumerate}
\item[{\rm 1)}]
$\widetilde{\rm ch}^{(+)}_{H(\Lambda^{[K(4),1]})}(\tau, z)$
$$ \hspace{-8mm}
= \,\ 
\frac{i}{2} \,\ 
\frac{1}{\eta(\frac{\tau}{2})\eta(2\tau)} \cdot 
\frac{\vartheta_{10}(\tau, z)}{\vartheta_{01}(\tau, z)} \, 
\bigg\{
\frac{\eta(2\tau)^5}{\eta(\tau)^2\eta(4\tau)^2} \cdot 
\vartheta_{00}(2\tau, 2z)
\, - \, 
2 \, \frac{\eta(4\tau)^2}{\eta(2\tau)} \cdot 
\vartheta_{10}(2\tau, 2z)
\bigg\}
$$
\item[{\rm 2)}]
$\widetilde{\rm ch}^{(-)}_{H(\Lambda^{[K(4),1]})}(\tau, z)$
$$ \hspace{-8mm}
= \,\ - \, 
\frac{i}{2} \,\ 
\frac{\eta(\frac{\tau}{2})}{\eta(\tau)^3} \cdot 
\frac{\vartheta_{10}(\tau, z)}{\vartheta_{00}(\tau, z)} \, 
\bigg\{
\frac{\eta(2\tau)^5}{\eta(\tau)^2\eta(4\tau)^2} \cdot 
\vartheta_{00}(2\tau, 2z)
\, + \, 
2 \, \frac{\eta(4\tau)^2}{\eta(2\tau)} \cdot 
\vartheta_{10}(2\tau, 2z)
\bigg\}
$$
\end{enumerate}
\end{prop}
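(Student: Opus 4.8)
The plan is to derive both formulas by combining the $m=4$ analogue of Corollary~\ref{n3:cor:2022-112a}(2) and (4) with the doubling relations in Lemma~\ref{n3:lemma:2022-202d}, and then translating the resulting $\overset{\circ}{A}{}^{[4]}_j$ expressions into theta quotients via the $m=2$ data already established in Theorem~\ref{n3:thm:2022-108a} and its proof. Concretely, for part~1) I would start from
$$
\big(\overset{N=3}{R}{}^{(+)}\widetilde{\rm ch}^{(+)}_{H(\Lambda^{[K(4),1]})}\big)(\tau,z)
= \tfrac12\big\{\overset{\circ}{A}{}^{[4]}_2(\tau,z)+\overset{\circ}{A}{}^{[4]}_1(\tau,z)\big\},
$$
which is Corollary~\ref{n3:cor:2022-112a}(2) with $m=4$, $m_2=1$. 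By Lemma~\ref{n3:lemma:2022-202d}(1) this equals $\overset{\circ}{A}{}^{[2]}_3(2\tau,2z)$, i.e.\ the $m=2$ function evaluated at $(2\tau,2z)$. For part~2) I would likewise use Corollary~\ref{n3:cor:2022-112a}(4) with $m=4$ to get $\tfrac12\{\overset{\circ}{A}{}^{[4]}_4+\overset{\circ}{A}{}^{[4]}_3\}$, and Lemma~\ref{n3:lemma:2022-202d}(2) to identify this with $\overset{\circ}{A}{}^{[2]}_4(2\tau,2z)$.

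Next I would express $\overset{\circ}{A}{}^{[2]}_3$ and $\overset{\circ}{A}{}^{[2]}_4$ individually in theta form. From Lemma~\ref{n3:lemma:2022-108e}(2)(iii)–(iv) we have
$$
2\overset{\circ}{A}{}^{[2]}_3 = i\{-2g_1^{(+)}+g_3^{(-)}\},\qquad
2\overset{\circ}{A}{}^{[2]}_4 = -i\{2g_1^{(+)}+g_3^{(-)}\},
$$
so both are explicit combinations of $g_1^{(+)}$ and $g_3^{(-)}$, whose theta-quotient definitions are \eqref{n3:eqn:2022-119c1} and \eqref{n3:eqn:2022-119c6}. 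Substituting $\tau\mapsto 2\tau$, $z\mapsto 2z$ in these gives $\overset{\circ}{A}{}^{[2]}_3(2\tau,2z)$ and $\overset{\circ}{A}{}^{[2]}_4(2\tau,2z)$ as combinations of $\vartheta_{ab}(2\tau,2z)$ and $\eta(2\tau),\eta(\tau),\eta(4\tau)$. I then divide by the N=3 denominators $\overset{N=3}{R}{}^{(+)}(\tau,z)$ and $\overset{N=3}{R}{}^{(-)}(\tau,z)$ from \eqref{n3:eqn:2022-115a}; the factors $\vartheta_{11}(\tau,z)/\vartheta_{00}(\tau,z)$ and $\vartheta_{11}(\tau,z)/\vartheta_{01}(\tau,z)$ will need to be matched against the $\vartheta_{11}(2\tau,2z)$ hidden inside $g_1^{(+)}$ and $g_3^{(-)}$, which is exactly where the doubling identity \eqref{n3:eqn:2022-120g1}, $\vartheta_{11}(2\tau,2z)=\frac{\eta(2\tau)}{\eta(\tau)^2}\vartheta_{11}(\tau,z)\vartheta_{10}(\tau,z)$, comes in: it converts the residual $\vartheta_{11}(2\tau,2z)$ into $\vartheta_{11}(\tau,z)\vartheta_{10}(\tau,z)$ times an $\eta$-factor, cancelling the $\vartheta_{11}(\tau,z)$ in the denominator and leaving the $\vartheta_{10}(\tau,z)/\vartheta_{01}(\tau,z)$ (resp.\ $/\vartheta_{00}(\tau,z)$) prefactor displayed in the statement.

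The main obstacle I anticipate is purely bookkeeping: correctly tracking which mixed-modulus theta and $\eta$ arguments ($\tau/2$, $\tau$, $2\tau$, $4\tau$) appear after the substitution $\tau\mapsto 2\tau$, and verifying that all spurious factors cancel to produce precisely the stated combinations
$$
\frac{\eta(2\tau)^5}{\eta(\tau)^2\eta(4\tau)^2}\vartheta_{00}(2\tau,2z)\ \mp\ 2\,\frac{\eta(4\tau)^2}{\eta(2\tau)}\vartheta_{10}(2\tau,2z).
$$
In particular one must check the overall constants and signs: the $i$'s coming from Lemma~\ref{n3:lemma:2022-108e}, the $-i$ inside \eqref{n3:eqn:2022-120g1} indirectly, and the factor $2$ on the $g_1^{(+)}$ term. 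I would do this by expanding $2\overset{\circ}{A}{}^{[2]}_3(2\tau,2z)$ and $2\overset{\circ}{A}{}^{[2]}_4(2\tau,2z)$ literally in theta quotients, then multiplying through by $1/\overset{N=3}{R}{}^{(\pm)}(\tau,z)$, and finally applying \eqref{n3:eqn:2022-120g1} once to clear $\vartheta_{11}(2\tau,2z)$; no genuinely new identity beyond those already in the excerpt should be required.
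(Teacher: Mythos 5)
Your proposal follows essentially the same route as the paper's own proof: reduce $\frac12\{\overset{\circ}{A}{}^{[4]}_1+\overset{\circ}{A}{}^{[4]}_2\}$ and $\frac12\{\overset{\circ}{A}{}^{[4]}_3+\overset{\circ}{A}{}^{[4]}_4\}$ to $\overset{\circ}{A}{}^{[2]}_3(2\tau,2z)$ and $\overset{\circ}{A}{}^{[2]}_4(2\tau,2z)$ via Lemma \ref{n3:lemma:2022-202d}, expand these through $g_1^{(+)}$ and $g_3^{(-)}$ using Lemma \ref{n3:lemma:2022-108e}, and divide by the N=3 denominators while clearing the doubled-argument thetas. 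The only cosmetic remark is that besides \eqref{n3:eqn:2022-120g1} you also implicitly need the companion doubling identity for $\vartheta_{01}(2\tau,2z)$ to arrive at the ratio $\vartheta_{10}(\tau,z)/\vartheta_{01}(\tau,z)$ (resp. $/\vartheta_{00}(\tau,z)$), exactly as the paper does without comment.
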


\begin{proof} 1) We compute 
$\overset{\circ}{A}{}^{[4]}_2(\tau, z)+\overset{\circ}{A}{}^{[4]}_1(\tau, z)$
by using Lemma \ref{n3:lemma:2022-202d}: 
{\allowdisplaybreaks
\begin{eqnarray*}
& & \hspace{-7mm}
\overset{\circ}{A}{}^{[4]}_2(\tau, z)+\overset{\circ}{A}{}^{[4]}_1(\tau, z)
\, = \, 
2  \overset{\circ}{A}{}^{[2]}_3(2\tau,  2z) 
=
i \, \Big\{- 2  g^{(+)}_1(2\tau, 2z) + g^{(-)}_3(2\tau, 2z)\Big\}
\\[1mm]
&=&
i \, \bigg\{-2 \, \frac{\eta(4\tau)^2}{\eta(2\tau)} \cdot 
\frac{\vartheta_{11}(2\tau, 2z)\vartheta_{10}(2\tau, 2z)
}{\vartheta_{01}(2\tau, 2z)}
\\[1mm]
& &\hspace{10mm}
+ \,\ 
\frac{\eta(2\tau)^5}{\eta(\tau)^2\eta(4\tau)^2} \cdot 
\frac{\vartheta_{11}(2\tau, 2z)\vartheta_{00}(2\tau, 2z)
}{\vartheta_{01}(2\tau, 2z)}
\bigg\}
\\[2mm]
&=&
i \, \frac{\vartheta_{11}(2\tau, 2z)}{\vartheta_{01}(2\tau, 2z)} \, 
\bigg\{
-2 \, \frac{\eta(4\tau)^2}{\eta(2\tau)} \cdot 
\vartheta_{10}(2\tau, 2z)
\, + \, 
\frac{\eta(2\tau)^5}{\eta(\tau)^2\eta(4\tau)^2} \cdot 
\vartheta_{00}(2\tau, 2z)
\bigg\}
\\[2mm]
&=&
i \frac{\vartheta_{11}(\tau, z) \vartheta_{10}(\tau, z)
}{\vartheta_{01}(\tau, z) \vartheta_{00}(\tau, z)} 
\bigg\{
-2 \frac{\eta(4\tau)^2}{\eta(2\tau)} \cdot 
\vartheta_{10}(2\tau, 2z)
+ 
\frac{\eta(2\tau)^5}{\eta(\tau)^2\eta(4\tau)^2} \cdot 
\vartheta_{00}(2\tau, 2z)
\bigg\}
\end{eqnarray*}}

\noindent
Then, by Corollary \ref{n3:cor:2022-112a}, we have
{\allowdisplaybreaks
\begin{eqnarray*}
& & \hspace{-10mm}
\widetilde{\rm ch}^{(+)}_{H(\Lambda^{[K(4),1]})}(\tau, z) \,\ = \,\ 
\frac{1}{\overset{N=3}{R}{}^{(+)}(\tau, z)} \cdot 
\frac12 \cdot \big[
\overset{\circ}{A}{}^{[4]}_2 \, + \, \overset{\circ}{A}{}^{[4]}_1\big](\tau, z)
\\[1mm]
&=&
\frac{i}{2} \,\ 
\frac{1}{\eta(\frac{\tau}{2})\eta(2\tau)} \cdot 
\frac{\vartheta_{00}(\tau, z)}{\vartheta_{11}(\tau, z)} \cdot
\frac{\vartheta_{11}(\tau, z) \, \vartheta_{10}(\tau, z)
}{\vartheta_{01}(\tau, z) \, \vartheta_{00}(\tau, z)}
\\[2mm]
& &
\times \,\ \bigg\{
-2 \, \frac{\eta(4\tau)^2}{\eta(2\tau)} \cdot 
\vartheta_{10}(2\tau, 2z)
\,\ + \,\ 
\frac{\eta(2\tau)^5}{\eta(\tau)^2\eta(4\tau)^2} \cdot 
\vartheta_{00}(2\tau, 2z)
\bigg\}
\\[1mm]
&=&
\frac{i}{2} \,\ 
\frac{1}{\eta(\frac{\tau}{2})\eta(2\tau)} \cdot 
\frac{\vartheta_{10}(\tau, z)}{\vartheta_{01}(\tau, z)} 
\\[1mm]
& & \hspace{10mm}
\times \,\ \bigg\{
-2 \, \frac{\eta(4\tau)^2}{\eta(2\tau)} \cdot 
\vartheta_{10}(2\tau, 2z)
\,\ + \,\ 
\frac{\eta(2\tau)^5}{\eta(\tau)^2\eta(4\tau)^2} \cdot 
\vartheta_{00}(2\tau, 2z)
\bigg\} ,
\end{eqnarray*}}
proving 1).

\medskip

\noindent
2) Next we compute 
$\overset{\circ}{A}{}^{[4]}_3(\tau, z)+\overset{\circ}{A}{}^{[4]}_4(\tau, z)$
by using Lemma \ref{n3:lemma:2022-202d}: 
{\allowdisplaybreaks
\begin{eqnarray*}
& &\hspace{-10mm}
\overset{\circ}{A}{}^{[4]}_3(\tau, z)
+\overset{\circ}{A}{}^{[4]}_4(\tau, z)
\,\ = \,\ 
2 \, \overset{\circ}{A}{}^{[2]}_4(2\tau, \, 2z) 
\, = \, 
- i \, \Big\{2 \, g^{(+)}_1(2\tau, 2z) \, + \, g^{(-)}_3(2\tau, 2z)\Big\}
\\[2mm]
&=&
-i \, \bigg\{2 \, \frac{\eta(4\tau)^2}{\eta(2\tau)} \cdot 
\frac{\vartheta_{11}(2\tau, 2z)\vartheta_{10}(2\tau, 2z)
}{\vartheta_{01}(2\tau, 2z)}
\, + \, 
\frac{\eta(2\tau)^5}{\eta(\tau)^2\eta(4\tau)^2} \cdot 
\frac{\vartheta_{11}(2\tau, 2z)\vartheta_{00}(2\tau, 2z)
}{\vartheta_{01}(2\tau, 2z)}
\bigg\}
\\[2mm]
&=&
-i \,\ \frac{\vartheta_{11}(2\tau, 2z)}{\vartheta_{01}(2\tau, 2z)} \, 
\bigg\{
2 \, \frac{\eta(4\tau)^2}{\eta(2\tau)} \cdot 
\vartheta_{10}(2\tau, 2z)
\,\ + \,\ 
\frac{\eta(2\tau)^5}{\eta(\tau)^2\eta(4\tau)^2} \cdot 
\vartheta_{00}(2\tau, 2z)
\bigg\}
\\[2mm]
&=&
-i \,\  \frac{\vartheta_{11}(\tau, z) \, \vartheta_{10}(\tau, z)
}{\vartheta_{01}(\tau, z) \, \vartheta_{00}(\tau, z)} \, 
\bigg\{
2 \, \frac{\eta(4\tau)^2}{\eta(2\tau)} \cdot 
\vartheta_{10}(2\tau, 2z)
\,\ + \,\ 
\frac{\eta(2\tau)^5}{\eta(\tau)^2\eta(4\tau)^2} \cdot 
\vartheta_{00}(2\tau, 2z)
\bigg\}
\end{eqnarray*}}

\noindent
Then, by Corollary \ref{n3:cor:2022-112a}, we have
{\allowdisplaybreaks
\begin{eqnarray*}
& & \hspace{-10mm}
\widetilde{\rm ch}^{(-)}_{H(\Lambda^{[K(4),1]})}(\tau, z) \,\ = \,\ 
\frac{1}{\overset{N=3}{R}{}^{(-)}(\tau, z)} \cdot 
\frac12 \cdot \big[
\overset{\circ}{A}{}^{[4]}_3 \, + \, \overset{\circ}{A}{}^{[4]}_4\big](\tau, z)
\\[3mm]
&=&- \, 
\frac{i}{2} \,\ 
\frac{\eta(\frac{\tau}{2})}{\eta(\tau)^3} \cdot 
\frac{\vartheta_{01}(\tau, z)}{\vartheta_{11}(\tau, z)} \cdot
\frac{\vartheta_{11}(\tau, z) \, \vartheta_{10}(\tau, z)
}{\vartheta_{01}(\tau, z) \, \vartheta_{00}(\tau, z)}
\\[1mm]
& &
\times \,\ \bigg\{
2 \, \frac{\eta(4\tau)^2}{\eta(2\tau)} \cdot 
\vartheta_{10}(2\tau, 2z)
\,\ + \,\ 
\frac{\eta(2\tau)^5}{\eta(\tau)^2\eta(4\tau)^2} \cdot 
\vartheta_{00}(2\tau, 2z)
\bigg\}
\\[3mm]
&=&- \, 
\frac{i}{2} \,\ 
\frac{\eta(\frac{\tau}{2})}{\eta(\tau)^3} \cdot 
\frac{\vartheta_{10}(\tau, z)}{\vartheta_{00}(\tau, z)} \, 
\bigg\{
2 \, \frac{\eta(4\tau)^2}{\eta(2\tau)} \cdot 
\vartheta_{10}(2\tau, 2z)
\,\ + \,\ 
\frac{\eta(2\tau)^5}{\eta(\tau)^2\eta(4\tau)^2} \cdot 
\vartheta_{00}(2\tau, 2z)
\bigg\}
\end{eqnarray*}}
proving 2).
\end{proof}

\begin{cor} 
\label{n3:cor:2022-205a}
When $m=4$, the modified character and supercharacter \\
$\widetilde{\rm ch}^{(\pm)}_{H(\Lambda^{[K(4),1]})}(\tau, z)$ 
are holomorphic functions of $(\tau, z) \in \ccc_+ \times \ccc$.
\end{cor}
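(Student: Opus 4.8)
The plan is to read off from the explicit formulas in Proposition~\ref{n3:prop:2022-204b} that both $\widetilde{\rm ch}^{(+)}_{H(\Lambda^{[K(4),1]})}$ and $\widetilde{\rm ch}^{(-)}_{H(\Lambda^{[K(4),1]})}$ are quotients of everywhere-holomorphic functions, and then argue that the apparent poles coming from the theta functions in the denominators are in fact cancelled. First I would recall that $\eta(\tau),\eta(\tfrac{\tau}{2}),\eta(2\tau),\eta(4\tau)$ are holomorphic and nowhere vanishing on $\ccc_+$, so none of the $\eta$-factors contributes either a pole or a zero; likewise $\vartheta_{ab}(2\tau,2z)$ and the numerator thetas $\vartheta_{10}(\tau,z)$ are entire in $(\tau,z)$. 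Hence the only possible singularities of the expressions in Proposition~\ref{n3:prop:2022-204b} are along the divisors of $\vartheta_{01}(\tau,z)$ (for the $(+)$ character) and of $\vartheta_{00}(\tau,z)$ (for the $(-)$ character).

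Next I would handle these potential poles. The cleanest route is not to fight with the $\vartheta$-quotient directly but to go back one step: by construction (the formulas in \eqref{n3:eqn:2022-112c} and Proposition~\ref{n3:prop:2022-112a}, specialized to $m=4$) we have
$$
\bigl(\overset{N=3}{R}{}^{(\pm)}\,\widetilde{\rm ch}^{(\pm)}_{H(\Lambda^{[K(4),1]})}\bigr)(\tau,z)
=\tfrac12\bigl\{\overset{\circ}{A}{}^{[4]}_{2}(\tau,z)+\overset{\circ}{A}{}^{[4]}_{1}(\tau,z)\bigr\}
\quad\text{resp.}\quad
\tfrac12\bigl\{\overset{\circ}{A}{}^{[4]}_{4}(\tau,z)+\overset{\circ}{A}{}^{[4]}_{3}(\tau,z)\bigr\},
$$
and by Lemma~\ref{n3:lemma:2022-202d}, items 1) and 2), the right-hand sides equal $\overset{\circ}{A}{}^{[2]}_{3}(2\tau,2z)$ and $\overset{\circ}{A}{}^{[2]}_{4}(2\tau,2z)$ respectively. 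Now $\overset{\circ}{A}{}^{[2]}_{3}$ and $\overset{\circ}{A}{}^{[2]}_{4}$ are, up to the exponential prefactor in \eqref{n3:eqn:2022-108b}, values of $\widetilde{\Phi}^{[1,0]}$ at the arguments \eqref{n3:eqn:2022-111d3}, \eqref{n3:eqn:2022-111d4}; by the $\widehat{sl}(2|1)$-denominator identity (Lemma~\ref{n3:lemma:2022-111e}) these are holomorphic precisely where the relevant $\vartheta_{11}$-factors in the denominator do not vanish — which by the computations in the proof of Lemma~\ref{n3:lemma:2022-108e} (formulas \eqref{n3:eqn:2022-120g2}, \eqref{n3:eqn:2022-120g3}) is exactly the statement that $\overset{\circ}{A}{}^{[2]}_{3}(2\tau,2z)$ carries a simple pole along the zeros of $\vartheta_{00}(\tau,z)$ coming from $g_1^{(+)}$ and along zeros of $\vartheta_{01}(\tau,z)$ coming from $g_3^{(-)}$, but these are precisely cancelled after dividing by $\overset{N=3}{R}{}^{(+)}(\tau,z)=\eta(\tfrac{\tau}{2})\eta(2\tau)\vartheta_{11}(\tau,z)/\vartheta_{00}(\tau,z)$, since the $\vartheta_{00}$ in the numerator of $1/\overset{N=3}{R}{}^{(+)}$ kills the $\vartheta_{00}$ in the denominator of $g_1^{(+)}$, while the $\vartheta_{11}$ there kills a spurious zero, etc.

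So the key steps, in order, are: (1) note $\eta$-factors are holomorphic and non-vanishing, reducing the problem to zeros of $\vartheta_{00}$, $\vartheta_{01}$ in the two denominators; (2) use Lemma~\ref{n3:lemma:2022-202d} and Lemma~\ref{n3:lemma:2022-108e} to rewrite the numerators of $\widetilde{\rm ch}^{(\pm)}_{H(\Lambda^{[K(4),1]})}$ as $i\,\vartheta_{11}(\tau,z)\vartheta_{10}(\tau,z)/(\vartheta_{01}(\tau,z)\vartheta_{00}(\tau,z))$ times an entire function, and combine with $1/\overset{N=3}{R}{}^{(\pm)}(\tau,z)$; (3) check that the $\vartheta_{11}$ zero in the numerator of $1/\overset{N=3}{R}{}^{(\pm)}$ is harmless (it is cancelled by the $\vartheta_{11}$ appearing in $g_i^{(\pm)}$), and that the $\vartheta_{00}$ (resp.\ $\vartheta_{01}$) factor appearing in the numerator of $1/\overset{N=3}{R}{}^{(\pm)}$ cancels the surviving $\vartheta_{00}$ (resp.\ $\vartheta_{01}$) in the denominator; conclude that both functions extend holomorphically to all of $\ccc_+\times\ccc$. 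The main obstacle is bookkeeping step~(3): one must verify that after all cancellations no zero of $\vartheta_{01}$ or $\vartheta_{00}$ remains uncancelled — equivalently, that the functions $\overset{\circ}{A}{}^{[2]}_{3}$, $\overset{\circ}{A}{}^{[2]}_{4}$, when divided by the $m=4$ N$=3$ denominators (rather than the $m=2$ ones for which Lemma~\ref{n3:lemma:2022-108e} was set up), still produce holomorphic quotients; this is a matched comparison of divisors of Mumford theta functions under $\tau\mapsto 2\tau$, $z\mapsto 2z$, which I would carry out using the standard product formulas \eqref{n3:eqn:2022-120g1}--\eqref{n3:eqn:2022-120g3} and the fact that $\vartheta_{ab}(\tau,z)$ has simple zeros exactly at the half-period lattice translates.
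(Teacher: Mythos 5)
Your steps (1)--(2) are the right reduction, and they match what the paper implicitly relies on: the corollary carries no separate proof and is meant to be read off from the explicit formulas of Proposition \ref{n3:prop:2022-204b}, so the only question is whether the zero divisors of $\vartheta_{01}(\tau,z)$ (for the character) and $\vartheta_{00}(\tau,z)$ (for the supercharacter) produce genuine poles. But your step (3) contains a real gap, and the mechanism you propose for closing it cannot work. First, a local error: $g_1^{(+)}(2\tau,2z)$ and $g_3^{(-)}(2\tau,2z)$ have the \emph{same} denominator $\vartheta_{01}(2\tau,2z)=\frac{\eta(2\tau)}{\eta(\tau)^2}\vartheta_{00}(\tau,z)\vartheta_{01}(\tau,z)$, so each has poles along both divisors; the division of labour you describe (one function responsible for the $\vartheta_{00}$ poles, the other for the $\vartheta_{01}$ poles) is not what happens. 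After dividing by $\overset{N=3}{R}{}^{(+)}$ the factors $\vartheta_{00}(\tau,z)$ and $\vartheta_{11}(\tau,z)$ do cancel, and you land exactly on the formula of Proposition \ref{n3:prop:2022-204b}: a single surviving $\vartheta_{01}(\tau,z)$ in the denominator multiplying the bracket $\frac{\eta(2\tau)^5}{\eta(\tau)^2\eta(4\tau)^2}\vartheta_{00}(2\tau,2z)-2\frac{\eta(4\tau)^2}{\eta(2\tau)}\vartheta_{10}(2\tau,2z)$. Neither $\vartheta_{00}(2\tau,2z)$ nor $\vartheta_{10}(2\tau,2z)$ vanishes anywhere on the zero set $z\in\frac{\tau}{2}+\zzz+\tau\zzz$ of $\vartheta_{01}(\tau,z)$, so no ``matched comparison of divisors'' of the individual Mumford factors can cancel that pole; what must be shown is that the \emph{linear combination} vanishes on that divisor, which is an honest identity rather than divisor bookkeeping. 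This is precisely the step your proposal leaves unproved.

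The identity that closes the gap is the duplication formula. Since $\vartheta_{00}(2\tau,0)=\frac{\eta(2\tau)^5}{\eta(\tau)^2\eta(4\tau)^2}$ and $\vartheta_{10}(2\tau,0)=2\,\frac{\eta(4\tau)^2}{\eta(2\tau)}$, the two brackets in Proposition \ref{n3:prop:2022-204b} are $\vartheta_{00}(2\tau,0)\vartheta_{00}(2\tau,2z)\mp\vartheta_{10}(2\tau,0)\vartheta_{10}(2\tau,2z)$, and splitting the double series for $\vartheta_{01}(\tau,z)^2$, resp.\ $\vartheta_{00}(\tau,z)^2$, according to the parity of $m+n$ gives
$\vartheta_{00}(2\tau,0)\vartheta_{00}(2\tau,2z)-\vartheta_{10}(2\tau,0)\vartheta_{10}(2\tau,2z)=\vartheta_{01}(\tau,z)^2$
and
$\vartheta_{00}(2\tau,0)\vartheta_{00}(2\tau,2z)+\vartheta_{10}(2\tau,0)\vartheta_{10}(2\tau,2z)=\vartheta_{00}(\tau,z)^2$.
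Hence $\widetilde{\rm ch}^{(+)}_{H(\Lambda^{[K(4),1]})}(\tau,z)=\frac{i}{2}\,\frac{\vartheta_{01}(\tau,z)\,\vartheta_{10}(\tau,z)}{\eta(\frac{\tau}{2})\eta(2\tau)}$ and $\widetilde{\rm ch}^{(-)}_{H(\Lambda^{[K(4),1]})}(\tau,z)=-\frac{i}{2}\,\frac{\eta(\frac{\tau}{2})}{\eta(\tau)^3}\,\vartheta_{00}(\tau,z)\,\vartheta_{10}(\tau,z)$, which are manifestly holomorphic. (An alternative route, which you do not take, is to invoke Corollary \ref{n3:cor:2022-205b} together with Proposition \ref{n3:prop:2022-116a}: the modified character is an average of honest characters, and those are holomorphic.) Without one of these inputs your proposal does not establish the claim.
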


The modified character 
$\widetilde{\rm ch}^{(+)}_{H(\Lambda^{[K(4),0]})}(\tau, z)$ cannot 
be obtained by the argument in this section, but 
further analysis via another approach suggests the following:

\begin{conj}
\label{n3:conj:2022-625a}
$\widetilde{\rm ch}^{(+)}_{H(\Lambda^{[K(4),0]})}(\tau, z)$
will be written in the form
\begin{equation}
\widetilde{\rm ch}^{(+)}_{H(\Lambda^{[K(4),0]})}(\tau, z)
= \dfrac{1}{\eta(\frac{\tau}{2})\eta(2\tau)} \, \Big\{
C_1(\tau)\vartheta_{01}(\tau,z)^2
+
C_2(\tau)\vartheta_{10}(\tau,z)^2\Big\}
\label{n3:eqn:2022-625a}
\end{equation} 

\noindent
where $C_i(\tau)$ are holomorphic functions in $\tau \in \ccc_+$ 
satisfying $C_1(-\frac{1}{\tau}) \, = \, - \, C_2(\tau)$.
\end{conj}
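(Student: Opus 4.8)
The plan is to reduce Conjecture \ref{n3:conj:2022-625a} to a single analytic input --- holomorphicity of $f(\tau,z):=\widetilde{\rm ch}^{(+)}_{H(\Lambda^{[K(4),0]})}(\tau,z)$ on $\ccc_+\times\ccc$ --- after which the shape \eqref{n3:eqn:2022-625a} and the symmetry $C_1(-\tfrac{1}{\tau})=-C_2(\tau)$ are forced by the modular data already recorded in Proposition \ref{n3:prop:2022-115a}. First I would specialise that proposition to $m=4$: since then $e^{-\frac{\pi im}{4}}=-1$ and $e^{\frac{\pi im}{2}}=1$, part 1)(i) collapses to
\[
f\Big(-\tfrac{1}{\tau},\tfrac{z}{\tau}\Big)=-\,e^{\frac{2\pi iz^2}{\tau}}\,f(\tau,z),
\]
and iterating parts 2)(i), 2)(iii) gives $f(\tau+2,z)=e^{-\frac{11\pi i}{12}}f(\tau,z)$. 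Next I would record the elliptic behaviour in $z$. By Corollary \ref{n3:cor:2022-112a} the numerator of $f$ is $\tfrac12\big(\overset{\circ}{A}{}^{[4]}_2-\overset{\circ}{A}{}^{[4]}_1\big)$; since $\widetilde{\Phi}^{[4,0]}$ is separately $1$-periodic in $z_1$ and in $z_2$ (Lemma \ref{n3:lemma:2022-202b}, case $s\in\zzz$), the shift $z\mapsto z+1$ carries the $z_1$- and $z_2$-arguments of $\overset{\circ}{A}{}^{[4]}_1$ onto those of $\overset{\circ}{A}{}^{[4]}_2$ up to integer translations, so $\overset{\circ}{A}{}^{[4]}_1(\tau,z+1)=\overset{\circ}{A}{}^{[4]}_2(\tau,z)$ and vice versa. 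Hence the numerator is anti-invariant under $z\mapsto z+1$, as is $\overset{N=3}{R}{}^{(+)}(\tau,z)=\eta(\tfrac{\tau}{2})\eta(2\tau)\vartheta_{11}/\vartheta_{00}$, and therefore $f(\tau,z+1)=f(\tau,z)$.

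Granting holomorphicity of $f$ in $z$, combining this $1$-periodicity with the displayed $S$-relation forces the index-one quasi-periodicity $f(\tau,z+\tau)=e^{-2\pi i(2z+\tau)}f(\tau,z)$: indeed $f(-\tfrac{1}{\tau},\tfrac{z}{\tau}+1)=f(-\tfrac{1}{\tau},\tfrac{z}{\tau})=-e^{\frac{2\pi iz^2}{\tau}}f(\tau,z)$, while the left-hand side also equals $f(-\tfrac{1}{\tau},\tfrac{z+\tau}{\tau})=-e^{\frac{2\pi i(z+\tau)^2}{\tau}}f(\tau,z+\tau)$. Thus for each fixed $\tau$ the function $f(\tau,\cdot)$ lies in the two-dimensional space of holomorphic functions of $z$ satisfying $\phi(z+1)=\phi(z)$ and $\phi(z+\tau)=e^{-2\pi i(2z+\tau)}\phi(z)$, which is spanned by $\vartheta_{01}(\tau,z)^2$ and $\vartheta_{10}(\tau,z)^2$. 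Solving the $2\times2$ linear system whose coefficients are these theta-squares evaluated at two fixed values of $z$ (holomorphic, and generically non-degenerate, in $\tau$) gives $f(\tau,z)=\alpha(\tau)\vartheta_{01}(\tau,z)^2+\beta(\tau)\vartheta_{10}(\tau,z)^2$ with $\alpha,\beta$ holomorphic on $\ccc_+$; setting $C_1:=\eta(\tfrac{\tau}{2})\eta(2\tau)\alpha$ and $C_2:=\eta(\tfrac{\tau}{2})\eta(2\tau)\beta$ --- legitimate since $\eta$ is nowhere zero --- yields \eqref{n3:eqn:2022-625a} with $C_1,C_2$ holomorphic on $\ccc_+$. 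Finally, substituting $(-\tfrac{1}{\tau},\tfrac{z}{\tau})$ into \eqref{n3:eqn:2022-625a}, using $\vartheta_{01}(-\tfrac{1}{\tau},\tfrac{z}{\tau})^2=-i\tau\,e^{\frac{2\pi iz^2}{\tau}}\vartheta_{10}(\tau,z)^2$, the same with $\vartheta_{01}\leftrightarrow\vartheta_{10}$, and $\eta(-\tfrac{1}{2\tau})\eta(-\tfrac{2}{\tau})=-i\tau\,\eta(2\tau)\eta(\tfrac{\tau}{2})$, the factors $-i\tau$ cancel, and comparing the coefficients of the linearly independent $\vartheta_{01}(\tau,z)^2$ and $\vartheta_{10}(\tau,z)^2$ against $-e^{\frac{2\pi iz^2}{\tau}}f(\tau,z)$ gives exactly $C_1(-\tfrac{1}{\tau})=-C_2(\tau)$ (and the equivalent $C_2(-\tfrac{1}{\tau})=-C_1(\tau)$).

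The main obstacle --- and the reason the statement is recorded as a conjecture --- is precisely the holomorphicity of $f$. Unlike Sections \ref{sec:m2:modified}--\ref{sec:m2:honest} and Proposition \ref{n3:prop:2022-204b}, where the needed combination of $\overset{\circ}{A}{}^{[m]}_j$'s is produced directly from the $\widehat{sl}(2|1)$ denominator identity (Lemma \ref{n3:lemma:2022-111e}) or from Proposition \ref{n3:prop:2022-117a} and is manifestly a ratio of $\eta$'s and $\vartheta$'s, the combination $\overset{\circ}{A}{}^{[4]}_2-\overset{\circ}{A}{}^{[4]}_1$ is accessible only through the Zwegers correction term $\Phi^{[2,\frac12]}_{\rm add}\big(2\tau,\,z+\tfrac{\tau}{2}-\tfrac12,\,z-\tfrac{\tau}{2}+\tfrac12,\,\tfrac{\tau}{8}\big)$, which is a priori only real-analytic. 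One would have to evaluate $R_{k;2}\big(2\tau,\,\tfrac{\tau}{2}-\tfrac12\big)=e^{-\pi ik}R_{k;2}\big(2\tau,\,\tfrac{\tau}{2}\big)$ for the four half-integers $k$ with $\tfrac12\le k<\tfrac92$ --- the level-two analogue of Lemmas \ref{n3:lemma:2022-115b} and \ref{n3:lemma:2022-116d} --- use $\theta_{k;2}=\theta_{k-4;2}$ to reduce the sum to $(R_{\frac12;2}-R_{\frac72;2})\,[\theta_{\frac12;2}-\theta_{-\frac12;2}]+(R_{\frac32;2}-R_{\frac52;2})\,[\theta_{\frac32;2}-\theta_{-\frac32;2}]$, and then prove that the non-holomorphic (error-function) parts of these $R$'s cancel, leaving only holomorphic $q$-monomials. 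This cancellation, which the present sections do not carry out, is the sole gap; once it is established, the argument above goes through, and only the explicit $q$-expansions of $C_1$ and $C_2$ would still require an independent computation of the character (e.g.\ from the module structure).
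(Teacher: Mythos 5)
This statement is recorded in the paper as a \emph{conjecture}: the author explicitly says that $\widetilde{\rm ch}^{(+)}_{H(\Lambda^{[K(4),0]})}$ ``cannot be obtained by the argument in this section'' and offers no proof, only the remark that ``further analysis via another approach suggests'' the stated form. So there is no proof in the paper to compare against, and your proposal is likewise not a proof: it is a conditional reduction, and the hypothesis you condition on --- holomorphicity of $\widetilde{\rm ch}^{(+)}_{H(\Lambda^{[K(4),0]})}(\tau,z)$ in $z$ --- is essentially Conjecture \ref{n3:conj:2022-205b} of the paper (which the author only knows how to derive from Conjecture \ref{n3:conj:2022-205a}). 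That hypothesis is the entire content of the difficulty: the numerator $\tfrac12(\overset{\circ}{A}{}^{[4]}_2-\overset{\circ}{A}{}^{[4]}_1)$ carries the Zwegers correction $\Phi^{[2,\frac12]}_{\rm add}(2\tau,z+\tfrac{\tau}{2}-\tfrac12,z-\tfrac{\tau}{2}+\tfrac12,\tfrac{\tau}{8})$, and in contrast to the $m=2$ computation (Lemmas \ref{n3:lemma:2022-115b}, \ref{n3:lemma:2022-116d}, where the error-function contributions to $R_{j;1}(\tau,\tfrac{\tau}{4})$ vanish term by term because the reflection $n\mapsto 1-n$ preserves each residue class mod $2$), at level $2$ the analogous reflection $n\mapsto 2-n$ interchanges the classes $j\equiv\tfrac12$ and $j\equiv\tfrac32$ mod $4$, so the non-holomorphic parts of $R_{\frac12;2},R_{\frac32;2},R_{\frac52;2},R_{\frac72;2}$ do not individually vanish and a genuine cancellation would have to be proved. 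You name this gap honestly, but naming it does not close it, so the proposal does not establish the conjecture.

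That said, the conditional part of your argument is correct and is sharper than anything the paper records, so it is worth stating what it actually proves: \emph{if} the modified character is holomorphic on $\ccc_+\times\ccc$, then \eqref{n3:eqn:2022-625a} and the relation $C_1(-\tfrac1\tau)=-C_2(\tau)$ follow. I checked the individual steps: at $m=4$ Proposition \ref{n3:prop:2022-115a} 1)(i) does degenerate to the self-contained relation $f(-\tfrac1\tau,\tfrac z\tau)=-e^{2\pi iz^2/\tau}f(\tau,z)$ since $e^{-\pi im/4}=-1$ and $e^{\pi im/2}=1$; the separate $1$-periodicity of $\widetilde\Phi^{[4,0]}$ in $z_1$ and $z_2$ (which holds for $s\in\zzz$, including for the correction term, by Lemma \ref{n3:lemma:2022-116c} and the quasi-periodicity of $\theta_{\pm k;4}$) gives $\overset{\circ}{A}{}^{[4]}_1(\tau,z+1)=\overset{\circ}{A}{}^{[4]}_2(\tau,z)$ and hence $f(\tau,z+1)=f(\tau,z)$; combining with the $S$-relation yields $f(\tau,z+\tau)=e^{-2\pi i(2z+\tau)}f(\tau,z)$, i.e.\ index $1$, whose holomorphic solution space is two-dimensional and is indeed spanned by $\vartheta_{01}(\tau,z)^2$ and $\vartheta_{10}(\tau,z)^2$; and the final coefficient comparison under $S$ (using $\vartheta_{01}\leftrightarrow\vartheta_{10}$ and $\eta(-\tfrac1{2\tau})\eta(-\tfrac2\tau)=-i\tau\,\eta(2\tau)\eta(\tfrac\tau2)$) gives exactly $C_1(-\tfrac1\tau)=-C_2(\tau)$. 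So you have genuinely reduced Conjecture \ref{n3:conj:2022-625a} to Conjecture \ref{n3:conj:2022-205b} (for this weight), which is a useful observation --- but the reduced statement remains open, so the conjecture is not proved.
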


\section{Honest characters in the case $m=4$}
\label{sec:m4:honest}

For $m \in \nnn$ and $s \in \frac12 \zzz$, we define the functions 
$P^{[m,s]}(\tau, z)$ and $Q^{[m,s]}(\tau, z)$ as follows:
\begin{subequations}
{\allowdisplaybreaks
\begin{eqnarray}
P^{[m,s]}(\tau, z) &:=& 
\Phi^{[\frac{m}{2},s]}\Big(2\tau, \,\ 
z+\frac{\tau}{2}, \,\ 
z-\frac{\tau}{2}, \,\ 0 \Big)
\label{n3:eqn:2022-202f1}
\\[1mm]
Q^{[m,s]}(\tau, z) &:=& 
\Phi^{[\frac{m}{2},s]}\Big(2\tau, \,\ 
z+\frac{\tau}{2}-\frac12, \,\ 
z-\frac{\tau}{2}+\frac12, \,\ 0 \Big)
\label{n3:eqn:2022-202f2}
\end{eqnarray}}
\end{subequations}

\begin{lemma} \quad 
\label{n3:lemma:2022-202c}
Let $m \in \nnn$ and $s \in \frac12 \zzz$. Then the following formulas hold:
\begin{enumerate}
\item[{\rm 1)}] $
P^{[m,s+1]}(\tau, z) - P^{[m,s]}(\tau, z) 
\, = \, - \, 
q^{\frac{s}{2}-\frac{s^2}{m}}
\, \big[\theta_{s, \, \frac{m}{2}}-\theta_{-s, \, \frac{m}{2}}
\big](2\tau, 2z)$
\item[{\rm 2)}] $
Q^{[m,s+1]}(\tau, z) \, - \, Q^{[m,s]}(\tau, z) 
\, = \, - \, 
e^{-\pi is} \, q^{\frac{s}{2}-\frac{s^2}{m}}
\, \big[\theta_{s, \, \frac{m}{2}}-\theta_{-s, \, \frac{m}{2}}
\big](2\tau, 2z)$
\end{enumerate}
\end{lemma}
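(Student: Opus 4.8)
The plan is to reduce the statement to the already-proved Lemma \ref{n3:lemma:2021-1213a}, which gives exactly the telescoping identity for $\Phi^{[m,s]}-\Phi^{[m,s+a]}$ in terms of the theta functions $\theta_{s+k,m}-\theta_{-(s+k),m}$. Since $P^{[m,s]}$ and $Q^{[m,s]}$ are, by definition \eqref{n3:eqn:2022-202f1}--\eqref{n3:eqn:2022-202f2}, just specializations of $\Phi^{[\frac{m}{2},s]}(2\tau,z_1,z_2,t)$ at particular values of $(z_1,z_2,t)$, the differences $P^{[m,s+1]}-P^{[m,s]}$ and $Q^{[m,s+1]}-Q^{[m,s]}$ are the corresponding specializations of $\Phi^{[\frac{m}{2},s]}-\Phi^{[\frac{m}{2},s+1]}$ (up to an overall sign, since the lemma computes $\Phi^{[m,s]}-\Phi^{[m,s+a]}$ whereas we want $P^{[m,s+1]}-P^{[m,s]}$).

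First I would apply Lemma \ref{n3:lemma:2021-1213a} part 3) with the substitutions $m \rightsquigarrow \frac{m}{2}$, $a=1$, and $\tau \rightsquigarrow 2\tau$, obtaining
\begin{equation*}
\Phi^{[\frac{m}{2},s]}(2\tau,z_1,z_2,t)-\Phi^{[\frac{m}{2},s+1]}(2\tau,z_1,z_2,t)
= e^{-2\pi i\frac{m}{2}t}\, e^{\pi i s(z_1-z_2)}\, q^{-\frac{s^2}{m}}\,
\big[\theta_{s,\frac{m}{2}}-\theta_{-s,\frac{m}{2}}\big](2\tau,z_1+z_2),
\end{equation*}
keeping in mind $q = e^{2\pi i\tau}$ here while the theta arguments use modulus $2\tau$, so $q^{-\frac{s^2}{m}} = e^{-2\pi i\tau\frac{s^2}{m}} = e^{-2\pi i(2\tau)\frac{s^2}{2m}} = (e^{2\pi i(2\tau)})^{-\frac{s^2}{2m}}$, which matches the normalization $q^{-\frac{(s+k)^2}{4m'}}$ of the original lemma with $m'=\frac{m}{2}$. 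Then I substitute $z_1 = z+\frac{\tau}{2},\ z_2 = z-\frac{\tau}{2},\ t=0$ for the $P$ case: here $z_1-z_2 = \tau$ and $z_1+z_2 = 2z$, so the prefactor becomes $e^{\pi i s\tau}q^{-s^2/m} = q^{s/2 - s^2/m}$, giving $P^{[m,s]}-P^{[m,s+1]} = q^{s/2-s^2/m}[\theta_{s,\frac{m}{2}}-\theta_{-s,\frac{m}{2}}](2\tau,2z)$, i.e. 1) after negating. For the $Q$ case I substitute $z_1 = z+\frac{\tau}{2}-\frac12,\ z_2 = z-\frac{\tau}{2}+\frac12,\ t=0$: now $z_1-z_2 = \tau-1$ and $z_1+z_2 = 2z$ still, so the prefactor picks up the extra $e^{\pi i s(\tau-1)} = e^{-\pi i s}e^{\pi i s\tau}$, producing $Q^{[m,s]}-Q^{[m,s+1]} = e^{-\pi i s}q^{s/2-s^2/m}[\theta_{s,\frac{m}{2}}-\theta_{-s,\frac{m}{2}}](2\tau,2z)$, which is 2) after negating.

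There is essentially no main obstacle here — the proof is a direct specialization of an already-established identity. The only thing requiring a little care is bookkeeping: tracking the sign flip between $\Phi^{[m,s]}-\Phi^{[m,s+a]}$ (as stated in Lemma \ref{n3:lemma:2021-1213a}) and the desired $P^{[m,s+1]}-P^{[m,s]}$, and correctly handling the $q$-power normalization, since $q$ in the lemma's right-hand side refers to $e^{2\pi i\tau}$ for modulus $\tau$, whereas after $\tau\rightsquigarrow 2\tau$ the theta functions live at modulus $2\tau$ and one must check that $q^{-\frac{s^2}{m}}$ (with $q=e^{2\pi i\tau}$) is the consistent exponent. A sanity check against the $m=2$ computations in Section \ref{sec:m2:honest} — in particular equations \eqref{n3:eqn:2022-119e} and \eqref{n3:eqn:2022-119f}, which are exactly the $m=2$, $s=\frac12$ instances — confirms the signs and exponents, so I would mention that consistency as a remark rather than redoing it.
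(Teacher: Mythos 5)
Your proposal is correct and follows essentially the same route as the paper: both apply Lemma \ref{n3:lemma:2021-1213a} (with $m\rightsquigarrow\frac{m}{2}$, $a=1$, $\tau\rightsquigarrow 2\tau$) and then specialize $(z_1,z_2)$ to $(z+\frac{\tau}{2},\,z-\frac{\tau}{2})$ and $(z+\frac{\tau}{2}-\frac12,\,z-\frac{\tau}{2}+\frac12)$, picking up the factors $e^{\pi is\tau}=q^{s/2}$ and $e^{\pi is(\tau-1)}=e^{-\pi is}q^{s/2}$ and negating at the end. Your explicit check of the $q$-power normalization under $\tau\rightsquigarrow 2\tau$ is a sound piece of bookkeeping that the paper leaves implicit.
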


\begin{proof} To prove these formulas we use the following formula
which is obtained from Lemma \ref{n3:lemma:2021-1213a} :
{\allowdisplaybreaks
\begin{eqnarray}
\lefteqn{
\Phi^{[\frac{m}{2},s]}(2\tau, z_1, z_2,0) 
\, - \, \Phi^{[\frac{m}{2},s+1]}(2\tau, z_1, z_2,0)} 
\nonumber
\\[1mm]
&=&
e^{\pi is(z_1-z_2)} \, q^{-\frac{s^2}{m}}
\, \big[\theta_{s, \, \frac{m}{2}}-\theta_{-s, \, \frac{m}{2}}
\big](2\tau, \, z_1+z_2)
\label{n3:eqn:2022-204a}
\end{eqnarray}}
Putting $z_1 = z+\frac{\tau}{2}$ and $z_2 = z-\frac{\tau}{2}$, this formula
\eqref{n3:eqn:2022-204a} gives 
{\allowdisplaybreaks
\begin{eqnarray*}
& & \hspace{-10mm}
\Phi^{[\frac{m}{2};s]}
\Big(2\tau, \, z+\frac{\tau}{2}, \, z-\frac{\tau}{2}, \, 0\Big) 
\, - \, 
\Phi^{[\frac{m}{2};s+1]}
\Big(2\tau, \, z+\frac{\tau}{2}, \, z-\frac{\tau}{2}, \, 0\Big) 
\\[1mm]
&=&
e^{\pi is\tau} \, q^{-\frac{s^2}{m}}
\, \big[\theta_{s, \, \frac{m}{2}}-\theta_{-s, \, \frac{m}{2}}
\big](2\tau, \, 2z)
\end{eqnarray*}}
proving 1). 

\medskip

Next, putting $z_1 = z+\frac{\tau}{2}-\frac12$ and 
$z_2 = z-\frac{\tau}{2}+\frac12$, the formula
\eqref{n3:eqn:2022-204a} gives 
{\allowdisplaybreaks
\begin{eqnarray*}
& & \hspace{-15mm}
\lefteqn{
\Phi^{[\frac{m}{2};s]}\Big(2\tau, \, 
z+\frac{\tau}{2}-\frac12, \, 
z-\frac{\tau}{2}+\frac12, \, 0\Big)
}
\\[1mm]
& &- \,\ 
\Phi^{[\frac{m}{2};s+1]}\Big(2\tau, \, 
z+\frac{\tau}{2}-\frac12, \, 
z-\frac{\tau}{2}+\frac12, \, 0\Big) 
\\[1mm]
&=&
e^{-\pi is}q^{\frac{s}{2}} \, q^{-\frac{s^2}{m}}
\, \big[\theta_{s, \, \frac{m}{2}}-\theta_{-s, \, \frac{m}{2}}
\big](2\tau, \, 2z)
\end{eqnarray*}}
proving 2). 
\end{proof}

\begin{lemma} \quad 
\label{n3:lemma:2022-202a}
Let $m \in \nnn$ and $s \in \frac12 \zzz$. Then

\begin{enumerate}
\item[{\rm 1)}] \quad $2 \, P^{[m,s]}(2\tau, 2z) \,\ = \,\ 
P^{[2m,2s]}(\tau,z) \,\ + \,\ e^{-2\pi is} \, Q^{[2m,2s]}(\tau,z)$

\item[{\rm 2)}] 
\begin{enumerate}
\item[{\rm (i)}] \quad $P^{[2m,2s]}(\tau,z) \,\ = \,\ 
P^{[m,s]}(2\tau, 2z) \, + \, P^{[m,s+\frac12]}(2\tau, 2z)$
\item[{\rm (ii)}] \quad $Q^{[2m,2s]}(\tau,z) \,\ = \,\ 
e^{2\pi is} \, \big\{
P^{[m,s]}(2\tau, 2z) \, - \, P^{[m,s+\frac12]}(2\tau, 2z)\big\}$
\end{enumerate}
\end{enumerate}
\end{lemma}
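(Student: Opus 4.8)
The plan is to derive both statements from the halving/doubling identity for $\Phi^{[m,s]}$ (Lemma~\ref{n3:lemma:2022-111c}), together with the integer-shift invariance (Lemma~\ref{n3:lemma:2022-202b}) and the consecutive-index formula (Lemma~\ref{n3:lemma:2022-202c}). Part~1) will fall out directly, and then 2)(i) and 2)(ii) will be extracted by applying 1) twice — once at $s$ and once at $s+\frac12$ — and taking the sum and the difference.

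For 1), I would first unwind the definitions, so that $P^{[m,s]}(2\tau,2z)=\Phi^{[\frac m2,s]}(4\tau,\,2z+\tau,\,2z-\tau,\,0)$. Applying Lemma~\ref{n3:lemma:2022-111c} with its modular parameter equal to $2\tau$ and its level equal to $\tfrac m2$ rewrites this as $\tfrac12\big\{\Phi^{[m,2s]}(2\tau,\,z+\tfrac\tau2,\,z-\tfrac\tau2,\,0)+e^{-2\pi is}\,\Phi^{[m,2s]}(2\tau,\,z+\tfrac\tau2+\tfrac12,\,z-\tfrac\tau2-\tfrac12,\,0)\big\}$. The first term is $P^{[2m,2s]}(\tau,z)$ by definition~\eqref{n3:eqn:2022-202f1}; the second term, after the integer translation $(z_1,z_2)\mapsto(z_1+1,z_2-1)$ (whose sum is $0\in 2\zzz$), equals $\Phi^{[m,2s]}(2\tau,\,z+\tfrac\tau2-\tfrac12,\,z-\tfrac\tau2+\tfrac12,\,0)=Q^{[2m,2s]}(\tau,z)$ by Lemma~\ref{n3:lemma:2022-202b}, the accompanying sign being $(-1)^{4s}=1$ since $s\in\tfrac12\zzz$. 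This yields $2P^{[m,s]}(2\tau,2z)=P^{[2m,2s]}(\tau,z)+e^{-2\pi is}Q^{[2m,2s]}(\tau,z)$, which is 1).

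For 2), I would apply 1) with $s$ replaced by $s+\frac12$, using $e^{-2\pi i(s+\frac12)}=-e^{-2\pi is}$ and $2(s+\frac12)=2s+1$, to get $2P^{[m,s+\frac12]}(2\tau,2z)=P^{[2m,2s+1]}(\tau,z)-e^{-2\pi is}Q^{[2m,2s+1]}(\tau,z)$. By Lemma~\ref{n3:lemma:2022-202c} (with level $2m$ and index $2s$), both $P^{[2m,2s+1]}$ and $Q^{[2m,2s+1]}$ differ from $P^{[2m,2s]}$ and $Q^{[2m,2s]}$ by the same term $q^{s-2s^2/m}[\theta_{2s,m}-\theta_{-2s,m}](2\tau,2z)$, up to the scalar $e^{-2\pi is}$ in the $Q$-case; substituting, the two theta-contributions combine with coefficient $e^{-4\pi is}-1=0$ and cancel, leaving $2P^{[m,s+\frac12]}(2\tau,2z)=P^{[2m,2s]}(\tau,z)-e^{-2\pi is}Q^{[2m,2s]}(\tau,z)$. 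Adding this to the identity from 1) and halving gives 2)(i); subtracting and halving gives 2)(ii).

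The computations are all routine once the parameters are matched, so I expect the only genuine source of trouble to be bookkeeping: correctly tracking the rescalings $m\leftrightarrow 2m$ and $\tau\leftrightarrow 2\tau$ when invoking Lemma~\ref{n3:lemma:2022-111c}, keeping the phases $e^{-2\pi is}$ and $e^{-2\pi i(s+\frac12)}=-e^{-2\pi is}$ straight, and checking that the hypotheses of Lemma~\ref{n3:lemma:2022-202b} ($m\in\nnn$ and $a+b\in 2\zzz$) really hold for the shift $(a,b)=(1,-1)$. A more pedestrian alternative for 2) would be to split the defining $j$-sum of $\Phi^{[m,2s]}(2\tau,\cdot)$ into even and odd $j$ and recognize the two pieces directly, but the route through the three cited lemmas is shorter and keeps the $\eta$- and $\vartheta$-free formalism intact.
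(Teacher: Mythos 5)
Your proposal is correct and follows essentially the same route as the paper: part 1) via Lemma \ref{n3:lemma:2022-111c} plus the integer shift of Lemma \ref{n3:lemma:2022-202b}, and part 2) by replacing $s$ with $s+\frac12$ in 1), cancelling the theta corrections from Lemma \ref{n3:lemma:2022-202c} using $e^{-4\pi is}=1$, and taking the sum and difference. The bookkeeping points you flag (the phase $(-1)^{4s}=1$ and the hypothesis $a+b\in 2\zzz$) are exactly the ones the paper's proof relies on.
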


\begin{proof} 1) 
By \eqref{n3:eqn:2022-202f1} and Lemma \ref{n3:lemma:2022-111c}, we have
{\allowdisplaybreaks
\begin{eqnarray*}
\lefteqn{
2P^{[m,s]}(2\tau, \, 2z) \,\ = \,\ 2 \, 
\Phi^{[\frac{m}{2},s]}(4\tau, \,\ 2z+\tau, \,\ 2z-\tau, \,\ 0)}
\\[1mm]
&=&
\underbrace{\Phi^{[m,2s]}
\Big(2\tau, z+\frac{\tau}{2}, z-\frac{\tau}{2}, 0\Big)}_{
\substack{|| \\[1mm] {\displaystyle P^{[2m,2s]}(\tau,z)
}}}
+ 
e^{-2\pi is} 
\underbrace{\Phi^{[m,2s]}
\Big(2\tau, z+\frac{\tau}{2}+\frac12, z-\frac{\tau}{2}-\frac12, 0\Big)
}_{\substack{\hspace{6mm} || \,\ put \\[1mm] {\rm (I)}
}}
\end{eqnarray*}}
where (I) is computed by using Lemma \ref{n3:lemma:2022-202b} as follows :
{\allowdisplaybreaks
\begin{eqnarray*}
{\rm (I)} &=& \Phi^{[m,2s]}
\Big(2\tau, 
\Big(z+\frac{\tau}{2}+\frac12\Big)-1,  
\Big(z-\frac{\tau}{2}-\frac12\Big)+1, 0\Big)
\\[1mm]
&=&
\Phi^{(+)[m,2s]}\Big(2\tau, 
z+\frac{\tau}{2}-\frac12,  
z-\frac{\tau}{2}+\frac12,  0\Big) \,\ = \,\ Q^{[2m,2s]}(\tau,z)
\end{eqnarray*}}
Thus we have
\begin{subequations}
\begin{equation}
2 \, P^{[m,s]}(2\tau, 2z) \,\ = \,\ 
P^{[2m,2s]}(\tau,z) \,\ + \,\ e^{-2\pi is} \, Q^{[2m,2s]}(\tau,z) \, ,
\label{n3:eqn:2022-202g1}
\end{equation}
proving 1). In order to prove 2), we let $s \rightarrow s+\frac12$ in  
\eqref{n3:eqn:2022-202g1}. Then, by using Lemma \ref{n3:lemma:2022-202c}, 
we have
{\allowdisplaybreaks
\begin{eqnarray}
& & \hspace{-10mm}
2 \, P^{[m,s+\frac12]}(2\tau, 2z) \,\ = \,\ 
P^{[2m,2s+1]}(\tau,z) \,\ + \,\ 
e^{-2\pi i(s+\frac12)} \, Q^{[2m,2s+1]}(\tau,z)
\nonumber
\\[1mm]
&= &
\Big\{P^{[2m,2s]}(\tau, z) \, - \, 
q^{\frac{2s}{2}-\frac{(2s)^2}{4m}} \, 
\big[\theta_{2s, \, 2m}-\theta_{-2s, \, 2m}\big](2\tau, \, 2z)\Big\}
\nonumber
\\[0mm]
& &
- \, e^{-2\pi is} \, \Big\{Q^{[2m,2s]}(\tau, z) \, - \, 
e^{-2\pi is} \, 
q^{\frac{2s}{2}-\frac{(2s)^2}{4m}} \, 
\big[\theta_{2s, \, 2m}-\theta_{-2s, \, 2m}\big](2\tau, \, 2z)\Big\}
\nonumber
\\[1mm]
&=&
P^{[2m,2s]}(\tau, z) \, - \, e^{-2\pi is} \, Q^{[2m,2s]}(\tau, z)
\label{n3:eqn:2022-202g2}
\end{eqnarray}}
\end{subequations}
Then by making 
${\rm \eqref{n3:eqn:2022-202g1}} \pm {\rm \eqref{n3:eqn:2022-202g2}}$,
we have
$$\left\{
\begin{array}{ccr}
2 \, P^{[m,s]}(2\tau, 2z) \, + \, 2 \, P^{[m,s+\frac12]}(2\tau, 2z)
&=& 2 \, P^{[2m,2s]}(\tau,z)  \\[3mm]
2 \, P^{[m,s]}(2\tau, 2z) \, - \, 2 \, P^{[m,s+\frac12]}(2\tau, 2z)
&=& 
2 \, e^{-2\pi is} \, Q^{[2m,2s]}(\tau,z)
\end{array}\right. \, ,
$$
so we have
$$
\left\{
\begin{array}{ccr}
P^{[2m,2s]}(\tau,z) &=& P^{[m,s]}(2\tau, 2z) \, + \, P^{[m,s+\frac12]}(2\tau, 2z)
\hspace{2mm}
\\[1mm]
Q^{[2m,2s]}(\tau,z) &=& e^{2\pi is} \, \big\{
P^{[m,s]}(2\tau, 2z) \, - \, P^{[m,s+\frac12]}(2\tau, 2z)\big\}
\end{array}\right. \, 
$$
proving 2).
\end{proof}

In order to compute the honest characters in the case $m=4$, we 
need to know $P^{[4, s]}(\tau, z)$ and $Q^{[4, s]}(\tau, z)$, 
which are obtained from Lemma \ref{n3:lemma:2022-202a} as follows:

\begin{lemma} \,\
\label{n3:lemma:2022-205a}
\begin{enumerate}
\item[{\rm 1)}] $P^{[2,s]}(\tau,z)$ \,\ 
$(s \in \frac12\zzz \,\ \text{such that} \,\ 
\frac12 \leq s \leq \frac32)$ are as follows:

\begin{enumerate}
\item[{\rm (i)}] $P^{[2,\frac12]}(\tau, z) 
\,\ = \,\ 
- \, \dfrac{i}{2} \, q^{\frac18} \, \Big\{
g_3^{(-)}(\tau, z) \, + \, \vartheta_{11}(\tau, z)\Big\}$
$$
= \,\ - \, \dfrac{i}{2} \, q^{\frac18} \, \bigg\{
\frac{\eta(\tau)^5}{\eta(\frac{\tau}{2})^2 \, \eta(2\tau)^2} \cdot 
\frac{\vartheta_{11}(\tau,z) \, \vartheta_{00}(\tau,z)
}{\vartheta_{01}(\tau,z)}
\,\ + \,\ \vartheta_{11}(\tau, z)\bigg\}
$$
\item[{\rm (ii)}] $P^{[2,1]}(\tau, z) 
\, = \, 
- i q^{\frac14} \, g_1^{(+)}(\tau,z)
\, = \, 
- i q^{\frac14} \, \dfrac{\eta(2\tau)^2}{\eta(\tau)} \cdot 
\dfrac{\vartheta_{11}(\tau,z) \, \vartheta_{10}(\tau,z)
}{\vartheta_{01}(\tau,z)}$
\item[{\rm (iii)}] $P^{[2,\frac32]}(\tau, z) 
\,\ = \,\ 
- \, \dfrac{i}{2} \, q^{\frac18} \, \Big\{
g_3^{(-)}(\tau, z) \, - \, \vartheta_{11}(\tau, z)\Big\}$
$$
= \,\ - \, \dfrac{i}{2} \, q^{\frac18} \, \bigg\{
\frac{\eta(\tau)^5}{\eta(\frac{\tau}{2})^2 \, \eta(2\tau)^2} \cdot 
\frac{\vartheta_{11}(\tau,z) \, \vartheta_{00}(\tau,z)
}{\vartheta_{01}(\tau,z)}
\,\ - \,\ \vartheta_{11}(\tau, z)\bigg\}
$$
\end{enumerate}

\item[{\rm 2)}] $P^{[4,s]}(\tau,z)$ \quad $(s = 1, 2)$ \,\ 
are as follows :

\begin{enumerate}
\item[{\rm (i)}] $P^{[4,1]}(\tau, z) 
\,\ = \,\ 
- \, \dfrac{i}{2} \, q^{\frac14} \, \Big\{
g^{(-)}_3(2\tau, 2z)
\,\ + \,\ 2 \, g_1^{(+)}(2\tau, 2z)
\,\ + \,\ \vartheta_{11}(2\tau, 2z)\Big\}$
$$ \hspace{-5mm}
= \, 
-  \frac{i}{2} \, q^{\frac14} \, \vartheta_{11}(2\tau, 2z)\bigg\{
\frac{\eta(2\tau)^5}{\eta(\tau)^2 \eta(4\tau)^2} 
\cdot 
\frac{\vartheta_{00}(2\tau,2z)}{\vartheta_{01}(2\tau,2z)}
+ 2 
\frac{\eta(4\tau)^2}{\eta(2\tau)} \cdot 
\frac{\vartheta_{10}(2\tau,2z)}{\vartheta_{01}(2\tau,2z)} 
+ 1\bigg\}
$$
\item[{\rm (ii)}] $P^{[4,2]}(\tau, z) 
\,\ = \,\ 
- \, \dfrac{i}{2} \, q^{\frac14} \, \Big\{
g^{(-)}_3(2\tau, 2z)
\,\ + \,\ 2 \, g_1^{(+)}(2\tau, 2z)
\,\ - \,\ \vartheta_{11}(2\tau, 2z)\Big\}$
$$ \hspace{-5mm}
= \,\ 
- \frac{i}{2} \, q^{\frac14} \, \vartheta_{11}(2\tau, 2z)\bigg\{
\frac{\eta(2\tau)^5}{\eta(\tau)^2 \eta(4\tau)^2} 
\cdot 
\frac{\vartheta_{00}(2\tau,2z)}{\vartheta_{01}(2\tau,2z)}
+ 2 \, 
\frac{\eta(4\tau)^2}{\eta(2\tau)} \cdot 
\frac{\vartheta_{10}(2\tau,2z)}{\vartheta_{01}(2\tau,2z)} 
- 1\bigg\}
$$
\end{enumerate}

\item[{\rm 3)}] $Q^{[4,s]}(\tau,z)$ \quad $(s =1, 2)$ \,\ 
are as follows :

\begin{enumerate}
\item[{\rm (i)}] $Q^{[4,1]}(\tau, z) 
\,\ = \,\ 
\dfrac{i}{2} \, q^{\frac14} \, \Big\{
g^{(-)}_3(2\tau, 2z)
\,\ - \,\ 2 \, g_1^{(+)}(2\tau, 2z)
\,\ + \,\ \vartheta_{11}(2\tau, 2z)\Big\}$
$$ \hspace{-5mm}
= \, 
\frac{i}{2} \, q^{\frac14} \, \vartheta_{11}(2\tau, 2z)\bigg\{
\frac{\eta(2\tau)^5}{\eta(\tau)^2 \eta(4\tau)^2} 
\cdot 
\frac{\vartheta_{00}(2\tau,2z)}{\vartheta_{01}(2\tau,2z)}
- 2 
\frac{\eta(4\tau)^2}{\eta(2\tau)} \cdot 
\frac{\vartheta_{10}(2\tau,2z)}{\vartheta_{01}(2\tau,2z)} 
+ 1\bigg\}
$$
\item[{\rm (ii)}] $Q^{[4,2]}(\tau, z) 
\,\ = \,\ 
\dfrac{i}{2} \, q^{\frac14} \, \Big\{
g^{(-)}_3(2\tau, 2z)
\,\ - \,\ 2 \, g_1^{(+)}(2\tau, 2z)
\,\ - \,\ \vartheta_{11}(2\tau, 2z)\Big\}$
$$ \hspace{-5mm}
= \, 
\frac{i}{2} \, q^{\frac14} \, \vartheta_{11}(2\tau, 2z)\bigg\{
\frac{\eta(2\tau)^5}{\eta(\tau)^2 \eta(4\tau)^2} 
\cdot 
\frac{\vartheta_{00}(2\tau,2z)}{\vartheta_{01}(2\tau,2z)}
- 2  
\frac{\eta(4\tau)^2}{\eta(2\tau)} \cdot 
\frac{\vartheta_{10}(2\tau,2z)}{\vartheta_{01}(2\tau,2z)} 
- 1\bigg\}
$$
\end{enumerate}
\end{enumerate}
\end{lemma}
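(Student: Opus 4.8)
The plan is to derive part 1) from the explicit evaluations of $\Phi^{[1,1/2]}$ and $\Phi^{[1,3/2]}$ already available in Proposition \ref{n3:prop:2022-116b} (together with the $\widehat{sl}(2|1)$-denominator identity for the integer case $s=1$), and then to bootstrap parts 2) and 3) from part 1) by the $\tau\mapsto 2\tau$ \lq\lq doubling'' relations of Lemma \ref{n3:lemma:2022-202a}. No new identities beyond what is already in the excerpt are needed.

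For part 1): since $\Phi^{[1,s]}(\tau,z_1,z_2,t)=e^{-2\pi it}F(\tau,z_1,z_2)$ with $F$ independent of $t$, one has $\Phi^{[1,s]}(2\tau,z_1,z_2,0)=q^{\frac18}\,\Phi^{[1,s]}(2\tau,z_1,z_2,\tfrac{\tau}{8})$, so by the definition \eqref{n3:eqn:2022-202f1} of $P$,
$$P^{[2,s]}(\tau,z)=q^{\frac18}\,\Phi^{[1,s]}\Big(2\tau,\,z+\tfrac{\tau}{2},\,z-\tfrac{\tau}{2},\,\tfrac{\tau}{8}\Big).$$
For $s=\tfrac12$ and $s=\tfrac32$ the right-hand side is precisely what Proposition \ref{n3:prop:2022-116b}, 1)(i) and 2)(i), computes, giving (i) and (iii) at once. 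For $s=1$ I would use $\Phi^{[1,1]}=\widetilde{\Phi}^{[1,1]}=\widetilde{\Phi}^{[1,0]}$ (Lemma \ref{n3:lemma:2022-111e} and Lemma \ref{n3:lemma:2022-111a}, 1)), so that $P^{[2,1]}(\tau,z)=q^{\frac18}\widetilde{\Phi}^{[1,0]}(2\tau,z+\tfrac{\tau}{2},z-\tfrac{\tau}{2},\tfrac{\tau}{8})$, and this last quantity was already evaluated to $-i\,g_1^{(+)}(\tau,z)$ in the proof of Lemma \ref{n3:lemma:2022-108e} (formula \eqref{n3:eqn:2022-119j2}); equivalently one applies Lemma \ref{n3:lemma:2022-111e} directly and simplifies with the theta-product identities \eqref{n3:eqn:2022-120g1} and \eqref{n3:eqn:2022-120g3}. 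Expanding $g_1^{(+)}$ and $g_3^{(-)}$ through their definitions \eqref{n3:eqn:2022-119c1}--\eqref{n3:eqn:2022-119c6} yields the second-form ($\eta$, $\vartheta$) expressions; the prefactor obtained in (ii) is $q^{\frac18}g_1^{(+)}$, the value consistent with the doubling step used in part 2).

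For parts 2) and 3): specialise Lemma \ref{n3:lemma:2022-202a}, 2) to $m=2$, which reads $P^{[4,2s]}(\tau,z)=P^{[2,s]}(2\tau,2z)+P^{[2,s+\frac12]}(2\tau,2z)$ and $Q^{[4,2s]}(\tau,z)=e^{2\pi is}\{P^{[2,s]}(2\tau,2z)-P^{[2,s+\frac12]}(2\tau,2z)\}$. Taking $s=\tfrac12$ produces $P^{[4,1]}$ and $Q^{[4,1]}$ (with phase $e^{\pi i}=-1$), and $s=1$ produces $P^{[4,2]}$ and $Q^{[4,2]}$ (with $e^{2\pi i}=1$); in every case the right-hand side involves only $P^{[2,1/2]},P^{[2,1]},P^{[2,3/2]}$ evaluated at $(2\tau,2z)$, all known from part 1). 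Substituting those formulas, pulling out the common factor $\vartheta_{11}(2\tau,2z)$, and using the definitions of $g_1^{(+)},g_3^{(-)}$ at argument $(2\tau,2z)$ gives the stated closed forms; the $\vartheta$-free term $\vartheta_{11}(2\tau,2z)$ inside the braces comes from the $\vartheta_{11}$-summands in $P^{[2,1/2]}$ and $P^{[2,3/2]}$ and changes sign between $s=\tfrac12$ and $s=\tfrac32$, which accounts for the $\pm1$.

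The steps are elementary; the only place demanding real care is the bookkeeping of $q$-powers and phases. One must track that passing from $t=0$ to $t=\tfrac{\tau}{8}$ in $\Phi^{[1,s]}$ contributes the factor $q^{\frac18}$, that replacing $(\tau,z)\mapsto(2\tau,2z)$ sends $q^{\frac18}$ to $q^{\frac14}$ and $g_i^{(\pm)}(\tau,z)$ to $g_i^{(\pm)}(2\tau,2z)$, and that the phase $e^{2\pi is}$ in Lemma \ref{n3:lemma:2022-202a}, 2)(ii) equals $-1$ for $s=\tfrac12$ but $+1$ for $s=1$; a single sign or exponent slip propagates through all six formulas. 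Everything else is substitution plus the standard $\eta$ and $\vartheta_{ab}$ identities already recorded in the paper.
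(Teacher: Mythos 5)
Your proposal is correct and follows the same route as the paper: part 1) from the definition \eqref{n3:eqn:2022-202f1} together with Proposition \ref{n3:prop:2022-116b} and \eqref{n3:eqn:2022-119j2} (after shifting $t$ from $\tfrac{\tau}{8}$ to $0$, which costs the factor $q^{\frac18}$, and using $\Phi^{[1,1]}=\widetilde{\Phi}^{[1,0]}$ for the case $s=1$), and parts 2), 3) from part 1) via Lemma \ref{n3:lemma:2022-202a}, 2) with $m=2$ and $s=\tfrac12,\,1$. Your remark that the prefactor in 1)(ii) comes out as $q^{\frac18}$ rather than the printed $q^{\frac14}$ is correct — only $q^{\frac18}$ is consistent with the leading term of the series, with the doubling step in parts 2) and 3), and with the $q^{-\frac{m}{16}}$ normalization used later in Proposition \ref{n3:prop:2022-204a}, so the statement as printed contains a misprint there.
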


\begin{proof} 
1) follows from definition \eqref{n3:eqn:2022-202f1} 
of $P^{[m,s]}(\tau, z)$ and Proposition \ref{n3:prop:2022-116b} 
and the formula \eqref{n3:eqn:2022-119j2}.
2) and 3) follow from 1) and Lemma \ref{n3:lemma:2022-202a}.
\end{proof}

\medskip

Then, in the case $m=4$, we obtain some of honest characters as follows:

\begin{prop} 
\label{n3:prop:2022-204a}
In the case $m=4$, the honest (super)characters of $H(\Lambda^{[K(4), m_2]})$
\,\ $(m_2 = 1,3)$ \,\ are given by the following formulas:

\begin{enumerate}
\item[{\rm 1)}]
\begin{enumerate}
\item[{\rm (i)}] \, ${\rm ch}^{(+)}_{H(\Lambda^{[K(4), 1]})}(\tau, z)
\,\ = \,\ 
\dfrac{i}{2} \cdot \dfrac{1}{\eta(\frac{\tau}{2}) \, \eta(2\tau)} \cdot 
\dfrac{\vartheta_{10}(\tau, z)}{\vartheta_{01}(\tau, z)} $
$$
\times \,\ \bigg\{
\frac{\eta(2\tau)^5}{\eta(\tau)^2 \eta(4\tau)^2} \, 
\vartheta_{00}(2\tau,2z)
\,\ - \,\ 2 \, 
\frac{\eta(4\tau)^2}{\eta(2\tau)} \, \vartheta_{10}(2\tau,2z) 
\,\ + \,\ \vartheta_{01}(2\tau,2z)\bigg\}
$$
\item[{\rm (ii)}] \, ${\rm ch}^{(+)}_{H(\Lambda^{[K(4), 3]})}(\tau, z)
\,\ = \,\ 
\dfrac{i}{2} \cdot \dfrac{1}{\eta(\frac{\tau}{2}) \, \eta(2\tau)} \cdot 
\dfrac{\vartheta_{10}(\tau, z)}{\vartheta_{01}(\tau, z)}$
$$
\times \,\ \bigg\{
\frac{\eta(2\tau)^5}{\eta(\tau)^2 \eta(4\tau)^2} \, 
\vartheta_{00}(2\tau,2z)
\,\ - \,\ 2 \, 
\frac{\eta(4\tau)^2}{\eta(2\tau)} \, \vartheta_{10}(2\tau,2z) 
\,\ - \,\ \vartheta_{01}(2\tau,2z)\bigg\}
$$
\end{enumerate}

\item[{\rm 2)}]
\begin{enumerate}
\item[{\rm (i)}] \, ${\rm ch}^{(-)}_{H(\Lambda^{[K(4), 1]})}(\tau, z)
\,\ = \,\ 
- \, \dfrac{i}{2} \cdot \dfrac{\eta(\frac{\tau}{2})}{\eta(\tau)^3} \cdot 
\dfrac{\vartheta_{10}(\tau, z)}{\vartheta_{00}(\tau, z)}$
$$
\times \,\ \bigg\{
\frac{\eta(2\tau)^5}{\eta(\tau)^2 \eta(4\tau)^2} \, 
\vartheta_{00}(2\tau,2z)
\,\ + \,\ 2 \, 
\frac{\eta(4\tau)^2}{\eta(2\tau)} \, \vartheta_{10}(2\tau,2z) 
\,\ + \, \vartheta_{01}(2\tau,2z)\bigg\}
$$
\item[{\rm (ii)}] \, ${\rm ch}^{(-)}_{H(\Lambda^{[K(4), 3]})}(\tau, z)
\,\ = \,\ 
- \, \dfrac{i}{2} \cdot \dfrac{\eta(\frac{\tau}{2})}{\eta(\tau)^3} \cdot 
\dfrac{\vartheta_{10}(\tau, z)}{\vartheta_{00}(\tau, z)}$
$$
\times \,\ \bigg\{
\frac{\eta(2\tau)^5}{\eta(\tau)^2 \eta(4\tau)^2} \, 
\vartheta_{00}(2\tau,2z)
\,\ + \,\ 2 \, 
\frac{\eta(4\tau)^2}{\eta(2\tau)} \, \vartheta_{10}(2\tau,2z) 
\,\ - \, \vartheta_{01}(2\tau,2z)\bigg\}
$$
\end{enumerate}
\end{enumerate}
\end{prop}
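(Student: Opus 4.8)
The plan is to read off these four formulas from Proposition~\ref{n3:prop:2022-111a} together with the explicit expressions for $P^{[4,s]}$ and $Q^{[4,s]}$ already obtained in Lemma~\ref{n3:lemma:2022-205a}; no new analytic work is required, since the evaluation of the Zwegers correction term and the doubling relations of Lemma~\ref{n3:lemma:2022-202a} are already packaged there. The first step is to specialize Proposition~\ref{n3:prop:2022-111a}~1) and 2) to $m=4$. Noting $\tfrac{m_2+1}{2}=1$ for $m_2=1$ and $\tfrac{m_2+1}{2}=2$ for $m_2=3$, the numerators $(\overset{N=3}{R}{}^{(+)}{\rm ch}^{(+)}_{H(\Lambda^{[K(4),m_2]})})$ and $(\overset{N=3}{R}{}^{(-)}{\rm ch}^{(-)}_{H(\Lambda^{[K(4),m_2]})})$ are $\Phi^{[2,\frac{m_2+1}{2}]}$ evaluated at $\big(2\tau,\,z+\tfrac{\tau}{2}-\tfrac12,\,z-\tfrac{\tau}{2}+\tfrac12,\,\tfrac{\tau}{8}\big)$ and at $\big(2\tau,\,z+\tfrac{\tau}{2},\,z-\tfrac{\tau}{2},\,\tfrac{\tau}{8}\big)$ respectively. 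Since $t$ enters $\Phi^{[m,s]}$ only through the overall factor $e^{-2\pi imt}$ (see \eqref{n3:eqn:2022-111a}--\eqref{n3:eqn:2022-111c}), taking $m=2$, $t=\tfrac{\tau}{8}$ extracts $e^{-\pi i\tau/2}=q^{-1/4}$, and the remaining factors are precisely $Q^{[4,\frac{m_2+1}{2}]}(\tau,z)$ and $P^{[4,\frac{m_2+1}{2}]}(\tau,z)$ by the definitions \eqref{n3:eqn:2022-202f1}--\eqref{n3:eqn:2022-202f2}.

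The second step is to substitute the closed forms of Lemma~\ref{n3:lemma:2022-205a}~2) and 3). They carry a prefactor $q^{1/4}$ cancelling the $q^{-1/4}$ just produced, so the numerators become $\pm\tfrac{i}{2}\,\vartheta_{11}(2\tau,2z)\big\{\cdots\big\}$, with the braces as in Lemma~\ref{n3:lemma:2022-205a} and the $\pm$ and the internal signs governed by $s=\tfrac{m_2+1}{2}\in\{1,2\}$. Putting the braces over the common denominator $\vartheta_{01}(2\tau,2z)$ and then dividing by the N=3 denominators of \eqref{n3:eqn:2022-115a}, I would apply the duplication identity \eqref{n3:eqn:2022-120g1}, namely $\vartheta_{11}(2\tau,2z)=\tfrac{\eta(2\tau)}{\eta(\tau)^2}\vartheta_{11}(\tau,z)\vartheta_{10}(\tau,z)$, to cancel the $\vartheta_{11}(\tau,z)$ sitting inside $\overset{N=3}{R}{}^{(\pm)}$ against the $\vartheta_{11}(2\tau,2z)$ in the numerator (leaving a factor $\vartheta_{10}(\tau,z)$), and the companion product formula $\vartheta_{00}(\tau,z)\vartheta_{01}(\tau,z)=\tfrac{\eta(\tau)^2}{\eta(2\tau)}\vartheta_{01}(2\tau,2z)$ to trade the surviving $\vartheta_{01}(2\tau,2z)$ in the denominator for $\vartheta_{00}(\tau,z)$ in the supercharacter case and for $\vartheta_{01}(\tau,z)$ in the character case. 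Collecting the powers of $\eta$ then produces the four stated expressions.

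There is no conceptual obstacle; the delicate part is purely bookkeeping. One must track the isolated ``$\pm\vartheta_{11}(2\tau,2z)$'' term (the ``$\pm1$'' inside the braces of Lemma~\ref{n3:lemma:2022-205a}) with care, because after the cancellations above it is precisely this term that supplies the extra summand $\vartheta_{01}(2\tau,2z)$ distinguishing the honest characters here from the modified ones of Proposition~\ref{n3:prop:2022-204b}. Checking that every occurrence of $\vartheta_{11}(\tau,z)$ and of $\vartheta_{01}(2\tau,2z)$ cancels cleanly, with the correct power of $\eta$ and the correct sign, is the crux, and both theta-quotient identities involved are classical companions of \eqref{n3:eqn:2022-120g1}.
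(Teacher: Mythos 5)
Your proposal is correct and follows essentially the same route as the paper: the paper's proof likewise reads the numerators off Proposition \ref{n3:prop:2022-111a} as $q^{-m/16}Q^{[m,\frac{m_2+1}{2}]}$ and $q^{-m/16}P^{[m,\frac{m_2+1}{2}]}$, substitutes the closed forms of Lemma \ref{n3:lemma:2022-205a}, and divides by the denominators \eqref{n3:eqn:2022-115a}. Your bookkeeping (the $q^{-1/4}$ versus $q^{1/4}$ cancellation, the use of \eqref{n3:eqn:2022-120g1} and the companion identity $\vartheta_{00}(\tau,z)\vartheta_{01}(\tau,z)=\frac{\eta(\tau)^2}{\eta(2\tau)}\vartheta_{01}(2\tau,2z)$) checks out and correctly accounts for the extra $\pm\vartheta_{01}(2\tau,2z)$ term.
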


\begin{proof} By Proposition \ref{n3:prop:2022-111a} and the formulas 
\eqref{n3:eqn:2022-202f1} and \eqref{n3:eqn:2022-202f2}, the 
numerators of the (super)characters of N=3 module 
$H(\Lambda^{[K(m), m_2]})$ are given by the following formulas:
{\allowdisplaybreaks
\begin{eqnarray*}
\big[\overset{N=3}{R}{}^{(+)} \, {\rm ch}^{(+)}_{H(\Lambda^{[K(m), m_2]})}
\big] (\tau, z) &=& q^{-\frac{m}{16}} \, 
Q^{[m, \, \frac{m_2+1}{2}]}(\tau, z)
\\[1mm]
\big[\overset{N=3}{R}{}^{(-)} \, {\rm ch}^{(-)}_{H(\Lambda^{[K(m), m_2]})}
\big] (\tau, z) &=& q^{-\frac{m}{16}} \, 
P^{[m, \, \frac{m_2+1}{2}]}(\tau, z)
\end{eqnarray*}}
Then this proposition follows from Lemma \ref{n3:lemma:2022-205a} and
the formula \eqref{n3:eqn:2022-115a} for the N=3 denominators.
\end{proof}

From Proposition \ref{n3:prop:2022-204b} and 
Proposition \ref{n3:prop:2022-204a}, we see the following:

\begin{cor}
\label{n3:cor:2022-205b}
In the case $m=4$,  the modified character 
$\widetilde{\rm ch}^{(+)}_{H(\Lambda^{[K(4), 1]})}(\tau,z)$ 
is a sum of honest characters, namely,
$$
\sum_{m_2=1,3}{\rm ch}^{(+)}_{H(\Lambda^{[K(4), m_2]})}(\tau,z)
\,\ = \,\ 2 \cdot 
\widetilde{\rm ch}^{(+)}_{H(\Lambda^{[K(4), 1]})}(\tau,z)
$$
\end{cor}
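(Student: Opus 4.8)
The plan is simply to compare the closed formulas already established; no new analytic input is needed. First I would recall from Proposition~\ref{n3:prop:2022-204b}~1) that
$$
\widetilde{\rm ch}^{(+)}_{H(\Lambda^{[K(4),1]})}(\tau,z)
= \frac{i}{2}\cdot\frac{1}{\eta(\frac{\tau}{2})\eta(2\tau)}\cdot\frac{\vartheta_{10}(\tau,z)}{\vartheta_{01}(\tau,z)}\bigg\{\frac{\eta(2\tau)^5}{\eta(\tau)^2\eta(4\tau)^2}\vartheta_{00}(2\tau,2z)-2\frac{\eta(4\tau)^2}{\eta(2\tau)}\vartheta_{10}(2\tau,2z)\bigg\},
$$
while Proposition~\ref{n3:prop:2022-204a}~1)(i),(ii) give ${\rm ch}^{(+)}_{H(\Lambda^{[K(4),1]})}$ and ${\rm ch}^{(+)}_{H(\Lambda^{[K(4),3]})}$ with the \emph{same} prefactor $\frac{i}{2}\cdot\frac{1}{\eta(\frac{\tau}{2})\eta(2\tau)}\cdot\frac{\vartheta_{10}(\tau,z)}{\vartheta_{01}(\tau,z)}$ and bracketed parts that differ only by the sign of the summand $\vartheta_{01}(2\tau,2z)$ (the $+$ sign for $m_2=1$, the $-$ sign for $m_2=3$).

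The key step is then purely arithmetic: adding the two honest characters makes the $\pm\vartheta_{01}(2\tau,2z)$ contributions cancel and doubles the remaining common part, so
$$
\sum_{m_2=1,3}{\rm ch}^{(+)}_{H(\Lambda^{[K(4),m_2]})}(\tau,z)
= i\cdot\frac{1}{\eta(\frac{\tau}{2})\eta(2\tau)}\cdot\frac{\vartheta_{10}(\tau,z)}{\vartheta_{01}(\tau,z)}\bigg\{\frac{\eta(2\tau)^5}{\eta(\tau)^2\eta(4\tau)^2}\vartheta_{00}(2\tau,2z)-2\frac{\eta(4\tau)^2}{\eta(2\tau)}\vartheta_{10}(2\tau,2z)\bigg\},
$$
which is exactly twice the displayed formula for $\widetilde{\rm ch}^{(+)}_{H(\Lambda^{[K(4),1]})}(\tau,z)$, proving the corollary.

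Alternatively one may argue at the level of numerators, before dividing by $\overset{N=3}{R}{}^{(+)}(\tau,z)$. By the computation in the proof of Proposition~\ref{n3:prop:2022-204a} the numerator of ${\rm ch}^{(+)}_{H(\Lambda^{[K(4),m_2]})}$ is $q^{-1/4}Q^{[4,(m_2+1)/2]}$; by Lemma~\ref{n3:lemma:2022-205a}~3) the $\pm\vartheta_{11}(2\tau,2z)$ terms in $Q^{[4,1]}$ and $Q^{[4,2]}$ cancel upon addition, leaving $i\{g_3^{(-)}(2\tau,2z)-2g_1^{(+)}(2\tau,2z)\}$. On the other side, by Corollary~\ref{n3:cor:2022-112a}, Lemma~\ref{n3:lemma:2022-202d}~1) and Lemma~\ref{n3:lemma:2022-108e}~2)(iii), the numerator of $\widetilde{\rm ch}^{(+)}_{H(\Lambda^{[K(4),1]})}$ equals $\overset{\circ}{A}{}^{[2]}_3(2\tau,2z)=\frac{i}{2}\{g_3^{(-)}(2\tau,2z)-2g_1^{(+)}(2\tau,2z)\}$, which is half the former; dividing both sides by the common denominator again gives the assertion.

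Both routes are pure bookkeeping, so there is no genuine obstacle; the only point requiring a moment's care is to check that the $q$-power normalizations match (the factor $q^{-m/16}=q^{-1/4}$ attached to the honest numerators against the $q^{1/4}$ sitting inside $Q^{[4,s]}$) and that the signs of $\vartheta_{01}(2\tau,2z)$ in the two halves are exactly those recorded in Proposition~\ref{n3:prop:2022-204a}~1).
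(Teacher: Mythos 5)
Your proposal is correct and is essentially the paper's own argument: the paper derives this corollary directly by comparing the explicit formulas of Proposition \ref{n3:prop:2022-204b} and Proposition \ref{n3:prop:2022-204a}, exactly as in your first route, where the $\pm\vartheta_{01}(2\tau,2z)$ terms cancel upon addition and the common part doubles. Your alternative numerator-level argument is a harmless restatement of the same cancellation one step earlier in the computation.
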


\medskip

Below we will show that the honest characters are holomorphic functions.

\begin{lemma}
\label{n3:lemma:2022-116a}
For $m \in \frac12 \nnn$ and $s \in \frac12 \zzz$, we put 
$$
f(\tau, z) := \Phi^{[m,s]}
\left(2\tau, z+\frac{\tau}{2}, z-\frac{\tau}{2}, 0\right)
$$
Then \quad $f(\tau, n+p\tau) =0$ \quad for \,\ 
${}^{\forall} n, {}^{\forall}p \in \zzz$.
\end{lemma}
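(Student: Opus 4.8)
The plan is to substitute $z=n+p\tau$ directly into the defining series \eqref{n3:eqn:2022-111a}, \eqref{n3:eqn:2022-111b} for $\Phi^{[m,s]}=\Phi^{[m,s]}_1-\Phi^{[m,s]}_2$ and to show that the two halves cancel. Set $Q:=e^{\pi i\tau}$, so that $q=Q^2$ and, since the first slot of $\Phi^{[m,s]}$ occurring in $f$ is $2\tau$, the parameter playing the role of $q$ in \eqref{n3:eqn:2022-111a}, \eqref{n3:eqn:2022-111b} is $Q^4$; also $e^{2\pi i(z+\frac{\tau}{2})}=e^{2\pi iz}Q$ and $e^{-2\pi i(z-\frac{\tau}{2})}=e^{-2\pi iz}Q$.

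First I would reduce to the case $n=0$. Because $2m\in\zzz$ and $2s\in\zzz$, the only effect of the integer part $n$ is an overall sign: at $z=n+p\tau$ one has $e^{2\pi i(z+\frac{\tau}{2})}=Q^{2p+1}$ independently of $n$, $e^{\pm 2\pi imj(z_1+z_2)}=e^{\pm4\pi imjn}Q^{\pm4mjp}=Q^{\pm4mjp}$ (since $2mjn\in\zzz$), while $e^{2\pi isz_1}$ and $e^{-2\pi isz_2}$ each acquire the same factor $e^{2\pi isn}=(-1)^{2sn}$ in $\Phi^{[m,s]}_1$ and in $\Phi^{[m,s]}_2$ respectively. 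Hence $f(\tau,n+p\tau)=(-1)^{2sn}f(\tau,p\tau)$, and it suffices to prove $f(\tau,p\tau)=0$.

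For this, direct substitution into \eqref{n3:eqn:2022-111a}, \eqref{n3:eqn:2022-111b} gives
\[
f(\tau,p\tau)=\sum_{j\in\zzz}\frac{Q^{4mj^2+4mpj+4sj+(2p+1)s}}{1-Q^{4j+2p+1}}\;-\;\sum_{j\in\zzz}\frac{Q^{4mj^2-4mpj+4sj+(1-2p)s}}{1-Q^{4j+1-2p}}.
\]
In the second sum I would replace the summation index $j$ by $j+p$. The denominator then becomes $1-Q^{4j+2p+1}$, matching the first sum, and the numerator exponent becomes $4m(j+p)^2-4mp(j+p)+4s(j+p)+(1-2p)s=4mj^2+4mpj+4sj+2ps+s$, which is exactly the numerator exponent $4mj^2+4mpj+4sj+(2p+1)s$ of the first sum. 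Therefore the two series coincide and $f(\tau,p\tau)=0$, whence $f(\tau,n+p\tau)=0$ for all $n,p\in\zzz$.

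Two points need care rather than cleverness. The termwise evaluation is legitimate only because $f$ is holomorphic at the lattice points: the denominators $1-Q^{4j+2p\pm1}$ have odd exponent $4j+2p\pm1\neq0$, hence are nonzero for $\tau\in\ccc_+$, and the Gaussian factor $Q^{4mj^2}$ ensures convergence of all series involved. The one place an error could slip in is the bookkeeping of the four exponents, in particular the sign of $(1-2p)s$ versus $(2p+1)s$ after the shift $j\mapsto j+p$; everything else is the formal manipulation above. As a consistency check, for $m=1$ Lemma \ref{n3:lemma:2022-111e} together with \eqref{n3:eqn:2022-120g1}--\eqref{n3:eqn:2022-120g3} identifies $f$ with a constant multiple of $q^{1/8}\dfrac{\eta(2\tau)^2}{\eta(\tau)}\dfrac{\vartheta_{11}(\tau,z)\vartheta_{10}(\tau,z)}{\vartheta_{01}(\tau,z)}$, which vanishes on $\zzz+\zzz\tau$ because $\vartheta_{11}$ does.
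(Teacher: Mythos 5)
Your proof is correct and is essentially the paper's own argument: substitute $z=n+p\tau$ into the two defining series of $\Phi^{[m,s]}(2\tau,\,z+\frac{\tau}{2},\,z-\frac{\tau}{2},\,0)$, pull out the common factor $(-1)^{2sn}$, and shift the index $j\mapsto j+p$ in the second sum to see it coincide termwise with the first. The only differences are cosmetic (you work with $Q=e^{\pi i\tau}$ and reduce to $n=0$ first, whereas the paper keeps $q$-powers with half-integer exponents and carries the sign factor through), and your exponent bookkeeping checks out.
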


\begin{proof} By the definition \eqref{n3:eqn:2022-111c} of $\Phi^{[m,s]}$,
the function $f(\tau, z)$ is written as follows:
$$
f(\tau, z) = f_1(\tau, z) -f_2(\tau, z) 
$$
where
{\allowdisplaybreaks
\begin{eqnarray*}
f_1(\tau, z) &=&\sum_{j \in \zzz}\frac{
e^{4\pi imjz}e^{2\pi isz}q^{2mj^2+2sj+\frac{s}{2}}}{
1-e^{2\pi iz} q^{2j+\frac12}}
\\[1mm]
f_2(\tau, z) &=&\sum_{j \in \zzz}\frac{
e^{-4\pi imjz}e^{-2\pi isz}q^{2mj^2+2sj+\frac{s}{2}}}{
1-e^{-2\pi iz} q^{2j+\frac12}}
\end{eqnarray*}}
Then we have
\begin{subequations}
{\allowdisplaybreaks
\begin{eqnarray}
f_1(\tau, n+p\tau) &=&
(-1)^{2sn}\sum_{j \in \zzz}\frac{
q^{2mj(j+p)+s(2j+p+\frac12)}}{1-q^{2j+p+\frac12}}
\label{n3:eqn:2022-116a}
\\[1mm]
f_2(\tau, n+p\tau) &=&
(-1)^{2sn}\sum_{j \in \zzz}\frac{
q^{2mj(j-p)+s(2j-p+\frac12)}}{1-q^{2j-p+\frac12}}
\label{n3:eqn:2022-116b}
\end{eqnarray}}
Putting $j=k+p$, this equation \eqref{n3:eqn:2022-116b} is rewitten
as follows:
\begin{equation}
f_2(\tau, n+p\tau) =
(-1)^{2sn}\sum_{k \in \zzz}\frac{
q^{2mk(k+p)+s(2k+p+\frac12)}}{1-q^{2k+p+\frac12}}
\label{n3:eqn:2022-116c}
\end{equation}
\end{subequations}
Then by \eqref{n3:eqn:2022-116a} and \eqref{n3:eqn:2022-116c}, 
one has
$$
f(\tau, n+p\tau) = f_1(\tau, n+p\tau)-f_2(\tau, n+p\tau)=0 \, ,
$$
proving lemma.
\end{proof}

From this lemma, we have the following:

\begin{lemma}
\label{n3:lemma:2022-116b}
Let $m$ be a positive integer and $m_2$ be a non-negative integer
such that $m_2 \leq m$. Then 
$$
(\overset{N=3}{R}{}^{(-)} \, 
{\rm ch}^{(-)}_{H(\Lambda^{[K(m), m_2]})})(\tau, n+p\tau)=0 
\qquad \text{for} \,\ {}^{\forall} n, {}^{\forall}p \in \zzz
$$
\end{lemma}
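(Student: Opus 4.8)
The plan is to reduce the statement immediately to the preceding Lemma~\ref{n3:lemma:2022-116a}, which already carries all the combinatorial content; the present lemma is only a specialization of it. First I would recall, from Proposition~\ref{n3:prop:2022-111a}(2), that the numerator of the supercharacter is
\[
(\overset{N=3}{R}{}^{(-)} \, {\rm ch}^{(-)}_{H(\Lambda^{[K(m), m_2]})})(\tau, z)
= \Phi^{[\frac{m}{2}, \frac{m_2+1}{2}]}\Big(2\tau, \, z+\tfrac{\tau}{2}, \, z-\tfrac{\tau}{2}, \, \tfrac{\tau}{8}\Big).
\]
By the definitions \eqref{n3:eqn:2022-111a}--\eqref{n3:eqn:2022-111b} the last slot $t$ of $\Phi^{[m',s]}$ enters only through the overall factor $e^{-2\pi i m' t}$, so the right-hand side equals
\[
q^{-\frac{m}{16}} \, \Phi^{[\frac{m}{2}, \frac{m_2+1}{2}]}\Big(2\tau, \, z+\tfrac{\tau}{2}, \, z-\tfrac{\tau}{2}, \, 0\Big)
= q^{-\frac{m}{16}} \, P^{[m, \frac{m_2+1}{2}]}(\tau, z),
\]
the last equality being the definition \eqref{n3:eqn:2022-202f1} of $P$.

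Next I would apply Lemma~\ref{n3:lemma:2022-116a} with its parameter $m$ replaced by $\tfrac{m}{2}\in\tfrac12\nnn$ and $s$ by $\tfrac{m_2+1}{2}\in\tfrac12\zzz$: its auxiliary function $f(\tau,z)=\Phi^{[\frac{m}{2},\frac{m_2+1}{2}]}(2\tau,\, z+\tfrac{\tau}{2},\, z-\tfrac{\tau}{2},\, 0)=P^{[m,\frac{m_2+1}{2}]}(\tau,z)$ satisfies $f(\tau, n+p\tau)=0$ for all $n,p\in\zzz$. Multiplying by the nowhere-vanishing factor $q^{-\frac{m}{16}}$ then yields the claimed vanishing. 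I would also remark that the hypotheses $m\in\nnn$, $m_2\in\zzz_{\geq 0}$, $m_2\leq m$ serve only to guarantee (via Lemma~\ref{n3:lemma:2022-108a}) that $H(\Lambda^{[K(m),m_2]})$ is a bona fide integrable N=3 module; they are not used in the vanishing itself, which holds for $P^{[m,\frac{m_2+1}{2}]}$ regardless.

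There is essentially no obstacle beyond Lemma~\ref{n3:lemma:2022-116a} itself, whose proof writes $f=f_1-f_2$ from \eqref{n3:eqn:2022-111c}, evaluates at $z=n+p\tau$, and performs the index shift $j\mapsto k+p$ to make $f_1(\tau,n+p\tau)$ and $f_2(\tau,n+p\tau)$ cancel term by term. The only points demanding a little care are the bookkeeping of the specialization $(z_1,z_2)=(z+\tfrac{\tau}{2},\,z-\tfrac{\tau}{2})$ together with $z=n+p\tau$, and the tracking of the harmless exponential prefactor produced by the $t=\tfrac{\tau}{8}$ argument appearing in Proposition~\ref{n3:prop:2022-111a}(2).
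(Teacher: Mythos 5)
Your proposal is correct and follows the paper's own proof exactly: extract the $t=\tfrac{\tau}{8}$ dependence as the prefactor $q^{-\frac{m}{16}}$ from Proposition \ref{n3:prop:2022-111a}(2) and then invoke Lemma \ref{n3:lemma:2022-116a} with parameters $\tfrac{m}{2}$ and $\tfrac{m_2+1}{2}$. The extra identification with $P^{[m,\frac{m_2+1}{2}]}$ and the remark about which hypotheses are actually used are harmless additions.
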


\begin{proof} By Proposition \ref{n3:prop:2022-111a}, we have
$$
(\overset{N=3}{R}{}^{(-)} \, 
{\rm ch}^{(-)}_{H(\Lambda^{[K(m), m_2]})})(\tau, z)
=
q^{-\frac{m}{16}}
\Phi^{[\frac{m}{2}, \frac{m_2+1}{2}]}
\left(2\tau, z+\frac{\tau}{2}, z-\frac{\tau}{2}, 0\right) \, .
$$
Then the claim follows from Lemma \ref{n3:lemma:2022-116a}.
\end{proof}

\begin{prop}
\label{n3:prop:2022-116a}
Let $m$ be a positive integer and $m_2$ be a non-negative integer
such that $m_2 \leq m$. Then the (super)characters 
${\rm ch}^{(\pm)}_{H(\Lambda^{[K(m), m_2]})}(\tau, z)$ 
are holomorphic in the domain $(\tau, z) \in \ccc_+ \times \ccc$.
\end{prop}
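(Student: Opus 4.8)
The plan is to prove holomorphy of the honest (super)characters ${\rm ch}^{(\pm)}_{H(\Lambda^{[K(m),m_2]})}(\tau,z)$ by showing that the potential poles, which can only come from the zeros of the N=3 denominators $\overset{N=3}{R}{}^{(\pm)}(\tau,z)$ in the quotient $\big(\overset{N=3}{R}{}^{(\pm)} {\rm ch}^{(\pm)}\big)/\overset{N=3}{R}{}^{(\pm)}$, are cancelled by zeros of the numerator. Concretely, from \eqref{n3:eqn:2022-115a} the zeros of $\overset{N=3}{R}{}^{(\pm)}(\tau,z)$ in the $z$-variable occur exactly where the theta function in the denominator vanishes: $\vartheta_{00}(\tau,z)$, $\vartheta_{01}(\tau,z)$ or $\vartheta_{10}(\tau,z)$, and the zeros of these theta functions lie on the lattice $\tfrac12+\tfrac{\tau}{2}+\zzz+\zzz\tau$, $\tfrac12+\zzz+\zzz\tau$, $\tfrac{\tau}{2}+\zzz+\zzz\tau$ respectively, i.e.\ at points of the form $a+b\tau$ with $a,b \in \tfrac12\zzz$. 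So it suffices to show that the numerator $\big(\overset{N=3}{R}{}^{(\pm)} {\rm ch}^{(\pm)}_{H(\Lambda^{[K(m), m_2]})}\big)(\tau,z)$ vanishes at all such half-lattice points — and more precisely at those half-lattice points where the relevant theta factor actually vanishes.

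First I would record, as already done in Lemma \ref{n3:lemma:2022-116b}, that for the supercharacter the numerator $q^{-m/16}\,\Phi^{[m/2,(m_2+1)/2]}(2\tau, z+\tfrac{\tau}{2}, z-\tfrac{\tau}{2},0)$ vanishes at $z=n+p\tau$ for all integers $n,p$; this handles the zeros of $\vartheta_{01}(\tau,z)$ (located at $z\in \tfrac12+\zzz+\zzz\tau$, which after the shift are integer-plus-integer-$\tau$ points — one should double-check the exact matching using the quasi-periodicity, but it reduces to Lemma \ref{n3:lemma:2022-116a}). Next, for the character ${\rm ch}^{(+)}$, whose numerator is $q^{-m/16}\,Q^{[m,(m_2+1)/2]}(\tau,z)=q^{-m/16}\,\Phi^{[m/2,(m_2+1)/2]}(2\tau, z+\tfrac{\tau}{2}-\tfrac12, z-\tfrac{\tau}{2}+\tfrac12,0)$, I would prove the analogue of Lemma \ref{n3:lemma:2022-116a}: that $\Phi^{[m,s]}(2\tau, z+\tfrac{\tau}{2}-\tfrac12, z-\tfrac{\tau}{2}+\tfrac12,0)$ vanishes at $z=n+p\tau$. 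The proof is the same index-shift trick — write $\Phi=\Phi_1-\Phi_2$, substitute $z=n+p\tau$, reindex the sum for $\Phi_2$ by $j\mapsto k+p$, and observe the two series become identical up to the sign $(-1)^{2sn}$ coming from the half-integral character. The shift by $\pm\tfrac12$ only contributes extra roots of unity that match between the two pieces (using $a+b\in 2\zzz$-type relations from Lemma \ref{n3:lemma:2022-202b}), so the cancellation persists. This takes care of the zeros of $\vartheta_{00}$ and $\vartheta_{10}$, which sit at $z\in \tfrac12+\tfrac{\tau}{2}+\zzz+\zzz\tau$ and $z\in\tfrac{\tau}{2}+\zzz+\zzz\tau$.

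The remaining point is bookkeeping: one must verify that \emph{every} zero of each denominator theta function is indeed among the points where the numerator vanishes — i.e.\ that the substitution $z\mapsto z\pm\tfrac{\tau}{2}$ inside $\Phi$ turns the zero lattices of $\vartheta_{00},\vartheta_{01},\vartheta_{10}$ into (sub)lattices of the form $\zzz+\zzz\tau$ or $\tfrac12+\zzz+\zzz\tau$ to which the vanishing lemmas apply. I would do this by a short case check. One also needs to note that holomorphy in $\tau$ is automatic: the functions $\Phi^{[m/2,s]}_i$ are defined by convergent series in $q$ with denominators $1-e^{2\pi i z_1}q^j$ that are nonzero for generic $z$ and $\tau\in\ccc_+$, and after dividing by the nonvanishing $\eta$-quotients the only singularities are the $z$-poles just discussed; so once those are removed, the result is holomorphic on all of $\ccc_+\times\ccc$.

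The main obstacle I anticipate is the precise matching of zero-sets in the previous paragraph — making sure that the argument-shift by $\pm\tfrac{\tau}{2}$ (and the $\mp\tfrac12$ for the non-twisted character) lines up the half-period zeros of $\vartheta_{00}$ and $\vartheta_{10}$ exactly with the vanishing locus $\{n+p\tau\}$ of $\Phi^{[m/2,s]}(2\tau,z+\tfrac{\tau}{2}(\mp1),z-\tfrac{\tau}{2}(\mp1),0)$, including the subtlety that a simple zero of the denominator must be matched by at least a simple zero of the numerator. This is not conceptually hard but requires care with the $2\tau$ versus $\tau$ periods and with the half-integral parameter $s=\tfrac{m_2+1}{2}$; everything else is a direct reprise of the reindexing computation in the proof of Lemma \ref{n3:lemma:2022-116a}.
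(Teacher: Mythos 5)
There is a genuine gap in your pole--zero bookkeeping. The zeros of $\overset{N=3}{R}{}^{(\pm)}(\tau,z)$ are \emph{not} located where the theta functions $\vartheta_{00},\vartheta_{01},\vartheta_{10}$ of \eqref{n3:eqn:2022-115a} vanish: those thetas sit in the denominators of the $\overset{N=3}{R}{}^{(\pm)}$, hence in the numerator of $1/\overset{N=3}{R}{}^{(\pm)}$, and their zeros are zeros, not poles, of the character's prefactor. Every zero of $\overset{N=3}{R}{}^{(\pm)}$ comes from the common factor $\vartheta_{11}(\tau,z)$ and lies on $\zzz+\zzz\tau$ --- a set you never mention. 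Consequently your stated goal, to show that the numerator $\overset{N=3}{R}{}^{(\pm)}{\rm ch}^{(\pm)}$ vanishes at the half-lattice zeros of $\vartheta_{00},\vartheta_{01},\vartheta_{10}$, is both unachievable and beside the point: at exactly those half-lattice points the numerator $\Phi^{[\frac m2,\frac{m_2+1}{2}]}$ has \emph{poles} (from the factors $1-e^{2\pi iz_1}q^{2j}$ after the substitution of Proposition \ref{n3:prop:2022-111a}), and the role of the theta zero carried by $1/\overset{N=3}{R}{}^{(\pm)}$ is precisely to cancel those poles, not to be cancelled by a zero of the numerator. That cancellation is the step you never carry out; in your closing paragraph you dismiss the singularities of $\Phi$ as occurring only at non-generic $z$, but holomorphy on all of $\ccc_+\times\ccc$ requires treating them, and they are not among ``the $z$-poles just discussed.'' (You have also interchanged the zero sets: in the paper's conventions $\vartheta_{01}$ vanishes on $\frac{\tau}{2}+\zzz+\zzz\tau$ and $\vartheta_{10}$ on $\frac12+\zzz+\zzz\tau$.)

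The correct organization, which is the paper's, is: (a) the poles of $\Phi^{[\frac m2,\frac{m_2+1}{2}]}(2\tau,z+\frac{\tau}{2},z-\frac{\tau}{2},0)$ lie exactly on the zero set $\frac{\tau}{2}+\zzz+\zzz\tau$ of $\vartheta_{01}(\tau,z)$ and are cancelled by that factor in $1/\overset{N=3}{R}{}^{(-)}$; (b) the zeros of $\vartheta_{11}$ on $\zzz+\zzz\tau$ are harmless because the numerator vanishes there, which is Lemma \ref{n3:lemma:2022-116b} --- the one ingredient you do invoke, though you misattribute what it accomplishes; (c) ${\rm ch}^{(+)}$ is then holomorphic because it is a scalar multiple of ${\rm ch}^{(-)}(\tau+1,z)$, which spares one the $\pm\frac12$-shifted analogue of Lemma \ref{n3:lemma:2022-116a} that you propose (your analogue is provable by the same reindexing, but the $T$-shift is cheaper). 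With your ingredients reassigned to these roles the proof goes through; as written, it does not.
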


\begin{proof} First we consider the super-character. By 
\eqref{n3:eqn:2022-115a} and Proposition \ref{n3:prop:2022-111a}, we have
\begin{equation}
{\rm ch}^{(-)}_{H(\Lambda^{[K(m), m_2]})}(\tau, z) =
\frac{\eta(\frac{\tau}{2})}{\eta(\tau)^3} \cdot 
\frac{\vartheta_{01}(\tau,z)}{\vartheta_{11}(\tau,z)} \cdot 
q^{-\frac{m}{16}}
\Phi^{[\frac{m}{2}, \frac{m_2+1}{2}]}
\left(2\tau, z+\frac{\tau}{2}, z-\frac{\tau}{2}, 0\right) 
\label{n3:eqn:2022-116d}
\end{equation}
where
\begin{eqnarray*}
& &
\Phi^{[\frac{m}{2}, \frac{m_2+1}{2}]}
\left(2\tau, z+\frac{\tau}{2}, z-\frac{\tau}{2}, 0\right)
=
\sum_{j \in \zzz}\frac{
e^{2\pi i mjz+\pi i (m_2+1)z} 
q^{mj^2+(m_2+1)(j+\frac14)}}{1-e^{2\pi iz}q^{2j+\frac12}}
\\[1mm]
& & \hspace{30mm}
-\sum_{j \in \zzz}\frac{
e^{2\pi i mjz-\pi i (m_2+1)z} 
q^{mj^2+(m_2+1)(j+\frac14)}}{1-e^{-2\pi iz}q^{2j+\frac12}}
\end{eqnarray*}
From these equations, we see the following:
\begin{enumerate}
\item[(i)] $\Phi^{[\frac{m}{2}, \frac{m_2+1}{2}]}
\left(2\tau, z+\frac{\tau}{2}, z-\frac{\tau}{2}, 0\right)$ has 
singularities at $e^{\pm 2\pi iz}q^{2j+\frac12}=1$, but these singularities 
are cancelled by $\vartheta_{01}(\tau,z)$.

\item[(ii)] $\dfrac{1}{\vartheta_{11}(\tau,z)}$ has 
singularities at $e^{\pm 2\pi iz}q^j=1$, but these singularities 
are cancelled by  $\Phi^{[\frac{m}{2}, \frac{m_2+1}{2}]}
\left(2\tau, z+\frac{\tau}{2}, z-\frac{\tau}{2}, 0\right)$ 
due to Lemma \ref{n3:lemma:2022-116b}.
\end{enumerate}

Thus all of singularities in the RHS of \eqref{n3:eqn:2022-116d} 
disappear, so ${\rm ch}^{(-)}_{H(\Lambda^{[K(m), m_2]})}(\tau, z)$ 
is holomorphic. Then ${\rm ch}^{(+)}_{H(\Lambda^{[K(m), m_2]})}(\tau, z)$ 
is holomorphic, since it is a scalar multiple of 
${\rm ch}^{(-)}_{H(\Lambda^{[K(m), m_2]})}(\tau+1, z)$. 
\end{proof}

From Corollary \ref{n3:cor:2022-116a} and Corollary \ref{n3:cor:2022-205b},
we conjecture the following:

\begin{conj} 
\label{n3:conj:2022-205a}
For $m \in \nnn$ and $j \in \{0,1\}$, there exists 
$\big(\mu^{[m]}(m_2)\big)_{m_2 \in \zzz^{[m]}_j}$
such that
\begin{enumerate}
\item[{\rm (i)}] \quad $\mu^{[m]}(m_2) \, \in \, \nnn$ \quad for each 
\,\ $m_2 \in \zzz^{[m]}_j$
\item[{\rm (ii)}] \,\ $
\sum\limits_{m_2 \in \zzz^{[m]}_j} \mu^{[m]}(m_2) \cdot 
{\rm ch}^{(+)}_{H(\Lambda^{[K(m), m_2]})}(\tau, z)$
$$= \,\ 
\Big(\sum\limits_{m_2 \in \zzz^{[m]}_j}\mu^{[m]}(m_2)\Big) \, \times \, 
\widetilde{\rm ch}^{(+)}_{H(\Lambda^{[K(m),j]})}(\tau, z)$$
\end{enumerate}
where \quad $
\zzz^{[m]}_j \,\ := \,\ \big\{m_2 \in \zzz \,\ ; \,\ 
0 \leq m_2 \leq m \,\ \text{and} \,\ m_2 -j \in 2 \zzz\big\} \, .$
\end{conj}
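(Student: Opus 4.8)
The plan is to reduce the conjecture to an identity among Zwegers' correction terms and then to a positivity question in a finite-dimensional space of theta functions. Fix $j\in\{0,1\}$ and $m_2\in\zzz^{[m]}_j$, so that $\frac{m_2+1}{2}-\frac{j+1}{2}=\frac{m_2-j}{2}\in\zzz$; by Lemma \ref{n3:lemma:2022-111a} the modified function $\widetilde{\Phi}^{[\frac{m}{2},s]}$ depends only on $s$ modulo $\zzz$, which is exactly the invariance \eqref{n3:eqn:2022-112e}. Combining this with the defining formulas for the honest and modified characters (Propositions \ref{n3:prop:2022-111a} and \ref{n3:prop:2022-112a}) gives, for every $m_2\in\zzz^{[m]}_j$,
$$
\overset{N=3}{R}{}^{(+)}(\tau,z)\,\bigl(\widetilde{\rm ch}^{(+)}_{H(\Lambda^{[K(m),j]})}-{\rm ch}^{(+)}_{H(\Lambda^{[K(m),m_2]})}\bigr)(\tau,z)
=\Phi^{[\frac{m}{2},\frac{m_2+1}{2}]}_{\rm add}\Bigl(2\tau,\, z+\tfrac{\tau}{2}-\tfrac12,\, z-\tfrac{\tau}{2}+\tfrac12,\, \tfrac{\tau}{8}\Bigr).
$$
Writing $w^{[m]}(m_2)$ for the right-hand side and dividing by $\overset{N=3}{R}{}^{(+)}$, statement (ii) for this $j$ is \emph{equivalent} to
$$
\sum_{m_2\in\zzz^{[m]}_j}\mu^{[m]}(m_2)\,w^{[m]}(m_2)=0,
$$
while (i) asks that the $\mu^{[m]}(m_2)$ be positive integers. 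For $m=2$ ($j=0$) and $m=4$ ($j=1$) one reads off from Corollaries \ref{n3:cor:2022-116a} and \ref{n3:cor:2022-205b} that the uniform choice $\mu^{[m]}\equiv 1$ works; the natural guess is that this persists, i.e. that $\widetilde{\rm ch}^{(+)}_{H(\Lambda^{[K(m),j]})}$ is the \emph{average} of the honest characters over $m_2\in\zzz^{[m]}_j$, but the conjecture deliberately allows non-uniform weights in case this fails for larger $m$.

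Second, I would make $w^{[m]}(m_2)$ explicit. By the definition \eqref{n3:eqn:2022-111j} of $\Phi_{\rm add}$ it is $-\tfrac12$ times an overall $q$-power times a finite $\qqq$-linear combination of the theta functions $[\theta_{k;m/2}-\theta_{-k;m/2}](2\tau,2z)$ with coefficients $R_{k;m/2}\bigl(2\tau,\tfrac{\tau}{2}-\tfrac12\bigr)$. Exactly as in the proofs of Lemmas \ref{n3:lemma:2022-115b} and \ref{n3:lemma:2022-116d}, the error-function part of each such $R_{k;m/2}$ vanishes by the oddness $E(-x)=-E(x)$, and the surviving sign-sum telescopes to a single monomial in $q$ times a root of unity; this yields a closed form for all coefficients, in particular for the "anchor" value $w^{[m]}(j)$. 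Moreover the telescoping identity of Lemma \ref{n3:lemma:2021-1213a}, applied at $2\tau$ with $a=\tfrac{m_2-m_2'}{2}$, computes each difference $w^{[m]}(m_2)-w^{[m]}(m_2')$ as an explicit sum of $\vartheta_{11}$-type theta functions, so it suffices to know the single anchor together with these differences.

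Third, I would analyse the linear span $V$ of $\{w^{[m]}(m_2):m_2\in\zzz^{[m]}_j\}$ inside the space of level-$\tfrac{m}{2}$ theta functions in $(2\tau,2z)$, and look for a nonzero kernel vector lying in the positive orthant. Here the reflection symmetry $\Phi^{[m,s]}(\tau,z_2,z_1,t)=\Phi^{[m,1-s]}(\tau,z_1,z_2,t)$ of Lemma \ref{n3:lemma:2022-111a}, combined with the quasi-periodicity of theta functions in $z$, should relate $w^{[m]}(m_2)$ to $w^{[m]}(m-m_2)$. When $j=1$ the involution $m_2\mapsto m-m_2$ on $\zzz^{[m]}_j$ has no fixed point, and I expect it to give the antisymmetry $w^{[m]}(m_2)=-w^{[m]}(m-m_2)$, which immediately produces $\mu^{[m]}\equiv 1$; combining this with the telescoping identities should, after clearing denominators, produce the integral weights in the remaining cases.

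The hard part is this last step, and specifically the case $j=0$: then $m_2=m/2$ is a fixed point of $m_2\mapsto m-m_2$, so the terms cannot simply be paired off, and one must separately control $w^{[m]}(m/2)$ — equivalently, show that it lies in the span of the antisymmetric combinations with exactly the coefficient predicted by the uniform weights. This is precisely the difficulty flagged by Remark \ref{n3:rem:2022-625a} and Conjecture \ref{n3:conj:2022-625a}: for $m\in 4\nnn$ the $SL_2(\zzz)$-action on the relevant characters degenerates, and even the single function $\widetilde{\rm ch}^{(+)}_{H(\Lambda^{[K(m),0]})}$ is not yet known in closed form (it is expected to be a combination of $\vartheta_{01}^2$ and $\vartheta_{10}^2$). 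Guaranteeing that the resulting linear relation among the $w^{[m]}(m_2)$ has coefficients all of one sign — so that they may be normalised to positive integers, as (i) demands — rather than merely existing over $\qqq$, is the essential obstruction to turning this outline into a proof.
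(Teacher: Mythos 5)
This statement is a \emph{conjecture} in the paper: the author offers no proof, only the two verified instances (Corollary \ref{n3:cor:2022-116a} for $m=2$ and Corollary \ref{n3:cor:2022-205b} for $m=4$, $j=1$), so there is no argument of the paper's to compare yours against. Your opening reduction is correct and is essentially the paper's own motivation: by \eqref{n3:eqn:2022-112e} (i.e.\ Lemma \ref{n3:lemma:2022-111a}\,1)) together with Proposition \ref{n3:prop:2022-111a} and \eqref{n3:eqn:2022-112c}, the difference $\overset{N=3}{R}{}^{(+)}\bigl(\widetilde{\rm ch}^{(+)}_{H(\Lambda^{[K(m),j]})}-{\rm ch}^{(+)}_{H(\Lambda^{[K(m),m_2]})}\bigr)$ is exactly the correction term $\Phi^{[\frac m2,\frac{m_2+1}{2}]}_{\rm add}\bigl(2\tau, z+\frac{\tau}{2}-\frac12, z-\frac{\tau}{2}+\frac12,\frac{\tau}{8}\bigr)$, and (ii) is equivalent to a positive-integer relation $\sum_{m_2}\mu^{[m]}(m_2)\,w^{[m]}(m_2)=0$ among these correction terms.

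Beyond that reduction, however, the proposal does not close the gap, and two of its intermediate claims are not sound. First, the assertion that ``exactly as in Lemmas \ref{n3:lemma:2022-115b} and \ref{n3:lemma:2022-116d} the error-function part of each $R_{k;m/2}$ vanishes'' does not transfer to general $m$: in those lemmas the cancellation of the $E$-terms comes from an involution $n\mapsto \frac m2-n$ that preserves the congruence class $n\equiv k \ ({\rm mod}\ m)$, and this self-pairing happens only for the special classes with $2k\equiv \frac m2$; for other $k$ the $E$-part of $R_{k;m/2}$ pairs with that of a \emph{different} $R_{k';m/2}$, so each individual coefficient need not be a single $q$-monomial, and the holomorphy of the correction term is itself only Conjecture \ref{n3:conj:2022-205b}. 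Second, the structural analysis of the reflection $m_2\mapsto m-m_2$ is wrong: it preserves $\zzz^{[m]}_j$ only for even $m$, and it has the fixed point $m_2=\frac m2$ in $\zzz^{[m]}_j$ precisely when $\frac m2\equiv j\ ({\rm mod}\ 2)$ --- in particular for $j=1$ and $m\equiv 2\ ({\rm mod}\ 4)$ (e.g.\ $m=6$, where $3\in\zzz^{[6]}_1$ is fixed), contrary to your claim that the $j=1$ case has no fixed point; the expected antisymmetry $w^{[m]}(m_2)=-w^{[m]}(m-m_2)$ is itself only checked against the two cases already in the paper. Finally, as you acknowledge, the existence of weights $\mu^{[m]}(m_2)$ that are simultaneously a kernel vector and \emph{positive integers} --- which is the entire content of (i)--(ii) --- is never established. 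So the proposal is a reasonable research plan consistent with the evidence in the paper, but it is not a proof, and the statement remains open both in the paper and in your write-up.
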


\begin{conj} 
\label{n3:conj:2022-205b}
For $m \in \nnn$, modified characters are holomorphic functions 
of $(\tau, z) \in \ccc_+ \times \ccc$.
\end{conj}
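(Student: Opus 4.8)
We outline the approach we would take to establish Conjecture~\ref{n3:conj:2022-205b}.

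The plan is to reduce everything to two distinguished modified characters and then to analyse the correction term $\Phi_{\rm add}$ directly. By \eqref{n3:eqn:2022-112e} and Proposition~\ref{n3:prop:2022-112a} it suffices to treat the six (super/twisted) modified characters of \eqref{n3:eqn:2022-112f}, and by Proposition~\ref{n3:prop:2022-115a} these six are produced from $\widetilde{\rm ch}^{(+)}_{H(\Lambda^{[K(m),0]})}$ and $\widetilde{\rm ch}^{(+)}_{H(\Lambda^{[K(m),1]})}$ by finitely many $S$ and $T$ moves: the $T$-transformations 2(i),(ii) give $\widetilde{\rm ch}^{(-)}_{H(\Lambda^{[K(m),0]})}$ and $\widetilde{\rm ch}^{(-)}_{H(\Lambda^{[K(m),1]})}$, and the $S$-transformations 1(v),(vi) then give $\widetilde{\rm ch}^{(+){\rm tw}(\sigma_\pm)}_{H(\Lambda^{[K(m),\cdot]})}$. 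Since the substitutions $\tau\mapsto\tau+1$ and $(\tau,z)\mapsto(-1/\tau,z/\tau)$ are biholomorphisms of $\ccc_+\times\ccc$ onto itself, and the automorphy factors occurring there ($\tau$, $e^{\pi imz^2/(2\tau)}$, roots of unity) are holomorphic and nowhere vanishing, holomorphy on $\ccc_+\times\ccc$ is preserved at each step. Thus Conjecture~\ref{n3:conj:2022-205b} reduces to the holomorphy of $\widetilde{\rm ch}^{(+)}_{H(\Lambda^{[K(m),0]})}$ and $\widetilde{\rm ch}^{(+)}_{H(\Lambda^{[K(m),1]})}$ on $\ccc_+\times\ccc$, for every $m\in\nnn$.

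For these, write $\widetilde{\rm ch}^{(+)}_{H(\Lambda^{[K(m),m_2]})}={\rm ch}^{(+)}_{H(\Lambda^{[K(m),m_2]})}+C$, where by \eqref{n3:eqn:2022-112c}, \eqref{n3:eqn:2022-111k} and \eqref{n3:eqn:2022-115a}
\[
C(\tau,z)=\frac{\vartheta_{00}(\tau,z)}{\eta(\tfrac{\tau}{2})\,\eta(2\tau)\,\vartheta_{11}(\tau,z)}\;
\Phi^{[\frac{m}{2},\frac{m_2+1}{2}]}_{\rm add}\Big(2\tau,\;z+\tfrac{\tau}{2}-\tfrac12,\;z-\tfrac{\tau}{2}+\tfrac12,\;\tfrac{\tau}{8}\Big).
\]
The honest character ${\rm ch}^{(+)}_{H(\Lambda^{[K(m),m_2]})}$ is holomorphic by Proposition~\ref{n3:prop:2022-116a}, so it remains to show $C$ is holomorphic. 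The decisive point is that, with these arguments, the first variable of every $R_{k;m/2}$ in \eqref{n3:eqn:2022-111j} is $\tfrac12\big((z+\tfrac{\tau}{2}-\tfrac12)-(z-\tfrac{\tau}{2}+\tfrac12)\big)=\tfrac{\tau-1}{2}$, independent of $z$, with $\mathrm{Im}(\tfrac{\tau-1}{2})/\mathrm{Im}(2\tau)=\tfrac14$ constant. Generalising the computations of Lemmas~\ref{n3:lemma:2022-115b} and \ref{n3:lemma:2022-116d}, we would show that for all $m$ and all relevant $k$ the error-function part of $R_{k;m/2}\big(2\tau,\tfrac{\tau-1}{2}\big)$ cancels, using $E(-x)=-E(x)$ and the symmetry of the summation lattice about its midpoint, so that $R_{k;m/2}\big(2\tau,\tfrac{\tau-1}{2}\big)$ is a finite $\zzz$-combination of powers of $q$ (most of the $R$'s vanishing identically), and that the surviving combinations $[\theta_{k;m/2}-\theta_{-k;m/2}](2\tau,2z)$ are constant multiples of $q$-powers times $\vartheta_{11}(\tau,z)$, generalising \eqref{n3:eqn:2022-119g}. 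Then $\Phi^{[\frac{m}{2},\frac{m_2+1}{2}]}_{\rm add}(2\tau,z+\tfrac{\tau}{2}-\tfrac12,z-\tfrac{\tau}{2}+\tfrac12,\tfrac{\tau}{8})$ equals a holomorphic function of $\tau$ times $\vartheta_{11}(\tau,z)$; the factor $\vartheta_{11}$ cancels the $\vartheta_{11}$ in the denominator above, and $C$ is left as a holomorphic multiple of $\vartheta_{00}/(\eta(\tfrac{\tau}{2})\eta(2\tau))$, hence holomorphic on $\ccc_+\times\ccc$. The same argument with $\tfrac{\tau-1}{2}$ replaced by $\tfrac{\tau}{2}$ and $1/\overset{N=3}{R}{}^{(+)}$ by $1/\overset{N=3}{R}{}^{(-)}$ handles the modified supercharacters directly.

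The main obstacle is the uniform evaluation of $R_{k;m/2}\big(2\tau,\tfrac{\tau-1}{2}\big)$ (and of $R_{k;m/2}\big(2\tau,\tfrac{\tau}{2}\big)$) over all $m\in\nnn$: one must track carefully the parities of $m$ and $m_2$, the range $\tfrac{m_2+1}{2}\le k<\tfrac{m_2+1}{2}+m$, and the half-integral indices occurring when $m$ is odd, and then prove the theta-identity realising the surviving terms through $\vartheta_{11}$. Should this uniform computation prove intractable, the statement still follows from Conjecture~\ref{n3:conj:2022-205a}: that conjecture writes $\widetilde{\rm ch}^{(+)}_{H(\Lambda^{[K(m),j]})}$ $(j\in\{0,1\})$ as a positive-rational combination of the honest characters ${\rm ch}^{(+)}_{H(\Lambda^{[K(m),m_2]})}$ with $0\le m_2\le m$, each holomorphic by Proposition~\ref{n3:prop:2022-116a}, so that together with the first paragraph one obtains Conjecture~\ref{n3:conj:2022-205b}. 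In either route the hard content is precisely the cancellation of the non-holomorphic ($E$-function) contributions, encoded both in the $R$-evaluations above and, equivalently, in Conjecture~\ref{n3:conj:2022-205a}.
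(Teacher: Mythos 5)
This statement is a \emph{conjecture} in the paper: the author gives no proof of it, only the remark following Conjecture \ref{n3:conj:2022-205a} that it would follow from that conjecture together with Proposition \ref{n3:prop:2022-116a}. Your proposal does not close this gap. The first paragraph is fine: the reduction of all six modified characters of \eqref{n3:eqn:2022-112f} to $\widetilde{\rm ch}^{(+)}_{H(\Lambda^{[K(m),0]})}$ and $\widetilde{\rm ch}^{(+)}_{H(\Lambda^{[K(m),1]})}$ via the $T$-moves 2(i),(ii) and the $S$-moves 1(v),(vi) of Proposition \ref{n3:prop:2022-115a} is a correct and harmless observation (and, unlike the inverse direction flagged in Remark \ref{n3:rem:2022-625a}, it causes no trouble when $m\in4\nnn$).

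The genuine gap is in your second paragraph, and it is exactly the open content of the conjecture. You assert that for all $m$ and all relevant $k$ the error-function part of $R_{k;m/2}\big(2\tau,\tfrac{\tau}{2}\big)$ (resp.\ $\tfrac{\tau-1}{2}$) cancels ``using $E(-x)=-E(x)$ and the symmetry of the summation lattice about its midpoint.'' That mechanism is what makes Lemmas \ref{n3:lemma:2022-115b} and \ref{n3:lemma:2022-116d} work for $m=2$: there the reflection $n\mapsto \tfrac{m}{2}-n$ about the critical point preserves each congruence class $n\equiv k \pmod{m}$. For general $m$ it does not: the class is preserved only when $2k\equiv \tfrac{m}{2}\pmod{m}$, so already for $m=4$ the classes $k=2$ and $k=4$ are interchanged rather than fixed, and the individual $R_{k;2}$ need not have vanishing $E$-part. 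Any cancellation would have to occur across different $k$'s after multiplying by the (distinct) factors $[\theta_{k;m/2}-\theta_{-k;m/2}]$, and no argument is given that it does; this is precisely why the author can only state holomorphy as a conjecture. Your fallback --- deducing the statement from Conjecture \ref{n3:conj:2022-205a} plus Proposition \ref{n3:prop:2022-116a} --- is verbatim the paper's own remark, but it replaces one conjecture by another and so is not a proof either. In short: the plan correctly isolates where the difficulty lies, but the decisive step is neither carried out nor, as formulated, likely to go through class-by-class for $m>2$.
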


We note that, if Conjecture \ref{n3:conj:2022-205a} is true, then 
Conjecture \ref{n3:conj:2022-205b} is true by 
Proposition \ref{n3:prop:2022-116a}.
Also note that, if Conjecture \ref{n3:conj:2022-205a} is true, then 
the honest characters ${\rm ch}^{(+)}_{H(\Lambda^{[K(m), m_2]})}(\tau, z)$
\,\ $(m_2 \in \zzz^{[m]}_j)$ can be written explicitly by the modified 
character $\widetilde{\rm ch}{}^{(+)}_{H(\Lambda^{[K(m),j]})}(\tau, z)$,
as is seen in the following Remark \ref{n3:rem:2022-205a}
in the case $m=4$.

\begin{rem}
\label{n3:rem:2022-205a}
Assume that Conjecture \ref{n3:conj:2022-205a} is true for $m=4$ and 
$j=0$ with $(\mu^{[4]}(0), \mu^{[4]}(2), \mu^{[4]}(4)) = (1,2,1)$.
Then the honest characters ${\rm ch}^{(+)}_{H(\Lambda^{[K(4), m_2]})}(\tau, z)$
\,\ $(m_2 \in \{0,2,4\})$ are expected to be as follows
by using the formula \eqref{n3:eqn:2022-625a} for the modified character 
$\widetilde{\rm ch}^{(+)}_{H(\Lambda^{[K(4),0]})}(\tau, z)$ 
in Conjecture \ref{n3:conj:2022-625a}:

\begin{enumerate}
\item[{\rm 1)}] ${\rm ch}^{(+)}_{H(\Lambda^{[K(4),0]})}(\tau, z)
\,\ = \,\ 
\widetilde{\rm ch}^{(+)}_{H(\Lambda^{[K(4),0]})}(\tau, z)$
$$
+ \, \frac{i}{4} \, q^{-\frac{1}{16}} \frac{1}{\eta(\frac{\tau}{2})\eta(2\tau)} \cdot 
\frac{\vartheta_{00}(\tau,z)}{\vartheta_{11}(\tau,z)} \, \big[
-3(\theta_{\frac12,2}-\theta_{-\frac12,2})+(\theta_{\frac32,2}-\theta_{-\frac32,2})
\big](2\tau,2z)
$$

\item[{\rm 2)}] ${\rm ch}^{(+)}_{H(\Lambda^{[K(4),2]})}(\tau, z)
\,\ = \,\ 
\widetilde{\rm ch}^{(+)}_{H(\Lambda^{[K(4),0]})}(\tau, z)$
$$
+ \, \frac{i}{4} \, q^{-\frac{1}{16}} \frac{1}{\eta(\frac{\tau}{2})\eta(2\tau)} \cdot 
\frac{\vartheta_{00}(\tau,z)}{\vartheta_{11}(\tau,z)} \, \big[
(\theta_{\frac12,2}-\theta_{-\frac12,2})+(\theta_{\frac32,2}-\theta_{-\frac32,2})
\big](2\tau,2z)
$$

\item[{\rm 3)}] ${\rm ch}^{(+)}_{H(\Lambda^{[K(4),4]})}(\tau, z)
\,\ = \,\ 
\widetilde{\rm ch}^{(+)}_{H(\Lambda^{[K(4),0]})}(\tau, z)$
$$
+ \, \frac{i}{4} \, q^{-\frac{1}{16}} \frac{1}{\eta(\frac{\tau}{2})\eta(2\tau)} \cdot 
\frac{\vartheta_{00}(\tau,z)}{\vartheta_{11}(\tau,z)} \, \big[
(\theta_{\frac12,2}-\theta_{-\frac12,2})-3(\theta_{\frac32,2}-\theta_{-\frac32,2})
\big](2\tau,2z)
$$
\end{enumerate}
\end{rem}

\section{$\vartheta$-relations}
\label{sec:theta-relation}

Using Lemma \ref{n3:lemma:2022-111e} and the formulas for 
$\overset{\circ}{A}{}^{[2]}_j(\tau,z)$ in Lemma \ref{n3:lemma:2022-108e}, 
we obtain the following formulas for $\vartheta_{ab}$:

\begin{prop} \,\ 
\label{n3:prop:2022-120b}
\begin{enumerate}
\item[{\rm 1)}] \quad $
\dfrac{\vartheta_{00}(2\tau, z)}{\vartheta_{10}(2\tau, z)}
\,\ + \,\ 
\dfrac{\vartheta_{10}(2\tau, z)}{\vartheta_{00}(2\tau, z)}
\,\ = \,\ 
\dfrac{\eta(\tau)^6}{\eta(\frac{\tau}{2})^2 \, \eta(2\tau)^4} \cdot 
\dfrac{\vartheta_{00}(\tau, z)}{\vartheta_{10}(\tau, z)} $

\item[{\rm 2)}] \quad $
\dfrac{\vartheta_{00}(2\tau, z)}{\vartheta_{10}(2\tau, z)}
\,\ - \,\ 
\dfrac{\vartheta_{10}(2\tau, z)}{\vartheta_{00}(2\tau, z)}
\,\ = \,\ 
\dfrac{\eta(\frac{\tau}{2})^2}{\eta(2\tau)^2} \cdot 
\dfrac{\vartheta_{01}(\tau, z)}{\vartheta_{10}(\tau, z)} $
\end{enumerate}
\end{prop}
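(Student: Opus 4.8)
The plan is to read the Proposition off from two independent evaluations of the auxiliary functions $\overset{\circ}{A}{}^{[2]}_5(\tau,z)$ and $\overset{\circ}{A}{}^{[2]}_6(\tau,z)$. First I would compute these directly from the doubling identities \eqref{n3:eqn:2022-117d5} and \eqref{n3:eqn:2022-117d6} specialised to $m=2$ (so that $\widetilde\Phi^{[\frac m2,0]}=\widetilde\Phi^{[1,0]}$ and the prefactor $\frac12\{1+(-1)^{m+2s}\}$ in \eqref{n3:eqn:2022-117d6} equals $1$), giving
\[
\overset{\circ}{A}{}^{[2]}_5(\tau,z)=\widetilde\Phi^{[1,0]}\Big(2\tau,\,z-\tfrac12,\,z+\tfrac12,\,0\Big),\qquad
\overset{\circ}{A}{}^{[2]}_6(\tau,z)=\widetilde\Phi^{[1,0]}\Big(2\tau,\,z+\tau-\tfrac12,\,z-\tau+\tfrac12,\,\tfrac\tau2\Big).
\]
Applying the $\widehat{sl}(2|1)$-denominator identity (Lemma \ref{n3:lemma:2022-111e}, with $s=0$ and modulus $2\tau$) to each, and collapsing the denominators with the elementary quasi-periodicities $\vartheta_{11}(2\tau,u\pm\tfrac12)=\pm\vartheta_{10}(2\tau,u)$ and $\vartheta_{10}(2\tau,u\pm\tau)=q^{-1/4}e^{\mp\pi iu}\vartheta_{00}(2\tau,u)$, I expect the closed forms
\[
\overset{\circ}{A}{}^{[2]}_5(\tau,z)=i\,\eta(2\tau)^3\,\frac{\vartheta_{11}(2\tau,2z)}{\vartheta_{10}(2\tau,z)^2},\qquad
\overset{\circ}{A}{}^{[2]}_6(\tau,z)=i\,\eta(2\tau)^3\,\frac{\vartheta_{11}(2\tau,2z)}{\vartheta_{00}(2\tau,z)^2}.
\]

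Second, Lemma \ref{n3:lemma:2022-108e} already records the combinations $\overset{\circ}{A}{}^{[2]}_5+\overset{\circ}{A}{}^{[2]}_6=i\,g_3^{(+)}$ and $\overset{\circ}{A}{}^{[2]}_5-\overset{\circ}{A}{}^{[2]}_6=i\,g_2^{(+)}$. Equating these with the closed forms above, dividing out the factor $i$, substituting $\vartheta_{11}(2\tau,2z)=\tfrac{\eta(2\tau)}{\eta(\tau)^2}\vartheta_{11}(\tau,z)\vartheta_{10}(\tau,z)$ from \eqref{n3:eqn:2022-120g1}, and using the classical duplication formula $\vartheta_{00}(2\tau,z)\vartheta_{10}(2\tau,z)=\tfrac{\eta(2\tau)^2}{\eta(\tau)}\vartheta_{10}(\tau,z)$ to rewrite the common denominator $\vartheta_{00}(2\tau,z)^2\vartheta_{10}(2\tau,z)^2$, the overall factor $\vartheta_{11}(\tau,z)/\vartheta_{10}(\tau,z)$ cancels and, after inserting the definitions \eqref{n3:eqn:2022-119c5} and \eqref{n3:eqn:2022-119c3} of $g_3^{(+)}$ and $g_2^{(+)}$, I obtain the two ``Landen'' identities
\[
\vartheta_{00}(2\tau,z)^2+\vartheta_{10}(2\tau,z)^2=\frac{\eta(\tau)^5}{\eta(\frac\tau2)^2\eta(2\tau)^2}\,\vartheta_{00}(\tau,z),\qquad
\vartheta_{00}(2\tau,z)^2-\vartheta_{10}(2\tau,z)^2=\frac{\eta(\frac\tau2)^2}{\eta(\tau)}\,\vartheta_{01}(\tau,z).
\]
Finally, dividing each of these through by $\vartheta_{00}(2\tau,z)\vartheta_{10}(2\tau,z)$ and using the duplication formula once more on the denominator turns the left-hand sides into $\tfrac{\vartheta_{00}(2\tau,z)}{\vartheta_{10}(2\tau,z)}\pm\tfrac{\vartheta_{10}(2\tau,z)}{\vartheta_{00}(2\tau,z)}$, while a one-line $\eta$-arithmetic check (e.g.\ $\tfrac{\eta(\tau)^5}{\eta(\frac\tau2)^2\eta(2\tau)^2}\cdot\tfrac{\eta(\tau)}{\eta(2\tau)^2}=\tfrac{\eta(\tau)^6}{\eta(\frac\tau2)^2\eta(2\tau)^4}$ and $\tfrac{\eta(\frac\tau2)^2}{\eta(\tau)}\cdot\tfrac{\eta(\tau)}{\eta(2\tau)^2}=\tfrac{\eta(\frac\tau2)^2}{\eta(2\tau)^2}$) produces exactly the asserted right-hand sides, proving 1) and 2).

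The main obstacle is twofold. One part is the bookkeeping in the first step: one must arrange the half-period transformation factors ($q^{\pm1/8}$, $q^{\pm1/4}$, $e^{\pm\pi iz}$ and the sign from $\vartheta_{11}(u-\tfrac12)=-\vartheta_{10}(u)$) so that everything cancels and the two $\overset{\circ}{A}{}^{[2]}$'s come out in the simple ratio form above — this is the same style of computation as in the proof of Lemma \ref{n3:lemma:2022-108e}. The other part is supplying the duplication formula $\vartheta_{00}(2\tau,z)\vartheta_{10}(2\tau,z)=\tfrac{\eta(2\tau)^2}{\eta(\tau)}\vartheta_{10}(\tau,z)$, which is not among the identities collected in Section \ref{sec:Phi}; it follows directly from the $q$-series definitions via Gauss's identity $\sum_{n\in\zzz}q^{2(n+\frac14)^2}=\eta(2\tau)^2/\eta(\tau)$, or can be cited from \cite{Mum}. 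Once these are in hand, the rest is routine manipulation of $\eta$- and $\vartheta$-quotients.
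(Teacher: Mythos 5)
Your proposal is correct and follows essentially the same route as the paper: both proofs evaluate $\overset{\circ}{A}{}^{[2]}_5$ and $\overset{\circ}{A}{}^{[2]}_6$ once via \eqref{n3:eqn:2022-117d5}--\eqref{n3:eqn:2022-117d6} and the $\widehat{sl}(2|1)$-denominator identity, and once via Lemma \ref{n3:lemma:2022-108e}, then take the sum and difference, using \eqref{n3:eqn:2022-120g1} together with the duplication formula $\vartheta_{00}(2\tau,z)\vartheta_{10}(2\tau,z)=\tfrac{\eta(2\tau)^2}{\eta(\tau)}\vartheta_{10}(\tau,z)$ (which the paper invokes in the equivalent quotient form $\vartheta_{10}(\tau,z)/\vartheta_{10}(2\tau,z)=\tfrac{\eta(\tau)}{\eta(2\tau)^2}\vartheta_{00}(2\tau,z)$). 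The only difference is cosmetic: you add/subtract before simplifying and pass through the intermediate identities for $\vartheta_{00}(2\tau,z)^2\pm\vartheta_{10}(2\tau,z)^2$, whereas the paper simplifies each side first and then adds/subtracts.
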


\begin{proof}
Letting $m=2$ in the formulas \eqref{n3:eqn:2022-117d5} 
and \eqref{n3:eqn:2022-117d6} and using Lemma \ref{n3:lemma:2022-108e}
for the explicit formulas for $\overset{\circ}{A}{}^{[2]}_5(\tau, z)$
and $\overset{\circ}{A}{}^{[2]}_6(\tau, z)$, we have 
\begin{subequations}
{\allowdisplaybreaks
\begin{eqnarray}
& &\hspace{-10mm}
\widetilde{\Phi}^{[1,0]}\Big(2\tau, \,  z-\frac12, \, z+\frac12, \, 0\Big) 
\, = \, 
\overset{\circ}{A}{}^{[2]}_5(\tau, z)
\,\ = \,\ 
\frac{i}{2} \, \big\{g^{(+)}_3(\tau,z)+g^{(+)}_2(\tau,z)\big\}
\nonumber
\\[1mm]
&=&
\frac{i}{2} \cdot \frac{\vartheta_{11}(\tau,z)}{\vartheta_{10}(\tau,z)}\left\{
\frac{\eta(\tau)^5}{\eta(\frac{\tau}{2})^2 \eta(2\tau)^2} 
\cdot \vartheta_{00}(\tau,z)
+
\frac{\eta(\frac{\tau}{2})^2}{\eta(\tau)} \cdot 
\vartheta_{01}(\tau,z)\right\} \hspace{10mm}
\label{n3:eqn:2022-120e1}
\\[1mm]
& & \hspace{-10mm}
\widetilde{\Phi}^{[1,0]}\Big(2\tau, \,  z+\tau-\frac12, \, 
z-\tau+\frac12, \, \frac{\tau}{2}\Big) 
\, = \, 
\overset{\circ}{A}{}^{[2]}_6(\tau, z)
\nonumber
\\[1mm]
&=&
\frac{i}{2} \, \big\{
g^{(+)}_3(\tau,z)-g^{(+)}_2(\tau,z)\big\}
\nonumber
\\[1mm]
&=&
\frac{i}{2} \cdot \frac{\vartheta_{11}(\tau,z)}{\vartheta_{10}(\tau,z)}\left\{
\frac{\eta(\tau)^5}{\eta(\frac{\tau}{2})^2 \eta(2\tau)^2} 
\cdot \vartheta_{00}(\tau,z)
-
\frac{\eta(\frac{\tau}{2})^2}{\eta(\tau)} \cdot 
\vartheta_{01}(\tau,z)\right\}
\label{n3:eqn:2022-120e2}
\end{eqnarray}}
\end{subequations}

We compute the LHS of \eqref{n3:eqn:2022-120e1} and \eqref{n3:eqn:2022-120e2}.
By Lemma \ref{n3:lemma:2022-111e}, these are rewritten as follows:
\begin{subequations}
{\allowdisplaybreaks
\begin{eqnarray}
& & \hspace{-10mm}
\text{LHS of \eqref{n3:eqn:2022-120e1}} \, = \, 
- \, i \, \eta(2\tau)^3 \, 
\frac{\vartheta_{11}(2\tau, \, 2z)
}{\vartheta_{11}(2\tau, \, z-\frac12) \, \vartheta_{11}(2\tau, \, z+\frac12)}
\nonumber
\\[1mm]
&=& i \, \frac{\eta(2\tau)^2}{\eta(\tau)} \cdot 
\frac{\vartheta_{00}(2\tau, z)}{\vartheta_{10}(2\tau, z)} \cdot 
\vartheta_{11}(\tau, z)
\label{n3:eqn:2022-120f1}
\\[1mm]
& & \hspace{-10mm}
\text{LHS of \eqref{n3:eqn:2022-120e2}}
\, = \, 
- i \, q^{-\frac12} \eta(2\tau)^3 
\frac{\vartheta_{11}(2\tau, \, 2z)}{\vartheta_{11}(2\tau, z+\tau-\frac12) 
\vartheta_{11}(2\tau, z-\tau+\frac12)}
\nonumber
\\[1mm]
&=& i \, \frac{\eta(2\tau)^2}{\eta(\tau)} \cdot 
\frac{\vartheta_{10}(2\tau, z)}{\vartheta_{00}(2\tau, z)} \cdot 
\vartheta_{11}(\tau, z)
\label{n3:eqn:2022-120f2}
\end{eqnarray}}
\end{subequations}
where we used the formula \eqref{n3:eqn:2022-120g1} and the following 
formulas:
{\allowdisplaybreaks
\begin{eqnarray*}
& &
\vartheta_{11}\Big(2\tau, \, z-\frac12\Big)
\vartheta_{11}\Big(2\tau, \, z+\frac12\Big)
\,\ = \,\ - \, \vartheta_{10}(2\tau, z)^2
\\[1mm]
& &
\vartheta_{11}\Big(2\tau, \, z+\tau-\frac12\Big)
\vartheta_{11}\Big(2\tau, \, z-\tau+\frac12\Big)
\,\ = \,\ - \, q^{-\frac12} \, \vartheta_{00}(2\tau, z)^2 
\end{eqnarray*}}
and
$$
\left\{
\begin{array}{ccc}
\dfrac{\vartheta_{10}(\tau, z)}{\vartheta_{10}(2\tau, z)} &=&
\dfrac{\eta(\tau)}{\eta(2\tau)^2} \,\ \vartheta_{00}(2\tau, z)
\\[4mm]
\dfrac{\vartheta_{10}(\tau, z)}{\vartheta_{00}(2\tau, z)} &=&
\dfrac{\eta(\tau)}{\eta(2\tau)^2} \,\ \vartheta_{10}(2\tau, z)
\end{array}\right.
$$
Then we have, by \eqref{n3:eqn:2022-120e1} and \eqref{n3:eqn:2022-120f1},
\begin{subequations}
\begin{equation}
\frac{\eta(2\tau)^2}{\eta(\tau)} \cdot 
\frac{\vartheta_{00}(2\tau, z)}{\vartheta_{10}(2\tau, z)} 
=
\frac12 \cdot \frac{1}{\vartheta_{10}(\tau,z)}\left\{
\frac{\eta(\tau)^5}{\eta(\frac{\tau}{2})^2 \eta(2\tau)^2} 
\vartheta_{00}(\tau,z)
+
\frac{\eta(\frac{\tau}{2})^2}{\eta(\tau)} 
\vartheta_{01}(\tau,z)\right\}
\label{n3:eqn:2022-120h1}
\end{equation}
and, by \eqref{n3:eqn:2022-120e2} and \eqref{n3:eqn:2022-120f2},
\begin{equation}
\frac{\eta(2\tau)^2}{\eta(\tau)} \cdot 
\frac{\vartheta_{10}(2\tau, z)}{\vartheta_{00}(2\tau, z)} 
=
\frac12 \cdot \frac{1}{\vartheta_{10}(\tau,z)}\left\{
\frac{\eta(\tau)^5}{\eta(\frac{\tau}{2})^2 \eta(2\tau)^2} 
\vartheta_{00}(\tau,z)
-
\frac{\eta(\frac{\tau}{2})^2}{\eta(\tau)}
\vartheta_{01}(\tau,z)\right\} .
\label{n3:eqn:2022-120h2}
\end{equation}
\end{subequations}
Now, by making $\text{\eqref{n3:eqn:2022-120h1}} \pm \text{\eqref{n3:eqn:2022-120h2}}$,
we have
\begin{eqnarray*}
\frac{\eta(2\tau)^2}{\eta(\tau)} \, \left\{
\frac{\vartheta_{00}(2\tau, z)}{\vartheta_{10}(2\tau, z)} 
\, + \, 
\frac{\vartheta_{10}(2\tau, z)}{\vartheta_{00}(2\tau, z)}\right\}
&=&
\frac{\eta(\tau)^5}{\eta(\frac{\tau}{2})^2 \eta(2\tau)^2} \cdot
\frac{\vartheta_{00}(\tau,z)}{\vartheta_{10}(\tau,z)}
\\[1mm]
\frac{\eta(2\tau)^2}{\eta(\tau)} \, \left\{
\frac{\vartheta_{00}(2\tau, z)}{\vartheta_{10}(2\tau, z)} 
\, - \, 
\frac{\vartheta_{10}(2\tau, z)}{\vartheta_{00}(2\tau, z)}\right\}
&=&
\frac{\eta(\frac{\tau}{2})^2}{\eta(\tau)} \cdot 
\frac{\vartheta_{01}(\tau,z)}{\vartheta_{10}(\tau,z)}
\end{eqnarray*}
namely
\begin{eqnarray*}
\frac{\vartheta_{00}(2\tau, z)}{\vartheta_{10}(2\tau, z)} 
\, + \, 
\frac{\vartheta_{10}(2\tau, z)}{\vartheta_{00}(2\tau, z)}
&=&
\frac{\eta(\tau)^6}{\eta(\frac{\tau}{2})^2 \eta(2\tau)^4} \cdot
\frac{\vartheta_{00}(\tau,z)}{\vartheta_{10}(\tau,z)}
\\[1mm]
\frac{\vartheta_{00}(2\tau, z)}{\vartheta_{10}(2\tau, z)} 
\, - \, 
\frac{\vartheta_{10}(2\tau, z)}{\vartheta_{00}(2\tau, z)}
&=&
\frac{\eta(\frac{\tau}{2})^2}{\eta(2\tau)^2} \cdot 
\frac{\vartheta_{01}(\tau,z)}{\vartheta_{10}(\tau,z)} \, ,
\end{eqnarray*}
proving proposition.
\end{proof}

\end{document}